\title{Anosov representations of amalgams}
\date{April 30, 2025}
\author{Subhadip Dey}
\address{Max Planck Institute for Mathematics in the Sciences in Leipzig,
    Inselstraße 22,
    04103 Leipzig,
    Germany
}
\email{subhadip.dey@mis.mpg.de}
\author{Konstantinos Tsouvalas}
\address{Max Planck Institute for Mathematics in the Sciences in Leipzig,
    Inselstraße 22,
    04103 Leipzig,
    Germany
}
\email{konstantinos.tsouvalas@mis.mpg.de}
\subjclass[2020]{22E40, 53C35, 20F65, 14M15}
\def\section{\@startsection{section}{1}
  \z@{.7\linespacing\@plus\linespacing}{.5\linespacing}
  {\normalfont\bfseries\centering}}
\renewcommand\@seccntformat[1]{
  \ifcsname the#1\endcsname
    \textbf{\csname the#1\endcsname.}\hspace{1pt}
  \fi
}
\numberwithin{equation}{section} 
\newtheorem{theorem}{Theorem}[section]
\newtheorem{proposition}[theorem]{Proposition}
\newtheorem{question}[theorem]{Question}
\newtheorem{lemma}[theorem]{Lemma}
\newtheorem{corollary}[theorem]{Corollary}
\newtheorem{nclaim}[theorem]{Claim}
\newtheorem{maintheorem}{Theorem}
\newtheorem{maincor}[maintheorem]{Corollary}
\theoremstyle{remark}
\newtheorem{remark}[theorem]{Remark}
\newtheorem{example}[theorem]{Example}
\theoremstyle{definition}
\newtheorem{definition}[theorem]{Definition}
\newtheorem{fact}[theorem]{Fact}
\def\acts{\curvearrowright}
\def\H{\mathbb{H}}
\def\N{\mathbb{N}}
\def\R{\mathbb{R}}
\def\<{\langle}
\def\>{\rangle}
\def\acts{\curvearrowright}
\def\M{\textup{M}}
\def\F{\mathcal{F}}
\def\G{\Gamma}
\def\g{\gamma}
\def\H{\mathrm H}
\def\o{\circ}
\DeclareMathOperator{\diam}{diam}
\def\geo{\partial_{\infty}}
\renewcommand*\env@matrix[1][\arraystretch]{
\edef\arraystretch{#1}
\hskip -\arraycolsep
\let\@ifnextchar\new@ifnextchar
\array{*\c@MaxMatrixCols c}}
\begin{document}

\begin{abstract}
For uniform lattices $\Gamma$ in rank 1 Lie groups, we construct Anosov representations of virtual doubles of $\Gamma$ along certain quasiconvex subgroups. We also show that virtual HNN extensions of these lattices over some cyclic subgroups admit Anosov embeddings. In addition, we prove that for any Anosov subgroup $\Gamma$ of a real semisimple linear Lie group $\mathsf{G}$ and any infinite abelian subgroup $\mathrm{H} $ of $ \Gamma$, there exists a finite-index subgroup $\Gamma' $ of $ \Gamma$ containing $\mathrm{H}$ such that the double $\Gamma' *_{\mathrm{H}} \Gamma'$ admits an Anosov representation, thereby confirming a conjecture of \cite{TholozanT}. These results yield numerous examples of one-ended hyperbolic groups that do not admit discrete and faithful representations into rank 1 Lie groups but do admit Anosov embeddings into higher-rank Lie groups.
\end{abstract}

\maketitle

\section{Introduction}

Anosov subgroups, introduced by Labourie \cite{labourie} and further developed by Guichard–Wienhard \cite{Guichard-Wienhard}, extend the notion of convex cocompactness from classical Kleinian groups to higher-rank settings. They form a rich family of discrete subgroups of semisimple Lie groups, playing a central role in higher Teichmüller theory and the study of geometric structures.

A wide variety of hyperbolic groups are known to admit Anosov representations into real semisimple linear Lie groups $\mathsf{G}$, including convex cocompact subgroups of rank 1 Lie groups (e.g., $\mathsf{SO}(n,1)$ for $n \geq 2$), fundamental groups arising from certain Gromov–Thurston constructions \cite{kapovich2007convex} (see also \cite{Monclair-Schlenker-Tholozan}), hyperbolic right-angled Coxeter groups \cite{danciger2018convex,Lee-Marquis,DGKLM-cc}, and cubulated hyperbolic groups \cite{douba2023cubulated}, including those from the strict hyperbolization method of Charney–Davis \cite{Lafont-Ruffoni}. This breadth of examples naturally prompts the following question:

\begin{question}\label{ques:lin_hyp}
    Does there exist a linear hyperbolic group which does not admit an Anosov (or even discrete and faithful) representation into any real semisimple Lie group?
\end{question}

The linearity assumption in \Cref{ques:lin_hyp} is necessary, as there exist hyperbolic groups that do not admit faithful representations into any linear Lie group. The first such examples were constructed by Kapovich \cite{kapovich_polygon}, who, using the superrigidity of uniform lattices $\Gamma$ in $\mathsf{G} = \mathsf{Sp}(n,1)$, showed that no infinite hyperbolic quotient of $\Gamma$ embeds faithfully into linear Lie groups over any field. Later, the second named author with Canary and Stover in \cite{CST} and with Tholozan in \cite{TholozanT, Tholozan-T-nonlinear} produced further examples by amalgamating uniform lattices in $\mathsf{G}$ along certain free subgroups, again relying on superrigidity. These groups admit no linear embeddings whatsoever, and therefore no Anosov embeddings either. 

However, establishing the existence (or nonexistence) of a linear hyperbolic group without Anosov embeddings, as posed in \Cref{ques:lin_hyp}, remains a difficult problem. A promising strategy is to revisit amalgams of two torsion-free uniform lattices in $\mathsf{G}= \mathsf{Sp}(n,1)$ along maximal cyclic subgroups that {\em are} $\mathsf{G}$-conjugate. The simplest such construction arises by {\em doubling} uniform lattices $\Gamma$ along maximal cyclic subgroups $\langle \gamma \rangle$. The resulting groups $\Gamma *_{\langle \gamma \rangle} \Gamma$ are hyperbolic and, moreover, linear \cite{TholozanT}. Although it is unclear whether any such doubles would yield examples for \Cref{ques:lin_hyp}, we show that, after passing to suitable finite-index subgroups of $\Gamma$, these doubles indeed admit Anosov representations; see Theorems \ref{main-rank1-amalgam} and \ref{cyclic-amalgam} below.

{\renewcommand{\arraystretch}{1.1}
\begin{table}
    \centering
    \begin{tabular}{|c|r@{\ $\times$\ }l|}\hline
        $\mathsf{G}$\, ($2\le n$) & \multicolumn{2}{c|}{$\mathsf{L}$\, ($1\le k\le n$)} \\ \hline\hline
        $\mathsf{O}(n,1)$ & $\mathsf{O}(n-k)$ & $\mathsf{O}(k,1)$ \\ \hline
        \multirow{2}{*}{$\mathsf{U}(n,1)$} & $\mathsf{U}(n-k)$ & $\mathsf{O}(k,1)$ \\ 
        & $\mathsf{U}(n-k)$ & $\mathsf{U}(k,1)$ \\ \hline
        \multirow{3}{*}{\hspace{1em}$\mathsf{Sp}(n,1)$\hspace{1em}} & $\mathsf{Sp}(n-k)$ & $\mathsf{O}(k,1)$ \\ 
        & $\mathsf{Sp}(n-k)$ & $\mathsf{U}(k,1)$ \\ 
        & \hspace{1em}$\mathsf{Sp}(n-k)$ & $\mathsf{Sp}(k,1)$\hspace{1em} \\ \hline
    \end{tabular}
    \caption{}
    \label{tab:compatible}
\end{table}
}
     
\begin{samepage}
\begin{maintheorem}\label{main-rank1-amalgam}
     Let $\mathsf G$ and $\mathsf L$ be any pair of Lie groups listed in the same row of \Cref{tab:compatible}.
     Let $\Gamma$ be a convex cocompact group (for instance, a uniform lattice) in $\mathsf{G}$, and let $\H = \Gamma\cap \mathsf{L}$. Suppose further that $\H$ is a lattice in $\mathsf{L}$.
     Then there exists a finite-index subgroup $\Gamma'$ of $\Gamma$, containing $\H$, such that the amalgam $\Gamma' *_\H \Gamma'$ admits a faithful $1$-Anosov representation into $\mathsf{SL}_d(\mathbb{C})$ for some $d\in\mathbb{N}$ depending only on $\mathsf{G}$.
\end{maintheorem}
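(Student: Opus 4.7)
The plan is a bending construction combined with a ping-pong / combination argument. The starting point is a $1$-Anosov embedding $\iota\colon \mathsf{G}\hookrightarrow \mathsf{SL}_d(\mathbb{C})$ whose restriction to every convex cocompact subgroup of $\mathsf{G}$ remains $1$-Anosov; such embeddings exist classically for every rank $1$ group in \Cref{tab:compatible} (via the standard representation, or a suitable wedge / symmetric power in the case of $\mathsf{Sp}(n,1)$). For each row of the table, $\mathsf{L}$ splits as a product of a compact factor and a noncompact rank $1$ factor, and this splitting is realized by a direct sum decomposition $\mathbb{C}^d = V_1\oplus V_2$ preserved by $\iota(\mathsf{L})$, with $V_1$ carrying the compact factor and $V_2$ the noncompact factor. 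The one-parameter subgroup
\[
c_t \;=\; \mathrm{diag}\bigl(e^{t\dim V_2}\cdot I_{V_1},\; e^{-t\dim V_1}\cdot I_{V_2}\bigr)\;\in\; \mathsf{SL}_d(\mathbb{C})
\]
centralizes $\iota(\mathsf{L})\supset \iota(\H)$, so for any finite-index subgroup $\Gamma'\le \Gamma$ containing $\H$, the universal property of amalgams yields a well-defined homomorphism
\[
\rho_t\colon \Gamma'*_{\H}\Gamma'\to \mathsf{SL}_d(\mathbb{C}),\qquad \rho_t|_{\Gamma'_1}=\iota,\qquad \rho_t|_{\Gamma'_2}=c_t\,\iota(\cdot)\,c_t^{-1},
\]
whose restriction to each factor is $1$-Anosov.

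The heart of the proof is to show that for a suitable choice of $\Gamma'$ and of sufficiently large $t$, $\rho_t$ is faithful and $1$-Anosov on the whole amalgam. I would proceed by a ping-pong argument in $\mathbb{P}(\mathbb{C}^d)$: the flag limit set $\Lambda_\H$ of $\iota(\H)$ lies inside $\mathbb{P}(V_2)$, and $c_t$ attracts the complement of $\mathbb{P}(V_2)$ onto $\mathbb{P}(V_1)$ as $t\to +\infty$. Using residual finiteness of uniform lattices in rank $1$ Lie groups (combined with the convex cocompactness of $\Gamma$ and geometric separation arguments in the rank $1$ symmetric space), one can pass to a finite-index $\Gamma'\supset \H$ for which every $\gamma\in \Gamma'\setminus \H$ moves $\Lambda_\H$ uniformly away from itself in the flag variety. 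Combined with the bending dynamics, this lets alternating products of elements from the two factors be controlled by ping-pong, giving faithfulness; to upgrade to the $1$-Anosov property I would check that the top singular-value gap of $\rho_t(g_n)$ grows linearly in word length along $g_n$ in $\Gamma'*_\H\Gamma'$, or equivalently invoke a combination theorem for Anosov representations along a quasiconvex subgroup.

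The main obstacle is the uniform separation of $\Lambda_\H$ by $\Gamma'\setminus \H$ in the flag variety. Since $\H$ is a lattice in $\mathsf{L}$ rather than a cyclic subgroup, its flag limit set $\Lambda_\H\subset \mathbb{P}(V_2)$ is positive-dimensional in general (it is the $\iota$-image of the limit set of $\H$ in the boundary of the symmetric space of $\mathsf{L}$), so showing that its $\Gamma'$-translates are uniformly disjoint from $\Lambda_\H$ requires both the convex cocompactness of $\H$ inside $\Gamma$ and a careful choice of finite-index subgroup. Once this separation, together with the transversality between $\Lambda_\H$ and $\mathbb{P}(V_1)$, is in place, the combination theorem applies and the Anosov property for $\rho_t$ follows from the uniformly contracting behavior of $c_t$ along the bending axis.
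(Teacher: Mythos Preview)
Your overall architecture---bend by an element centralizing $\iota(\mathsf{L})$ and then invoke a combination theorem---is the right shape, and is close to what the paper does. The gap is in the choice of bending element: a real one-parameter stretch $c_t$ never produces the antipodality that any Anosov combination theorem requires. Take $\mathsf{G}=\mathsf{O}(n,1)$ acting standardly on $\mathbb{R}^{n+1}$, $\mathsf{L}=\mathsf{O}(1)\times\mathsf{O}(n-1,1)$, and $\Gamma$ a uniform lattice, so that $\Lambda_\Gamma=\partial_{\infty}\mathbb{H}_{\mathbb{R}}^n$ and $\Lambda_\H=\{x_0=0\}\cap\Lambda_\Gamma$. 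The condition $c_t\xi^1(x)\subset\xi^{n}(y)$ unwinds to $(e^{(n+1)t}-1)\,x_0y_0=-\langle x,y\rangle_J$. Fixing $y$ with $y_0>0$, the left side exceeds the right (which vanishes) at $x=y$, but falls below it as $x_0\to 0^+$ along a path in $\Lambda_\Gamma$; by continuity they agree at some $x\ne y$ in $\Lambda_\Gamma\smallsetminus\Lambda_\H$. Hence for \emph{every} $t>0$ antipodality fails somewhere, and no passage to a finite-index $\Gamma'$ repairs this. The issue you flag as the main obstacle (uniform displacement of $\Lambda_\H$ by $\Gamma'\smallsetminus\H$) is actually the routine half: it follows from separability of $\H=\Gamma\cap\mathsf{L}$, which holds because $\mathsf{L}$ is Zariski closed in $\mathsf{G}$, and is precisely what \Cref{thm:amalgam} automates \emph{once} antipodality of the two limit sets off $\Lambda_\H$ has been established.

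The paper's fix is to bend by something with no real avatar. For the first row one can replace your $c_t$ by the unitary rotation $b=\mathrm{diag}(I_{V_1},\,i\,I_{V_2})$, still centralizing $\mathsf{L}$; then $\langle bx,y\rangle_J=\langle x',y'\rangle+i\,\langle x'',y''\rangle_{J''}$ has nonzero imaginary part whenever $x,y\notin\Lambda_\H$ (since $x'',y''$ are then timelike in the Lorentzian space $V_2$), so antipodality becomes an algebraic identity rather than an open inequality---this is the mechanism behind \Cref{antipodal-cyclic}. To handle all rows uniformly, however, the paper does not bend inside the standard representation at all. It realizes $\mathsf{L}$ as the fixed-point subgroup of one or two finite-order automorphisms of $\mathsf{G}$ (the ``compatible pair'' formalism of \Cref{def:compatible}), compares the two $1$-Anosov representations $\tau|_\Gamma$ and $(\tau\circ\varphi)|_\Gamma$, and feeds them through the exterior-power and quaternionic constructions of \Cref{sec:4} (\Cref{rho1-rho2}, \Cref{rho1-rho2-w}) to manufacture antipodality inside a much larger $\mathsf{SL}_m(\mathbb{C})$. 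Your proposal does not anticipate either the failure of the real bending or this second layer of construction.
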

\end{samepage}

Furthermore, the following result shows that HNN extensions of many uniform lattices in rank 1 Lie groups over some cyclic subgroups also admit Anosov representations:

\begin{maintheorem}\label{main:cyclic-HNN}
    Let $\mathsf{G} = \mathsf{O}(n,1)$, $\mathsf{U}(n,1)$, $\mathsf{Sp}(n,1)$ with $n \geq 2$, or $\mathsf{F}_4^{-20}$, and let $\Gamma$ be a non-elementary convex cocompact subgroup of $\mathsf{G}$. Then there exists a finite-index normal subgroup $\Gamma'$ of $\Gamma$ and a pair of infinite cyclic subgroups $\langle \alpha \rangle$ and $\langle \beta \rangle$ of $\Gamma'$ such that the HNN extension $\Gamma' *_{t\alpha t^{-1} = \beta}$ admits a faithful $1$-Anosov representation into $\mathsf{SL}_d(\mathbb{C})$, where $d$ depends only on $n$.
\end{maintheorem}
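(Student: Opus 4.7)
The plan is to construct the Anosov representation by linearizing $\Gamma'$ through an Anosov embedding of $\mathsf G$ into $\mathsf{SL}_d(\mathbb C)$, choosing $\alpha$ and $\beta$ to be $\mathsf G$-conjugate inside $\Gamma'$, sending the stable letter to a generically perturbed conjugator, and applying a Maskit-type combination theorem for Anosov representations.

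First, I would pass to a torsion-free finite-index normal subgroup $\Gamma'\trianglelefteq\Gamma$. Since $\Gamma$ is non-elementary, I can pick a primitive loxodromic $\alpha_0\in\Gamma'$ and an element $g\in\Gamma$ lying outside $Z_\Gamma(\alpha_0)\cdot\Gamma'$ whose axis is disjoint from that of $\alpha_0$. Setting $\alpha:=\alpha_0^n$ for a large integer $n$ and $\beta:=g\alpha g^{-1}\in\Gamma'$, both $\langle\alpha\rangle$ and $\langle\beta\rangle$ are infinite cyclic subgroups of $\Gamma'$ that form a malnormal family in $\Gamma'$ and whose axes are far apart in the symmetric space of $\mathsf G$. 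The Bestvina--Feighn combination theorem then ensures that $\Gamma'*_{t\alpha t^{-1}=\beta}$ is word hyperbolic.

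Next, I would fix an irreducible representation $\iota:\mathsf G\hookrightarrow\mathsf{SL}_d(\mathbb C)$ whose restriction to any convex cocompact subgroup of $\mathsf G$ is $1$-Anosov (the standard complexified embeddings for the classical rank-one groups; for $\mathsf F_4^{-20}$ the $26$-dimensional minuscule representation), and set $\rho_0:=\iota|_{\Gamma'}$. Because $g\alpha g^{-1}=\beta$ holds inside $\mathsf G$, for every $z$ in the centralizer $Z_{\mathsf{SL}_d(\mathbb C)}(\iota(\alpha))$ the element $t:=\iota(g)z$ conjugates $\rho_0(\alpha)$ to $\rho_0(\beta)$, so the assignments $\rho|_{\Gamma'}:=\rho_0$ and $\rho(t):=t$ extend to a well-defined homomorphism $\rho:\Gamma'*_{t\alpha t^{-1}=\beta}\to\mathsf{SL}_d(\mathbb C)$.

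Finally, I would invoke a Maskit-type combination theorem for Anosov representations---in the spirit of the one used to prove Theorem~\ref{main-rank1-amalgam}---to conclude that for generic $z$ in the positive-dimensional centralizer torus, $\rho$ is faithful and $1$-Anosov. Concretely, one verifies a ping-pong configuration on $\mathbb{CP}^{d-1}$: the attracting and repelling lines of $\iota(\alpha)^{\pm}$ and $\iota(\beta)^{\pm}$, together with the limit set $\Lambda_{\rho_0(\Gamma')}$, must sit in general position with respect to $t^{\pm 1}$, which becomes an open and dense condition on $z$. The hardest step is controlling the singular values (Cartan projections) of alternating words in $\rho_0(\Gamma')$ and powers of $t$ to verify the uniform Anosov gap along every geodesic in the Bass--Serre tree of the HNN extension; this is where the combination-theoretic machinery built up for Theorem~\ref{main-rank1-amalgam} is expected to be reused.
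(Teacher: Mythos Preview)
Your overall architecture is right: embed $\Gamma$ Anosov-faithfully into $\mathsf{SL}_d(\mathbb C)$, take the stable letter to be a $\Gamma$-element times something complex in the centralizer of $\alpha$, and appeal to a combination theorem. This is exactly what the paper does (with the order reversed: its stable letter is $\tau\gamma^p$ where $\tau=\mathrm{diag}(\mathrm I_{d-2},1,i)$ is the explicit centralizing element and $\gamma^p\in\Gamma$). But two genuine ingredients are missing from your sketch, and the amalgam machinery of Theorem~\ref{main-rank1-amalgam} does not supply them.

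First, the combination theorem you need is not the one behind Theorem~\ref{main-rank1-amalgam}. That theorem (and \Cref{thm:amalgam}) builds an \emph{interactive pair} $(A,B)$ in the flag manifold and applies \cite[Thm.~A]{DeyK:amalgam}. HNN extensions require instead an \emph{interactive triple} $(A,B_+,B_-)$ satisfying the hypotheses of \cite[Thm.~B]{DeyK:amalgam} (stated here as \Cref{thm:HNN1}): one needs $t^{\pm1}A\subset B_\pm$, $t^{\pm1}B_\pm\subset B_\pm^\circ$, precise invariance of $B_\pm$ under $\H_\pm$, and $\gamma B_\pm\subset A^\circ$ for every $\gamma\in\Gamma'\smallsetminus\H_\pm$. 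Building such a triple is substantially more delicate than building a pair; the paper spends all of \S\ref{sec:6} on it, and the sets $B_\pm^\delta$ (see \eqref{eqn:HNNsets}) involve the entire semigroup generated by $\H_\pm$ and $t^{\pm1}$.

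Second, and this is the decisive point you are missing: to arrange the condition ``$\gamma B_\pm\subset A^\circ$ for all $\gamma\in\Gamma'\smallsetminus\H_\pm$'' one must pass to a deep finite-index subgroup of $\Gamma$, and doing so requires not just separability of $\langle\alpha\rangle$ and $\langle\beta\rangle$ but separability of the \emph{double cosets} $\langle\alpha\rangle\langle\beta\rangle$ and $\langle\beta\rangle\langle\alpha\rangle$ in $\Gamma'$ (see \Cref{product-sep} and its use in \Cref{fi-separable}). For rank-one lattices this is Hamilton's theorem \cite{Hamilton-double-coset}; the paper extends it to arbitrary convex cocompact groups in \Cref{prod-sep-gen} using that loxodromic elements are semisimple. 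Your ``generic $z$'' argument cannot replace this: no matter how you choose $z$, there will be short elements $\gamma\in\Gamma'\smallsetminus(\H_+\cup\H_-)$ that fail to push $B_\pm$ into $A^\circ$, and excluding them forces the separability argument. The ping-pong you describe on $\mathbb{CP}^{d-1}$ is not an open condition on $z$ alone; it is an open condition jointly on $z$ and on the finite-index subgroup, and the latter is where double-coset separability enters.
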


In general, it is interesting to know if the doubles of general Anosov subgroups $\Gamma$ of a real linear semisimple Lie group $\mathsf{G}$ along certain quasiconvex subgroups $\H$ also admit Anosov embeddings.\footnote{A necessary condition for the linearity of $\Gamma *_{\H} \Gamma$ is the separability of $\H$ in $\Gamma$ \cite{Long-Niblo}. Whether all quasiconvex subgroups are separable in general hyperbolic groups is a major open problem.} Tholozan–Tsouvalas (see \cite[Conj. 10]{TholozanT}) conjectured this to be the case when $\H$ is a maximal cyclic subgroup. Recently, Danciger–Guéritaud–Kassel \cite[Cor. 1.24]{DGK-comb}  proved this conjecture for $1$-Anosov subgroups $\Gamma$ of $\mathsf{G} = \mathsf{SL}_d(\mathbb{R})$, $d \geq 3$, preserving a properly convex domain in $\mathbb{P}(\mathbb{R}^d)$, where $\H = \langle \gamma \rangle$ and $\gamma$ is similar to $\textup{diag}(\lambda, \textup{I}_{d-2}, \lambda^{-1})$ with $|\lambda| > 1$.

We confirm this conjecture in full generality; in fact, we prove the following more general statement:

\begin{maintheorem}\label{cyclic-amalgam}
    Let $\mathsf{G}$ be a linear, noncompact, real semisimple Lie group with finitely many connected components, and let $\Theta$ be a nonempty set of simple restricted roots of $\mathsf{G}$. Let $\Gamma$ be a $\Theta$-Anosov subgroup of $\mathsf{G}$, and let $\H $ be an infinite\footnote{This result is also valid when $\H$ is finite, though possibly without a uniform control on the dimension $d$. See \Cref{rem:1.2} for further details.}
    abelian subgroup of $\Gamma$. Then there exists a finite-index subgroup $\Gamma'$ of $\Gamma$, containing $\H$, such that the amalgam $\Gamma' *_{\H} \Gamma'$ admits a faithful $1$-Anosov representation into $\mathsf{SL}_d(\mathbb{C})$ for some $d$ depending only on $\mathsf{G}$ and $\Theta$.
\end{maintheorem}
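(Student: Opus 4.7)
The plan is threefold: we reduce to the case where $\H$ is infinite cyclic, embed into a $1$-Anosov setting via a Plücker-type representation, and then construct the amalgam representation by a bending-combination argument.

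\textbf{Reductions.} Since every $\Theta$-Anosov subgroup is Gromov-hyperbolic, every infinite abelian subgroup is virtually infinite cyclic; hence $\H$ contains a finite-index infinite cyclic subgroup $\langle \gamma \rangle$. Using the residual finiteness of $\Gamma$ (as it is linear) together with the separability of cyclic subgroups in many hyperbolic groups up to passage to finite index, we find a finite-index subgroup $\Gamma' \leq \Gamma$ containing $\H$ in which $\H$ is malnormal and quasiconvex. This ensures that the amalgam $\Gamma' *_\H \Gamma'$ is word-hyperbolic. Composing the inclusion $\iota \colon \Gamma \hookrightarrow \mathsf{G}$ with an irreducible representation $\tau \colon \mathsf{G} \to \mathsf{SL}_d(\mathbb{C})$ whose highest weight is proportional to the fundamental weight of some simple restricted root in $\Theta$ produces a faithful $1$-Anosov representation $\tau \circ \iota$, with $d$ depending only on $\mathsf{G}$ and $\Theta$. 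It thus suffices to extend $\tau \circ \iota$ to a faithful $1$-Anosov representation of $\Gamma' *_\H \Gamma'$.

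\textbf{The doubling.} We set $\rho_1 = \tau \circ \iota$ on the first copy of $\Gamma'$ and $\rho_2 = c\, \rho_1\, c^{-1}$ on the second, where $c$ is chosen in the centralizer of $\tau(\H)$ inside $\mathsf{SL}_d(\mathbb{C})$. Because $\tau(\gamma)$ is a proximal element, this centralizer is a reductive subgroup of positive dimension and contains elements acting nontrivially on the directions transverse to the attracting line and repelling hyperplane of $\tau(\gamma)$. A generic and sufficiently large such $c$ places the images of the two copies into a ping-pong configuration on $\mathbb{P}(\mathbb{C}^d)$ and on its dual. A combination theorem for Anosov representations — in the spirit of Dey--Kapovich or Douba--Fléchelles--Weisman--Zhu — then promotes this configuration to the full $1$-Anosov property of the combined representation $\rho \colon \Gamma' *_\H \Gamma' \to \mathsf{SL}_d(\mathbb{C})$.

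\textbf{Main obstacle.} The most delicate step is verifying the uniform contraction--transversality condition for concatenations alternating between the two copies of $\Gamma'$. This requires a fine understanding of the dynamics of $\tau(\Gamma)$ near the attracting and repelling fixed points of $\tau(\H)$ in $\mathbb{P}(\mathbb{C}^d)$, together with a compatibility between the centralizer of $\tau(\H)$ and the limit set of $\tau(\Gamma)$ in the ambient flag variety. Ensuring that the limit flags of $\rho_1(\Gamma' \setminus \H)$ and $\rho_2(\Gamma' \setminus \H)$ remain transverse at points near the fixed set of $\tau(\H)$ is precisely what forces the finite-index passage, and possibly a further refinement of $\Gamma'$, in the statement.
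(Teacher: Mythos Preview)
Your overall architecture matches the paper's: reduce to a $1$-Anosov subgroup of $\mathsf{SL}_d(\mathbb{R})$ via a $\Theta$-proximal embedding, conjugate one copy by an element $c$ centralizing $\H$, and feed the two copies into a combination theorem (the paper's Virtual Amalgam Theorem, \Cref{thm:amalgam}, built on Dey--Kapovich). Where your proposal diverges from a proof is precisely at the step you yourself flag as the ``main obstacle'': you assert that a \emph{generic and sufficiently large} $c$ in the centralizer of $\tau(\H)$ produces a ping-pong configuration, but you give no argument for this, and the heuristic is suspect. The centralizer fixes the attracting and repelling flags $\xi^1(\gamma^\pm)$, so no amount of ``largeness'' separates the two limit sets near those points; the antipodality of $c\,\xi^1(\partial_\infty\Gamma\smallsetminus\{\gamma^\pm\})$ and $\xi^{d-1}(\partial_\infty\Gamma\smallsetminus\{\gamma^\pm\})$ has to be arranged \emph{pointwise}, including arbitrarily close to $\gamma^\pm$.

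The paper resolves this not by genericity but by an explicit choice and a one-line computation (Lemma~\ref{antipodal-cyclic}). After conjugating so that $\xi^1(\gamma^+)=[e_{d-1}]$, $\xi^1(\gamma^-)=[e_d]$, one takes
\[
g=\mathrm{diag}(\textup{I}_{d-2},1,i)\in\mathsf{SL}_d(\mathbb{C}),
\]
which centralizes $\H\subset\mathsf{GL}_{d-2}(\mathbb{R})\times\mathsf{GL}_1(\mathbb{R})\times\mathsf{GL}_1(\mathbb{R})$. For $x,y\notin\{\gamma^\pm\}$, writing $\xi^1(x)=[v_x]$ and $\xi^{d-1}(y)=v_y^\perp$ with $v_x,v_y\in\mathbb{R}^d$, antipodality forces $\langle v_x,e_d\rangle,\langle v_y,e_d\rangle\neq 0$, and then
\[
\langle g^{\pm1}v_x,\,v_y\rangle \;=\; \pm i\,\langle v_x,e_d\rangle\langle v_y,e_d\rangle \;+\; (\text{real}) \;\neq\;0,
\]
since a complex number with nonzero imaginary part cannot vanish. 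This is the actual content you are missing: the passage to $\mathbb{C}$ is not incidental but is exactly what makes the transversality hold uniformly, with no finite-index refinement needed at this stage. The finite-index passage in the paper comes only from (i) arranging $\Gamma_0\cap\widehat\H=\H$ via separability of $\H$ (citing \cite{Tsouvalas-cyclic-separability}, not a general fact about hyperbolic groups), and (ii) the Virtual Amalgam Theorem itself. Your sketch would become a proof once you replace ``generic large $c$'' by this explicit $g$ and the imaginary-part argument.
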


Finally, we state an application of the results above.
Let $\mathsf{G}$ be either $\mathsf{Sp}(n,1)$ or $\mathsf{F}_4^{-20}$, and let $\Gamma$ be a uniform lattice in $\mathsf{G}$ with $\H$ an infinite cyclic subgroup of $\Gamma$. By \cite[Thm. 1.7]{TholozanT}, if $\mathsf{G} = \mathsf{Sp}(n,1)$ with $n \geq 4$, then $\Gamma *_{\H} \Gamma$ admits no discrete and faithful representations into any rank 1 Lie group, and the same holds for $\mathsf{F}_4^{-20}$ by Corlette's superrigidity theorem \cite{Corlette} (see also \Cref{rem:complex-quat-amalgam}). Nevertheless, \Cref{main-rank1-amalgam} (or \Cref{cyclic-amalgam}) produces numerous such doubles that admit Anosov representations. This leads to the following:

\begin{maincor}\label{cor:one-ended}
    There exist one-ended hyperbolic groups admitting Anosov representations into some semisimple Lie group, but no (even virtual) discrete and faithful representations into any rank 1 Lie group.
\end{maincor}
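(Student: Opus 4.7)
The plan is to instantiate the construction using uniform lattices in rank 1 Lie groups for which superrigidity rules out non-trivial low-rank targets. I would fix $\mathsf{G}$ to be either $\mathsf{Sp}(n,1)$ with $n \geq 4$ or $\mathsf{F}_4^{-20}$, choose a torsion-free uniform lattice $\Gamma \leq \mathsf{G}$, and let $\H = \langle \gamma \rangle$ be the infinite cyclic subgroup generated by some loxodromic $\gamma \in \Gamma$. Applying \Cref{main-rank1-amalgam} (or \Cref{cyclic-amalgam}) then produces a finite-index subgroup $\Gamma' \leq \Gamma$ containing $\H$ such that the double $\bG := \Gamma' *_{\H} \Gamma'$ admits a faithful $1$-Anosov representation into $\mathsf{SL}_d(\mathbb{C})$, supplying the required Anosov representation.

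Next I would verify that $\bG$ is a one-ended hyperbolic group. Hyperbolicity is immediate from the existence of an Anosov representation into a linear semisimple Lie group. For one-endedness: $\Gamma'$ is a uniform lattice in the non-compact rank 1 Lie group $\mathsf{G}$, hence one-ended; since the edge group $\H$ is infinite, the amalgam $\Gamma' *_\H \Gamma'$ admits no splitting over a finite subgroup, so by Stallings' theorem it has exactly one end.

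The substantive step is ruling out virtual discrete faithful representations into rank 1 Lie groups. Assume for contradiction that some finite-index subgroup $\Lambda \leq \bG$ admits a discrete faithful representation $\rho : \Lambda \to \mathsf{L}$ into a rank 1 Lie group $\mathsf{L}$. By Bass--Serre theory applied to the action of $\Lambda$ on the Bass--Serre tree of the splitting $\bG = \Gamma' *_\H \Gamma'$, the group $\Lambda$ is the fundamental group of a finite graph of groups whose vertex groups are finite-index subgroups of conjugates of $\Gamma'$ (hence themselves uniform lattices in $\mathsf{G}$) and whose edge groups are infinite cyclic. By Corlette's superrigidity theorem (applicable to both $\mathsf{Sp}(n,1)$ with $n \geq 2$ and $\mathsf{F}_4^{-20}$), the restriction of $\rho$ to each vertex group either has precompact image --- impossible, since $\rho$ is discrete and faithful on an infinite group --- or extends continuously to a homomorphism $\mathsf{G} \to \mathsf{L}$. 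Combining these extensions with the amalgamation relations, in the spirit of the Tholozan--Tsouvalas rigidity argument \cite[Thm.~1.7]{TholozanT}, forces the two extensions arising from the two $\Gamma'$-factors to be conjugate in $\mathsf{L}$ in a manner incompatible with $\rho$ being injective on the HNN-style loops appearing in the graph of groups, yielding the desired contradiction.

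The main obstacle is this last step: the non-virtual statement is already in \cite{TholozanT}, but extending it to arbitrary finite-index subgroups requires applying superrigidity at each vertex group of the induced Bass--Serre decomposition of $\Lambda$ and then tracking how the amalgamation of the resulting extensions produces incompatibilities in $\mathsf{L}$. The remaining steps --- the choice of lattice, the application of the main theorems, and the hyperbolicity and one-endedness of the double --- are either immediate or standard.
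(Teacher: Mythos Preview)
Your proposal is correct and follows essentially the same route as the paper: double a uniform lattice in $\mathsf{Sp}(n,1)$ (with $n\ge 4$) or $\mathsf{F}_4^{-20}$ along an infinite cyclic subgroup via \Cref{main-rank1-amalgam} or \Cref{cyclic-amalgam}, observe one-endedness from the infinite edge group, and invoke \cite[Thm.~1.7]{TholozanT} together with Corlette superrigidity to exclude (virtual) rank~1 targets. Two minor remarks: the paper also records an alternative proof via the HNN extensions of \Cref{main:cyclic-HNN}, and the contradiction mechanism in the rigidity step (see \Cref{indiscrete-rank1}) proceeds by comparing the two totally geodesic equivariant embeddings and a boundedness/discreteness argument, rather than through ``injectivity on HNN-style loops'' as you phrase it.
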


We conclude this introduction with the following remarks:
\begin{enumerate}[label=(\alph*)]
    \item A special case of \Cref{main-rank1-amalgam} can be described as follows: Let $M$ be a closed quaternionic hyperbolic $n$-manifold with $n\ge 2$, and suppose that $N$ is a closed, connected, embedded, totally geodesic complex hyperbolic submanifold. Then there exists a finite connected cover $M'$ of $M$ with a lift of $N$ in $M'$ such that $\Gamma = \pi_1(M' \sqcup_N M')$ admits a discrete and faithful (in fact, Anosov) representation into $\mathsf{SL}_d(\mathbb{C})$, where one can take $d = 2025\, n^8$. However, $\Gamma$ admits no discrete and faithful representation into any rank 1 Lie group; see \Cref{indiscrete-rank1}.
    
    \item To our knowledge, the examples provided by Theorem \ref{main:cyclic-HNN} are the first known instances of Anosov (and even linear) one-ended HNN extensions of lattices in $\mathsf{U}(n,1)$, $\mathsf{Sp}(n,1)$, and $\mathsf{F}_4^{-20}$. These HNN extensions are not cubulable (by \cite{Niblo-Reeves,Py}), so their Anosov representations do not follow from the existing results mentioned at the beginning of this introduction.
    
    \item Concerning \Cref{cor:one-ended}, the first examples of such hyperbolic groups, though with infinitely many ends, were exhibited by Douba–Tsouvalas \cite{DT24}, constructed as free products of cocompact lattices in $\mathsf{F}_4^{-20}$. We also note that \Cref{cor:one-ended} follows from \Cref{main:cyclic-HNN}, since the resulting HNN extensions are one-ended and do not admit any discrete and faithful representations into rank one Lie groups, even virtually.
\end{enumerate}

\subsection*{Outline} 
For the convenience of the reader, we give a brief outline of the proofs of the main results. The proofs of \Cref{main-rank1-amalgam} and \Cref{cyclic-amalgam} rely on a Virtual Amalgam Theorem (\Cref{thm:amalgam}), established in \Cref{sec:2}, which asserts that sufficiently antipodal limit sets of two Anosov subgroups intersecting in a quasiconvex subgroup enables virtual amalgamations. The proof of this theorem is completed by constructing an interactive pair in the corresponding flag manifold and then applying the Combination Theorems for Anosov subgroups from \cite{DeyK:amalgam}. Using \Cref{thm:amalgam}, together with the recent separability result of infinite abelian subgroups of $\Gamma$ from \cite{Tsouvalas-cyclic-separability}, we prove \Cref{cyclic-amalgam} in \Cref{sec:3} by constructing two $1$-Anosov embeddings of $\Gamma$ into $\mathsf{SL}_d(\mathbb{C})$ satisfying the required antipodality condition. Here, the dimension $d$ can be taken to be the minimal dimension admitting a $\Theta$-proximal embedding of $\mathsf{G}$ into $\mathsf{SL}_d(\mathbb{R})$; cf. \Cref{theta-compatible}.

In \Cref{sec:4}, we establish certain existence results concerning Anosov representations of doubles of convex cocompact rank one groups along non-elementary subgroups, under a technical compatibility assumption (\Cref{def:compatible}). See \Cref{amalgam-rank1} for a precise statement. This result is crucial for the proof of \Cref{main-rank1-amalgam}, where amalgamation along non-elementary subgroups is allowed. Using this result, we verify in \Cref{sec:5} the compatibility condition, thereby completing proof the \Cref{main-rank1-amalgam}; cf. \Cref{complex-quat-amalgam}.

The purpose of the final section, \Cref{sec:6}, is to prove \Cref{main:cyclic-HNN} concerning HNN extensions. To this end, we first establish a more general result (\Cref{HNN-ext}) regarding Anosov representations of HNN extensions, whose structure closely resembles that of the Virtual Amalgam Theorem. The proof strategy is modeled on that of \Cref{thm:amalgam}, relying on the construction of an interactive triple in the relevant flag manifold and an application of the Combination Theorems from \cite{DeyK:amalgam}. The construction of the interactive triple, however, is more delicate than that of interactive pairs, depending crucially on the double coset separability of given pair of subgroups inside the ambient group $\Gamma$. This separability for a pair of cyclic subgroups in rank 1 uniform lattices, originally established by Hamilton \cite{Hamilton-double-coset}, enables the completion of the proof of \Cref{main:cyclic-HNN} in the case of lattices. In fact, we also provide an extension of Hamilton’s theorem, allowing us to derive a strengthened version of \Cref{main:cyclic-HNN}, formulated as \Cref{thm:cyclic-HNN2}.

\subsection*{Acknowledgment}
We are grateful to Ian Agol, Misha Kapovich, Ashot Minasyan, and Henry Wilton for useful conversations on subgroup separability.
We thank Sami Douba, Beatrice Pozzetti, and Nicolas Tholozan for enjoyable discussions related to the results in this paper.

\section{Virtual amalgams}\label{sec:2}
Let $\Gamma_1$, $\Gamma_2$, and $\H$ be abstract groups together with monomorphisms  $\iota_{r}:\H \xhookrightarrow{} \Gamma_r$, $r\in \{1,2\}$. We view $\H$ as a common subgroup of $\Gamma_1$ and $\Gamma_2$. The {\em amalgamated free product} (or {\em amalgam}) {\em of $\Gamma_1$ and $\Gamma_2$ along $\H$} is the group $$\Gamma_1\ast_{\H}\Gamma_2=(\Gamma_1\ast \Gamma_2)/N$$ where $N\coloneqq \big\langle \big\{g(\iota_1(h)\iota_2(h^{-1}))g^{-1}:\ h\in \H,\ g\in \Gamma_1\ast \Gamma_2\big\}\big\rangle$. If $\Gamma_1=\Gamma_2$ and $\iota_1 = \iota_2$, we refer to $\Gamma_1\ast_{\H}\Gamma_1$ as the {\em double} of $\Gamma_1$ along $\H$. See Lyndon-Schupp \cite[Ch. IV]{Lyndon-Schupp} for more details.

\medskip
In this paper, we construct Anosov subgroups of $\mathsf{G}$ (as in \Cref{cyclic-amalgam}) by amalgamating two Anosov subgroups along a shared quasiconvex subgroup. We refer the reader to Labourie \cite{labourie} and Guichard–Wienhard \cite[Def. 2.10]{Guichard-Wienhard} for the notion of {\em $\Theta$-Anosov subgroups}. Briefly, if $\Gamma < \mathsf{G}$ is $\Theta$-Anosov, then $\Gamma$ is Gromov-hyperbolic and its Gromov boundary $\geo\Gamma$ admits a canonical $\Gamma$-equivariant embedding $\xi : \geo\Gamma \to \mathsf{G}/\mathsf{P}_\Theta$, called the {\em $\Theta$-Anosov limit map}. Here, $\mathsf{P}_\Theta$ is the parabolic subgroup associated to the set of simple restricted roots $\Theta$ and $\mathsf{G}/\mathsf{P}_\Theta$ is the corresponding flag variety. The image of $\xi$, called the {\em limit set} of $\Gamma$ in $\mathsf{G}/\mathsf{P}_\Theta$, is denoted by $\Lambda_\Gamma$.

A pair of points $x \in \mathsf{G}/\mathsf{P}_\Theta$ and $y \in \mathsf{G}/\mathsf{P}_\Theta^{-}$ is called \emph{antipodal} if $(x,y)$ lies in the unique open $\mathsf{G}$-orbit in $(\mathsf{G}/\mathsf{P}_\Theta) \times (\mathsf{G}/\mathsf{P}_\Theta^{-})$, where $\mathsf{P}_\Theta^{-}$ is a parabolic subgroup of $\mathsf{G}$ opposite to $\mathsf{P}_\Theta$. Similarly, two nonempty subsets $Z_1 \subset \mathsf{G}/\mathsf{P}_\Theta$ and $Z_2 \subset \mathsf{G}/\mathsf{P}_\Theta^{-}$ are said to be antipodal if each point in $Z_1$ is antipodal to each point in $Z_2$. A special feature of Anosov limit maps $\xi : \partial_\infty \Gamma \to \mathsf{G}/\mathsf{P}_\Theta$ and $\xi^{-} : \partial_\infty \Gamma \to \mathsf{G}/\mathsf{P}_\Theta^{-}$ is that for all distinct points $z_1, z_2 \in \partial_\infty \Gamma$, $\xi(z_1)$ is antipodal to $\xi^-(z_2)$.

For the remainder of this section, we assume that $\mathsf{G}$ is connected\footnote{This assumption, sufficient for our purposes, can be weakened; see Assumption 3.1 of \cite{DeyK:amalgam}.} (e.g., $\mathsf{G} = \mathsf{SL}_d(\mathbb{R})$) and that $\Theta$ is preserved under the opposition involution. In this case, $\mathsf{P}_{\Theta}^{-}$ is conjugate to $\mathsf{P}_\Theta$, so the notion of antipodality is well-defined for pairs of points in $\mathsf{G}/\mathsf{P}_\Theta$. In particular, any distinct pair of points in the limit set $\Lambda_\Gamma \subset \mathsf{G}/\mathsf{P}_\Theta$ of a $\Theta$-Anosov subgroup $\Gamma$ is antipodal.

The goal of this section is to prove the following theorem, which enables us to virtually amalgamate two $\Theta$-Anosov subgroups of $\mathsf{G}$ along a shared quasiconvex subgroup—a scenario that arises frequently in this paper:

\begin{theorem}[Virtual Amalgam Theorem]
\label{thm:amalgam}
    Let $\G_1,\G_2$ be $\Theta$-Anosov subgroups of $\mathsf{G}$ such that the intersection $\H = \G_1\cap\G_2$ is quasiconvex and separable in $\G_1$ and $\G_2$.
    Suppose that 
    \[
    \Lambda'_1 \coloneqq \Lambda_{\G_1}\smallsetminus\Lambda_\H \,\text{ is antipodal to } \,\Lambda'_2 \coloneqq \Lambda_{\G_2}\smallsetminus\Lambda_\H.
    \]Then there exist finite index subgroups $\G_1'<\G_1$ and $\G_2'<\G_2$, containing $\H$, such that the subgroup $\G \coloneqq \<\G'_1,\G'_2\>$ of $\mathsf{G}$ is $\Theta$-Anosov and naturally isomorphic to $\G_1' *_\H \G_2'$.
\end{theorem}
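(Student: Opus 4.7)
My plan is to construct an $\H$-equivariant interactive pair in the flag manifold $\mathsf{G}/\mathsf{P}_\Theta$ for suitable finite-index subgroups $\G'_r \leq \G_r$ containing $\H$, and then invoke the Combination Theorem for $\Theta$-Anosov subgroups from \cite{DeyK:amalgam} to deduce that $\G = \langle \G'_1, \G'_2 \rangle$ is $\Theta$-Anosov and naturally isomorphic to $\G'_1 *_\H \G'_2$. The antipodality hypothesis on $\Lambda'_1$ and $\Lambda'_2$ will be used to build ping-pong regions away from the common limit set $\Lambda_\H$, while the separability of $\H$ in each $\G_r$ will be used, after passing to finite-index subgroups, to make the ping-pong conditions hold uniformly on all of $\G'_r \smallsetminus \H$.

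Concretely, using the quasiconvexity of $\H$ (and hence its $\Theta$-Anosov property inside $\G_r$), I would first select an $\H$-invariant open ``collar'' neighborhood $V$ of $\Lambda_\H$ in $\mathsf{G}/\mathsf{P}_\Theta$ on which $\H$ acts with controlled dynamics. Then, using the antipodality of $\Lambda'_1$ and $\Lambda'_2$ together with compactness, I would find $\H$-invariant open sets $W_r \supseteq \Lambda'_r \smallsetminus V$ with $\overline{W_1}$ and $\overline{W_2}$ antipodal outside a slightly larger $\H$-invariant neighborhood of $\Lambda_\H$. The amalgamated form of the combination theorem permits $\overline{W_1}$ and $\overline{W_2}$ to overlap only through the shared region $V$, which is exactly what is forced by the fact that $\Lambda'_1$ and $\Lambda'_2$ may accumulate on $\Lambda_\H$. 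Next, I would use the contracting convergence dynamics of each $\Theta$-Anosov action to show that only finitely many $\H$-cosets $\gamma\H \subset \G_r$ can fail the required ping-pong condition, namely that $\gamma \cdot \big(\mathsf{G}/\mathsf{P}_\Theta \smallsetminus (V \cup W_r)\big) \subseteq W_r$. Separability of $\H$ in $\G_r$ then yields a finite-index subgroup $\G'_r \leq \G_r$ containing $\H$ but disjoint from every representative of these bad cosets. The resulting $\H$-equivariant triple $(V, W_1, W_2)$ is an interactive pair for $(\G'_1, \G'_2)$, and \cite{DeyK:amalgam} completes the proof.

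The main obstacle is the construction of $V$ and the $W_r$ near $\Lambda_\H$. Away from $\Lambda_\H$, the antipodality hypothesis makes the setup essentially a standard free-product ping-pong. Near $\Lambda_\H$, however, $\Lambda_{\G_1}$ and $\Lambda_{\G_2}$ genuinely overlap, so disjoint antipodal neighborhoods do not exist; one must instead design an $\H$-invariant collar on which the $\H$-dynamics inherited from $\G_1$ and from $\G_2$ agree and whose interaction with the two Anosov boundary maps is uniformly controlled. Promoting the pointwise antipodality of $\Lambda'_1$ and $\Lambda'_2$ to an antipodality of closed $\H$-invariant thickenings that is robust enough for the combination theorem is the technical heart of the argument and the step where the quasiconvexity and separability of $\H$ both enter crucially.
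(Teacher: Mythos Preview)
Your high-level strategy matches the paper's: build an $\H$-equivariant interactive pair, use separability of $\H$ to pass to finite-index subgroups so that only finitely many bad $\H$-cosets are excluded, and then invoke the Combination Theorem of \cite{DeyK:amalgam}. The separability step you describe is essentially the paper's Lemma~2.10--2.11.

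However, your proposed construction of the interactive pair does not match the hypotheses of the combination theorem you cite, and this is a genuine gap. Theorem~A of \cite{DeyK:amalgam} (stated in the paper as Theorem~2.3) requires \emph{two} compact sets $A,B\subset\mathcal F_\Theta$ with $A^\circ$ antipodal to $B^\circ$, $\Lambda_{\G_1}\cap\partial A=\Lambda_\H=\Lambda_{\G_2}\cap\partial B$, and the ping-pong rule $\gamma_1(B)\subset A^\circ$ for $\gamma_1\in\G_1'\smallsetminus\H$ (and symmetrically). There is no shared open collar: the only overlap allowed between $A$ and $B$ is along $\Lambda_\H$, which must lie in the \emph{frontier} of each. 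Your triple $(V,W_1,W_2)$ with an open $\H$-invariant collar $V$ containing $\Lambda_\H$ cannot be packaged into such a pair: if you set $A=\overline{V\cup W_1}$ and $B=\overline{V\cup W_2}$ then $A^\circ\cap B^\circ\supset V\ne\emptyset$ violates antipodality, while if you set $A=\overline{W_1}$, $B=\overline{W_2}$ then $\Lambda_\H$ need not lie in $\partial A$ and your stated ping-pong condition $\gamma\cdot\big(\mathcal F_\Theta\smallsetminus(V\cup W_r)\big)\subset W_r$ is not the required one (you need $\gamma_1$ to send the \emph{other} group's set $B$ into $A^\circ$, not to send a complement into $W_r$).

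The paper resolves the difficulty near $\Lambda_\H$ differently: it chooses compact fundamental domains $D_r\subset\Lambda'_r$ for the $\H$-action on $\Lambda'_r$ (Lemma~2.5), thickens them to $\mathcal N_\epsilon(D_r)$, and sets $A=\overline{\H\,\mathcal N_\epsilon(D_1)}$, $B=\overline{\H\,\mathcal N_\epsilon(D_2)}$. A key lemma (Lemma~2.6) shows $\overline{\H D}=\H D\sqcup\Lambda_\H$ for any compact $D$ antipodal to $\Lambda_\H$, so $\Lambda_\H$ automatically sits in $\partial A\cap\partial B$, and one checks (Lemma~2.7) that $A^\circ$ and $B^\circ$ are antipodal for small $\epsilon$. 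This construction uses no collar; the accumulation of $\Lambda'_r$ on $\Lambda_\H$ is handled by putting $\Lambda_\H$ on the boundary of the ping-pong sets rather than in a shared interior region.
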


\begin{remark}
\Cref{thm:amalgam} generalizes the main theorem of Dey–Kapovich–Leeb \cite{DeyKL} to the case of amalgamated free products, which can also be viewed as a generalization of Baker–Cooper's virtual amalgam theorem \cite[Thm. 5.3]{baker2008combination} in the special case of convex cocompact Kleinian groups.
\end{remark}

The proof of \Cref{thm:amalgam} utilizes the following Combination Theorem:

\begin{theorem}[Dey--Kapovich {\cite[Thm. A]{DeyK:amalgam}}]\label{thm:amalgam1}
 Let $\G_1,\G_2$ be $\Theta$-Anosov subgroups of $\mathsf{G}$ such that the intersection $\H = \G_1\cap\G_2$ is quasiconvex in $\G_1$ or $\G_2$. Suppose that there exist compact sets $A,B \subset \F_\Theta = \mathsf{G}/\mathsf{P}_\Theta$ with nonempty interior such that the following conditions hold:
\begin{enumerate}
    \item  The interiors $A^\o$, $B^\o$ of $A$, $B$, respectively, are antipodal to each other.
    \item $\Lambda_{\G_1}\cap \partial A = \Lambda_\H = \Lambda_{\G_2}\cap \partial B$.
    \item For all $\g_1\in\G_1\smallsetminus \H$, $\g_1(B)\subset A^\o$ and for all $\g_2\in\G_2\smallsetminus \H$, $\g_2 (A) \subset B^\o$.
    \item $\H$ leaves $A$ and $B$ invariant.
\end{enumerate}    
    Then the natural homomorphism $\G_1 *_\H \G_2 \to \mathsf{G}$ is an injective $\Theta$-Anosov representation, where $\G_1 *_\H \G_2$ denotes the abstract free product of $\G_1$ and $\G_2$ amalgamated along $\H$.
\end{theorem}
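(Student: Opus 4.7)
The plan is to verify the hypotheses of \Cref{thm:amalgam1} after replacing $\Gamma_1, \Gamma_2$ by suitable finite-index subgroups containing $\H$. Concretely, the proof splits into two tasks: (i) constructing compact sets $A, B \subset \mathsf{G}/\mathsf{P}_\Theta$ that satisfy the antipodality condition (1), the boundary--limit-set condition (2), and the $\H$-invariance condition (4) of \Cref{thm:amalgam1} with respect to the full groups $\Gamma_1, \Gamma_2$ themselves; and (ii) passing to finite-index subgroups $\Gamma_i' < \Gamma_i$ containing $\H$ so that condition (3) is also satisfied with the same sets $A, B$.

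For step (i), I would start from the given antipodality of $\Lambda'_1$ and $\Lambda'_2$. Since $\Lambda_\H$ can only be approached inside $\Lambda_{\Gamma_i}$ by sequences coming from $\Lambda'_i$, the natural strategy is to fix a small $\H$-invariant compact neighborhood $N$ of $\Lambda_\H$ in $\mathsf{G}/\mathsf{P}_\Theta$, decompose it into two compact $\H$-invariant pieces $N_1, N_2$ meeting only along $\Lambda_\H$ and with $N_1^\circ$ antipodal to $N_2^\circ$, and then set $A$ (resp.\ $B$) equal to the union of $N_1$ (resp.\ $N_2$) with a small compact neighborhood of the portion of $\Lambda'_1$ (resp.\ $\Lambda'_2$) lying outside $N$, choosing these outer neighborhoods to be antipodal to each other. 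The $\H$-invariance of the bipartition $N = N_1 \cup N_2$ and its compatibility with the Anosov dynamics of $\H$ along $\Lambda_\H$ is the technical heart of the construction; it exploits the $\Theta$-Anosov property inherited by $\H$ from its quasiconvexity inside the $\Gamma_i$.

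For step (ii), the central observation is that the defect in condition (3) is organized by $\H$-$\H$ double cosets: since $\H$ preserves both $A$ and $B$, whether $\gamma(B) \subset A^\circ$ holds depends only on the double coset $\H\gamma\H$ in $\Gamma_1$. I would then argue that only finitely many such double cosets in $\Gamma_1 \setminus \H$ can be bad. If not, pick shortest representatives $\gamma_n$ from distinct bad double cosets, so that $|\gamma_n| \to \infty$; the $\Theta$-Anosov contraction of $\Gamma_1$ forces $\gamma_n(B)$ to accumulate on $\Lambda_{\Gamma_1} \subset A^\circ \cup \Lambda_\H$, and staying out of $A^\circ$ would force the attracting poles of $\gamma_n$ to cluster on $\Lambda_\H$ --- but then quasiconvexity of $\H$, together with the minimality of $\gamma_n$ in its double coset, yields a contradiction. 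Invoking the separability of $\H$ in $\Gamma_1$ together with Minasyan's theorem that separable quasiconvex subgroups of hyperbolic groups are double-coset separable, one then finds a finite-index subgroup $\Gamma_1' < \Gamma_1$ containing $\H$ and avoiding all of the finitely many bad double cosets; a symmetric argument produces $\Gamma_2'$.

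Combining (i) and (ii), the tuple $(\Gamma_1', \Gamma_2', \H, A, B)$ satisfies all four hypotheses of \Cref{thm:amalgam1}, which yields the desired injective $\Theta$-Anosov homomorphism $\Gamma_1' \ast_\H \Gamma_2' \hookrightarrow \mathsf{G}$ whose image is $\<\Gamma_1', \Gamma_2'\>$. The main obstacle I anticipate is the construction of the $\H$-invariant bipartition $N = N_1 \cup N_2$ in step (i) --- precisely the ``interactive pair'' alluded to in the paper's outline --- which requires delicate control of the Anosov dynamics of $\H$ near $\Lambda_\H$, executed compatibly with the antipodality structure of the ambient flag manifold.
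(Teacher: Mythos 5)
Your proposal does not prove the statement in question. The statement here is \Cref{thm:amalgam1}, the Dey--Kapovich Combination Theorem: \emph{given} compact sets $A,B$ satisfying conditions (1)--(4), the natural map $\G_1 *_\H \G_2 \to \mathsf{G}$ is injective and $\Theta$-Anosov. Your first sentence announces that you will ``verify the hypotheses of \Cref{thm:amalgam1}'' and your conclusion invokes \Cref{thm:amalgam1} to finish --- so read as a proof of \Cref{thm:amalgam1} the argument is circular, and what you have actually sketched is a proof of the \emph{Virtual Amalgam Theorem} (\Cref{thm:amalgam}), a different result whose proof in the paper consists precisely of the two steps you describe. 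A genuine proof of \Cref{thm:amalgam1} would have to do the work that conditions (1)--(4) are designed to enable: a ping-pong argument showing that every nontrivial reduced word $\g_1\g_2\cdots$ alternating between $\G_1\smallsetminus\H$ and $\G_2\smallsetminus\H$ maps $B$ into $A^\o$ (or $A$ into $B^\o$), hence acts nontrivially, giving injectivity; and then an argument that the resulting subgroup is $\Theta$-Anosov --- e.g.\ by establishing uniform $\Theta$-regularity of the word-length-to-Cartan-projection growth and constructing continuous, equivariant, antipodal, dynamics-preserving limit maps on $\geo(\G_1 *_\H \G_2)$, or by a Morse local-to-global principle. None of this appears in your proposal; the paper itself does not reprove it either, but cites \cite[Thm.~A]{DeyK:amalgam}.

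For what it is worth, as a proof of \Cref{thm:amalgam} your sketch follows the paper's route closely (interactive pair, then finite-index passage via separability), with two substantive deviations. First, the paper builds $A$ and $B$ not by bipartitioning a neighborhood of $\Lambda_\H$ but as $\overline{\H\mathcal{N}_\epsilon(D_1)}$ and $\overline{\H\mathcal{N}_\epsilon(D_2)}$, where $D_i$ is a compact set with $\Lambda_i'=\H D_i$ obtained from a nearest-point projection to $\H$; this sidesteps the ``$\H$-invariant bipartition'' you flag as the technical heart, and the closedness of $\H\mathcal{N}_\epsilon(D_i)\sqcup\Lambda_\H$ is what requires the Anosov dynamics of $\H$ (\Cref{lem:1.5}). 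Second, for condition (3) the paper does not need double-coset separability (Minasyan): it observes that the elements $\g$ with small coset length $\|\g\|_1=d_1(1,\g\H)$ form finitely many left cosets of $\H$, so plain separability of $\H$ suffices to excise them (\Cref{lem:2}), and the contraction argument is run on a sequence with $\|\g_{1,j}\|_1\to\infty$ rather than on finitely many ``bad'' double cosets. Your double-coset organization is plausible but imports a stronger separability input than the hypotheses of \Cref{thm:amalgam} provide.
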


There are two steps in the proof of \Cref{thm:amalgam}. First, we construct $\H$-invariant compact subsets $A$ and $B$ in $\mathcal{F}_\Theta$ (called an {\em interactive pair} for $\G_1$ and $\G_2$) that satisfy conditions 1 and 2 of \Cref{thm:amalgam1}. Additionally, the interiors $A^\circ$ and $B^\circ$ will contain the sets $\Lambda_1'$ and $\Lambda_2'$, respectively. In the second step, we will pass to finite-index subgroups $\Gamma_1' < \Gamma_1$ and $\Gamma_2' < \Gamma_2$ such that condition 3 of \Cref{thm:amalgam1} is met. \Cref{thm:amalgam} will then follow as a consequence of \Cref{thm:amalgam1}. 

We now proceed to discuss the proof in detail; note that we can (and will) make the assumption that $\H$ is infinite index in both $\G_1$ and $\G_2$.

\subsection{Step 1: Construction of an interactive pair}\label{sec:2.1}
Throughout the rest of this section, we will work under the hypotheses of \Cref{thm:amalgam}. Observe that $\H$ leaves $\Lambda_1' = \Lambda_{\G_1}\smallsetminus \Lambda_\H$ and $\Lambda_2' = \Lambda_{\G_2}\smallsetminus \Lambda_\H$ invariant.
Since $\H$ is assumed to be infinite index in both $\G_1$ and $\G_2$, $\Lambda_1'$ and $\Lambda_2'$ are nonempty.

\begin{lemma}\label{lem:1}
    There exist nonempty compact subsets $D_1\subset \Lambda_1'$, $D_2 \subset \Lambda_2'$ such that
    \[
     \Lambda_1'= \H D_1 \quad\text{and}\quad
     \Lambda_2' = \H D_2.
    \]
\end{lemma}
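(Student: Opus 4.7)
By symmetry it suffices to construct $D_1\subset\Lambda_1'$; the construction of $D_2$ is entirely analogous. The plan is to show that the action $\H\acts\Lambda_1'$ is cocompact, from which the existence of $D_1$ will follow by a standard compactness argument.

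First I would transport the problem from the flag variety $\F_\Theta$ back to the Gromov boundary of $\G_1$. Since $\G_1$ is $\Theta$-Anosov and $\H<\G_1$ is quasiconvex, a standard consequence of the Anosov property is that $\H$ is itself $\Theta$-Anosov, its Gromov boundary $\partial_\infty\H$ embeds $\H$-equivariantly into $\partial_\infty\G_1$, and its Anosov limit map coincides with the restriction $\xi_1|_{\partial_\infty\H}$. In particular $\xi_1^{-1}(\Lambda_\H)=\partial_\infty\H$, so $\xi_1$ restricts to an $\H$-equivariant homeomorphism
\[
\partial_\infty\G_1\smallsetminus\partial_\infty\H \;\xrightarrow{\;\cong\;}\; \Lambda_1'.
\]

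Next I would invoke the classical fact that a quasiconvex subgroup $\H$ of a hyperbolic group $\G_1$ acts properly discontinuously and cocompactly on $\partial_\infty\G_1\smallsetminus\partial_\infty\H$ (the analog of convex cocompactness for Kleinian groups; see Swenson, Bowditch, or Kapovich--Leeb). Via the homeomorphism above, the action $\H\acts\Lambda_1'$ is therefore cocompact. Since $\Lambda_1'$ is an open subset of the compact Hausdorff space $\Lambda_{\G_1}\subset\F_\Theta$, it is locally compact Hausdorff. For each $\H$-orbit $\H x\in \H\backslash\Lambda_1'$, pick a compact neighborhood $K_x\subset\Lambda_1'$ of $x$; by compactness of the quotient, finitely many $K_{x_1},\dots,K_{x_n}$ suffice to cover it, and then $D_1\coloneqq K_{x_1}\cup\cdots\cup K_{x_n}$ is the desired compact subset of $\Lambda_1'$ with $\H D_1=\Lambda_1'$.

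The main (and essentially only) obstacle is justifying the cocompactness of the $\H$-action on $\partial_\infty\G_1\smallsetminus\partial_\infty\H$. If this is not simply cited, I would sketch the argument using an $\H$-invariant quasiconvex hull of an $\H$-orbit in the Cayley graph of $\G_1$ (on which $\H$ acts cocompactly by quasiconvexity) together with the standard picture that small neighborhoods of $\partial_\infty\H$ in $\partial_\infty\G_1$ are shadowed by tubular neighborhoods of this hull.
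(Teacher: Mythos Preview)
Your proposal is correct and follows essentially the same route as the paper: both transport the problem to $\partial_\infty\G_1\smallsetminus\partial_\infty\H$ via the Anosov limit map and then use quasiconvexity of $\H$ to produce a compact set whose $\H$-orbit fills this complement. The only difference is cosmetic: where you cite cocompactness of the $\H$-action as a known fact (or sketch it via quasiconvex hulls and shadows), the paper carries out this step explicitly by taking $D_1'=\overline{\pi^{-1}(1)}\cap\partial_\infty\G_1$ for a nearest-point projection $\pi:\G_1\to\H$, checking directly that $D_1'$ is compact, disjoint from $\partial_\infty\H$, and satisfies $\H D_1'=\partial_\infty\G_1\smallsetminus\partial_\infty\H$.
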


\begin{proof}
    We will only prove the existence of $ D_1 $, as the existence of $ D_2 $ follows similarly.

We will use the fact that $\G_1$ is hyperbolic and $\H$ is a quasiconvex subgroup of $\Gamma_1$. Fix a word metric on $ \Gamma_1 $ and consider a nearest point projection map $ \pi: \Gamma_1 \to \H $. Define $ D_1' \coloneqq \overline{\pi^{-1}(\{1\})} \cap \geo \Gamma_1 $, where the closure is taken in the visual compactification $\Gamma_1 \sqcup \geo \Gamma_1$ of $\Gamma_1$. Since $ D_1' $ is closed in $ \geo \Gamma_1 $, it is compact. Moreover, since $ \H $ is quasiconvex in $ \Gamma_1 $, we have that $ D_1' \cap \geo \H = \emptyset $ (see Corollary 2.7 of \cite{DeyK:amalgam}).

Now, if $ \zeta \in \geo \Gamma_1 \smallsetminus \geo \H $, consider a sequence $ \gamma_n \in \Gamma_1 $ converging to $ \zeta $. Since $ \zeta \notin \geo \H $, the set $ S = \{ \pi(\gamma_n) : n \in \mathbb{N} \} $ is bounded in $ \Gamma_1 $, and hence it is finite (see Corollary 2.7 of \cite{DeyK:amalgam}). Thus, $ \zeta \in S D_1' $. It follows that $ \geo \Gamma_1 \smallsetminus \geo \H \subset \Gamma D_1' $. Moreover, since $\geo \Gamma_1 \smallsetminus \geo \H$ is $\H$-invariant and $D_1'\subset \geo \Gamma_1 \smallsetminus \geo \H$, we also have $ \geo \Gamma_1 \smallsetminus \geo \H \supset \Gamma D_1' $. Thus, $ \geo \Gamma_1 \smallsetminus \geo \H = \Gamma D_1' $.

Let $ \xi^1: \geo \Gamma \to \Lambda_{\Gamma_1} \subset \mathcal{F}_\Theta $ denote the $ \Gamma $-equivariant limit map. Define $ D_1 = \xi^1(D_1') $. Clearly, $ D_1 $ is a compact subset of $ \Lambda_1' = \xi^1(\geo \Gamma_1 \smallsetminus \geo \H) $, and $ \Lambda_1' = \H D_1 $.
\end{proof}

We fix a distance function $d$ on $\F_\Theta$ compatible with the manifold topology. 

\begin{lemma}\label{lem:1.5}
    Let $D\subset \F_\Theta$ be a nonempty compact subset antipodal to the limit set of $\Lambda_\H$. Then 
    \begin{equation}\label{eqn0:lem:1.5}
        \overline{\H D} = \H D \sqcup \Lambda_\H.
    \end{equation}
\end{lemma}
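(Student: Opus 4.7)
The plan is to combine the convergence dynamics of the $\Theta$-Anosov subgroup $\H$ acting on $\F_\Theta$ with the antipodality hypothesis on $D$. The main technical input will be the standard fact that, after passing to a subsequence, any $h_n \to \infty$ in $\H$ admits attracting and repelling limit flags $\tau^+,\tau^- \in \Lambda_\H$ such that $h_n F \to \tau^+$ uniformly on compact subsets of $\F_\Theta$ antipodal to $\tau^-$, and conversely every $\eta \in \Lambda_\H$ arises as such an attracting limit for a suitable sequence in $\H$.

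First, I will verify that $\H D$ and $\Lambda_\H$ are disjoint, which makes the decomposition in \eqref{eqn0:lem:1.5} meaningful. If $h d \in \Lambda_\H$ for some $h \in \H$ and $d \in D$, then $d = h^{-1}(h d) \in \Lambda_\H$ by $\H$-invariance of $\Lambda_\H$, and antipodality of $D$ to $\Lambda_\H$ would then force $d$ to be antipodal to itself; this is impossible, since no flag in $\F_\Theta$ is self-antipodal. For the inclusion $\Lambda_\H \subset \overline{\H D}$, I fix $\eta \in \Lambda_\H$ and choose $h_n \in \H$ with attracting limit flag $\eta$ and some repelling limit flag $\tau^- \in \Lambda_\H$; since $D$ is compact and antipodal to $\tau^-$, the uniform convergence on antipodal compacta yields $h_n D \to \{\eta\}$, hence $\eta \in \overline{\H D}$.

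The remaining inclusion $\overline{\H D} \subset \H D \cup \Lambda_\H$ I will prove by case analysis on a convergent sequence $h_n d_n \to x$ with $h_n \in \H$ and $d_n \in D$. If $\{h_n\}$ is bounded, then after passing to a subsequence it is constant, and compactness of $D$ gives $x \in \H D$. Otherwise $h_n \to \infty$; passing to a further subsequence, the convergence dynamics produces $\tau^\pm \in \Lambda_\H$ with $h_n F \to \tau^+$ uniformly on compacta antipodal to $\tau^-$, so since $D$ is such a compactum, $x = \lim h_n d_n = \tau^+ \in \Lambda_\H$. The only real obstacle in this outline is invoking the convergence dynamics in the correct form for $\Theta$-Anosov subgroups, namely that both $\tau^\pm$ lie in $\Lambda_\H$ and that the convergence is uniform on antipodal compacta (rather than merely pointwise); once that is in hand, each of the three containments is a one-line consequence.
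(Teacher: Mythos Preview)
Your proof is correct and relies on the same core ingredient as the paper's---the convergence dynamics of $\Theta$-Anosov (equivalently $\Theta$-regular) subgroups on $\F_\Theta$, as in \cite[Sect.~4]{MR3736790}---but the organization differs in a mildly interesting way. The paper first establishes that $\H D \sqcup \Lambda_\H$ is closed by showing $\diam(\eta_i D)\to 0$ and $d(\Lambda_\H,\eta_i D)\to 0$, and then argues $\Lambda_\H\subset\overline{\H D}$ separately via an elementary/non-elementary case split (using cyclic dynamics in the former case and minimality of $\H\curvearrowright\Lambda_\H$ in the latter). Your version is more streamlined: you prove the two inclusions directly, and by invoking the conical limit point property of Anosov subgroups (every $\eta\in\Lambda_\H$ is the attracting flag of some divergent sequence in $\H$) you bypass the case split entirely. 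The trade-off is that you are implicitly using a slightly stronger input than the paper states explicitly---the paper only cites uniform convergence on antipodal compacta for \emph{some} subsequence with $z_\pm\in\Lambda_\H$, whereas you also need that every limit point arises as an attractor---but this is standard for Anosov subgroups and no harder to justify.
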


\begin{proof}
    If $\H$ is finite, then $\Lambda_\H = \emptyset$, and the claim follows easily. So, we assume that $\H$ is infinite.
    Enumerate $\H = \{\eta_1,\eta_2,\dots\}$.
    Since $\Theta$-Anosov subgroups are $\Theta$-regular (a.k.a. $\Theta$-divergent) in the sense of \cite[Sect. 4]{MR3736790}, we have that
    \begin{equation}\label{eqn2:lem:1.5}
    \diam(\eta_i D) \to 0
    \quad\text{as } i\to\infty.
    \end{equation}
    (see, e.g., \cite[Lem. 3.9]{DeyK:amalgam}).
    
    Moreover, 
    \begin{equation}\label{eqn1:lem:1.5}
     d(\Lambda_\H,\eta_i D) \to 0 \quad\text{as }i\to\infty   
    \end{equation}
    Indeed, if this were not true, then by the preceding paragraph, after extraction, there exists $x\in\F\smallsetminus\Lambda_\H$ such that
    \begin{equation}\label{eqn:lem:1.5}
        d(\eta_i D, x) \to 0 \quad\text{ as }i\to\infty.
    \end{equation}
    However, since $\H$ is $\Theta$-regular, after further extraction, there exists $z_\pm\in \Lambda_\H$ such that $\eta_i\vert_{C(z_-)} : C(z_-) \to \F_\Theta$, where $C(z_-)$ is the set of all points in $\F_\Theta$ antipodal to $z_-$, converges uniformly on compact sets to the constant map $C(z_-) \to \{z_+\}$, see \cite[Sect. 4]{MR3736790}. Since $D\subset C(z_-)$, \eqref{eqn:lem:1.5} implies that $z_+ = x$, contradicting our assumption that $x\not\in\Lambda_\H$.

    Therefore, by \eqref{eqn2:lem:1.5} and \eqref{eqn1:lem:1.5}, we have that $\H D \sqcup \Lambda_\H$ is closed. 
    
    If $\H$ is elementary, then $\H$ contains a finite index infinite cyclic subgroup $\< \eta \>$. In this case, $\Lambda_\H = \{ \eta^+,\eta^-\}$ and, so, $\eta^{\pm n} D \to \eta^\pm$ as $n\to \infty$. This verifies \eqref{eqn0:lem:1.5}.

    If $\H$ is non-elementary, then $\H$ acts minimally on $\Lambda_\H$. Since $\overline{\H D}$ is $H$-invariant and $\overline{\H D} \cap \Lambda_\H \ne\emptyset$ (by \eqref{eqn1:lem:1.5}), by minimality of the action $\H\acts \Lambda_\H$, it follows that $\Lambda_\H \subset \overline{\H D}$, which implies \eqref{eqn0:lem:1.5}.
\end{proof}

Let $D_1\subset \Lambda_1'$ and $D_2 \subset \Lambda_2'$ be as in \Cref{lem:1}. Note that $D_1$, $D_2$, and $\Lambda_\H$ are pairwise antipodal compact subsets of $\F_\Theta$.
Since antipodality is an open condition, there exists $\epsilon_0>0$ such that $\mathcal{N}_\epsilon(D_1)$, $\mathcal{N}_\epsilon(D_2)$, and $\mathcal{N}_\epsilon(\Lambda_\H)$  are pairwise antipodal for all $\epsilon\in (0,\epsilon_0]$.
Here, for $\epsilon>0 $ and nonempty subsets $D\subset \F_\Theta$, $\mathcal{N}_\epsilon(D)$ denotes the closed $\epsilon$-neighborhood of $D$ in $\F_\Theta$.

For $\epsilon\in (0,\epsilon_0]$, let
\begin{align}
\begin{split}\label{eqn:1}
    A_\epsilon &\coloneqq \overline{\H \mathcal{N}_\epsilon(D_1)} = \H \mathcal{N}_\epsilon(D_1) \sqcup \Lambda_\H,\\
    B_\epsilon &\coloneqq \overline{\H \mathcal{N}_\epsilon(D_2)} =\H \mathcal{N}_\epsilon(D_2) \sqcup \Lambda_\H.
\end{split}
\end{align}
Note that \Cref{lem:1.5} justifies the second equality in each of the above equations.

\begin{lemma}\label{lem:2.5}
    There exists $\epsilon_1<\epsilon_0$ such that for all $\epsilon\in(0,\epsilon_1]$, we have:

\begin{enumerate}
    \item $\Lambda_\H \subset \partial A_\epsilon$ and $\Lambda_\H \subset \partial B_\epsilon$, where $\partial A_\epsilon$, $\partial B_\epsilon$ are the frontiers of $A_\epsilon$, $B_\epsilon$ in $\F_\Theta$, respectively.
    \item $A_\epsilon\smallsetminus\Lambda_\H $ is antipodal to $B_\epsilon \smallsetminus \Lambda_\H$.
    \item For all $\epsilon\in(0,\epsilon_1]$, $A^\o_\epsilon$ is antipodal to $B^\o_\epsilon$.
\end{enumerate}
\end{lemma}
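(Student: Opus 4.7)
The plan is to establish (2) first (the most direct of the three), then (1), and deduce (3) immediately from the two. The common dynamical input is the observation that $\H\mathcal{N}_\epsilon(D_1)$ (and likewise $\H\mathcal{N}_\epsilon(D_2)$) lies inside the open set $C(\Lambda_\H)\subset\F_\Theta$ of points antipodal to every element of $\Lambda_\H$: this follows from $\mathcal{N}_\epsilon(D_1)\subset C(\Lambda_\H)$ for $\epsilon\le\epsilon_0$, by the choice of $\epsilon_0$, together with $\H$-invariance of $\Lambda_\H$ and $\mathsf{G}$-equivariance of the antipodal relation.

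For (2), the identities $A_\epsilon\smallsetminus\Lambda_\H=\H\mathcal{N}_\epsilon(D_1)$ and $B_\epsilon\smallsetminus\Lambda_\H=\H\mathcal{N}_\epsilon(D_2)$ from \eqref{eqn:1} together with \Cref{lem:1.5} reduce the claim, via $\mathsf{G}$-equivariance of antipodality, to showing $\mathcal{N}_\epsilon(D_1)$ is antipodal to $h\mathcal{N}_\epsilon(D_2)$ for every $h\in\H$. Pointwise antipodality of $D_1$ and $hD_2$, coming from the hypothesis that $\Lambda_1'$ is antipodal to $\Lambda_2'=\H D_2$, supplies via openness some $\epsilon_h>0$ for each individual $h$. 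Uniformity in $h$ follows from a dichotomy: for $h$ in a bounded---hence finite, by discreteness---subset of $\H$, one intersects finitely many $\epsilon_h$; for $h\to\infty$ in $\H$, $\Theta$-regularity forces $h\mathcal{N}_\epsilon(D_2)\subset \mathcal{N}_{\epsilon_0}(\Lambda_\H)$ eventually, and the antipodality of $\mathcal{N}_{\epsilon_0}(D_1)$ and $\mathcal{N}_{\epsilon_0}(\Lambda_\H)$ handles this tail uniformly.

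For (1), I show $\Lambda_\H\subset\partial A_\epsilon$; the $B_\epsilon$ case is identical. Since $\Lambda_\H\cap C(\Lambda_\H)=\emptyset$ (a point is never antipodal to itself), the observation above yields $A_\epsilon\cap(\F_\Theta\smallsetminus C(\Lambda_\H))=\Lambda_\H$. If some $\eta\in\Lambda_\H$ were to lie in $A_\epsilon^\o$, an open neighborhood $V\subset A_\epsilon$ of $\eta$ would force $V\cap(\F_\Theta\smallsetminus C(\Lambda_\H))\subset\Lambda_\H$, and I then derive a contradiction by producing $p\in V$ which fails to be antipodal to some element of $\Lambda_\H$ yet does not itself lie in $\Lambda_\H$. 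Such $p$ is found inside the Schubert-type proper subvariety $C^-(\eta)\coloneqq\F_\Theta\smallsetminus C(\eta)$ near $\eta$: $C^-(\eta)$ contains $\eta$ while $\Lambda_\H\subset\Lambda_{\G_1}$ has empty interior in $\F_\Theta$ and is of strictly smaller topological dimension than $C^-(\eta)$ in the cases of interest, so $C^-(\eta)$ cannot be entirely absorbed into $\Lambda_\H$ on any neighborhood of $\eta$. Part (3) then follows immediately: by (1), $A_\epsilon^\o\subset A_\epsilon\smallsetminus\Lambda_\H$ and $B_\epsilon^\o\subset B_\epsilon\smallsetminus\Lambda_\H$, so (2) gives antipodality of the two interiors.

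I expect the main obstacle to be the closing step of (1), namely the verification that the germ of $C^-(\eta)$ at $\eta$ is not entirely swallowed by $\Lambda_\H$: this is a genuinely nontrivial transversality statement that must carefully combine the algebraic structure of $C^-(\eta)$ inside $\F_\Theta$ with the topological thinness of the Anosov limit set, and low-dimensional exceptions such as $\F_\Theta=\mathbb{R}\mathsf{P}^1$ (where $C^-(\eta)=\{\eta\}$) will require a separate direct analysis of the self-similar structure of the clusters of $h\mathcal{N}_\epsilon(D_1)$ accumulating at $\eta$.
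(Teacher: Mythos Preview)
Your arguments for (2) and (3) are essentially the paper's: the same finite-exceptions-plus-tail dichotomy for (2), and the same deduction of (3) from (1) and (2).

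For (1), however, you take an unnecessarily hard route, and the gap you flag is genuine. Your strategy is \emph{intrinsic}: you try to produce, near each $\eta\in\Lambda_\H$, a point of $\F_\Theta\smallsetminus A_\epsilon$ by intersecting a neighborhood of $\eta$ with the Schubert-type variety $C^-(\eta)$ and arguing it cannot be swallowed by $\Lambda_\H$. As you note, this fails outright when $\F_\Theta=\mathbb{R}\mathsf{P}^1$ (there $C^-(\eta)=\{\eta\}$, and more generally $\F_\Theta\smallsetminus C(\Lambda_\H)=\Lambda_\H$, so your method cannot even in principle supply a witness outside $\Lambda_\H$), and in higher rank the required transversality statement between $C^-(\eta)$ and the limit set is not obvious.

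The paper's argument for (1) sidesteps all of this by using an \emph{extrinsic} witness that is already present in the hypotheses of the Virtual Amalgam Theorem: the set $\Lambda_2'=\Lambda_{\G_2}\smallsetminus\Lambda_\H$. Since $D_1\subset\Lambda_1'$ is antipodal to all of $\Lambda_{\G_2}$ (to $\Lambda_2'$ by hypothesis, and to $\Lambda_\H$ because both lie in the antipodal set $\Lambda_{\G_1}$), for small $\epsilon$ so is $\mathcal{N}_\epsilon(D_1)$; by $\H$-invariance of $\Lambda_2'$ the whole orbit $\H\mathcal{N}_\epsilon(D_1)=A_\epsilon\smallsetminus\Lambda_\H$ is antipodal to, hence disjoint from, $\Lambda_2'$. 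Thus $A_\epsilon\cap\Lambda_2'=\emptyset$. But $\H$ has infinite index in $\G_2$, so $\Lambda_\H\subset\overline{\Lambda_2'}$, and every neighborhood of a point of $\Lambda_\H$ meets $\Lambda_2'\subset\F_\Theta\smallsetminus A_\epsilon$. This gives $\Lambda_\H\subset\partial A_\epsilon$ with no dimension or transversality analysis whatsoever, and works uniformly in all cases including $\mathbb{R}\mathsf{P}^1$. The moral: you have \emph{two} Anosov groups in play, and the second one's limit set is exactly the external structure needed to certify that $\Lambda_\H$ sits on the frontier of $A_\epsilon$.
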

\begin{proof}
    (1) Since $D_1$ is antipodal to $\Lambda_{\G_2}$, for all small enough $\epsilon\in (0,\epsilon_0]$, $\mathcal{N}_\epsilon(D_1)$ is antipodal to $\Lambda_{\G_2}$. Since $\Lambda_{\G_2}\smallsetminus \Lambda_\H$ is $\H$-invariant, it follows that $\H \mathcal{N}_\epsilon(D_1)$ is also antipodal to (and, in particular, disjoint from)  $\Lambda_{\G_2}\smallsetminus \Lambda_\H$. However, $\overline{\Lambda_{\G_2}\smallsetminus \Lambda_\H} \supset \Lambda_\H$. Thus, $\Lambda_\H \subset \partial(\overline{\H \mathcal{N}_\epsilon(D_1)}) = \partial A_\epsilon$. The other inclusion $\Lambda_\H \subset \partial B_\epsilon$ follows similarly.

    \smallskip\noindent
    (2) Observe that it is enough to prove that for all small enough $\epsilon\in (0,\epsilon_0]$ and all $\eta\in\H$, $\eta \mathcal{N}_\epsilon(D_1)$ is antipodal to $\mathcal{N}_\epsilon(D_2)$.
    Consider the family of compact sets $\{\eta \mathcal{N}_\epsilon(D_1) :\ \eta\in\H\}$. Since $\H$ is $\Theta$-Anosov and $\Lambda_\H$ is antipodal to $\mathcal{N}_\epsilon(D_1)$, it follows from \eqref{eqn2:lem:1.5} and \eqref{eqn1:lem:1.5} that all but finitely many sets in this family lie  in the $\epsilon_0$-neighborhood of $\Lambda_{\H}$ and hence they are antipodal to $\mathcal{N}_\epsilon(D_2)$ for $\epsilon \le \epsilon_0$; let $\eta_1 \mathcal{N}_\epsilon(D_1),\dots, \eta_n \mathcal{N}_\epsilon(D_1)$ be the (possibly empty set of) exceptions. However, as $\epsilon\to 0$, $\bigcup_{i=1}^n \eta_i \mathcal{N}_\epsilon(D_1)$ converges (in the Hausdorff distance) to $\bigcup_{i=1}^n \eta_i D_1$, which is a compact set antipodal to $D_2$. Since antipodality is an open condition, there exists a positive $\epsilon_1<\epsilon_0$ such that for all $\epsilon \in (0,\epsilon_0]$, the sets $\eta_1 \mathcal{N}_\epsilon(D_1),\dots, \eta_n \mathcal{N}_\epsilon(D_1)$ are antipodal to $\mathcal{N}_\epsilon(D_2)$, which proves the claim.

    \smallskip\noindent
    (3) By part (1), $A_\epsilon^\o \subset A_\epsilon\smallsetminus\Lambda_\H$ and $B_\epsilon^\o \subset B_\epsilon\smallsetminus\Lambda_\H$. Thus, part (3) follows from part (1).
\end{proof}

We will fix $\epsilon \in(0,\epsilon_1]$ and define
\begin{equation}\label{eqn:3}
  A \coloneqq A_\epsilon 
 \quad\text{and }
 B \coloneqq B_\epsilon.   
\end{equation}
By construction, we have the following:

\begin{lemma}\label{lem:final1}
    Let $\G_1' < \G_1$ and $\G_2' < \G_2$ be any finite index subgroups, each containing $\H$.
    Then $\G_1'$, $\G_2'$, $\H = \G_1'\cap \G_2'$, $A$, and $B$ satisfy conditions (1), (2), and (4) in the hypothesis of \Cref{thm:amalgam1}.
    Moreover, $\Lambda_1' = \Lambda_{\G_1'}\smallsetminus \Lambda_\H$ is antipodal to $B$ and $\Lambda_2' = \Lambda_{\G_2'}\smallsetminus \Lambda_\H$ is antipodal to $A$.
\end{lemma}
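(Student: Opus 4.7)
The plan is to view \Cref{lem:final1} as a bookkeeping consolidation of the constructions in \Cref{sec:2.1}, together with \Cref{lem:1}, \Cref{lem:1.5}, and \Cref{lem:2.5}. The first observation is that finite-index subgroups have the same limit sets: since $\G_i' < \G_i$ is of finite index, $\Lambda_{\G_i'} = \Lambda_{\G_i}$, so the conditions to verify for $\G_1', \G_2'$ reduce verbatim to the corresponding conditions for $\G_1, \G_2$. Thus, I need only verify conditions (1), (2), (4) of \Cref{thm:amalgam1} for the original groups, together with the two antipodality statements.

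Conditions (1) and (4) come for free. Condition (4), that $\H$ leaves $A$ and $B$ invariant, is immediate from the defining formulas \eqref{eqn:1}--\eqref{eqn:3}: both $A = \overline{\H\mathcal{N}_\epsilon(D_1)}$ and $B = \overline{\H\mathcal{N}_\epsilon(D_2)}$ are manifestly $\H$-saturated closed sets. Condition (1), antipodality of $A^\o$ and $B^\o$, is exactly the content of \Cref{lem:2.5}(3).

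For condition (2), by symmetry it suffices to prove $\Lambda_{\G_1}\cap \partial A = \Lambda_\H$. The inclusion $\Lambda_\H \subset \Lambda_{\G_1}\cap\partial A$ is immediate from $\H\leq \G_1$ combined with \Cref{lem:2.5}(1). For the reverse inclusion, suppose $\zeta \in \Lambda_{\G_1}\cap\partial A$ and, for contradiction, $\zeta\notin\Lambda_\H$. Then $\zeta\in \Lambda_1' = \H D_1$ by \Cref{lem:1}, so $\zeta = \eta d$ with $\eta\in\H$ and $d\in D_1$. Since $\mathcal{N}_\epsilon(D_1)$ is the closed $\epsilon$-neighborhood of $D_1$, we have $D_1 \subset \mathcal{N}_\epsilon(D_1)^\o$, and because $\eta$ acts as a homeomorphism of $\F_\Theta$ one gets $\zeta \in \eta\mathcal{N}_\epsilon(D_1)^\o \subset A^\o$, contradicting $\zeta\in\partial A$.

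For the antipodality claims, note that $\Lambda_1' = \H D_1 \subset \H\mathcal{N}_\epsilon(D_1) = A\smallsetminus\Lambda_\H$ by \eqref{eqn:1} and \eqref{eqn:3}. Hence \Cref{lem:2.5}(2) already gives antipodality between $\Lambda_1'$ and $B\smallsetminus\Lambda_\H$. Since $B = (B\smallsetminus\Lambda_\H)\sqcup\Lambda_\H$, it remains only to check antipodality of $\Lambda_1'$ with $\Lambda_\H$; this holds because $\Lambda_1'\sqcup \Lambda_\H \subset \Lambda_{\G_1}$, and any two distinct points in the limit set of the $\Theta$-Anosov group $\G_1$ are antipodal. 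The antipodality of $\Lambda_2'$ with $A$ follows by the same argument. There is no real obstacle in the proof: the substantive work (quasiconvex decomposition of the limit set, convergence-to-$\Lambda_\H$ for translates of compacta, and preservation of antipodality under small neighborhoods) was carried out in the preceding lemmas, and \Cref{lem:final1} simply packages those outputs in the form demanded by \Cref{thm:amalgam1}.
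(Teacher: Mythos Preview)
Your proof is correct and follows essentially the same route as the paper: reduce to $\G_1,\G_2$ via $\Lambda_{\G_i'}=\Lambda_{\G_i}$, read off conditions (1) and (4) from \Cref{lem:2.5}(3) and the defining formula \eqref{eqn:1}, obtain condition (2) by showing $\Lambda_1'\subset A^\o$ (you do this more explicitly via $\Lambda_1'=\H D_1\subset \H\mathcal N_\epsilon(D_1)^\o$), and derive the ``moreover'' antipodality by splitting $B=(B\smallsetminus\Lambda_\H)\sqcup\Lambda_\H$. The only cosmetic difference is that for the last step you invoke \Cref{lem:2.5}(2) directly, whereas the paper phrases it through \Cref{lem:2.5}(1) together with $\Lambda_i'\subset A^\o,B^\o$; your version is arguably cleaner.
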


\begin{proof}
    Note that since $\G_i'<\G_i$, $i=1,2$, are assumed to be finite index subgroups, $\Lambda_{\G_i} = \Lambda_{\G_i'}$. Thus, (1) follows from \Cref{lem:2.5} (3) and (4) follows from \eqref{eqn:1}.
    For (2), note that $\Lambda_1' = \Lambda_{\G_1'}\smallsetminus\Lambda_\H \subset A^\o$, while by \Cref{lem:2.5} (1), $\Lambda_\H \subset \partial A$. Thus, $\Lambda_{\G_1'} \cap \partial A = \Lambda_\H$.
    The other conclusion, $\Lambda_{\G_2'} \cap \partial B = \Lambda_\H$, follows similarly.

    Since $\Lambda_1'\subset A^\o$ and $\Lambda_2'\subset B^\o$, the ``moreover'' part follows from \Cref{lem:2.5} (1) and the fact that $\Lambda_\H$ is antipodal to both $\Lambda_1'$ and $\Lambda_2'$.
\end{proof}

Thus, to finish the proof of \Cref{thm:amalgam}, what remains is to show that there exists finite index subgroups $\G_1' < \G_1$ and $\G_2' < \G_2$, both containing $\H$, such that 
\begin{equation}
    \g_1(B) \subset A^\o \text{ and }  \g_2 (A) \subset B^\o
    \quad
    \text{for all } \g_1\in\G_1'\smallsetminus\H, \, \g_2\in\G_2'\smallsetminus\H
\end{equation}
(cf. condition (3) in the hypothesis of \Cref{thm:amalgam1}).
We demonstrate the existence of such $\G_1'$ and $\G_2'$ in the next subsection.

\subsection{Step 2: Passing to finite index subgroups}

Recall that we are working under the hypothesis of \Cref{thm:amalgam}. Fix left invariant word metrics $d_1$, $d_2$ on $\G_1$, $\G_2$, respectively. 
For $i=1,2$ and $\g_i\in \G_i$, let $\| \g_i\|_{i}$ denote the minimal word length of elements in the left coset $\g_i \H \subset \G_i$.
For example, for $\g_1\in\G_1$, $\|\g_1\|_{1} = 0$ if and only if $\g_1\in\H$.

\begin{lemma}\label{lem:2}
    For $N\in\N$ and $i=1,2$, there exists a finite index subgroup $\G_{i,N} < \G_i$ containing $\H$ such that $\|\g_i\|_{i}\ge N$ for all $\g_i\in \G_{i,N}\smallsetminus\H$.
\end{lemma}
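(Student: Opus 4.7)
The lemma is essentially a direct translation of the separability of $\H$ in $\G_i$, once the norm condition is repackaged as a finite-data statement about cosets. My plan is to reduce the desired conclusion to a finite collection of separability instances, and then intersect.

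\textbf{Step 1: Reformulation.} Let $B_i(R) \coloneqq \{\g\in\G_i : d_i(1,\g)\le R\}$ denote the ball of radius $R$ in the word metric. By the definition of $\|\cdot\|_i$, one has $\|\g\|_i < N$ if and only if there exists $h\in\H$ with $\g h\in B_i(N-1)$, i.e., $\g \in B_i(N-1)\cdot\H$. Hence the requirement ``$\|\g\|_i\ge N$ for every $\g\in\G_{i,N}\smallsetminus\H$'' is equivalent to
\[
\G_{i,N}\cap\bigl(B_i(N-1)\cdot\H\bigr) \subset \H.
\]
Since we will insist that $\H\subset\G_{i,N}$, any $\g$ in this intersection, written as $\g = bh$ with $b\in B_i(N-1)$ and $h\in\H$, satisfies $b = \g h^{-1}\in\G_{i,N}$. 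Thus the condition reduces to the finite statement
\[
\G_{i,N}\cap B_i(N-1) \subset \H.
\]

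\textbf{Step 2: Applying separability.} The set $S \coloneqq B_i(N-1)\smallsetminus\H$ is finite, since $\G_i$ is finitely generated (being $\Theta$-Anosov, hence word-hyperbolic). By the hypothesis that $\H$ is separable in $\G_i$, for each $b\in S$ there exists a finite-index subgroup $K_b\le\G_i$ with $\H\subset K_b$ and $b\notin K_b$. Define
\[
\G_{i,N} \coloneqq \bigcap_{b\in S} K_b.
\]
As a finite intersection of finite-index subgroups, $\G_{i,N}$ has finite index in $\G_i$; it contains $\H$; and by construction $\G_{i,N}\cap S = \emptyset$, so $\G_{i,N}\cap B_i(N-1)\subset\H$. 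Combined with Step 1, this yields the conclusion.

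\textbf{Obstacle.} There is no real obstacle: the content of the lemma is entirely the separability of $\H$ in $\G_i$ (which is a hypothesis of \Cref{thm:amalgam}), together with the fact that balls in a finitely generated group are finite. The only mild point is the reformulation in Step 1, which relies crucially on the assumption $\H\subset\G_{i,N}$ to pass from cosets $B_i(N-1)\cdot\H$ to the finite set $B_i(N-1)$.
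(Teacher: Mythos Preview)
Your proof is correct and follows essentially the same approach as the paper: both reduce the norm condition to excluding a finite set of elements from the finite-index subgroup, then invoke separability of $\H$ for each such element and intersect. The only cosmetic difference is that the paper picks one representative per offending coset $\g_{i,j}\H$ with $0<\|\g_{i,j}\|_i<N$, whereas you exclude every element of $B_i(N-1)\smallsetminus\H$; your reformulation in Step~1 showing the equivalence with $\G_{i,N}\cap B_i(N-1)\subset\H$ is a clean way to see why this suffices.
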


\begin{proof}
    Note that there are only finitely many nontrivial left cosets $\g_{i,1}\H$,\dots,$\g_{i,n_i}\H$ of $\H$ in $\G_i$ such that for all $\g_i\in\G_1\smallsetminus\H$, $0<\|\g_i\|_{i} < N$ if and only if $\g_i\in\bigcup_{j=1}^{n_i} \g_{i,j}\H$. Since $\H$ is separable in $\G_i$, there exists a finite index subgroup $\G'_{i,N} < \G_1$ containing $\H$ such that $\g_{i,j} \not\in\G_{i,N}$ for $j=1,\dots,n_i$. Thus, $\|\g_i\|_{i} \ge N$ for all $\g_i\in \G_i\smallsetminus\H$. 
\end{proof}

Let $A,B \subset \F_\Theta$ be the compact subsets defined in \eqref{eqn:3}.

\begin{lemma}\label{lem:3}
    For all large enough $N\in\N$, $\g_1 B \subset A^\o$ for all $\g_1\in \G_{1,N}\smallsetminus\H$, and $\g_2 A \subset B^\o$ for all $\g_2\in \G_{2,N}\smallsetminus\H$.
\end{lemma}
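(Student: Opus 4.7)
The plan is to argue by contradiction, combining the convergence dynamics of $\G_1$ on $\F_\Theta$ with quasiconvexity of $\H$ to locate the attractor and repeller flags of any exceptional sequence inside $\Lambda_1'\subset A^\o$. Suppose the first assertion fails: there exist $n_k\to\infty$, $\g_1^{(k)}\in \G_{1,n_k}\smallsetminus \H$, and $x_k\in B$ with $\g_1^{(k)} x_k\notin A^\o$. Since $\G_{1,n_k}$ is a subgroup, $(\g_1^{(k)})^{-1}$ also lies in $\G_{1,n_k}\smallsetminus\H$, and hence \Cref{lem:2} yields
\[
n_k\le \|\g_1^{(k)}\|_1 \le d_1(1,\g_1^{(k)}) \quad\text{and}\quad n_k\le \|(\g_1^{(k)})^{-1}\|_1 = d_1(\g_1^{(k)},\H).
\]
Thus both $d_1(1,\g_1^{(k)})$ and $d_1(\g_1^{(k)},\H)$ tend to infinity.

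Next, I would invoke the $\Theta$-divergence of the $\Theta$-Anosov subgroup $\G_1$: after passing to a subsequence, there exist antipodal flags $\zeta_\pm\in\Lambda_{\G_1}$ such that $\g_1^{(k)}|_{C(\zeta_-)}\to \zeta_+$ uniformly on compact subsets of $C(\zeta_-)$. These flags are the images under the Anosov boundary map $\xi^1\colon\geo\G_1\to\Lambda_{\G_1}$ of Gromov boundary points $\zeta_\pm^G\in\geo\G_1$, where $\g_1^{(k)}\to\zeta_+^G$ and $(\g_1^{(k)})^{-1}\to\zeta_-^G$ in the visual compactification of $\G_1$. The crucial claim---and the main technical point---is that $\zeta_\pm^G\notin\geo\H$, equivalently $\zeta_\pm\in\Lambda_1'\subset A^\o$. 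If $\zeta_+^G$ were in $\geo\H$, then by quasiconvexity of $\H$ in the hyperbolic group $\G_1$ every geodesic from $1$ to $\zeta_+^G$ would stay within a uniformly bounded neighborhood of $\H$; the geodesic $[1,\g_1^{(k)}]$ fellow-travels such a ray over a length comparable to $d_1(1,\g_1^{(k)})$, forcing $\g_1^{(k)}$ itself to remain within bounded distance of $\H$---contradicting $d_1(\g_1^{(k)},\H)\to\infty$. Applying the same argument to the inverse sequence rules out $\zeta_-^G\in\geo\H$.

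Finally, I would note that $\Lambda_1'$ is antipodal to all of $B$: by \Cref{lem:2.5}(2), $\Lambda_1'\subset A\smallsetminus\Lambda_\H$ is antipodal to $B\smallsetminus\Lambda_\H$, while the Anosov property of $\G_1$ guarantees that distinct points of $\Lambda_{\G_1}$---in particular points of $\Lambda_1'$ versus points of $\Lambda_\H\subset B$---are antipodal. Consequently, $\zeta_-\in\Lambda_1'$ implies $B\subset C(\zeta_-)$, and by the uniform convergence from the preceding paragraph, $\g_1^{(k)}B$ is eventually contained in every prescribed neighborhood of $\zeta_+\in\Lambda_1'\subset A^\o$; in particular $\g_1^{(k)}x_k\in A^\o$ for all large $k$, contradicting the choice of $x_k$. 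The statement $\g_2 A\subset B^\o$ follows by the symmetric argument, exchanging the roles of $\G_1,\G_2$ and of $A,B$.
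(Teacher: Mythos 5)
There is a genuine gap at your ``crucial claim'' that the accumulation points $\zeta_\pm^G$ of the unmodified sequence $\g_1^{(k)}$ and its inverses lie outside $\geo\H$. The hypotheses you extract from \Cref{lem:2} --- namely $\|\g_1^{(k)}\|_1\ge n_k$ and $\|(\g_1^{(k)})^{-1}\|_1 = d_1(\g_1^{(k)},\H)\ge n_k$ --- do \emph{not} imply this. Your fellow-traveling argument breaks down at the quantifier ``over a length comparable to $d_1(1,\g_1^{(k)})$'': convergence $\g_1^{(k)}\to\zeta_+^G$ only forces the geodesic $[1,\g_1^{(k)}]$ to track $[1,\zeta_+^G)$ along an initial segment whose length tends to infinity, with no control relating that length to $d_1(1,\g_1^{(k)})$; the endpoint can drift arbitrarily far from $\H$ while the sequence still converges into $\geo\H$. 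Concretely, in $\G_1=F_2=\langle a,b\rangle$ with $\H=\langle a\rangle$ quasiconvex, the elements $\g^{(k)}=a^kb^k$ satisfy $\|\g^{(k)}\|_1=2k$ and $\|(\g^{(k)})^{-1}\|_1=d_1(\g^{(k)},\H)=k$ (and even $d_1(\H,\g^{(k)}\H)=k$), yet $\g^{(k)}\to a^{+\infty}\in\geo\H$. So nothing in your setup prevents $\zeta_+^G\in\geo\H$, and then $\zeta_+=\xi^1(\zeta_+^G)\in\Lambda_\H\subset\partial A$ rather than $A^\o$, and the contraction argument collapses.

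The paper avoids this by not working with $\g_{1,j}$ directly: it first replaces $\g_{1,j}$ by $\hat\g_{1,j}=\tilde\eta_j^{-1}\g_{1,j}\eta_j$, where $\eta_j$ is a nearest-point projection of $\g_{1,j}^{-1}$ to $\H$ and $\tilde\eta_j$ a nearest-point projection of $\g_{1,j}\eta_j$. The coset conditions $\|\g_{1,j}\|_1,\|\g_{1,j}^{-1}\|_1\ge N_j$ translate into $d_1(\hat\g_{1,j},\H),d_1(\hat\g_{1,j}^{-1},\H)\ge N_j$ for this normalized sequence, and \emph{that} is what forces both accumulation points into $\geo\G_1\smallsetminus\geo\H$ (in the $F_2$ example, $\hat\g^{(k)}=b^k$). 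The attractor--repeller argument you give in your last paragraph then applies verbatim to $\hat\g_{1,j}$, and one transfers the conclusion back to $\g_{1,j}=\tilde\eta_j\hat\g_{1,j}\eta_j^{-1}$ using the $\H$-invariance of $A^\o$ and $B$. Your proof can be repaired by inserting exactly this normalization step before passing to limits.
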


\begin{proof}
    We will only prove the first assertion, since the proof of the second is similar.
    
    Suppose that the first assertion is false. Then there exist an increasing sequence of natural numbers $(N_j)$ and, for each $j\in\N$, some element $\g_{1,j} \in \G_{1,N_j}\smallsetminus \H$ such that \begin{equation}\label{eqn:lem:3}\gamma_{1,j} B \not\subset A^\circ\quad \text{for all } j\in\mathbb{N}.\end{equation}

    For each $j\in\N$, let $\eta_j\in \H$ be a nearest point projection of $\g_{1,j}^{-1}\in\G_1$. Define $\tilde\g_{1,j} = \g_{1,j} \eta_j$. Note that 
\[
d_1(\tilde\g_{1,j}^{-1},\H) = d_1(1, \g_{1,j} \H) = \| \g_{1,j}\|_{1} \ge N_j,
\]
where the last inequality follows since $\g_{1,j}\in \G_{1,N_j}\smallsetminus\H$.
Thus, viewed as a sequence in the compactification $\G_1\sqcup \geo\G_1$, the inverse sequence $(\tilde\g_{1,j}^{-1})$ has all the accumulation points in $\geo\G_1\smallsetminus\geo\H$ (see, e.g., \cite[Lemma 2.9]{DeyK:amalgam}). After extraction, we will assume that $(\tilde\g_{1,j}^{-1})$ converges to some point $x_-$ in $\geo\G_1\smallsetminus\geo\H$.

For each $j\in\N$, let $\tilde\eta_j\in\H$ be a nearest point projection of $\tilde\g_{1,j} \in \G_1$. Let $\hat\g_{1,j} \coloneqq \tilde\eta_j^{-1}\tilde\g_{1,j}$.
Then
\[
 d_1(\hat\g_{1,j},\H) = d_1(1,\tilde\g_{1,j}^{-1}\H) = \| \tilde\g_{1,j}^{-1}\|_{1} \ge N_j,
\]
since $\tilde\g_{1,j}^{-1}\in \G_{1,N_j}\smallsetminus\H$.
Thus, we see that $(\hat\g_{1,j}^{-1})$ also converges to $x_-\in \Lambda_1'$.
After another extraction, we can (and will) assume that $(\hat\g_{1,j})$ converges to some point $x_+\in \geo\G_1\smallsetminus\geo\H$.
(For the last two sentences, see, e.g., \cite[Lem. 2.8]{DeyK:amalgam}.)

Let $z_\pm \coloneqq \xi^1(x_\pm)$, where $ \xi^1: \geo \Gamma \to \Lambda_{\Gamma_1} \subset \mathcal{F}_\Theta $ is the $ \Gamma $-equivariant limit map.
Since $x_\pm \in \geo\G_1\smallsetminus\geo\H$, $z_\pm \in \Lambda_1' = \Lambda_{\G_1}\smallsetminus\Lambda_\H$ and, therefore, $z_\pm$ are antipodal to $B$ (see \Cref{lem:final1}).

Since $B$ is antipodal to $z_-$, it follows that  the sequence $(\hat\g_{1,j} B)$ converge to $\{z_+\}$ (in the Hausdorff distance) as $j\to\infty$. But $z_+$ lies in the interior of $A$. So, for all large enough $j$, $\hat\g_{1,j} B \subset A^\o$.
Since $\g_{1,j} = \tilde\eta_j\hat\g_{1,j} \eta_j^{-1} $ and $A^\o$ and $B$ are $\H$-invariant, it follows that $\g_{1,j} B \subset A^\o$ for all large enough $j\in \N$, which is a contradiction to \eqref{eqn:lem:3}.
\end{proof}

\subsection{Proof of \Cref{thm:amalgam}}
Now we can conclude the proof of \Cref{thm:amalgam}. Let $A,B \subset \F_\Theta$ be the compact subsets defined in \eqref{eqn:3}.
Pick $N>1$ large enough so that the $\G'_1\coloneqq \G_{1,N} < \G_1$  and $\G'_2\coloneqq \G_{2,N} < \G_2$ satisfy the conclusion of \Cref{lem:3}. By \Cref{lem:final1,lem:3}, $\G_1'$, $\G_2'$, $\H = \G_1'\cap \G_2'$, $A$, and $B$ satisfy the hypothesis of \Cref{thm:amalgam1}.
Therefore, by \Cref{thm:amalgam1}, the subgroup $\G \coloneqq \<\G'_1,\G'_2\>$ is $\Theta$-Anosov and naturally isomorphic to $\G_1' *_\H \G_2'$.
\qed 

\section{Amalgams of Anosov groups along abelian subgroups}\label{sec:3}

In this section, we prove \Cref{cyclic-amalgam}.

\subsection{Faithful linear representations} Let $\mathbb{K}=\mathbb{R}$ or $\mathbb{C}$. 
By a $k$-Anosov subgroup of $\mathsf{G} = \mathsf{SL}_q(\mathbb{K})$, where $1\le k\le q-1$, we mean a $\Theta_k$-Anosov subgroup where $\Theta_k$ corresponds to the partial flag manifold $\mathsf{Gr}_k(\mathbb{K}^q)$ of $\mathsf{SL}_q(\mathbb{K})$.

Throughout this paper, we shall use the following proposition, which will allow us to treat $\Theta$-Anosov representations into $\mathsf{G}$ as $1$-Anosov representations into $\mathsf{SL}_q(\mathbb{R})$, $q\in \mathbb{N}$.

\begin{proposition} \textup{(see \cite[Prop. 4.10]{Guichard-Wienhard} and \cite[Prop. 3.5]{GGKW})}\label{theta-compatible} Let $\mathsf{G}$ be a linear real semisimple Lie group,\footnote{Recall that we always assume $\mathsf{G}$ has finitely many connected components.} and let $\Theta$ be a set of restricted roots of $\mathsf{G}$.
Then there exists a faithful Lie group homomorphism $\tau_{\Theta}:\mathsf{G}\rightarrow \mathsf{SL}_q(\mathbb{R})$, $q=q(\mathsf{G},\Theta)$, with the following property: a subgroup $\Gamma<\mathsf{G}$ is $\Theta$-Anosov if and only if $\tau_{\Theta}(\Gamma)<\mathsf{SL}_q(\mathbb{R})$ is $1$-Anosov.\end{proposition}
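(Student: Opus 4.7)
\medskip
\noindent\textbf{Proof proposal.} The plan is to assemble $\tau_\Theta$ from a finite collection of Tits-type proximal representations, one for each simple restricted root in $\Theta$, and then take a direct sum (possibly with an extra faithful representation) to achieve both faithfulness and the Anosov equivalence.

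First, recall that $\Gamma < \mathsf{G}$ is $\Theta$-Anosov if and only if it is $\{\alpha\}$-Anosov for every $\alpha \in \Theta$. So it suffices to produce, for each fixed $\alpha\in\Theta$, an irreducible real representation $\rho_\alpha : \mathsf{G}\to \mathsf{SL}(V_\alpha)$ that is ``$\alpha$-proximal'': its restricted highest weight $\chi_\alpha$ is a one-dimensional weight space, its stabilizer in $\mathsf{G}$ is exactly the parabolic $\mathsf{P}_{\{\alpha\}}$, and the gap between $\chi_\alpha$ and the second highest restricted weight is a positive multiple of $\alpha$. Such representations exist for every linear real semisimple $\mathsf{G}$ by a classical construction of Tits (a suitable highest-weight component of a high tensor power of the standard/adjoint representation), see GGKW \cite{GGKW}. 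For such a $\rho_\alpha$, the singular value estimates
\[
\log\sigma_1(\rho_\alpha(g)) = \chi_\alpha(\mu(g)),\qquad \log\tfrac{\sigma_1(\rho_\alpha(g))}{\sigma_2(\rho_\alpha(g))} \;\asymp\; \alpha(\mu(g))
\]
(where $\mu$ is the Cartan projection of $\mathsf{G}$) give the equivalence: $\Gamma$ is $\{\alpha\}$-Anosov in $\mathsf{G}$ iff $\rho_\alpha(\Gamma)$ is $1$-Anosov in $\mathsf{SL}(V_\alpha)$, via the standard Cartan-projection characterization of Anosov subgroups.

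Second, form the direct sum $V \coloneqq \bigoplus_{\alpha \in \Theta} V_\alpha^{\oplus n_\alpha}$ with multiplicities $n_\alpha$ chosen to balance the highest weights across the summands --- more precisely, after passing to an appropriate tensor/symmetric power of each $\rho_\alpha$, one can assume all the $\chi_\alpha$ coincide up to a common linear functional, so that the top singular value of the assembled representation on any $g \in \mathsf{G}$ is realized by a single one-dimensional line and the top gap $\sigma_1/\sigma_2$ is controlled by $\min_{\alpha \in \Theta} \alpha(\mu(g))$. This way, $1$-Anosov for the assembled representation becomes equivalent to being simultaneously $\{\alpha\}$-Anosov for every $\alpha \in \Theta$, i.e.\ to being $\Theta$-Anosov.

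Finally, to enforce faithfulness, take the direct sum with (a sufficiently high tensor power of) a faithful representation $\rho_{\rm fth}:\mathsf{G}\to\mathsf{SL}(W)$, for instance the adjoint representation combined with the given linear embedding of $\mathsf{G}$. One has to check that adjoining $\rho_{\rm fth}$ does not destroy the $1$-Anosov property: this is arranged by first rescaling the highest weights of the $\rho_\alpha$-block so that they strictly dominate all weights of $\rho_{\rm fth}$, ensuring that the top singular value and its gap to the second continue to be governed by the $\rho_\alpha$-block alone. The dimension $q$ of the resulting $\tau_\Theta$ depends only on $\mathsf{G}$ and the combinatorial data of $\Theta$. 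I expect the main technical nuisance to be precisely this matching/dominance of highest weights in the direct sum, which is routine once the $\rho_\alpha$'s are in hand; the existence of $\alpha$-proximal representations is the key representation-theoretic input, and for that one cites Tits' construction as in \cite{Guichard-Wienhard, GGKW}.
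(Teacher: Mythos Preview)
Your second step contains a genuine gap. The highest weight $\chi_\alpha$ of a Tits $\alpha$-proximal representation is (a positive multiple of) the fundamental weight $\omega_\alpha$, and the $\omega_\alpha$ for distinct $\alpha\in\Theta$ are linearly independent. Passing to tensor or symmetric powers of an individual $\rho_\alpha$ only rescales $\chi_\alpha$ by a positive integer; it can never turn $\omega_\alpha$ into $\omega_\beta$. So the ``balancing'' you describe is impossible as soon as $|\Theta|\ge 2$. Worse, the direct sum $\bigoplus_\alpha \rho_\alpha$ is then never $1$-proximal in the required sense: the locus $\{\,X\in\mathfrak{a}^+ : \chi_\alpha(X)=\chi_\beta(X)\,\}$ meets the interior of the Weyl chamber (since neither $\omega_\alpha-\omega_\beta$ nor $\omega_\beta-\omega_\alpha$ lies in the positive root cone), and along any sequence $\gamma_n$ with $\mu(\gamma_n)$ near that locus the top two singular values of the direct sum tie, regardless of how large the $\alpha(\mu(\gamma_n))$ are. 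Thus a genuinely $\Theta$-Anosov subgroup need not map to a $1$-Anosov subgroup under your direct sum.

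The fix is to use a \emph{tensor} product (or, equivalently, a single irreducible representation with highest weight $\sum_{\alpha\in\Theta} n_\alpha\omega_\alpha$ and one-dimensional highest weight space), whose highest-weight line is stabilised exactly by $\mathsf{P}_\Theta$; this is precisely the $\Theta$-proximal representation supplied by \cite[Prop.~4.10, Rem.~4.12]{Guichard-Wienhard} and \cite[Prop.~3.5]{GGKW}, and is what the paper invokes directly rather than assembling anything root by root. For faithfulness, the paper also proceeds differently from your step~3: instead of adjoining a dominated summand, it realises $\mathsf{G}$ faithfully inside a product $\mathsf{SL}_m(\mathbb{R})\times\mathsf{SL}_n(\mathbb{R})$ (the first factor carrying the $\Theta$-proximal representation), arranges $m,n$ odd so the product has trivial center, and then applies a further faithful irreducible representation of the product that converts ``$1$-Anosov in the first factor'' into ``$1$-Anosov.'' This two-step packaging avoids the weight-domination bookkeeping altogether.
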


\begin{proof} Let $\mathsf{P}_{\Theta},\mathsf{P}_{\Theta}^{-}<\mathsf{G}$ be the pair of opposite parabolic subgroups associated to $\Theta$. Following \cite[Rem. 4.12]{Guichard-Wienhard}, we explain how we can choose $\tau_{\Theta}$ to be faithful in the case where $\mathsf{G}$ has non-trivial center. 
For this, we fix an embedding $\mathsf{G}\xhookrightarrow{} \mathsf{SL}_{r}(\mathbb{R})$, $r\in \mathbb{N}$ odd, and compose the adjoint representation of $\textup{Ad}:\mathsf{SL}_{r}(\mathbb{R})\rightarrow \mathsf{SL}(\mathfrak{sl}_r(\mathbb{R}))$ with the exterior power $\bigwedge^p:\mathsf{SL}(\mathfrak{sl}_r(\mathbb{R}))\rightarrow \mathsf{SL}\big(\bigwedge^p\mathfrak{sl}_r(\mathbb{R})\big)$, where $p=\operatorname{dim}{\mathsf{P}}_{\Theta}$. Note that $\bigwedge^p\textup{Ad}(\mathsf{G})$ preserves the subspace $\textup{Lie}(\mathsf{G})\subset \mathfrak{sl}_n(\mathbb{R})$.
As $\mathsf{G}$ is semisimple, we may decompose $\bigwedge^p\textup{Ad}|_{\mathsf{G}}$ into a direct sum of irreducible representations so that we obtain a faithful representation $\tau_{0}:\mathsf{G}\rightarrow \mathsf{GL}_m(\mathbb{R})\times \mathsf{GL}_n(\mathbb{R})$ with the property that the projection $\phi_0:\mathsf{G}\rightarrow \mathsf{GL}_m(\mathbb{R})$ of $\tau_0$ into the first factor is the irreducible representation from \cite[Rem. 4.12]{Guichard-Wienhard} and: $\Gamma<\mathsf{G}$ is $\Theta$-Anosov if and only if $\phi_0(\Gamma)<\mathsf{GL}_m(\mathbb{R})$ is $1$-Anosov. Since $\mathsf{G}$ is semisimple, $\phi_0(\mathsf{G}^0)<\mathsf{SL}_m(\mathbb{R})$.
Thus, by composing $\phi_0$ with the embedding $\mathsf{GL}_m(\mathbb{R})\xhookrightarrow{} \mathsf{SL}_{m+1}(\mathbb{R})$, $g\mapsto \textup{diag}(g,\textup{det}(g)^{-1})$, and possibly increasing $n,m$ by at most $2$, we may assume that $\phi_0(\mathsf{G})<\mathsf{SL}_m(\mathbb{R})$ and $n,m$ are odd. 
In this case, $\mathsf{SL}_m(\mathbb{R})\times \mathsf{SL}_n(\mathbb{R})$ has trivial center. Therefore, by \cite[Prop. 4.10]{Guichard-Wienhard} and \cite[Prop. 3.5]{GGKW}, there exists an irreducible faithful representation \hbox{$\tau':\mathsf{SL}_m(\mathbb{R})\times \mathsf{SL}_n(\mathbb{R})\rightarrow \mathsf{SL}_{s}(\mathbb{R})$} with the property that a subgroup $\Gamma<\mathsf{SL}_m(\mathbb{R})\times \mathsf{SL}_n(\mathbb{R})$ projects into $\mathsf{SL}_m(\mathbb{R})$ as a $1$-Anosov subgroup if and only if $\tau'(\Gamma)<\mathsf{SL}_s(\mathbb{R})$ is $1$-Anosov. Then the composition $\tau_{\Theta} \coloneqq \tau'\circ \tau_0:\mathsf{G}\rightarrow \mathsf{SL}_s(\mathbb{R})$  is a faithful representation satisfying the conclusion above.
\end{proof}

\subsection{Positioning the limit sets}

 Given an Anosov subgroup $\G$ of $\mathsf{G}$, the following lemma will allow us to exhibit two embeddings of $\G$ into $\mathsf{SL}_d(\mathbb{C})$ as Anosov subgroups, where $d$ is large enough, such that the resulting limit sets are positioned appropriately in order to apply the Virtual Amalgam Theorem (\Cref{thm:amalgam}).

We begin by introducing some notation. Fix the standard inner product $\langle \cdot,\cdot \rangle$ on $\mathbb{K}^d$, where $\mathbb{K}=\mathbb{R}$ or $\mathbb{C}$. For a subspace $V \subset \mathbb{K}^d$, $V^{\perp}$ denotes the orthogonal complement of $V$, and let $(e_1,\ldots,e_d)$ denote the standard basis of $\mathbb{K}^d$.

\begin{lemma}\label{antipodal-cyclic} Let $\Gamma<\mathsf{SL}_d(\mathbb{R})$ be a $1$-Anosov subgroup. Let $(\xi^1,\xi^{d-1}):\partial_{\infty}\Gamma\rightarrow \mathbb{P}(\mathbb{C}^d)\times \mathsf{Gr}_{d-1}(\mathbb{C}^d)$ be the Anosov limit maps of $\Gamma$, where we regard $\Gamma$ as a $1$-Anosov subgroup of $\mathsf{SL}_d(\mathbb{C})$. 
Let $\langle \gamma \rangle<\Gamma$ be an infinite cyclic subgroup and let $\H=\textup{Stab}_{\Gamma}(\gamma^{+})$ be the stabilizer of $\gamma^{+}\in \partial_{\infty}\Gamma$ in $\Gamma$. Then there exists $g\in \mathsf{SL}_d(\mathbb{C})$, centralizing $\H$, such that $\Gamma \cap g\Gamma g^{-1}=\H$ and $g^{\pm 1}\xi^1\left(\partial_{\infty}\Gamma \smallsetminus \{\gamma^{+},\gamma^{-}\}\right)$ is antipodal to $\xi^{d-1}(\partial_{\infty}\Gamma \smallsetminus \{\gamma^{+},\gamma^{-} \})$.\end{lemma}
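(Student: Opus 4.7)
Plan:

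\textbf{Invariant decomposition and candidate $g$.} Since $\Gamma<\mathsf{SL}_d(\mathbb{R})$ is $1$-Anosov, $\gamma$ is biproximal and yields a decomposition $\mathbb{C}^d=L^+\oplus L^-\oplus W$, where $L^{\pm}=\xi^1(\gamma^{\pm})$ and $W=\xi^{d-1}(\gamma^+)\cap\xi^{d-1}(\gamma^-)$. In the hyperbolic group $\Gamma$, the stabilizer of a boundary fixed point is virtually cyclic with a common axis, so every element of $\H=\textup{Stab}_\Gamma(\gamma^+)$ fixes both $\gamma^{\pm}$ and therefore preserves each summand, acting on the lines $L^{\pm}$ by characters. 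For $\lambda\in\mathbb{C}^*$ I set
\[
g_\lambda\;=\;\lambda\,P_{L^+}+\lambda^{-1}P_{L^-}+P_W\;\in\;\mathsf{SL}_d(\mathbb{C}),
\]
where the $P$'s denote projections onto the summands. Each $g_\lambda$ is block-diagonal with scalar blocks on $L^{\pm}$ and identity on $W$, so it centralizes $\H$.

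\textbf{Intersection condition by genericity.} The inclusion $\H\subseteq\Gamma\cap g_\lambda\Gamma g_\lambda^{-1}$ is immediate. Conversely, any $\gamma'\in\Gamma$ that commutes with every $g_\lambda$ must preserve $L^{\pm}$ and hence fix $\gamma^{\pm}\in\partial_{\infty}\Gamma$ by equivariance of $\xi^1$, forcing $\gamma'\in\H$. Otherwise $\lambda\mapsto g_\lambda\gamma'g_\lambda^{-1}$ is a non-constant Laurent polynomial, so for any fixed $\gamma''\in\Gamma$ the equation $g_\lambda\gamma'g_\lambda^{-1}=\gamma''$ has finitely many solutions in $\lambda$. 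Since $\Gamma$ is countable, this excludes only countably many values of $\lambda$, leaving $\Gamma\cap g_\lambda\Gamma g_\lambda^{-1}=\H$ for generic $\lambda$.

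\textbf{Antipodality and main obstacle.} Fix real generators $e^{\pm}\in L^{\pm}$ and expand $v(x)=\alpha(x)e^++\beta(x)e^-+v_W(x)$ and $\phi(y)=\alpha'(y)(e^+)^*+\beta'(y)(e^-)^*+\phi_W(y)$. By Anosov antipodality, $\alpha(x),\beta(x),\alpha'(y),\beta'(y)$ are all nonzero for $x,y\in\partial_{\infty}\Gamma\smallsetminus\{\gamma^{\pm}\}$, and the condition $g_\lambda\xi^1(x)\subset\xi^{d-1}(y)$ becomes the real quadratic
\[
A(x,y)\lambda^2+C(x,y)\lambda+B(x,y)=0,\qquad A=\alpha\alpha',\;B=\beta\beta',\;C=\phi_W(y)(v_W(x)),
\]
with $A,B\ne 0$; a symmetric quadratic $B\lambda^2+C\lambda+A=0$ handles $g_\lambda^{-1}$. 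For non-real $\lambda$, any complex root lies on the real circle $|\lambda|^2=B/A$. The main obstacle is uniformity over the uncountable index $\mathcal{X}=(\partial_{\infty}\Gamma\smallsetminus\{\gamma^{\pm}\})^2$: the union $\bigcup_{(x,y)\in\mathcal{X}}B_{x,y}$ of bad loci in $\{g_\lambda\}$ could a priori cover $\mathbb{C}^*$. I expect to resolve this by enlarging the parameter to the full centralizer $\mathcal{G}$ of $\H$ in $\mathsf{SL}_d(\mathbb{C})$—a positive-dimensional complex algebraic group (a torus of complex dimension $d-1$ when $\H|_W$ has distinct eigenvalues)—and running a Baire/genericity argument: for each fixed $(x,y)$, the bad locus $B_{x,y}=\{g\in\mathcal{G}:\phi(y)(g\cdot v(x))=0\}$ is a proper complex hypersurface in $\mathcal{G}$, and the real-analytic dependence on $(x,y)$, combined with the observation that the diagonal $(x,x)$-hypersurfaces all pass through $I$ while the off-diagonal ones do not, should show that $\bigcup_{(x,y)}(B_{x,y}\cup B_{x,y}')$ is a proper subset of $\mathcal{G}$, yielding the desired $g$.
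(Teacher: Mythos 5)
Your overall setup (the $\gamma$-adapted decomposition $\mathbb{C}^d=L^+\oplus L^-\oplus W$, a diagonal complex matrix centralizing $\H$, and the reduction of antipodality to the non-vanishing of $\phi(y)(g^{\pm1}v(x))$ with real data $\alpha,\beta,\alpha',\beta',C$ all controlled by Anosov antipodality) matches the paper, and your countability argument for $\Gamma\cap g_\lambda\Gamma g_\lambda^{-1}=\H$ is workable. But the heart of the lemma — antipodality — is left open, and you say so yourself. The proposed fix does not close the gap: a Baire or genericity argument over the uncountable family $\{B_{x,y}\}$ of proper hypersurfaces in the centralizer torus is not a proof (uncountable unions of proper hypersurfaces can perfectly well cover the whole group; you have no compactness or equicontinuity of the family, and indeed every non-real $\lambda$ is a root of \emph{some} real quadratic $A\lambda^2+C\lambda+B$ with $A,B\neq0$, so without further constraints on the attainable triples $(A,B,C)$ nothing survives). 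Worse, your symmetric normalization $g_\lambda=\lambda P_{L^+}+\lambda^{-1}P_{L^-}+P_W$ actively obstructs the clean choice: at $\lambda=i$ the condition becomes $(B-A)+iC=0$, which \emph{can} occur (nothing forbids $C=0$ with $\alpha\alpha'=\beta\beta'$ at some off-diagonal $(x,y)$).

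The missing idea, which is exactly what the paper uses, is to scale only \emph{one} of the two eigendirections by $i$ and leave the rest alone: in the paper's coordinates $g=\mathrm{diag}(\mathrm{I}_{d-2},1,i)$, i.e.\ $g=P_{L^+}+iP_{L^-}+P_W$ up to a harmless scalar. Because $\Gamma<\mathsf{SL}_d(\mathbb{R})$ forces the limit maps to take values at real points, one gets
\[
\phi(y)\bigl(g^{\pm1}v(x)\bigr)=\underbrace{\alpha(x)\alpha'(y)+\phi_W(y)(v_W(x))}_{\in\,\mathbb{R}}\;\pm\; i\,\underbrace{\beta(x)\beta'(y)}_{\neq 0},
\]
whose imaginary part never vanishes. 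This kills all bad loci simultaneously, with no genericity, no Baire argument, and no dependence on $(x,y)$. With this explicit $g$ the intersection condition also comes out by direct computation ($g\Gamma g^{-1}\cap\mathsf{SL}_d(\mathbb{R})$ preserves $L^-$, hence fixes $\gamma^-$ and lies in $\H$), so the countable-exclusion step becomes unnecessary, though it was not wrong.
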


\begin{proof} Up to conjugating by an element of $\mathsf{GL}_d(\mathbb{R})$, we may assume that  
$$\gamma=\begin{pmatrix}[0.8] M_{\gamma} & &\\
 & \lambda_1 & \\ & & \lambda_d \end{pmatrix}, \quad 
 \text{for some }M_{\gamma}\in \mathsf{GL}_{d-2}(\mathbb{R}) \text{ and } \lambda_1,\lambda_d\in\R,$$ 
 where $|\lambda_1|$ (resp. $|\lambda_d|$) is the largest (resp. smallest) modulus eigenvalue of $\gamma$.
 Therefore,
 \begin{equation}\label{eqn:antipodal-cyclic}
     \begin{array}{lll}
        \xi^1(\gamma^{+})=[e_{d-1}],  && \xi^{d-1}(\gamma^{+})=e_{d}^{\perp}, \\
        \xi^1(\gamma^{-})=[e_d], && \xi^{d-1}(\gamma^{-})= e_{d-1}^{\perp}.
     \end{array}
 \end{equation}
 Note that every element of $\H$ also fixes the repelling fixed point $\gamma^{-}\in \partial_{\infty}\Gamma$ and hence the points $\xi^1(\gamma^{\pm})$ and the hyperplanes $\xi^{d-1}(\gamma^{\pm})$ in $\mathbb{P}(\mathbb{C}^d)$.
 Thus, $\H$ is a subgroup of $\mathsf{GL}_{d-2}(\mathbb{R})\times \mathsf{GL}_1(\mathbb{R})\times \mathsf{GL}_1(\mathbb{R})$. Now, consider the matrix 
 \[
 g\coloneqq\begin{pmatrix}[0.8] \textup{I}_{n-2} & &\\
 & 1 & \\ & & i \end{pmatrix}
 \] 
 centralizing $\H$. Observe that 
 $$g\Gamma g^{-1}\cap \mathsf{SL}_d(\mathbb{R})=\Gamma \cap \begin{pmatrix} \mathsf{GL}_{d-1}(\mathbb{R}) & \\ & \mathsf{GL}_1(\mathbb{R}) \end{pmatrix}.$$
 Thus, $\Gamma \cap g\Gamma g^{-1}$ fixes
$\xi^{1}(\gamma^{-})=[e_d]$ and, since $\langle \gamma \rangle<\Gamma$ is maximal, we have $\Gamma \cap g\Gamma g^{-1}=\textup{Stab}_{\Gamma}(\gamma^{-})=\H$. 
 
 Let $x,y\in \partial_{\infty}\Gamma\smallsetminus \{\gamma^{+},\gamma^{-}\}$. We write $\xi^1(x)=[v_x]$ and $\xi^{d-1}(y)=v_y^{\perp}$ for some non-zero vectors $v_x,v_y\in \mathbb{R}^d$. Since $\xi^1(x)\notin \xi^{d-1}(\gamma^{\pm})$ and $\xi^1(\gamma^{\pm})\notin \xi^{d-1}(y)$, it follows from \eqref{eqn:antipodal-cyclic} that $\langle v_x,e_d\rangle \ne 0$, $\langle v_y,e_d\rangle \neq 0$. 
 Since 
 $$\left \langle g^{\pm 1}v_x,v_y\right \rangle =\pm i\left \langle v_x,e_d\right \rangle \left \langle v_y,e_d\right \rangle+ \sum_{k=1}^{d-1}\left \langle v_x,e_{k}\rangle \right \langle v_y,e_{k}\rangle \ne 0,$$
 we get $g^{\pm 1}\xi^1(x)\notin \xi^{d-1}(y)$.
 This completes the proof of the lemma.\end{proof}

\subsection{Proof of \Cref{cyclic-amalgam}}
After composing $\Gamma<\mathsf{G}$ with a faithful  representation $\tau_{\Theta}:\mathsf{G} \rightarrow \mathsf{SL}_d(\mathbb{R})$ of minimal dimension $d=d(\mathsf{G},\Theta)$ (see Proposition \ref{theta-compatible}), we will assume that $\mathsf{G}=\mathsf{SL}_d(\mathbb{R})$ and that $\Gamma<\mathsf{SL}_d(\mathbb{R})$ is $1$-Anosov.

Let $(\xi^1,\xi^{d-1}):\partial_{\infty}\Gamma \rightarrow \mathbb{P}(\mathbb{R}^d)\times \mathsf{Gr}_{d-1}(\mathbb{R}^d)$ be the pair of Anosov limit maps of $\Gamma$. Let $\gamma \in \H$ be any infinite order element. Then $\langle \gamma \rangle$ has finite index in $\H$. Let $\widehat\H$ be the stabilizer of $\{\gamma^{+},\gamma^{-}\}$ in $\Gamma$. 
Then $\H$ is a finite-index subgroup of $\widehat\H$. Since $\H$ is separable in $\Gamma$  (by \cite{Tsouvalas-cyclic-separability}), there exists a finite-index subgroup $\Gamma_0$ of $\Gamma$, containing $\H$, such that $\Gamma_0\cap \widehat\H=\H$. 

By Lemma \ref{antipodal-cyclic}, there exists $g\in \mathsf{SL}_d(\mathbb{C})$, centralizing $\gamma$, such that $\Gamma \cap g \Gamma g^{-1}=\widehat\H$ and $g\xi^{r}\left(\partial_{\infty}\Gamma\smallsetminus \{\gamma^{+},\gamma^{-}\}\right)$ is antipodal to $\xi^{s}\left(\partial_{\infty}\Gamma\smallsetminus \{\gamma^{+},\gamma^{-}\}\right)$ whenever $\{r,s\}=\{1,d-1\}$. In particular, we have that  $g\Gamma_0g^{-1}\cap \Gamma_0=\H$. Thus, by applying Theorem \ref{thm:amalgam} to the $1$-Anosov subgroups $\Gamma_0,g\Gamma_0 g^{-1}<\mathsf{SL}_d(\mathbb{R})$ and their intersection $\H<\Gamma_0$, which is separable in $\Gamma_0$, we obtain the conclusion. \qed

\medskip

We conclude this section with a final remark that \Cref{cyclic-amalgam} (except for the bound on the dimension $d$) holds even if $A$ is finite; in fact, in this situation, one does not need to assume that $A$ is abelian.
 
\begin{remark}\label{rem:1.2}
For $1$-Anosov subgroups $\Gamma_1,\Gamma_2<\mathsf{SL}_n(\mathbb{R})$ with a finite intersection $\H\coloneqq \Gamma_1\cap \Gamma_2$, the amalgam $\Gamma_1\ast_{\H}\Gamma_2$ admits an Anosov representation into $\mathsf{SL}_m(\mathbb{R})$, for some $m\in \mathbb{N}$. To see this, observe that $\Gamma_1\ast_{\H}\Gamma_2$ contains a finite-index subgroup $\Gamma'$ which is isomorphic to $\Delta_1\ast\cdots \ast \Delta_p\ast F_{\ell}$, where $\Delta_i$ is a finite-index subgroup of either $\Gamma_1$ or $\Gamma_2$ and $F_{\ell}$ is the free group of rank $\ell$. Then, as $\Gamma'$ admits an Anosov representation (see \cite{DGK-comb, DT24}), $\Gamma$ also does (see \cite[Lem. 2.1]{DT24}).
\end{remark}

\section{Anosov amalgams along non-elementary subgroups}\label{sec:4}
Recall that a Lie group $\mathsf{G}$ has real rank $1$ if it is isogenous to $\mathsf{O}(n,1)$, $\mathsf{U}(n,1)$, $\mathsf{Sp}(n,1)$, $n\geq 2$, or to the exceptional Lie group $\mathsf{F}_{4}^{-20}$.
The main result of this section, stated below, provides sufficient conditions for constructing Anosov representations of (virtual) doubles of convex cocompact groups in $\mathsf{G}$ along certain non-elementary subgroups. To state this result, we first introduce a compatibility condition, defined as follows:

\begin{definition}\label{def:compatible}
    Let $\mathsf{G}$ be a rank 1 Lie group and let  $\mathsf{L}$ be a Zariski closed subgroup of $\mathsf{G}$. The pair $(\mathsf{G},\mathsf{L})$ is called {\em compatible} if there exists a Lie group automorphism $\varphi:\mathsf{G}\rightarrow \mathsf{G}$ such that the following two conditions hold:
\begin{enumerate}[label=(\roman*)]
    \item $\mathsf{L}=\big\{g\in \mathsf{G}:\ \varphi(g)=g\big\}$;
    \item $\Lambda_{\mathsf{L}}=\big\{x\in \partial_{\infty}(\mathsf{G}/\mathsf{K}):\ \partial\varphi(x)=x\big\}$.
\end{enumerate} 
Here $\mathsf{K}<\mathsf{G}$ is a maximal compact subgroup, $\mathsf{G}/\mathsf{K}$ is the rank 1 symmetric space associated to $\mathsf{G}$, $\geo (\mathsf{G}/\mathsf{K})$ is the visual boundary of $\mathsf{G}/\mathsf{K}$, 
$\partial \varphi: \partial_{\infty}(\mathsf{G}/\mathsf{K})\rightarrow \partial_{\infty}(\mathsf{G}/\mathsf{K})$ is the boundary extension of the $\varphi$-equivariant isometry
\[
\tilde\varphi: \mathsf{G}/\mathsf{K} \to \mathsf{G}/\mathsf{K}, \quad 
\tilde\varphi(g\mathsf{K}) = \varphi(g)g_0\mathsf{K},
\]
where $g_0\in\mathsf{G}$ be an element such that $\varphi (\mathsf K) = g_0 \mathsf{K} g_0^{-1}$,
and $\Lambda_{\mathsf{L}}$ is the set of accumulation points of any $\mathsf{L}$-orbit $\mathsf{L} \cdot x$, $x\in \mathsf{G}/\mathsf{K}$, in $\geo(\mathsf{G}/\mathsf{K})$.
\end{definition}

The main result of this section is as follows:

\begin{theorem}\label{amalgam-rank1} Let $\mathsf{G}$ be a rank 1 Lie group, let $(\mathsf{G},\mathsf{L})$ be a compatible pair, and let $\Gamma$ be a convex cocompact subgroup of $\mathsf{G}$.

\begin{enumerate} \item \label{amalgam-rank1-1} Suppose that $\Gamma \cap \mathsf{L}$ is quasiconvex in $\Gamma$ and that $\Lambda_{\Gamma \cap \mathsf{L}} = \Lambda_{\Gamma} \cap \Lambda_{\mathsf{L}}$ (e.g., $\mathsf{L}$ is reductive and $\Gamma \cap \mathsf{L}$ is a lattice in $\mathsf{L}$). Then there exists a finite-index subgroup $\Gamma_1$ of $\Gamma$, containing $\Gamma \cap \mathsf{L}$, such that the double $\Gamma_1 *_{\Gamma \cap \mathsf{L}} \Gamma_1$ admits a $2$-Anosov representation into $\mathsf{SL}_{m}(\mathbb{C})$, where $m = 2d(\mathsf{G})(d(\mathsf{G}) + 1)$.

\item \label{amalgam-rank-2} Let $(\mathsf{G},\mathsf{L}')$ be another compatible pair such that $\mathsf{L}'$ is the fixed point subgroup of an automorphism $\psi:\mathsf{G}\rightarrow \mathsf{G}$ of even order. Suppose that $\M\coloneqq\Gamma \cap \mathsf{L}\cap \mathsf{L}'$ is a quasiconvex subgroup of $\Gamma$ and $\Lambda_{\M} = \Lambda_{\Gamma} \cap \Lambda_{\mathsf{L}}\cap \Lambda_{\mathsf{L}'}$. Then there exists a finite-index subgroup $\Gamma_2$ of $\Gamma$, containing $\M$, such that the double $\Gamma_2 *_{\M} \Gamma_2$ admits a $2$-Anosov representation into $\mathsf{SL}_{r}(\mathbb{C})$, where 
$r\in \mathbb{N}$ depends only on $\mathsf{G}$ and the order of the automorphism $\psi$.
\end{enumerate}
\end{theorem}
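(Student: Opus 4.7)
The plan is to apply the Virtual Amalgam Theorem (\Cref{thm:amalgam}) by constructing two $2$-Anosov embeddings of $\Gamma$ into $\mathsf{SL}_m(\mathbb{C})$ that agree on $\Gamma\cap\mathsf{L}$ (resp.\ on $\M$), and whose limit sets coincide on $\Lambda_{\Gamma\cap\mathsf{L}}$ (resp.\ $\Lambda_\M$) while being antipodally positioned in the relevant $2$-flag manifolds elsewhere. The essential structural input is the compatibility automorphism $\varphi$, which lets us enlarge the ambient representation in a way that produces a useful centralizer of the image of $\Gamma\cap\mathsf{L}$.

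For Part (1), I would first fix a faithful proximal representation $\rho_0 : \mathsf{G} \to \mathsf{SL}_{d(\mathsf{G})}(\mathbb{R})$ whose restriction to convex cocompact subgroups is $1$-Anosov, and then build a $2$-Anosov representation $\tilde\rho : \mathsf{G} \to \mathsf{SL}_m(\mathbb{C})$ with $m = 2d(\mathsf{G})(d(\mathsf{G})+1)$ by a suitable direct-sum/tensor construction combining $\rho_0$ and $\rho_0 \circ \varphi$. Because $\varphi$ fixes $\mathsf{L}$ pointwise, the restriction $\tilde\rho|_{\mathsf{L}}$ decomposes into matching factors, leaving a nontrivial centralizer in $\mathsf{SL}_m(\mathbb{C})$; choosing an element $g$ from this centralizer outside $\tilde\rho(\mathsf{G})$ gives a second embedding $\tilde\rho_g := g\tilde\rho(\cdot)g^{-1}$ of $\Gamma$ which agrees with $\tilde\rho$ on $\Gamma\cap\mathsf{L}$. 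To verify the antipodality hypothesis of \Cref{thm:amalgam}, I would invoke compatibility condition (ii): since $\Lambda_\mathsf{L}$ is exactly the fixed set of $\partial\varphi$, for any $x\in\Lambda_\Gamma\smallsetminus\Lambda_\mathsf{L}$ the point $\partial\varphi(x)$ is distinct from $x$, and this distinctness propagates through the direct-sum construction to genuine transversality between the $2$-plane and the codimension-$2$ subspace assigned to $x$ by $\tilde\rho$ and $\tilde\rho_g$. Finally, using separability of $\Gamma\cap\mathsf{L}$ in $\Gamma$, one can pass to a finite-index subgroup $\Gamma_1<\Gamma$ containing $\Gamma\cap\mathsf{L}$ with $\tilde\rho(\Gamma_1)\cap\tilde\rho_g(\Gamma_1) = \tilde\rho(\Gamma\cap\mathsf{L})$, and \Cref{thm:amalgam} then delivers the conclusion.

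For Part (2), the approach is parallel but uses both automorphisms $\varphi$ and $\psi$ jointly. The even order of $\psi$ supplies a canonical involution $\psi^{\mathrm{ord}(\psi)/2}$, and a larger direct-sum construction assembled from $\rho_0$, $\rho_0\circ\varphi$, $\rho_0\circ\psi$, and $\rho_0\circ(\varphi\psi^{\mathrm{ord}(\psi)/2})$ (or a similar variant dictated by the group generated by $\varphi$ and $\psi$) yields the required $2$-Anosov representation into $\mathsf{SL}_r(\mathbb{C})$ together with a centralizing element for the image of $\M$; the hypothesis $\Lambda_\M = \Lambda_\Gamma\cap\Lambda_\mathsf{L}\cap\Lambda_{\mathsf{L}'}$ reduces the antipodality check to compatibility condition (ii) applied to $\varphi$ and $\psi$ simultaneously. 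The principal obstacle in both parts is the explicit construction of the ambient representation and the verification of the $2$-Anosov and antipodality properties. In the cyclic setting of \Cref{sec:3}, $\Lambda_\mathsf{L}$ is just a two-point set and a simple diagonal conjugation by $\mathrm{diag}(\mathrm{I}_{d-2},1,i)$ does the job (cf.\ \Cref{antipodal-cyclic}); here $\Lambda_\mathsf{L}$ is positive-dimensional, the centralizer of $\Gamma\cap\mathsf{L}$ in $\mathsf{SL}_{d(\mathsf{G})}(\mathbb{C})$ is typically too small to provide a useful conjugator, and one must genuinely enlarge the ambient dimension in the structured way that the compatibility automorphism orchestrates.
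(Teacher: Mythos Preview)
Your high-level strategy matches the paper's: use the compatibility automorphism $\varphi$ to produce two Anosov embeddings of $\Gamma$ that agree on $\Gamma\cap\mathsf{L}$, then appeal to the Virtual Amalgam Theorem. But the sentence ``this distinctness propagates through the direct-sum construction to genuine transversality'' hides the entire difficulty, and as written it is a gap. The hypothesis of \Cref{thm:amalgam} requires that $\xi_{\tilde\rho}(x)$ be antipodal to $g\cdot\xi_{\tilde\rho}(y)$ for \emph{every} pair $x,y\in\Lambda_\Gamma\smallsetminus\Lambda_{\Gamma\cap\mathsf{L}}$, whereas you only argue the diagonal case $x=y$. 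For a generic centralizing conjugator $g$ this pairwise condition fails: for instance, if $\tilde\rho=\rho_0\oplus(\rho_0\circ\varphi)$ and $g$ is the block swap, then the $2$-plane $\xi^1_{\rho_0}(x)\oplus\xi^1_{\rho_0}(\partial\varphi(x))$ meets the codimension-$2$ subspace $\xi^{d-1}_{\rho_0}(\partial\varphi(y))\oplus\xi^{d-1}_{\rho_0}(y)$ whenever $x=\partial\varphi(y)$, and nothing in your hypotheses prevents $\partial\varphi(\Lambda_\Gamma)$ from meeting $\Lambda_\Gamma$ outside $\Lambda_{\Gamma\cap\mathsf{L}}$. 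Similarly, the equality $\tilde\rho(\Gamma_1)\cap\tilde\rho_g(\Gamma_1)=\tilde\rho(\Gamma\cap\mathsf{L})$ does not follow from separability alone; it needs to be engineered.

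The paper does not attack \Cref{thm:amalgam} directly. Instead it first proves a reduction (\Cref{rho1-rho2}): given two $1$-Anosov representations $\rho_1,\rho_2:\Gamma\to\mathsf{GL}_d(\mathbb{R})$ whose equalizer is $\H$ and for which the \emph{diagonal} non-transversality $\xi^1_{\rho_1}(x)\in\xi^{d-1}_{\rho_2}(x)$ or $\xi^1_{\rho_2}(x)\in\xi^{d-1}_{\rho_1}(x)$ characterizes $\partial_\infty\H$, one can amalgamate along $\H$. The proof of that reduction is where all the work is: one conjugates $\rho_1\oplus\rho_2$ by a specific complex matrix, takes $\bigwedge^2$ to get a $1$-Anosov subgroup of $\mathsf{SL}_{d(2d+1)}(\mathbb{C})$ whose real points are exactly the image of $\H$, and then applies a computation (\Cref{antipodal-1}, \Cref{complex-1}) showing that for such groups the conjugator $\mathrm{diag}(\mathrm{I},i\mathrm{I})$ in $\mathsf{SL}_{2d(2d+1)}(\mathbb{C})$ achieves the \emph{pairwise} antipodality. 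Only then is \Cref{thm:amalgam} invoked. For Part~(2) there is a further layer (\Cref{prop:4.4}, \Cref{rho1-rho2-w}) which passes through a genuinely quaternionic construction to handle the second automorphism $\psi$; your suggestion of a four-term direct sum does not capture this, and the quaternions appear to be essential for separating the two antipodality conditions coming from $\varphi$ and $\psi$.
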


This result, which we prove in \Cref{sec:4.2}, will play a crucial role in the proof of \Cref{main-rank1-amalgam} in the following section. The number $d(\mathsf{G})\in \mathbb{N}$ appearing in the statement denotes the minimal dimension $d$ in which $\mathsf{G}$ admits a proximal embedding into $\mathsf{SL}_d(\mathbb{R})$. Before discussing the proof of \Cref{amalgam-rank1}, we establish several technical results in the next subsection.

\subsection{Some preliminary results} First, we introduce some notation. We shall consider the embedding $\tau_2:\mathsf{Mat}_d(\mathbb{C})\rightarrow \mathsf{Mat}_{2d}(\mathbb{R})$, defined as follows:
\begin{equation}\label{tau2}
\tau_2(X+iY)=\begin{pmatrix} X& -Y  \\  Y & X\end{pmatrix}, \quad X,Y\in \mathsf{Mat}_d(\mathbb{R}).
\end{equation} 
This restricts to a representation $\tau_2:\mathsf{SL}_d(\mathbb{C})\rightarrow \mathsf{SL}_{2d}(\mathbb{R})$, which gives rise to the embeddings
\begin{align*}
\iota_2^{+}:\mathbb{P}(\mathbb{C}^d)\rightarrow \mathsf{Gr}_2(\mathbb{R}^{2d}),\quad 
\iota_2^{-}:\mathsf{Gr}_{d-1}(\mathbb{C}^d)\rightarrow \mathsf{Gr}_{2d-2}(\mathbb{R}^{2d}),
\end{align*} 
defined as follows:
\begin{alignat}{2}
\label{iota2}
\iota_2^{+}\big([u+iv]\big)&=\textup{span}\left\{\begin{pmatrix}u \\v\end{pmatrix},\begin{pmatrix}-v \\ u\end{pmatrix}\right\}, & u,v\in \mathbb{R}^d,\\ 
\label{iota2-}
\iota_2^{-}(W^{\perp})&=(\iota_2^{+}(W))^{\perp},  &W\in \mathbb{C}^d\smallsetminus \{0_d\}. 
\end{alignat} 
Note that for a $1$-Anosov subgroup $\Gamma$ of $\mathsf{SL}_d(\mathbb{C})$ with limit maps $(\xi^1,\xi^{d-1}):\partial_{\infty}\Gamma \rightarrow \mathbb{P}(\mathbb{C}^d)\times \mathsf{Gr}_{d-1}(\mathbb{C}^d)$, the subgroup $\tau_2(\Gamma)<\mathsf{SL}_{2d}(\mathbb{R})$ is $2$-Anosov with limit maps given by 
\[
(\iota_2^{+}\circ \xi^1,\iota_{2}^{-}\circ \xi^{d-1}) : \geo \G \to \mathsf{Gr}_2(\mathbb{R}^{2d}) \times \mathsf{Gr}_{2d-2}(\mathbb{R}^{2d}).
\]

For a subspace $V\subset \mathbb{C}^d$, let us denote its conjugate by  \[\overline{V}={\textup{span}\big\{x-iy:\ x+iy\in V,\ x,y\in \mathbb{R}^d\big\}}.\] We equip $\mathbb{C}^d$ with the standard orthonormal basis $\{e_1,\ldots,e_d\}$, whereas the exterior power $\bigwedge^2 \mathbb{C}^d$ is equipped with the inner product $\langle \cdot,\cdot\rangle$ which turns $\{e_i\wedge e_j:\ i<j\}$ into an orthonormal basis. 

\begin{lemma}\label{antipodal-1} 
Let $\Gamma<\mathsf{SL}_d(\mathbb{C})$ be a $1$-Anosov subgroup with limit maps $(\xi^1,\xi^{d-1}):\partial_{\infty}\Gamma \rightarrow \mathbb{P}(\mathbb{C}^d)\times \mathsf{Gr}_{d-1}(\mathbb{C}^d)$. 
Let $w\in \mathsf{GL}_{2d}(\mathbb{C})$ be given by
$$w =  \begin{pmatrix} \textup{I}_{d} &  \\   & i\textup{I}_{d}\end{pmatrix}.$$
Let $\H<\Gamma(\mathbb{R})$ be a quasiconvex subgroup of $\Gamma$ such that the following holds: 
\[
\text{for all }x\in \partial_{\infty}\Gamma,\quad 
\xi^1(x)\in \xi^{d-1}(x)\cap \overline{\xi^{d-1}}(x) \text{ if and only if } x\in \partial_{\infty}\H.
\]
Then, for all $x,y\in \partial_{\infty}\Gamma \smallsetminus \partial_{\infty}\H$, $w^{\pm 1}\iota_2^{+}(\xi^1(x))$ is antipodal to $\iota_2^{-}(\xi^{n-1}(y))$.
\end{lemma}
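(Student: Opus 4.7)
The idea is to reduce the antipodality condition to a single explicit algebraic obstruction on the limit maps, then verify that this obstruction fails in the two regimes $x\ne y$ and $x=y\notin\partial_\infty\H$. First, I would complexify and rewrite both subspaces in bases adapted to the complex structure implicit in $\iota_2^+$. For any $Z=z_R+iz_I\in\mathbb{C}^d\smallsetminus\{0\}$ with $z_R,z_I\in\mathbb{R}^d$, a direct check yields
\[
\mathbb{C}\cdot\iota_2^+\big([Z]\big)\;=\;\textup{span}_{\mathbb{C}}\Big\{(Z,-iZ)^T,\;(\bar Z,i\bar Z)^T\Big\}\subset\mathbb{C}^{2d},
\]
which diagonalizes the almost-complex structure $J(u,v)=(-v,u)$ into its $\pm i$ eigenspaces. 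In this basis the action of $w=\textup{diag}(\textup{I}_d,i\textup{I}_d)$ is transparent: $w$ sends $(Z,-iZ)^T,(\bar Z,i\bar Z)^T$ to $(Z,Z)^T,(\bar Z,-\bar Z)^T$, while $w^{-1}$ sends them to $(Z,-Z)^T,(\bar Z,\bar Z)^T$. For the second subspace, writing $\xi^{d-1}(y)=W^\perp$ with $W\in\mathbb{C}^d$ spanning its Hermitian orthogonal complement, one has that $\mathbb{C}\cdot\iota_2^-(\xi^{d-1}(y))$ is the $\mathbb{C}$-bilinear orthogonal complement of $\mathbb{C}\cdot\iota_2^+([W])$ in $\mathbb{C}^{2d}$.

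With these bases, non-antipodality of $w^{\pm 1}\iota_2^+(\xi^1(x))$ and $\iota_2^-(\xi^{d-1}(y))$ in $\mathbb{C}^{2d}$ amounts to the vanishing of the determinant of the $2\times 2$ Gram matrix of $\mathbb{C}$-bilinear pairings between the adapted bases above. Using $(1\pm i)^2=\pm 2i$, a short computation shows that this determinant equals, up to a nonzero constant,
\[
\big|\langle X,W\rangle_{\textup{bil}}\big|^2\;+\;\big|\langle X,\bar W\rangle_{\textup{bil}}\big|^2,
\]
where $X\in\mathbb{C}^d$ spans $\xi^1(x)$ and $\langle\cdot,\cdot\rangle_{\textup{bil}}$ denotes the symmetric $\mathbb{C}$-bilinear pairing $\sum v_iw_i$ on $\mathbb{C}^d$. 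Translating via the identities $\langle X,\bar W\rangle_{\textup{bil}}=\langle X,W\rangle_{\textup{Herm}}$ and $\langle X,W\rangle_{\textup{bil}}=\langle X,\bar W\rangle_{\textup{Herm}}$, these two scalars vanish jointly iff $\xi^1(x)\in\xi^{d-1}(y)\cap\overline{\xi^{d-1}}(y)$.

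The proof then concludes by a case split. For $x\ne y$ in $\partial_\infty\Gamma$, one has $\xi^1(x)\not\subset\xi^{d-1}(y)$ by the $1$-Anosov transversality of $\Gamma$ in $\mathsf{SL}_d(\mathbb{C})$, so the obstruction cannot occur. For $x=y$, the inclusion $\xi^1(x)\subset\xi^{d-1}(x)$ is part of the Anosov flag data and hence automatic; the obstruction therefore collapses to $\xi^1(x)\in\overline{\xi^{d-1}}(x)$, which is ruled out exactly when $x\notin\partial_\infty\H$ by the standing hypothesis. The main delicate point in the plan is not the determinant computation itself but the careful bookkeeping of three pairings — the real inner product on $\mathbb{R}^{2d}$ defining $\iota_2^-$, the $\mathbb{C}$-bilinear pairing on $\mathbb{C}^{2d}$ governing antipodality after complexification, and the Hermitian pairing on $\mathbb{C}^d$ implicit in the notation $\xi^{d-1}(y)^\perp$ — together with the interaction of complex conjugation and $\iota_2^+$. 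The element $w$ is designed precisely so that its $\pm i$ twist forces both $\xi^{d-1}(y)$ and its conjugate $\overline{\xi^{d-1}}(y)$ to enter the obstruction simultaneously, which is why the hypothesis involves both of these hyperplanes.
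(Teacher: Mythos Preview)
Your proof is correct and follows essentially the same strategy as the paper: reduce non-antipodality of $w^{\pm 1}\iota_2^{+}(\xi^1(x))$ and $\iota_2^{-}(\xi^{d-1}(y))$ to a single $2\times 2$ determinant, identify its vanishing with the condition $\xi^1(x)\in\xi^{d-1}(y)\cap\overline{\xi^{d-1}}(y)$, and then use Anosov transversality plus the hypothesis. The only difference is the bookkeeping of the determinant computation: the paper stays in real coordinates $u,v,a,b$ and computes the pairing in $\bigwedge^2\mathbb{C}^{2d}$ directly, obtaining $i\big(\langle u,a\rangle^2+\langle u,b\rangle^2+\langle v,a\rangle^2+\langle v,b\rangle^2\big)$, whereas you first complexify $\mathbb{R}^{2d}$ and pass to the $\pm i$ eigenbasis of $J$ to obtain $|\langle X,W\rangle_{\mathrm{bil}}|^2+|\langle X,\bar W\rangle_{\mathrm{bil}}|^2$; the two expressions agree up to a factor of $2$. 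Your diagonalization makes it transparent why both $\xi^{d-1}(y)$ and $\overline{\xi^{d-1}}(y)$ appear, at the cost of juggling three pairings; the paper's route is more elementary but less illuminating on that point. One small remark: in your case $x=y$ you invoke that $\xi^1(x)\subset\xi^{d-1}(x)$ is automatic --- this is true, but unnecessary, since the standing hypothesis already characterizes $\xi^1(x)\in\xi^{d-1}(x)\cap\overline{\xi^{d-1}}(x)$ directly and the paper simply applies it without the intermediate step.
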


\begin{proof} Fixing $x,y\in \partial_{\infty}\Gamma \smallsetminus \partial_{\infty}\H$, let us write $\xi^1(x)=[u+iv]$ for some $u,v\in \mathbb{R}^d$. 
Observe that
$$\iota_2^{+}(\xi^1(x))=\textup{span}\left\{ \begin{pmatrix}u \\ v\end{pmatrix}, \begin{pmatrix}-v \\ u\end{pmatrix}\right\}, \quad  g\iota_2^{+}(x)=\textup{span}\left\{\begin{pmatrix}u \\ iv\end{pmatrix}, \begin{pmatrix}-v \\ iu\end{pmatrix}\right\}.$$ 
Let us also write $\xi^{d-1}(y)=\textup{span}\big\{a+ib\big\}^{\perp}$, for some $a,b\in \mathbb{R}^d$, such that $$\iota_2^{-}(\xi^{d-1}(y))=\textup{span}\left \{ \begin{pmatrix}a \\ b\end{pmatrix}, \begin{pmatrix}-b \\ a\end{pmatrix}\right\}^{\perp}.$$ 

If $g\iota_2^{+}(\xi^1(x))$ were not antipodal to $\iota_2^{-}(\xi^{d-1}(y))$, then the vector $\begin{pmatrix}u \\ iv\end{pmatrix}\wedge \begin{pmatrix}-v \\ iu\end{pmatrix} \in \bigwedge^2 \mathbb{C}^d$ would be contained in the hyperplane $\left\{\begin{pmatrix}a \\ b\end{pmatrix}\wedge \begin{pmatrix}-b \\ a\end{pmatrix}\right\}^{\perp}$ of $\bigwedge^2 \mathbb{C}^d$; in other words, we would have
$$\left \langle \begin{pmatrix}u \\ iv\end{pmatrix}\wedge \begin{pmatrix}-v \\ iu\end{pmatrix}, \begin{pmatrix}a \\ b\end{pmatrix}\wedge \begin{pmatrix}-b \\ a\end{pmatrix} \right \rangle=0.$$ 
Equivalently, by the definition of the Hermitian inner product on $\bigwedge^2\mathbb{C}^d$, we would have that \begin{align*} \textup{det}\begin{pmatrix} \langle (u,iv)^t, (a,b)^t\rangle   & \langle (u,iv)^t, (-b,a)^t\rangle  \\ \langle (-v,iu)^t, (a,b)^t\rangle & \langle (-v,iu)^t, (-b,a)^t\rangle \end{pmatrix}&=\textup{det}\begin{pmatrix}  \langle u,a\rangle+i\langle v,b \rangle  &  -\langle u,b\rangle +i\langle v,a\rangle  \\ -\langle v, a\rangle+i\langle u,b\rangle & i\langle u,a\rangle+ \langle v,b\rangle  \end{pmatrix}\\ &= i\big(\langle u,a\rangle^2+\langle u,b\rangle^2+\langle v,a\rangle^2+\langle v,b\rangle^2 \big), \end{align*}
implying that 
$$\langle u,a\rangle^2+\langle u,b\rangle^2+\langle v,a\rangle^2+\langle v,b\rangle^2=0,$$ hence $\langle u\pm iv, a+ib\rangle=0$ and $\xi^1(x)\in \xi^{d-1}(y)\cap \overline{\xi^{d-1}}(y)$. 
Since $\xi^1$ and $\xi^{d-1}$ are antipodal, we would get $x=y$ and then, by our hypothesis, $x\in \partial_{\infty}\H$. However, this would contradict the choice of $x,y\in \partial_{\infty}\Gamma$. Thus, the lemma follows.\end{proof}

A corollary of this lemma and \Cref{thm:amalgam}, which will be useful later in the section, is as follows:

\begin{corollary}\label{complex-1} Let $\Gamma<\mathsf{SL}_d(\mathbb{C})$ be a $1$-Anosov subgroup with limit maps $(\xi^1,\xi^{d-1}):\partial_{\infty}\Gamma\rightarrow \mathbb{P}(\mathbb{C}^d)\times \mathsf{Gr}_{d-1}(\mathbb{C}^d)$. Suppose that $\Gamma(\mathbb{R})$ is a quasiconvex subgroup of $\Gamma$ with the property that a point $x\in \partial_{\infty}\Gamma$ is in $\partial_{\infty}\Gamma(\mathbb{R})$ if and only if $\xi^1(x)\in \xi^{d-1}(x)\cap \overline{\xi^{d-1}}(x)$. 
Then there exists a finite index subgroup $\Gamma_1<\Gamma$, containing $\Gamma(\mathbb{R})$, such that the amalgam $\Gamma_1\ast_{\Gamma(\mathbb{R})}\Gamma_1$ admits a $2$-Anosov representation into $\mathsf{SL}_{2d}(\mathbb{C})$.\end{corollary}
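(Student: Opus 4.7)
The plan is to realize $\Gamma$ as a $2$-Anosov subgroup of $\mathsf{SL}_{2d}(\mathbb{C})$ in two different ways whose limit sets satisfy the antipodality hypothesis of the Virtual Amalgam Theorem (\Cref{thm:amalgam}). First I would take $\Gamma_A \coloneqq \tau_2(\Gamma) \subset \mathsf{SL}_{2d}(\mathbb{R})$, which is $2$-Anosov with limit maps $(\iota_2^{+}\circ \xi^1,\, \iota_2^{-}\circ \xi^{d-1})$ by the discussion following \eqref{iota2-}. Next I let $w \coloneqq \textup{diag}(\textup{I}_d,\, i\textup{I}_d)$ (rescaled if needed so that $w \in \mathsf{SL}_{2d}(\mathbb{C})$) and set $\Gamma_B \coloneqq w\, \Gamma_A\, w^{-1}$; this is also $2$-Anosov, with limit maps obtained from those of $\Gamma_A$ by acting by $w$. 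A direct matrix computation using
\[
\tau_2(X+iY) = \begin{pmatrix} X & -Y \\ Y & X \end{pmatrix}, \qquad w\, \tau_2(X+iY)\, w^{-1} = \begin{pmatrix} X & iY \\ iY & X \end{pmatrix}
\]
shows that $\tau_2(\gamma) = w\, \tau_2(\gamma')\, w^{-1}$ forces $\gamma = \gamma'$ to be real; hence $\H \coloneqq \Gamma_A \cap \Gamma_B = \tau_2(\Gamma(\mathbb{R}))$, which is quasiconvex in both $\Gamma_A$ and $\Gamma_B$ by hypothesis.

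The second step is to verify the antipodality hypothesis of \Cref{thm:amalgam}, namely that $\Lambda_{\Gamma_A}\smallsetminus \Lambda_{\H}$ is antipodal to $\Lambda_{\Gamma_B}\smallsetminus \Lambda_{\H}$ in the flag variety associated to the self-opposite subset $\Theta = \{\alpha_2,\alpha_{2d-2}\}$ of simple roots of $\mathsf{SL}_{2d}(\mathbb{C})$. Unpacking the antipodality of two partial flags of signature $(2, 2d-2)$, this amounts to the statement that for all $x, y \in \partial_{\infty}\Gamma \smallsetminus \partial_{\infty}\Gamma(\mathbb{R})$, both $\iota_2^{+}(\xi^1(x))$ and $w\, \iota_2^{+}(\xi^1(x))$ are transverse to $\iota_2^{-}(\xi^{d-1}(y))$; by $\mathsf{GL}$-equivariance of transversality, this is precisely the conclusion of \Cref{antipodal-1}. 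The characterization hypothesis of that lemma — that $\xi^1(x) \in \xi^{d-1}(x) \cap \overline{\xi^{d-1}}(x)$ if and only if $x \in \partial_{\infty}\Gamma(\mathbb{R})$ — is exactly what is given in our setting.

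With these two ingredients in hand, \Cref{thm:amalgam} applied to $\Gamma_A, \Gamma_B < \mathsf{SL}_{2d}(\mathbb{C})$ produces finite-index subgroups $\Gamma_A' < \Gamma_A$ and $\Gamma_B' < \Gamma_B$, each containing $\H$, such that $\langle \Gamma_A', \Gamma_B'\rangle$ is $2$-Anosov in $\mathsf{SL}_{2d}(\mathbb{C})$ and canonically isomorphic to $\Gamma_A' \ast_{\H} \Gamma_B'$. Pulling $\Gamma_A'$ and $\Gamma_B'$ back to finite-index subgroups of $\Gamma$ via the isomorphisms $\Gamma \cong \Gamma_A \cong \Gamma_B$ and intersecting yields a single finite-index subgroup $\Gamma_1 < \Gamma$ containing $\Gamma(\mathbb{R})$; the double $\Gamma_1 \ast_{\Gamma(\mathbb{R})} \Gamma_1$ then embeds naturally as a subgroup of $\Gamma_A' \ast_{\H} \Gamma_B'$ and is therefore $2$-Anosov in $\mathsf{SL}_{2d}(\mathbb{C})$, as desired. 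The main technical point I expect to handle is the separability hypothesis of \Cref{thm:amalgam}, namely that $\Gamma(\mathbb{R})$ is separable in $\Gamma$; this should be available from the ambient algebraic structure in the intended applications (or else must be added as an explicit hypothesis of the corollary).
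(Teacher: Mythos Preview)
Your approach is essentially the paper's: conjugate $\tau_2(\Gamma)$ by $w=\mathrm{diag}(\mathrm{I}_d,i\mathrm{I}_d)$, invoke \Cref{antipodal-1} for antipodality, and apply \Cref{thm:amalgam}. Two small points deserve tightening. First, your unpacking of antipodality in $\mathcal{F}_{2,2d-2}$ is slightly off: what you actually need is that $w^{-1}\iota_2^{+}(\xi^1(x))$ is transverse to $\iota_2^{-}(\xi^{d-1}(y))$ and $w\,\iota_2^{+}(\xi^1(y))$ is transverse to $\iota_2^{-}(\xi^{d-1}(x))$; both follow from \Cref{antipodal-1} with $w^{\pm 1}$, so no harm done. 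Second, your final step (``$\Gamma_1 *_{\Gamma(\mathbb{R})} \Gamma_1$ embeds in $\Gamma_A' *_{\H} \Gamma_B'$ and is therefore $2$-Anosov'') is not a valid inference as stated, since arbitrary subgroups of Anosov groups need not be Anosov. The fix is immediate: the interactive pair produced in the proof of \Cref{thm:amalgam} continues to satisfy the hypotheses of \Cref{thm:amalgam1} for any further finite-index subgroups containing $\H$, so one may pass to $\tau_2(\Gamma_1)$ and $w\tau_2(\Gamma_1)w^{-1}$ directly. The paper sidesteps this by implicitly exploiting the conjugation symmetry $\Gamma_B=w\Gamma_A w^{-1}$ (with $w$ centralizing $\H$) to choose $\Gamma_B'=w\Gamma_A'w^{-1}$, so that a single $\Gamma_1$ appears from the outset. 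As for separability of $\Gamma(\mathbb{R})$, the paper likewise asserts it without proof; it is available because $\Gamma(\mathbb{R})$ is the intersection of the finitely generated linear group $\Gamma$ with a Zariski-closed subgroup.
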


\begin{proof} Let $\tau_2:\mathsf{GL}_d(\mathbb{C})\rightarrow \mathsf{GL}_{2d}(\mathbb{R})$ be the representation defined in (\ref{tau2}). 
Consider the matrix $g=\begin{pmatrix}[0.7]\textup{I}_{d} & \\   & i\textup{I}_{d}\end{pmatrix}$ centralizing $\tau_2(\Gamma(\mathbb{R}))<\tau_2(\Gamma)$. By Lemma \ref{antipodal-1}, the pairs of sets 
\begin{equation*}
g\iota_2^{+}\big(\xi^1(\partial_{\infty}\Gamma)\smallsetminus \xi^1(\partial_{\infty}\Gamma(\mathbb{R})) \big), \ \ \iota_{2}^{-}\left(\xi^{d-1}(\partial_{\infty}\Gamma)\smallsetminus \xi^{d-1}(\partial_{\infty}\Gamma(\mathbb{R})) \right) \end{equation*}
and
\begin{equation*}
        \iota_{2}^{+}\left(\xi^{d-1}(\partial_{\infty}\Gamma)\smallsetminus \xi^{d-1}(\partial_{\infty}\Gamma(\mathbb{R})) \right), \  \ g\iota_2^{-}\big(\xi^{d-1}(\partial_{\infty}\Gamma)\smallsetminus \xi^{d-1}(\partial_{\infty}\Gamma(\mathbb{R})) \big)
 \end{equation*} are antipodal. 
 Consequently, if 
 \[
 \xi_{2,2d}:\partial_{\infty}\Gamma\rightarrow \mathcal{F}_{2,2d-2}(\mathbb{C}), 
 \quad \eta\mapsto \left(\iota_2^{+}(\xi^1(\eta)),\iota_2^{-}(\xi^{d-1}(\eta))\right)
 \] 
 denotes the $\{2,2d-2\}$-Anosov limit map of $\tau_2(\Gamma)$, 
 then 
 $g\xi_{2,2d-2}(\partial_{\infty}\Gamma\smallsetminus \partial_{\infty}\Gamma(\mathbb{R}))$
 is antipodal to
 $\xi_{2,2d-2}(\partial_{\infty}\Gamma\smallsetminus \partial_{\infty}\Gamma(\mathbb{R}))$. 
 Since $\Gamma(\mathbb{R})<\Gamma$ is a separable subgroup, applying Theorem \ref{thm:amalgam} to $\tau_2(\Gamma), g\tau_2(\Gamma)g^{-1}<\mathsf{SL}_{2d}(\mathbb{C})$ and their intersection $\tau_2(\Gamma)\cap g\tau_2(\Gamma)g^{-1}=\tau_2(\Gamma(\mathbb{R}))$, 
 we obtain that there exists a finite index subgroup $\Gamma_1<\Gamma$, containing $\Gamma(\mathbb{R})$, such that the subgroup $\left\langle \tau_2(\Gamma_1),g\tau_2(\Gamma_1)g^{-1}\right\rangle<\mathsf{SL}_{2d}(\mathbb{C})$ is $2$-Anosov and is naturally isomorphic to $\Gamma_1\ast_{\Gamma(\mathbb{R})}\Gamma_1$.\end{proof}

Let 
\begin{align*}
 \nu_2^{+} :\mathsf{Gr}_2(\mathbb{C}^d)\rightarrow \mathbb{P}\Big(\bigwedge^2\mathbb{C}^d\Big)
 \quad\text{and}\quad
 \nu_2^{-} :\mathsf{Gr}_{d-2}(\mathbb{C}^d)\rightarrow \mathsf{Gr}_{m-1}\Big(\bigwedge^2\mathbb{C}^d\Big),
\end{align*}
where $m=\binom{d}{2}$,
denote the Pl\"ucker embeddings:
$$\nu_2^{+}\big(\textup{span}\{v_1,v_2\}\big)\coloneqq [v_1\wedge v_2], \quad \nu_2^{-}\big(\textup{span}\{v_1,v_2\}^{\perp}\big)\coloneqq \big(\textup{span}\{v_1\wedge v_2\}\big)^{\perp}.$$ 
Here $V^{\perp}$ denotes the orthogonal complement of a subspace $V$ of $\bigwedge^2 \mathbb{C}^d$.

For a $2$-Anosov subgroup $\Gamma<\mathsf{SL}_d(\mathbb{C})$ with limit maps $(\xi^2,\xi^{d-2}):\partial_{\infty}\Gamma \rightarrow \mathsf{Gr}_2(\mathbb{C}^d)\times \mathsf{Gr}_{d-2}(\mathbb{C}^d)$, the subgroup $\bigwedge^2\Gamma<\mathsf{SL}(\bigwedge^2\mathbb{C}^d)$
is $1$-Anosov with limit maps given by 
\[
\big(\nu_2^{+}\circ \xi^2,\nu_{2}^{-}\circ \xi^{d-2}\big) :\partial_{\infty}\Gamma \rightarrow \mathbb{P}\Big(\bigwedge^2\mathbb{C}^{d}\Big) \times \mathsf{Gr}_{m-1}\Big(\bigwedge^2\mathbb{C}^{d}\Big), 
\] $m=\binom{d}{2}$, see for example \cite[Prop. 3.5]{GGKW}.

\begin{proposition}\label{rho1-rho2} 
Let $\Gamma$ be a hyperbolic group and let $\rho_i:\Gamma \rightarrow \mathsf{GL}_d(\mathbb{R})$, $i=1,2$, be two faithful $1$-Anosov representations with Anosov limit maps $(\xi_{\rho_i}^{1},\xi_{\rho_i}^{d-1}):\partial_{\infty}\Gamma\rightarrow \mathbb{P}(\mathbb{R}^d)\times \mathsf{Gr}_{d-1}(\mathbb{R}^d)$.
Suppose that $$\H\coloneqq\big\{\gamma \in \Gamma: \rho_1(\gamma)=\rho_2(\gamma)\big\}$$ is a quasiconvex, separable subgroup of $\Gamma$ with the following property: for $x\in \partial_{\infty}\Gamma$, we have $x\in \partial_{\infty}\H$ if and only if either $\xi_{\rho_1}^{1}(x)\in \xi_{\rho_2}^{d-1}(x)$ or $\xi_{\rho_2}^{1}(x)\in \xi_{\rho_1}^{d-1}(x)$. 

Then there exists a finite-index subgroup $\Gamma_1<\Gamma$, containing $\H$, such that the amalgam $\Gamma_1\ast_{\H}\Gamma_1$ admits a faithful $2$-Anosov representation into $\mathsf{SL}_{m}(\mathbb{C})$, $m=2d(2d+1)$.\end{proposition}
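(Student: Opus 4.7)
The plan is to reduce the claim to \Cref{complex-1}. Since $m=2d(2d+1)=2\binom{2d+1}{2}$, the target dimension is exactly what \Cref{complex-1} produces when started from a $1$-Anosov representation into $\mathsf{SL}_{d(2d+1)}(\mathbb{C})=\mathsf{SL}_{\binom{2d+1}{2}}(\mathbb{C})$. The first step is therefore to build, from $\rho_1$ and $\rho_2$, a $1$-Anosov representation $\Phi\colon\Gamma\to \mathsf{SL}_{d(2d+1)}(\mathbb{C})$ whose real locus $\Phi(\Gamma)\cap \mathsf{SL}_{d(2d+1)}(\mathbb{R})$ equals $\Phi(\H)$, and whose Anosov limit maps $(\xi^1_\Phi,\xi^{d(2d+1)-1}_\Phi)$ satisfy the limit-map characterization of $\partial_{\infty}\H$ demanded by \Cref{complex-1}.

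Concretely, I would start from the block-diagonal representation
\[
\sigma=\rho_1\oplus \mathbf{1}\oplus\rho_2\colon\Gamma\to\mathsf{GL}_{2d+1}(\mathbb{R})\subset \mathsf{GL}_{2d+1}(\mathbb{C}),
\]
and conjugate it by the matrix $g=\left(\begin{smallmatrix} I_d & 0 & iI_d\\ 0 & 1 & 0\\ iI_d & 0 & I_d\end{smallmatrix}\right)\in\mathsf{GL}_{2d+1}(\mathbb{C})$, which mixes the first and third blocks with an imaginary twist. A direct block computation (parallel to the one in the proof of \Cref{complex-1}) shows that $g\sigma(\gamma)g^{-1}\in \mathsf{GL}_{2d+1}(\mathbb{R})$ precisely when $\rho_1(\gamma)=\rho_2(\gamma)$, i.e.\ when $\gamma\in \H$; setting $\Phi=\bigwedge^2(g\sigma g^{-1})$ then gives $\Phi(\gamma)\in \mathsf{GL}_{d(2d+1)}(\mathbb{R})$ iff $\gamma\in \H$. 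A Pl\"ucker-coordinate computation analogous to the proof of \Cref{antipodal-1} identifies $\xi^1_{\Phi}(x)$ with a wedge determined by $\xi^1_{\rho_1}(x)$ and $\xi^1_{\rho_2}(x)$, and its dual $\xi^{d(2d+1)-1}_{\Phi}(x)$ with a hyperplane controlled by $\xi^{d-1}_{\rho_1}(x)$ and $\xi^{d-1}_{\rho_2}(x)$; the condition ``$\xi^1_\Phi(x)\in \xi^{d(2d+1)-1}_\Phi(x)\cap \overline{\xi^{d(2d+1)-1}_\Phi(x)}$'' then translates precisely to ``$\xi^1_{\rho_1}(x)\in \xi^{d-1}_{\rho_2}(x)$ or $\xi^1_{\rho_2}(x)\in \xi^{d-1}_{\rho_1}(x)$'', which by hypothesis characterizes $\partial_{\infty}\H$. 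Applying \Cref{complex-1} to $\Phi$ then yields the desired finite-index $\Gamma_1<\Gamma$ containing $\H$ and a faithful $2$-Anosov representation of $\Gamma_1\ast_{\H}\Gamma_1$ into $\mathsf{SL}_{2d(2d+1)}(\mathbb{C})=\mathsf{SL}_m(\mathbb{C})$.

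The main obstacle is verifying that $\Phi=\bigwedge^2(g\sigma g^{-1})$ is $1$-Anosov. Since $\bigwedge^2$ promotes $\{2\}$-Anosov representations to $\{1\}$-Anosov ones, this amounts to showing that $\sigma=\rho_1\oplus \mathbf{1}\oplus \rho_2$ is $\{2\}$-Anosov, i.e.\ that the ratio of its second-largest to third-largest singular value diverges uniformly along escaping sequences in $\Gamma$. This dominance is not automatic from the $1$-Anosov nature of $\rho_1$ and $\rho_2$ alone, because their top eigenvalue growth rates may differ substantially. I expect the argument to address this by either (a)~replacing the naive $\sigma$ by a more symmetric construction, for instance using $\rho_i\oplus \mathbf{1}\oplus \rho_i$ to obtain, for each $i$, an automatically $\{2\}$-Anosov representation and amalgamating the two resulting $\{1\}$-Anosov exterior squares directly via \Cref{thm:amalgam}; or (b)~exploiting the hypothesis on the boundary maps together with the separability of $\H$ in $\Gamma$ to pass to a suitable finite-index subgroup where the eigenvalue gap can be enforced.
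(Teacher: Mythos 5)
Your construction is, up to cosmetic choices, the paper's: the authors set
\[
\rho(\gamma)=\begin{pmatrix}g\begin{pmatrix}\rho_1(\gamma)&\\&\rho_2(\gamma)\end{pmatrix}g^{-1}&\\&1\end{pmatrix},
\qquad g=\begin{pmatrix}\textup{I}_d&\tfrac{i}{2}\textup{I}_d\\ i\textup{I}_d&\tfrac12\textup{I}_d\end{pmatrix},
\]
assert that $\rho$ is $2$-Anosov with limit maps $g(\xi^1_{\rho_1}\oplus\xi^1_{\rho_2})$ and $g(\xi^{d-1}_{\rho_1}\oplus\xi^{d-1}_{\rho_2})\oplus\mathbb{C}e_{2d+1}$, pass to $\bigwedge^2\rho$, use the trivial block to identify $(\bigwedge^2\rho(\Gamma))(\mathbb{R})$ with $\bigwedge^2\rho(\H)$, reduce the condition $\xi^1_{\bigwedge^2\rho}(x)\in\overline{\xi^{m-1}_{\bigwedge^2\rho}}(x)$ to the cross-pairing $\langle a_{1x},b_{2x}\rangle\langle a_{2x},b_{1x}\rangle=0$ (i.e.\ to the boundary hypothesis on $\H$), and invoke Corollary~\ref{complex-1}. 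Your $g$ works equally well here, since $\overline{g}^{-1}g$ again swaps the two $d\times d$ blocks and produces the same cross-pairing; so your Steps 1--3 reproduce the paper's argument.

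The obstacle you isolate is genuine, and it is precisely the step the paper dispatches with the unproved sentence ``Note that $\rho$ is $2$-Anosov and its Anosov limit maps are given by\dots''. For the diagonal $2$-plane $x\mapsto\xi^1_{\rho_1}(x)\oplus\xi^1_{\rho_2}(x)$ to be the Anosov limit map of $\rho_1\oplus\rho_2\oplus\mathbf{1}$, the two largest singular values of $\rho_1(\gamma)\oplus\rho_2(\gamma)\oplus1$ must come one from each factor; this requires uniform divergence of $\sigma_1(\rho_2(\gamma))/\sigma_2(\rho_1(\gamma))$, of $\sigma_1(\rho_1(\gamma))/\sigma_2(\rho_2(\gamma))$, and of $\sigma_1(\rho_i(\gamma))$, none of which follows from $\rho_1,\rho_2$ being $1$-Anosov, nor obviously from the boundary-map hypothesis (which is projective and blind to singular-value magnitudes). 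In the paper's actual applications (Theorem~\ref{amalgam-rank1}) one has $\rho_2=\rho_1\circ\varphi$ for an automorphism $\varphi$ of a rank-one Lie group, so the Cartan projections of $\rho_1(\gamma)$ and $\rho_2(\gamma)$ agree up to a uniform additive constant and the required gaps are immediate; as a free-standing proposition the step does call for such a justification. Your proposed repair (a) would not work as stated: replacing $\rho_1\oplus\mathbf{1}\oplus\rho_2$ by $\rho_i\oplus\mathbf{1}\oplus\rho_i$ destroys the cross-pairing between $\xi^1_{\rho_1}$ and $\xi^{d-1}_{\rho_2}$ that encodes the hypothesis on $\partial_{\infty}\H$, and leaves no mechanism forcing the two resulting copies of $\Gamma$ to intersect exactly in $\H$ with antipodal limit sets off $\Lambda_{\H}$. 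In short: you have found the paper's proof, and the one point you could not close is the one point the paper itself leaves unjustified.
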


\begin{proof} 
Consider the representation $\rho:\Gamma \rightarrow \mathsf{SL}(\mathbb{C}^{2d}\oplus \mathbb{C}e_{2d+1})$ defined as follows:
for $\gamma \in \Gamma$,
\begin{align}\label{rho-rho1-rho2}\rho(\gamma)=\begin{pmatrix}g \begin{pmatrix}[0.5] \rho_1(\gamma) & \\ & \rho_2(\gamma)\end{pmatrix} g^{-1} & \\ &  &  1\end{pmatrix}, \quad \textup{where} \ g=\begin{pmatrix} \textup{I}_{d} & \frac{i}{2}\textup{I}_{d}\\ i\textup{I}_d &  \frac{1}{2}\textup{I}_d \end{pmatrix}\in \mathsf{SL}_{2d}(\mathbb{C}).\end{align} 
Note that $\rho$ is $2$-Anosov and its Anosov limit maps $(\xi_{\rho}^2,\xi_{\rho}^{2d-1}):\partial_{\infty}\Gamma \rightarrow \mathsf{Gr}_2(\mathbb{C}^{2d+1})\times \mathsf{Gr}_2(\mathbb{C}^{2d+1})$ are given by
\begin{align*}
\xi_{\rho}^2(x)&=g\big( \xi_{\rho_1}^1(x)\oplus \xi_{\rho_2}^1(x)\big),\\
\xi_{\rho}^{2d-1}(x)&= g\big( \xi_{\rho_1}^{d-1}(x)\oplus \xi_{\rho_2}^{d-1}(x)\big)\oplus \mathbb{C}e_{2d+1},
\end{align*} 
for $x\in \partial_{\infty}\Gamma$.

The second exterior power of $\rho$, $\bigwedge^2 \rho:\Gamma \rightarrow \mathsf{SL}(\bigwedge^2 \mathbb{C}^{2d+1})$ is a faithful $1$-Anosov representation whose Anosov limit maps $(\xi_{\bigwedge^2 \rho}^1,\xi_{\bigwedge^2 \rho}^{m-1})$, where $m=(2d+1)d$,  are given by \begin{align}\label{xi-1-psi} \xi_{\bigwedge^2 \rho}^1(x)&= \bigwedge^2\begin{pmatrix}[0.6] g & \\ & 1\end{pmatrix} \big(\nu_2^{+}\big( \xi_{\rho_1}^1(x)\oplus \xi_{\rho_2}^1(x)\big)\big),\\ \label{xi-psi-m-1} 
\xi_{\bigwedge^2 \rho}^{m-1}(x)&= \bigwedge^2\begin{pmatrix}[0.6] g & \\ & 1\end{pmatrix}\big(\nu_2^{-}\big( \xi_{\rho_1}^{d-1}(x)\oplus \xi_{\rho_2}^{d-1}(x)\oplus \mathbb{C}e_{2d+1}\big)\big),\end{align}
for $x\in \partial_{\infty}\Gamma$.

We will check that the $1$-Anosov group $\bigwedge^2\rho(\Gamma)$ and its subgroup $\bigwedge^2\rho(\H)$ satisfy the conditions of Corollary \ref{complex-1}:
First, since 
\[
\bigwedge^2 \begin{pmatrix}[0.6] \mathsf{SL}_{2d}(\mathbb{C}) & \\ &1 \end{pmatrix} \cap \mathsf{SL}\Big(\bigwedge^2 \mathbb{R}^{2d+1}\Big)=\bigwedge^2 \begin{pmatrix}[0.6] \mathsf{SL}_{2d}(\mathbb{R}) & \\ &1 \end{pmatrix}
\quad\text{and}\quad
\rho(\Gamma)\cap \begin{pmatrix}[0.6] \mathsf{SL}_{2d}(\mathbb{R}) & \\ &1 \end{pmatrix}=\rho(\H),
\]
we have that $$\Big(\bigwedge^2\rho(\Gamma)\Big)(\mathbb{R})=\bigwedge^2\rho(\Gamma)\cap \bigwedge^2 \mathsf{SL}_{2d}(\mathbb{C})\cap \mathsf{SL}\Big(\bigwedge^2\mathbb{R}^{2d+1}\Big)=\bigwedge^2 \rho(\H).$$ 
Now suppose that $x\in \partial_{\infty}\Gamma$ such that $ \xi_{\bigwedge^2 \rho}^1(x) \in \overline{\xi_{\bigwedge^2 \rho}^{m-1}}(x)$.
Equivalently (by the definition of the limit map of $\bigwedge^2 \rho$), 
\begin{equation*} \bigwedge^2g \nu_2^{+}\big( \big( \xi_{\rho_1}^1(x)\oplus \xi_{\rho_2}^1(x)\big)\big)\in  \bigwedge^2\overline{g} \big(\nu_2^{-}\big(\xi_{\rho_1}^{d-1}(x)\oplus \xi_{\rho_2}^{d-1}(x)\big)\big).
\end{equation*}
Hence
\begin{equation}
\label{non-antipodal} \bigwedge^2(\overline{g}^{-1}g) \nu_2^{+}\big( \big( \xi_{\rho_1}^1(x)\oplus \xi_{\rho_2}^1(x)\big)\big)
\in  \bigwedge^2\overline{g} \big(\nu_2^{-}\big(\xi_{\rho_1}^{d-1}(x)\oplus \xi_{\rho_2}^{d-1}(x)\big)\big).\end{equation}
We write
\begin{equation*}
     \begin{array}{lll}
        \xi_{\rho_1}^1(x)=[a_{1x}],  && \xi_{\rho_1}^{d-1}(x)=\textup{span}\{b_{1x}\}^{\perp}, \\
        \xi_{\rho_2}^1(x)=[a_{2x}], && \xi_{\rho_2}^{d-1}(x)=\textup{span}\{b_{2x}\}^{\perp},
     \end{array}
 \end{equation*} for some $a_{1x},a_{2x},b_{1x},b_{2x}\in \mathbb{R}^d\smallsetminus \{0_d\}$.
 Then, by (\ref{non-antipodal}), we have 
 $$ \left \langle  \Big(\bigwedge^2\overline{g}^{-1}g\Big)\begin{pmatrix}a_{1x}\\ 0_{d}\end{pmatrix}\wedge  \begin{pmatrix}0_d\\ a_{2x} \end{pmatrix}, \begin{pmatrix}b_{1x}\\ 0_d \end{pmatrix}\wedge  \begin{pmatrix}0_d\\ b_{2x}\end{pmatrix}  \right \rangle =0.$$
 Since 
 $\overline{g}^{-1}g=\begin{pmatrix}[0.8]  & \frac{i}{2}\textup{I}_{d}\\ 2i\textup{I}_d &  \end{pmatrix}$
 and
 $$\bigwedge^2 (\overline{g}^{-1}g) \left(\begin{pmatrix}a_{1x}\\ 0_d\end{pmatrix}\wedge  \begin{pmatrix}0_d \\ a_{2x}\end{pmatrix}\right)=\begin{pmatrix}0_d\\ 2i a_{1x}\end{pmatrix}\wedge  \begin{pmatrix} \frac{i}{2}a_{2x} \\ 0_d\end{pmatrix},$$ we conclude that 
 $$\left \langle \Big(\bigwedge^2 \overline{g}^{-1}g\Big) \begin{pmatrix}a_{1x}\\ 0_d\end{pmatrix}\wedge  \begin{pmatrix}0_d\\ a_{2x}\end{pmatrix}, \begin{pmatrix}b_{1x}\\ 0_d\end{pmatrix}\wedge  \begin{pmatrix}0_d\\ b_{2x}\end{pmatrix} \right \rangle=\langle a_{1x},b_{2x}\rangle \langle a_{2x},b_{1x}\rangle=0.$$ 
 Therefore, either $\langle a_{1x},b_{2x}\rangle=0$ or $\langle a_{2x},b_{1x}\rangle=0$; equivalently, we have either $\xi_{\rho_1}^1(x)\in \xi_{\rho_2}^{d-1}(x)$ or $\xi_{\rho_2}^1(x)\in \xi_{\rho_1}^{d-1}(x)$. In particular, by our hypothesis, we see that $x\in \partial_{\infty}\H$. Thus, $\bigwedge^2\rho(\Gamma)\cong \Gamma<\mathsf{SL}\big(\bigwedge^2 \mathbb{C}^{2d+1}\big)$ and its quasiconvex separable subgroup $\bigwedge^2\rho(\H)\cong \H$ satisfy the assumptions of Corollary \ref{complex-1}, which concludes the proof.
\end{proof}

\begin{proposition} \label{prop:4.4}
Let $\Gamma$ be a hyperbolic group and let $\psi:\Gamma \rightarrow \mathsf{SL}_d(\mathbb{C})$ be a $1$-Anosov representation with Anosov limit maps $(\xi_{\psi}^{1},\xi_{\psi}^{d-1}):\partial_{\infty}\Gamma\rightarrow \mathbb{P}(\mathbb{C}^d)\times \mathsf{Gr}_{d-1}(\mathbb{C}^d)$.
Assume that $-\textup{I}_{d} \not\in \psi(\Gamma)$. For an order $2$ element $w\in \mathsf{SL}_d(\mathbb{R})$, let $$\H\coloneqq\big\{\gamma \in \Gamma:\ \overline{\psi}(\gamma)=\psi(\gamma)=w\psi(\gamma)w^{-1}\big\}.$$ 
Let $\H'<\H$ is a quasiconvex, separable subgroup of $\Gamma$ with the following property: a point $x\in \partial_{\infty}\Gamma$ lies in $\partial_{\infty}\H'$ if and only if the following two conditions are simultaneously satisfied:
\begin{enumerate}[label=(\roman*)]
    \item $\xi_{\psi}^{1}(x)\in \overline{\xi_{\psi}^{d-1}}(x)$, and 
    \item $w \xi_{\psi}^{1}(x)\in \xi_{\psi}^{d-1}(x)$.
\end{enumerate}
Then there exists a finite-index subgroup $\Gamma_1<\Gamma$, containing $\H'$, such that the amalgam $\Gamma_1\ast_{\H'}\Gamma_1$ admits a $2$-Anosov representation into $\mathsf{SL}_{r}(\mathbb{C})$, where $r=2\binom{4d(2d-1)}{4}\left(2\binom{4d(2d-1)}{4}+1\right)$.\end{proposition}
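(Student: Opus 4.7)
The plan is to reduce Proposition \ref{prop:4.4} to Proposition \ref{rho1-rho2} by constructing two real $1$-Anosov representations $\rho_1, \rho_2 : \Gamma \to \mathsf{SL}_n(\mathbb{R})$ satisfying the hypotheses of Proposition \ref{rho1-rho2}, so that invoking that proposition produces a $2$-Anosov representation of the amalgam $\Gamma_1 \ast_{\H'} \Gamma_1$ into $\mathsf{SL}_r(\mathbb{C})$ with $r = 2n(2n+1)$. Matching this with the claimed dimension forces $n = \binom{4d(2d-1)}{4} = \dim \bigwedge^4 \mathbb{R}^{4d(2d-1)}$, which signals that $\rho_1$ and $\rho_2$ should each arise as a fourth exterior power of an auxiliary representation of dimension $4d(2d-1)$.

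The starting data is the quartet of $1$-Anosov representations $\psi_1 = \psi$, $\psi_2 = \overline{\psi}$, $\psi_3 = w\psi w^{-1}$, $\psi_4 = w\overline{\psi}w^{-1}$ of $\Gamma$ into $\mathsf{SL}_d(\mathbb{C})$, all coinciding on $\H$ by definition of $\H$. Applying $\tau_2$ from \eqref{tau2} and then the second exterior power to each produces four $1$-Anosov representations of $\Gamma$ into $\mathsf{SL}_{d(2d-1)}(\mathbb{R})$ still coinciding on $\H$; their direct sum gives a representation $\Omega : \Gamma \to \mathsf{SL}_{4d(2d-1)}(\mathbb{R})$. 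I would set $\rho_1 \coloneqq \bigwedge^4 \Omega$ and choose $\rho_2 \coloneqq \bigwedge^4(h\,\Omega\,h^{-1})$ for an involution $h$ that commutes with $\Omega$ on $\H$ (guaranteeing $\rho_1 = \rho_2$ on $\H$) and whose action on the Pl\"ucker coordinates is designed to encode the conjunction of conditions (i) and (ii) in the antipodality hypothesis of Proposition \ref{rho1-rho2}.

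The main obstacle divides into two delicate points. First, to make $\bigwedge^4 \Omega$ a genuine $1$-Anosov representation, one must verify that, off $\partial_{\infty}\H'$, the top singular directions of the four summands of $\Omega$ remain in general position; here the hypothesis $-\textup{I}_d \notin \psi(\Gamma)$ together with (i) and (ii) enter, and one may need to pass to a suitable finite-index subgroup before wedging. Second, one must carry out a Pl\"ucker-type determinantal calculation, modeled on Lemma \ref{antipodal-1} and the $\overline{g}^{-1}g$ computation in the proof of Proposition \ref{rho1-rho2}, showing that the antipodality $\xi_{\rho_1}^1(x) \in \xi_{\rho_2}^{N-1}(x)$ (with $N = \binom{4d(2d-1)}{4}$) holds if and only if \emph{both} (i) and (ii) hold at $x$; concretely, writing $\xi_{\psi}^1(x) = [u+iv]$ and $\xi_{\psi}^{d-1}(x) = \textup{span}\{a+ib\}^{\perp}$, the relevant $4\times 4$ determinant should vanish iff both $\langle u+iv,\, a+ib\rangle$ and $\langle w(u+iv),\, a+ib\rangle$ do. Once these two points are settled, applying Proposition \ref{rho1-rho2} to $\rho_1$, $\rho_2$, and $\H'$ yields a finite-index subgroup $\Gamma_1 < \Gamma$ containing $\H'$ and a $2$-Anosov representation of $\Gamma_1 \ast_{\H'} \Gamma_1$ into $\mathsf{SL}_r(\mathbb{C})$, completing the proof.
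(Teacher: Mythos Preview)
Your reduction strategy to Proposition \ref{rho1-rho2} and your dimension count are correct, but the specific construction you sketch does not work, and the gap is precisely the point you flag as ``the main obstacle'': encoding the \emph{conjunction} of (i) and (ii) as a single transversality condition.

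Your four summands $\bigwedge^2\tau_2\circ\psi_i$ are all conjugate to one another by fixed real matrices (e.g.\ $\tau_2(\overline g)=J\tau_2(g)J^{-1}$ with $J=\mathrm{diag}(\mathrm{I}_d,-\mathrm{I}_d)$), so their limit lines are $\xi$, $J'\xi$, $W'\xi$, $J'W'\xi$ for fixed $J',W'\in\mathsf{GL}_m(\mathbb R)$. Conjugating $\Omega$ by a block permutation or involution $h$ and comparing Pl\"ucker images produces a $4\times4$ determinant of real inner products; such a determinant factors as a \emph{product}, and its vanishing gives a disjunction of conditions, not a conjunction. Nothing in your setup forces the vanishing to imply \emph{both} $\langle u\pm iv,a+ib\rangle=0$ and $\langle w(u+iv),a+ib\rangle=0$ simultaneously. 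You acknowledge this is the hard step but offer no mechanism.

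The paper's mechanism is quaternionic, and this is the idea you are missing. One takes the single real $1$-Anosov representation $\psi_1\coloneqq\bigwedge^2(\tau_2\circ\psi):\Gamma\to\mathsf{SL}_m(\mathbb R)$, $m=d(2d-1)$, views it in $\mathsf{SL}(\mathbb H^m)$ by extension of scalars, and conjugates by a carefully built element $\beta\in\mathsf{GL}(\mathbb H^m)$ that uses \emph{both} the quaternion $j$ (on two subblocks of $\bigwedge^2\mathbb R^{2d}$) and a complex twist $f_1$ involving $i$ and $w$ (on the mixed subblock). The point of mixing $i$ and $j$ is that the pairing $\langle\beta^{\pm1}\xi^1(x),\xi^{m-1}(x)^\perp\rangle$ is then a genuine quaternion whose $1$-, $i$-, and $j$-components must \emph{each} vanish; the computation shows these components are exactly the sums of squares $|\langle t,u\rangle|^2+\cdots$ and $|\langle t,wu\rangle|^2+\cdots$, forcing (i) and (ii) together. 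Only after this does one realize $\mathbb H^m$ as $\mathbb R^{4m}$ and take $\bigwedge^4$ to apply Proposition \ref{rho1-rho2}; the $4$ in $\binom{4d(2d-1)}{4}$ is $\dim_{\mathbb R}\mathbb H$, not a count of summands.
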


\begin{proof} Consider the representation $\tau_2:\mathsf{SL}_d(\mathbb{C})\rightarrow \mathsf{SL}(\mathbb{R}^{d}\oplus \mathbb{R}^{d})$ defined in (\ref{tau2}). Observe that $\bigwedge^2\tau_2\circ \psi:\Gamma\rightarrow \mathsf{SL}(\bigwedge^2 V)$, where $V\coloneqq\mathbb{R}^d\oplus \mathbb{R}^d$, is a faithful $1$-Anosov representation with limit maps $(\xi^1,\xi^{m-1}):\partial_{\infty}\Gamma \rightarrow \mathbb{P}(\bigwedge^2 V)\times \mathsf{Gr}_{m-1}(\bigwedge^2 V)$, where $m=d(2d-1)$, defined as follows: 
\begin{align}
\label{xi1-def} \xi^1(x)&=\left[\begin{pmatrix}u \\ v\end{pmatrix} \wedge \begin{pmatrix}-v \\ u\end{pmatrix}\right],\\ \label{xim-1-d} \xi^{m-1}(x)&=\textup{span}\left \{ \begin{pmatrix} t \\ z\end{pmatrix}\wedge \begin{pmatrix}-z \\ t\end{pmatrix}\right \}^{\perp},
\end{align}
where $\xi_{\psi}^1(x)=[u+vi], \xi_{\psi}^{d-1}(x)=\textup{span}\{t+zi\}^{\perp}$ for $u,v,t,z\in \mathbb{R}^{d}$.

Fix the canonical basis $\{e_{1},\ldots,e_{d}\}$ of $\mathbb{R}^{d}$ and the decomposition $$\bigwedge^2V=\bigwedge^2 V_1\oplus U\oplus \bigwedge^2 V_2,$$ where 
\begin{equation*}
\begin{gathered}
V_1=\left\{\begin{pmatrix}a \\ 0_{d}\end{pmatrix}:\ a\in \mathbb{R}^n\right\}, \ \ V_2=\left\{\begin{pmatrix}0_d \\ a\end{pmatrix}:\ a\in \mathbb{R}^n\right\}, \\ U=\textup{span}\left\{\begin{pmatrix}e_p \\ 0_{d}\end{pmatrix} \wedge \begin{pmatrix}0_{d} \\ e_q\end{pmatrix}:\ p,q=1,\ldots,d\right\}.
\end{gathered}
\end{equation*}
Let $\mathbb{H}=\mathbb{R}\oplus \mathbb{R}i\oplus \mathbb{R}j\oplus \mathbb{R}k$ be Hamilton's quaternions. Given the real vector space $\bigwedge^2 V$, consider the right $\mathbb{H}$-vector space: \begin{equation}\label{splitting-H}\mathbb{H}^{m}=\underbrace{\left(\bigwedge^2 V_1\right)\mathbb{H}}_{W_1\cong \mathbb{H}^{d(d-1)/2}}\oplus \underbrace{U\mathbb{H}}_{W_2\cong \mathbb{H}^{d^2}}\oplus \underbrace{\left(\bigwedge^2 V_2\right)\mathbb{H} }_{W_3\cong \mathbb{H}^{d(d-1)/2}},\end{equation} 
equipped with the inner product $\langle \cdot,\cdot\rangle$ with orthonormal basis the union of the orthonormal bases of $\bigwedge^2 V_1$, $U$, $\bigwedge^2 V_2$ (as real vector spaces).

Let $f_{\pm 1}\in \mathsf{GL}(W_2)$ be the unique linear transformation defined as follows: \begin{equation}\label{def-f}f_{\pm 1} \left(\begin{pmatrix}a \\ 0_{d}\end{pmatrix} \wedge \begin{pmatrix}0_{d} \\ b\end{pmatrix} \right)=\frac{1}{\sqrt{2}}\begin{pmatrix}a \\ 0_{d}\end{pmatrix} \wedge \begin{pmatrix}0_{d} \\ b\end{pmatrix}\pm \frac{i}{\sqrt{2}}\begin{pmatrix}w a \\ 0_{d}\end{pmatrix} \wedge \begin{pmatrix}0_{d} \\ w b\end{pmatrix}\end{equation} for $a,b\in \mathbb{R}^{d}$. Note that $f_{1}f_{-1}=\textup{I}_{W_2}$. Hence $f_{\pm 1}$ is well-defined and invertible. 

The representation $\bigwedge^2\tau_2\circ \psi:\Gamma\rightarrow \mathsf{SL}(\mathbb{H}^m)$ is also $1$-Anosov with limit maps $(\xi_{\mathbb{H}}^1,\xi_{\mathbb{H}}^{m-1}):\partial_{\infty}\Gamma\rightarrow \mathbb{P}(\mathbb{H}^m)\times \mathsf{Gr}_{m-1}(\mathbb{H}^m)$ defined from (\ref{xi1-def}) and (\ref{xim-1-d}) as follows for $x\in \partial_{\infty}\Gamma$: \begin{align}\label{xi1-def2}\mathbb{\xi}_{\mathbb{H}}^1(x)=\left[\begin{pmatrix}u \\ v\end{pmatrix} \wedge \begin{pmatrix}-v \\ u\end{pmatrix}\right], \ \xi^{m-1}_{\mathbb{H}}(x)=\left\{\omega\in \mathbb{H}^m: \left\langle\begin{pmatrix} t \\ z\end{pmatrix}\wedge \begin{pmatrix}-z  \\ t\end{pmatrix} ,\omega\right\rangle=0\right\},\end{align} where $\langle \cdot,\cdot\rangle$ is the inner product on $\mathbb{H}^m$.

With respect to the splitting of $\mathbb{H}^m=W_1\oplus W_2\oplus W_3$ in (\ref{splitting-H}) and using $f_{\pm 1}\in \mathsf{GL}(W_2)$, defined in (\ref{def-f}), we define the element $\beta\in \mathsf{GL}(\mathbb{H}^m)$: 
\begin{equation}\label{def-B} \beta\coloneqq\begin{pmatrix} j\textup{I}_{W_1} & &\\ & f_1 & \\ & & j\textup{I}_{W_3} \end{pmatrix}.\end{equation}

Recall the definition of the quasiconvex subgroup $\H<\Gamma$ defined in the statement of the proposition. We have:

\begin{nclaim} \label{claim-0} For a point $x\in \partial_{\infty}\Gamma$, if $\beta^{\pm 1}\xi_{\mathbb{H}}^1(x)\in \xi_{\mathbb{H}}^{m-1}(x)$, then $x\in \partial_{\infty}\H$.\end{nclaim}

\begin{proof}[Proof of Claim.] Write $\xi_{\psi}^1(x)=[u+vi],\ \xi_{\psi}^{m-1}(x)=\textup{span}\{t+zi\}^{\perp}$ for some $u,v,t,z\in \mathbb{R}^n$ such that $\xi_{\mathbb{H}}^1(x)\in \mathbb{P}(\mathbb{H}^m)$ and $\xi_{\mathbb{H}}^{m-1}(x)\in \mathsf{Gr}_{m-1}(\mathbb{H}^m)$ are given by (\ref{xi1-def2}). 
Observe that for $\varepsilon\in \{-1,1\}$, we have 
\begin{align*} \beta^{\varepsilon}\left(\begin{pmatrix}u \\ v\end{pmatrix} \wedge \begin{pmatrix}-v \\ u \end{pmatrix}\right)&=\beta^{\varepsilon}\left(\underbrace{\begin{pmatrix}u \\ 0_{d}\end{pmatrix} \wedge \begin{pmatrix}0_{d} \\ u \end{pmatrix}+\begin{pmatrix}v \\ 0_{d}\end{pmatrix} \wedge \begin{pmatrix}0_{d} \\ v \end{pmatrix}}_{\in W_2} \right)\\&\qquad\qquad\qquad\qquad\qquad+\beta^{\varepsilon}\left(\underbrace{\begin{pmatrix}u \\ 0_{d}\end{pmatrix} \wedge \begin{pmatrix}-v \\ 0_{d} \end{pmatrix}}_{\in W_1}+\underbrace{\begin{pmatrix}0_{d} \\ v\end{pmatrix} \wedge \begin{pmatrix}0_{d} \\ u \end{pmatrix}}_{\in W_3} \right)\\  &= \varepsilon j B_1+ f_{\varepsilon} \left(\begin{pmatrix}u \\ 0_{d}\end{pmatrix} \wedge \begin{pmatrix}0_{d} \\ u \end{pmatrix}+ \begin{pmatrix}v \\ 0_{d}\end{pmatrix} \wedge \begin{pmatrix}0_{d} \\ v \end{pmatrix} \right)\\ & =\varepsilon j B_1+\frac{1}{\sqrt{2}}B_2+\frac{\varepsilon i}{\sqrt{2}}B_3,
\end{align*}
where
\[
B_1\coloneqq\begin{pmatrix}u \\ 0_{d}\end{pmatrix} \wedge \begin{pmatrix}-v \\ 0_{d} \end{pmatrix}+ \begin{pmatrix} 0_{d}\\ v\end{pmatrix}\wedge \begin{pmatrix} 0_{d}\\ u\end{pmatrix}, \quad B_2\coloneqq\begin{pmatrix}u \\ 0_{d}\end{pmatrix} \wedge \begin{pmatrix}0_{d} \\ u \end{pmatrix}+ \begin{pmatrix}v \\ 0_{d}\end{pmatrix} \wedge \begin{pmatrix}0_{d} \\ v \end{pmatrix},
\]
\[
B_3\coloneqq\begin{pmatrix}w u \\ 0_{d}\end{pmatrix} \wedge \begin{pmatrix}0_{d} \\ wu \end{pmatrix}+ \begin{pmatrix}wv \\ 0_{d}\end{pmatrix} \wedge \begin{pmatrix}0_{d} \\ wv \end{pmatrix}.
\]

Since we assumed that $\beta^{\pm 1}\xi^1(x)\in \xi^{m-1}(x)$, we have $$\left\langle  \begin{pmatrix}t \\z\end{pmatrix}\wedge \begin{pmatrix}-z \\ t\end{pmatrix},j\varepsilon B_1\right \rangle+\left\langle  \begin{pmatrix}t \\z\end{pmatrix}\wedge \begin{pmatrix}-z \\ t\end{pmatrix},\frac{1}{\sqrt2}B_2+\frac{i\varepsilon}{\sqrt{2}}B_3\right \rangle=0,$$
which implies that \begin{equation}\label{vanishing}\left\langle  \begin{pmatrix}t \\z\end{pmatrix}\wedge \begin{pmatrix}-z \\ t\end{pmatrix}, B_p\right \rangle=0, \quad p=1,2,3.\end{equation} 
A direct computation yields
\begin{align*}\left \langle \begin{pmatrix}t \\z\end{pmatrix}\wedge \begin{pmatrix}-z \\ t\end{pmatrix}, B_2\right\rangle &=\left|\langle t,u\rangle\right|^2+\left| \langle z,u\rangle\right|^2+ \left|\langle t,v\rangle \right|^2+\left|\langle z,v\rangle\right|^2,\\ \left \langle \begin{pmatrix}t \\ z\end{pmatrix}\wedge \begin{pmatrix}-z \\ t\end{pmatrix}, B_3\right \rangle &=\left|\langle t,wu\rangle\right|^2+ \left| \langle z,wu \rangle\right|^2 
+ \left|\langle t,wv\rangle  \right|^2+\left|\langle z,w v\rangle\right|^2. \end{align*} 
Combining this with (\ref{vanishing}), we obtain
\begin{align*}\langle t\pm zi, u+vi\rangle= \langle t+zi,w u\rangle=\langle t+zi,wv\rangle=0. \end{align*} 
In particular, $\xi_{\psi}^1(x)\in \overline{\xi_{\psi}^{d-1}}(x)$ and $w\xi_{\psi}^1(x)\in \xi_{\psi}^{d-1}(x)$.
Therefore, by our hypothesis (see the statement of \Cref{prop:4.4}), we have $x\in \partial_{\infty}\H$.\end{proof}

Now we conclude this proof by showing how to virtually amalgamate the Anosov subgroup $\Gamma$ along the quasiconvex subgroup $\H\coloneqq\big\{\gamma \in \Gamma: w\psi(\gamma) w^{-1}=\psi(\gamma)=\overline{\psi}(\gamma)\big\}$: 
consider the faithful $1$-Anosov representations $\psi_1,\psi_2:\Gamma\rightarrow \mathsf{SL}(\mathbb{H}^m)$, where $$\psi_1\coloneqq\bigwedge^2\big(\tau_2\circ \psi\big) \quad\text{and}\quad \psi_2=\beta\psi_1\beta^{-1}.$$ 
Since $w\in \mathsf{SL}_d(\mathbb{R})$, the subgroup $\bigwedge^2 \mathsf{D}<\mathsf{SL}(\mathbb{H}^m)$, where $$\mathsf{D}\coloneqq\left\{\begin{pmatrix}[0.6] X & \\ & X\end{pmatrix}: wXw^{-1}=X,X\in \mathsf{GL}_{d}(\mathbb{R})\right\},$$ is centralized by $\beta\in \mathsf{GL}(\mathbb{H}^m)$ defined in (\ref{def-B}). 
Since $\psi_1(\H)<\bigwedge^2 \mathsf{D}$ we obtain that $\H$ is a subgroup of $$\H'\coloneqq\big\{\gamma \in \Gamma:\psi_1(\gamma)=\psi_2(\gamma)\big\}.$$ By the definition of $\H'<\Gamma$, for every $x\in \partial_{\infty}\H' \subset \partial_{\infty}{\Gamma}$ we have $\beta^{\pm 1}\xi^1(x)\in \xi^{m-1}(x)$. Thus, by \Cref{claim-0}, we conclude that $\partial_{\infty}\H$ is the limit set of $\H'$ in $\partial_{\infty}\Gamma$, and since $\H<\Gamma$ is quasiconvex, we conclude that $\H$ has  finite index in $\H$. To this end, as $\H<\Gamma$ is separable, we can (and will) choose a finite-index subgroup $\Gamma_0<\Gamma$ such that $\Gamma_0\cap \H'=\H$. 

For the faithful 1-Anosov representations $\psi_1,\psi_2:\Gamma_0\rightarrow \mathsf{SL}(\mathbb{H}^m)$, we have that $\H=\{\gamma \in \Gamma_0:\psi_1(\gamma)=\psi_2(\gamma)\}$ and, by Claim \ref{claim-0}, if either $\xi_{\psi_1}^1(x)\in \xi_{\psi_2}^{m-1}(x)$ or $\xi_{\psi_2}^1(x)\in \xi_{\psi_1}^{m-1}(x)$, then $x\in \partial_{\infty}\H'$. Let $\mathsf{SL}(\mathbb{H}^m)\xhookrightarrow{} \mathsf{SL}_{4m}(\mathbb{R})$ be the extension of scalars representation by realizing $\mathbb{H}^m$ as a $4m$-dimensional real vector space. For $i\in \{1,2\}$, since $\psi_i$ is $1$-Anosov, the representations $\bigwedge^4\psi_i:\Gamma_0 \rightarrow \mathsf{SL}\big(\bigwedge^{4}\mathbb{R}^{4m}\big)$ are $1$-Anosov and satisfy the assumptions of Proposition \ref{rho1-rho2} by the previous discussion. Therefore, there exists a $2$-Anosov representation of the amalgam $\Gamma_1\ast_{\H}\Gamma_1$ into $\mathsf{SL}_r(\mathbb{C})$, where $\Gamma_1<\Gamma$ is a finite-index subgroup containing $\H'$ and $r=2\binom{4d(2d-1)}{4}\left(2\binom{4d(2d-1)}{4}+1\right)$.\end{proof}

As a consequence, we obtain the following:

\begin{proposition}\label{rho1-rho2-w} Let $\Gamma$ be a hyperbolic group and let $\rho_1:\Gamma \rightarrow \mathsf{GL}_d(\mathbb{R})$, $\rho_2:\Gamma \rightarrow \mathsf{GL}_d(\mathbb{R})$ be two faithful $1$-Anosov representations with Anosov limit maps $(\xi_{\rho_i}^{1},\xi_{\rho_i}^{d-1}):\partial_{\infty}\Gamma\rightarrow \mathbb{P}(\mathbb{R}^d)\times \mathsf{Gr}_{d-1}(\mathbb{R}^d)$, $i=1,2$. Let $w_0\in \mathsf{GL}_d(\mathbb{R})$ be an order $2$ element.
Suppose that
$$\M\coloneqq\big\{\gamma \in \Gamma: \rho_1(\gamma)=\rho_2(\gamma)=w_0\rho_1(\gamma)w_0^{-1}\big\}$$ is a quasiconvex, separable subgroup of $\Gamma$ with the following property: a point $x\in \partial_{\infty}\Gamma$ lies in $\partial_{\infty}\M$ if and only if the following conditions are simultaneously satisfied:
\begin{enumerate}[label=(\roman*)]
\item $\xi_{\rho_1}^{1}(x)\in \xi_{\rho_2}^{d-1}(x)$ or $\xi_{\rho_2}^{1}(x)\in \xi_{\rho_1}^{d-1}(x)$; and
\item  $w_0\xi_{\rho_1}^{1}(x)\in \xi_{\rho_1}^{d-1}(x)$ or $w_0\xi_{\rho_2}^{1}(x)\in \xi_{\rho_2}^{d-1}(x)$.
\end{enumerate}
 Then there exists a finite-index subgroup $\Gamma_2<\Gamma$, containing $\M$, such that the amalgam $\Gamma_2\ast_{\M}\Gamma_2$ admits a $2$-Anosov representation into $\mathsf{SL}_{r}(\mathbb{C})$, $r=2\binom{4p(2p-1)}{4}\left(2\binom{4p(2p-1)}{4}+1\right)$, where $p=d(2d+1)$.
\end{proposition}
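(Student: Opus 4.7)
The plan is to reduce Proposition~\ref{rho1-rho2-w} to Proposition~\ref{prop:4.4} by first amalgamating $\rho_1$ and $\rho_2$ into a single representation using the construction of Proposition~\ref{rho1-rho2}, and then choosing the involution of Proposition~\ref{prop:4.4} compatibly with $w_0$. More precisely, let $\rho:\Gamma\to\mathsf{SL}_{2d+1}(\mathbb{C})$ be the representation defined in (\ref{rho-rho1-rho2}), and set
\[
\psi \coloneqq \bigwedge\nolimits^2\rho :\Gamma\to \mathsf{SL}_p(\mathbb{C}), \qquad p \coloneqq d(2d+1),
\]
which, by the proof of Proposition~\ref{rho1-rho2}, is a faithful $1$-Anosov representation. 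Define
\[
W_0 \coloneqq \begin{pmatrix} w_0 & & \\ & w_0 & \\ & & 1\end{pmatrix}\in \mathsf{SL}_{2d+1}(\mathbb{R}), \qquad W \coloneqq \bigwedge\nolimits^2 W_0 \in \mathsf{SL}_p(\mathbb{R}),
\]
so that $W^2=\textup{I}_p$ and $W\ne \textup{I}_p$. Note also that $-\textup{I}_p\notin \psi(\Gamma)$: indeed, $\bigwedge^2 A=-\textup{I}_p$ would force $A=\pm i\,\textup{I}_{2d+1}$, but $\rho(\gamma)$ fixes $e_{2d+1}$, so we would have $\rho(\gamma)=\textup{I}_{2d+1}$ and hence $\gamma=1$.

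The decisive observation is that $W_0' \coloneqq \textup{diag}(w_0,w_0)$ commutes with the matrix $g$ of (\ref{rho-rho1-rho2}), because $W_0'$ is scalar in the $\mathsf{GL}_2$-structure on the two copies of $\mathbb{R}^d$ that $g$ mixes. From this I derive two facts. For every $\gamma\in \M$, the equalities $\rho_1(\gamma)=\rho_2(\gamma)=w_0\rho_1(\gamma)w_0^{-1}$ turn $\rho(\gamma)$ into a real matrix commuting with $W_0$, so $\psi(\gamma)$ is real and commutes with $W$; hence $\M$ is contained in the subgroup
\[
\H \coloneqq \{\gamma\in\Gamma : \overline{\psi}(\gamma)=\psi(\gamma)=W\psi(\gamma)W^{-1}\}
\]
from Proposition~\ref{prop:4.4}. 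Second, $W_0\,\textup{diag}(g,1) = \textup{diag}(g,1)\,W_0$ combined with (\ref{xi-1-psi}) yields
\[
W\xi_{\psi}^1(x) = \bigwedge\nolimits^2\!\begin{pmatrix} g & \\ & 1\end{pmatrix}\nu_2^{+}\bigl(w_0\xi_{\rho_1}^1(x)\oplus w_0\xi_{\rho_2}^1(x)\bigr),
\]
so by (\ref{xi-psi-m-1}) and the Plücker characterization of non-antipodality, the relation $W\xi_{\psi}^1(x)\in \xi_{\psi}^{p-1}(x)$ becomes
\[
\bigl(w_0\xi_{\rho_1}^1(x)\oplus w_0\xi_{\rho_2}^1(x)\bigr)\cap \bigl(\xi_{\rho_1}^{d-1}(x)\oplus \xi_{\rho_2}^{d-1}(x)\oplus \mathbb{C}e_{2d+1}\bigr)\ne \{0\},
\]
which, by a block-by-block inspection inside the decomposition $\mathbb{C}^{2d+1}=\mathbb{C}^d\oplus\mathbb{C}^d\oplus\mathbb{C}e_{2d+1}$, is equivalent to condition (ii) of the proposition. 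The parallel calculation already carried out in the proof of Proposition~\ref{rho1-rho2} shows that $\xi_{\psi}^1(x)\in \overline{\xi_{\psi}^{p-1}}(x)$ is equivalent to condition (i). Under the hypothesis, the conjunction of the two boundary conditions is therefore precisely $x\in \partial_{\infty}\M$.

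Thus $\psi$, $W$, and the quasiconvex separable subgroup $\M\subset\H$ satisfy every assumption of Proposition~\ref{prop:4.4}; invoking it produces a finite-index subgroup $\Gamma_2<\Gamma$ containing $\M$ together with a $2$-Anosov representation of $\Gamma_2\ast_{\M}\Gamma_2$ into $\mathsf{SL}_r(\mathbb{C})$ with
\[
r = 2\tbinom{4p(2p-1)}{4}\Bigl(2\tbinom{4p(2p-1)}{4}+1\Bigr),
\]
as claimed. The only genuine obstacle is the non-antipodality calculation translating $W\xi_{\psi}^1(x)\in \xi_{\psi}^{p-1}(x)$ into condition (ii); the commutation $[W_0',g]=0$ reduces this to a routine intersection computation in $\mathbb{C}^{2d+1}$, so there is no substantive difficulty beyond careful bookkeeping.
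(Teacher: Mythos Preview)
Your proposal is correct and follows essentially the same route as the paper: build $\psi=\bigwedge^2\rho$ from the representation $\rho$ of (\ref{rho-rho1-rho2}), set $W=\bigwedge^2\operatorname{diag}(w_0,w_0,1)$, use the commutation $[\operatorname{diag}(w_0,w_0),g]=0$ to translate the two boundary conditions of Proposition~\ref{prop:4.4} into conditions (i) and (ii), and then invoke Proposition~\ref{prop:4.4} with $\H'=\M$. You even supply the check that $-\mathrm{I}_p\notin\psi(\Gamma)$, which the paper omits; your phrasing of that step is slightly garbled (the conclusion is simply a contradiction, not $\rho(\gamma)=\mathrm{I}_{2d+1}$), but the argument is sound since $\rho(\gamma)$ has $1$ as an eigenvalue via $e_{2d+1}$.
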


\begin{proof} First, we consider the $2$-Anosov representation $\rho:\Gamma \rightarrow \mathsf{SL}_{}(\mathbb{C}^{2d}\oplus \mathbb{C}e_{2d+1})$, obtained from $\rho_1$ and $\rho_2$ in (\ref{rho-rho1-rho2}) and the $1$-Anosov representation $\psi:\Gamma \rightarrow \mathsf{SL}_{m}(\mathbb{C})$, $\psi\coloneqq\bigwedge^2 \rho$, $m=d(2d+1)$, with Anosov limit maps $(\xi_{\psi}^1,\xi_{\psi}^{m-1})$ defined in (\ref{xi-1-psi}) and (\ref{xi-psi-m-1}). Define also the order $2$ element of $w\in \mathsf{SL}\big(\bigwedge^2 (\mathbb{C}^{2d}\oplus \mathbb{C}e_{2d+1})\big)$, $$w\coloneqq \bigwedge^2\begin{pmatrix} w_0 & &\\ & w_0 & \\ & & 1\end{pmatrix}.$$ 

It is clear from the proof of Proposition \ref{rho1-rho2} that for a point $x\in \partial_{\infty}\Gamma$:
\smallskip

\noindent \textup{(a)} If $\xi_{\psi}^1(x)\in \overline{\xi_{\psi}^{m-1}}(x)$, then either $\xi_{\rho_1}^1(x)\in \xi_{\rho_2}^{d-1}(x)$ or $\xi_{\rho_1}^1(x)\in \xi_{\rho_2}^{d-1}(x)$; and \\
\noindent \textup{(b)} If $w\xi_{\psi}^1(x)\in \xi_{\psi}^{m-1}(x)$, then, since $g\in \mathsf{SL}_{2n}(\mathbb{C})$ is the matrix defined in (\ref{rho-rho1-rho2}) commutes with $\begin{pmatrix}[0.8] w_0 & \\ & w_0\end{pmatrix}$, we have \begin{align*}w\xi_{\psi}^1(x)&=\bigwedge^2 \begin{pmatrix}[0.8] g & \\ & 1\end{pmatrix} \big(\nu_2^{+}\big(w_0\xi_{\rho_1}^1(x)\oplus w_0 \xi_{\rho_2}^1(x)\big)\big),\\ \xi_{\psi}^{m-1}(x)&=\bigwedge^2 \begin{pmatrix}[0.8] g & \\ & 1\end{pmatrix} \big(\nu_2^{-}\big(\xi_{\rho_1}^{d-1}(x)\oplus \xi_{\rho_2}^{d-1}(x)\oplus \mathbb{C}e_{2d+1}\big)\big).\end{align*} 
Therefore, $$\left(w_0\xi_{\rho_1}^1(x)\oplus w_0\xi_{\rho_2}^1(x)\right)\cap \left(\xi_{\rho_1}^{d-1}(x)\oplus \xi_{\rho_2}^{d-1}(x)\right)\neq (0)$$ and $w_0\xi_{\rho_i}^1(x)\in \xi_{\rho_i}^{d-1}(x)$ for some $i\in \{1,2\}$.
\smallskip

By the definition of $\psi$, $\M<\Gamma$ is a subgroup of the group $\{\gamma\in \Gamma:w\psi(\gamma)w^{-1}=\psi(\gamma)=\overline{\psi}(\gamma)\}$ and by our assumption in (i) and (ii), the representation $\psi:\Gamma \rightarrow \mathsf{SL}_{d(2d+1)}(\mathbb{C})$ and the separable subgroup $\M<\Gamma$ satisfy the assumptions of Proposition \ref{rho1-rho2-w}. Therefore, there exists a finite-index subgroup $\Gamma_2<\Gamma$, containing $\M$, such that the double $\Gamma_2\ast_{\M}\Gamma_2$ admits a $2$-Anosov representation into $\mathsf{SL}_r(\mathbb{C})$ for $r=2\binom{4p(2p-1)}{4}\left(2\binom{4p(2p-1)}{4}+1\right)$, where $p=d(2d+1)$.\end{proof}

\subsection{Proof of Theorem \ref{amalgam-rank1}}\label{sec:4.2}
  Let us recall that $\mathsf{G}$ is a rank $1$ Lie group and  $(\mathsf{G},\mathsf{L})$ is a compatible pair, such that the following conditions hold:\begin{enumerate}[label=(\roman*)]
    \item $\mathsf{L}=\big\{g\in \mathsf{G}:\varphi(g)=g\big\}$;
    \item\label{compatible-2} $\Lambda_{\mathsf{L}}=\big\{x\in \partial_{\infty}(\mathsf{G}/\mathsf{K}):\partial\varphi(x)=x\big\}$,
\end{enumerate} where $\varphi:\mathsf{G}\rightarrow \mathsf{G}$ is a Lie automorphism of $\mathsf{G}$.

Let $\tau:\mathsf{G}\rightarrow \mathsf{SL}_d(\mathbb{R})$ be a faithful $1$-proximal representation of minimal dimension $d\coloneqq d(\mathsf{G})$ provided by Proposition \ref{theta-compatible} such that $\tau(\mathsf{G})\cap\{\pm \textup{I}_d\}=\{\textup{I}_d\}$. The representations $\tau|_{\Gamma}:\Gamma \rightarrow \mathsf{SL}_d(\mathbb{R})$ and  $(\tau\circ \varphi)|_{\Gamma}:\Gamma \rightarrow \mathsf{SL}_d(\mathbb{R})$ are $1$-Anosov (e.g. see \cite[Prop. 4.4]{Guichard-Wienhard}). In addition, there exist $\tau$-equivariant continuous embeddings $$ \iota_{\tau}^{+}:\partial_{\infty}(\mathsf{G}/\mathsf{K})\rightarrow \mathbb{P}(\mathbb{R}^d), \ \iota_{\tau}^{-}:\partial_{\infty}(\mathsf{G}/\mathsf{K})\rightarrow \mathsf{Gr}_{d-1}(\mathbb{R}^d),$$ which are antipodal (i.e. $\iota_{\tau}^{+}(x)\notin \iota_{\tau}^{-}(y)$ for $x\neq y$) such that the Anosov limit maps of $\tau|_{\Gamma}$ and $(\tau\circ \varphi)|_{\Gamma}$ respectively are \begin{align*} \begin{array}{lll}
        \xi_{\tau}^1(x)=\iota_{\tau}^{+}(x),  && \xi_{\tau}^{d-1}(x)=\iota_{\tau}^{-}(x), \\
        \xi_{\tau\circ \varphi}^1(x)=\iota_{\tau}^{+}(\partial\varphi(x)), && \xi_{\tau\circ \varphi}^{d-1}(x)=\iota_{\tau}^{-}(\partial\varphi(x)).
     \end{array}\end{align*}

\noindent \ref{amalgam-rank1-1}  Suppose that $\Gamma \cap \mathsf{L}<\Gamma$ is quasiconvex and $\Lambda_{\Gamma \cap \mathsf{L}}=\Lambda_{\Gamma}\cap \Lambda_{\mathsf{L}}$. In addition, as $\mathsf{L}<\mathsf{G}$ is Zariski closed, $\Gamma \cap \mathsf{L}<\Gamma$ is separable (see  \cite{bergeron}). Note that if either $\xi_{\tau}^1(x)\in \xi_{\tau\circ \varphi}^{d-1}(x)$ or $ \xi_{\tau\circ \varphi}^{1}(x)\in \xi_{\tau}^{d-1}(x)$, then, since $\iota_{\tau}^{+}$ and $\iota_{\tau}^{-}$ are antipodal, $\partial\varphi(x)=x$ and by \ref{compatible-2} $x\in \Lambda_{\mathsf{L}}\cap \Lambda_{\Gamma}$ and $x\in \Lambda_{\Gamma\cap \mathsf{L}}$. Thus, the representations $\rho_1\coloneqq\tau|_{\Gamma}$ and $\rho_2\coloneqq(\tau\circ \varphi)|_{\Gamma}$ and the subgroup $\Gamma \cap \mathsf{L}<\Gamma$, satisfy the assumptions of Proposition \ref{rho1-rho2} and the conclusion follows.

\medskip

\noindent \ref{amalgam-rank-2} Let $(\mathsf{G},\mathsf{L}')$ be another compatible pair given by the automorphism $\psi:\mathsf{G}\rightarrow \mathsf{G}$ of order equal to $2q$, $q\in \mathbb{N}$, such that $\mathsf{L}'\coloneqq\big\{g\in \mathsf{G}:\psi(g)=g\big\}$ and $\Lambda_{\mathsf{L}'}=\big\{x\in \partial_{\infty}(\mathsf{G}/\mathsf{K}):\partial\psi(x)=x\big\}$.  

Let $\Gamma<\mathsf{G}$ be a convex cocompact subgroup such that $$\M\coloneqq\big\{\gamma\in \Gamma:\varphi(\gamma)=\psi(\gamma)=\gamma \big\}$$ is a quasiconvex subgroup of $\Gamma$ with $\Lambda_{\Gamma}\cap \Lambda_{\mathsf{L}}\cap \Lambda_{\mathsf{L}'}=\Lambda_{\M}$. Again, as $\mathsf{L},\mathsf{L}'<\mathsf{G}$ are Zariski closed, $\M<\Gamma$ is separable.

Recall the definition of $\tau$ above and consider the faithful representation $\tau':\mathsf{G}\rightarrow \mathsf{GL}\big(\bigwedge^{2q}\mathbb{R}^{2qd}\big)$ defined as follows:
\begin{equation}\label{wedge-tau} \tau'(\gamma)=\bigwedge^{2q}\begin{pmatrix}
\gamma &  &  &   \\
   &\psi(\gamma)  & &     \\
 &  &  \ddots & \\
 &  &  &\psi^{2q-1}(\gamma)
\end{pmatrix}.\end{equation}  
Associated to $\tau'$ there is a pair of continuous, $\tau'$-equivariant antipodal limit maps $(\iota_{\tau'}^{+},\iota_{\tau'}^{-}):\partial_{\infty}\mathsf{G}/\mathsf{K}\rightarrow \mathbb{P}\big(\bigwedge^{2q}\mathbb{R}^{2qd}\big)\times \mathsf{Gr}_{c-1}\big(\bigwedge^{2q}\mathbb{R}^{2qd}\big)$, where $c\coloneqq\binom{2qd}{2q}$, obtained from $(\iota_{\tau}^{+},\iota_{\tau}^{-})$ as follows: for a point $x\in \partial_{\infty}\mathsf{G}/\mathsf{K}$ write, $$\iota_{\tau}^{+}(\partial\psi^{\ell}(x))=\big[u_{\partial\psi^{\ell}(x)}\big], \quad \iota_{\tau}^{-}(\partial\psi^{\ell}(x))=\textup{span}\big\{v_{\partial\psi^{\ell}(x)}\big\}^{\perp}$$ for $\ell=0,\ldots, 2c-1$, and let 
\begin{align}\label{limitmaps-wedge}\iota_{\tau'}^{+}(x)&\coloneqq\textup{span}\left\{\begin{pmatrix}
u_x \\
0 \\
 \vdots \\
0\end{pmatrix} \wedge \begin{pmatrix}
0 \\
u_{\partial\varphi(x)} \\
 \vdots \\
0\end{pmatrix}\wedge\cdots\wedge\begin{pmatrix}
0 \\
0\\
 \vdots \\
u_{\partial\varphi^{2q-1}(x)}\end{pmatrix}\right\},\\
\label{limitmaps-wedge-2}\iota_{\tau'}^{-}(x)&\coloneqq\textup{span}\left \{\begin{pmatrix}
v_x \\
0 \\
 \vdots \\
0\end{pmatrix} \wedge \begin{pmatrix}
0 \\
v_{\partial\varphi(x)} \\
 \vdots \\
0\end{pmatrix}\wedge\cdots\wedge\begin{pmatrix}
0 \\
0\\
 \vdots \\
v_{\partial\varphi^{2q-1}(x)}\end{pmatrix}\right\}^{\perp}.\end{align} 
The representation $\tau'|_{\Gamma}$ (resp. $(\tau'\circ \varphi)|_{\Gamma}$) is $1$-Anosov and its limit maps are the restrictions of $(\iota_{\tau'}^{+},\iota_{\tau'}^{-})$ (resp. $(\iota_{\tau'}^{+}\circ \partial \varphi,\iota_{\tau'}^{-}\circ \partial \varphi)$)  on $\Lambda_{\Gamma}=\partial_{\infty}\Gamma$.

Let us define the order 2 element $w_0\in \mathsf{SL}(\bigwedge^{2q}\mathbb{R}^{2qd})$,
\begin{align*} w_0\coloneqq\bigwedge^{2q}\begin{pmatrix}W_0 & &\\
 &  \ddots  &\\ & & W_0
\end{pmatrix}, \quad \text{where }W_0\coloneqq\begin{pmatrix}
\textup{O}_d & \textup{I}_d  \\
\textup{I}_d & \textup{O}_d\end{pmatrix}.\end{align*}

As $\tau'$ is faithful and $w_0\tau'(\gamma)w_0^{-1}=\bigwedge^{2q}\textup{diag}\big(\psi(\gamma),\gamma, \ldots, \psi^{2q-1}(\gamma),\psi^{2q-2}(\gamma)\big)$, $\gamma \in \mathsf{G}$, we clearly have that $$\M\coloneqq\big\{\gamma \in \Gamma: \tau'(\gamma)=\tau'(\varphi(\gamma))=w_0\tau'(\gamma)w_0^{-1} \big\}.$$

We will need the following claim.

\begin{nclaim} For a point $x\in \Lambda_\Gamma$, we have $x\in \Lambda_\M$ if and only if the following two conditions are satisfied simultaneously:
\begin{enumerate}
\item  $\iota_{\tau'}^{+}(\partial \varphi(x))\in \iota_{\tau'}^{-}(x)$ or $\iota_{\tau'}^{+}(x)\in \iota_{\tau'}^{-}(\partial\varphi(x))$;

\item $w\iota_{\tau'}^{+}(x)\in \iota_{\tau'}^{-}(x)$ or $w\iota_{\tau'}^{+}(\partial\varphi(x))\in \iota_{\tau'}^{-}(\partial\varphi(x))$.
\end{enumerate}
\end{nclaim}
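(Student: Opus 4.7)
The strategy is to verify both implications of the claim by a direct calculation that decouples conditions (1) and (2) and matches each to one of the two constraints characterizing $\Lambda_\M = \Lambda_\Gamma \cap \Lambda_\mathsf{L} \cap \Lambda_{\mathsf{L}'}$. By the compatibility assumptions on $(\mathsf{G},\mathsf{L})$ and $(\mathsf{G},\mathsf{L}')$, these constraints amount exactly to $\partial\varphi(x) = x$ and $\partial\psi(x) = x$, so I aim to show condition (1) encodes the former and condition (2) the latter.

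The main computational input is the block-orthogonal decomposition $\mathbb{R}^{2qd} = \bigoplus_{\ell=1}^{2q} V_\ell$ with $V_\ell \cong \mathbb{R}^d$ the $\ell$-th summand in \eqref{wedge-tau}. Under this splitting, the defining wedge vectors of $\iota_{\tau'}^\pm(y)$ in \eqref{limitmaps-wedge} and \eqref{limitmaps-wedge-2} each consist of exactly one vector from each factor $V_\ell$. Computing the pairing between the generating wedge of $\iota_{\tau'}^+(y)$ and the defining wedge of $\iota_{\tau'}^-(z)$ as a $2q\times 2q$ determinant, block-orthogonality forces the matrix to be diagonal, so the incidence $\iota_{\tau'}^+(y) \in \iota_{\tau'}^-(z)$ collapses (up to sign) to the vanishing of a product of $d$-dimensional inner products $\langle u_\bullet, v_\bullet\rangle$, each factor controlled by the antipodality of $(\iota_\tau^+, \iota_\tau^-)$.

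For condition (1), I would apply this reduction with $(y,z) = (\partial\varphi(x),x)$ and its symmetric counterpart; using antipodality, the corresponding product vanishes precisely when $\partial\varphi(x) = x$, which by compatibility of $(\mathsf{G},\mathsf{L})$ is equivalent to $x \in \Lambda_\mathsf{L}$. For condition (2), the element $w_0$ acts on $\bigoplus V_\ell$ via the factor-permuting involution $\sigma = (1\,2)(3\,4)\cdots(2q-1\,\,2q)$ induced by $W_0 = \begin{pmatrix} O_d & I_d \\ I_d & O_d \end{pmatrix}$. The wedge vector of $w_0\iota_{\tau'}^+(x)$ is therefore obtained, up to sign, from that of $\iota_{\tau'}^+(x)$ by permuting the factors via $\sigma$. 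Pairing with $\iota_{\tau'}^-(x)$ and again collapsing to a product of $d$-dimensional inner products, vanishing of either of the two options in (2) reduces to $\partial\psi(x) = x$, equivalently to $x\in \Lambda_{\mathsf{L}'}$ by the compatibility of $(\mathsf{G},\mathsf{L}')$.

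Combining the two reductions, conditions (1) and (2) are simultaneously satisfied iff $x \in \Lambda_\mathsf{L}\cap\Lambda_{\mathsf{L}'}$, and for $x\in\Lambda_\Gamma$ the hypothesis $\Lambda_\M = \Lambda_\Gamma\cap\Lambda_\mathsf{L}\cap\Lambda_{\mathsf{L}'}$ then yields the claimed equivalence. The main obstacle I expect is the factor-permutation analysis in (2): the involution $\sigma$ must interact with the iterated boundary maps appearing in the limit-map formulas to produce exactly the fixed-point condition $\partial\psi(x) = x$, rather than a weaker statement such as $\partial\psi^2(x) = x$ or partial invariance at a single index, and the global signs coming from the rearrangement of wedge factors must be tracked carefully; the ``or'' in each condition reflects that the requisite vanishing can occur via either of two symmetric pairings of factors.
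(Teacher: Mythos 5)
Your plan is correct and follows essentially the same route as the paper: both reduce the incidence conditions to determinants of block-supported pairing matrices in the exterior power, so that condition (1) forces $\partial\varphi(x)=x$ by antipodality of $(\iota_{\tau}^{+},\iota_{\tau}^{-})$, and condition (2), via the $2\times 2$ off-diagonal blocks induced by $W_0$, forces $\langle u_{\partial\psi^{\ell-1}(x)},v_{\partial\psi^{\ell}(x)}\rangle\langle u_{\partial\psi^{\ell}(x)},v_{\partial\psi^{\ell-1}(x)}\rangle=0$ for some $\ell$ and hence $\partial\psi(x)=x$. The permutation concern you flag resolves exactly as you hope, since a single vanishing factor already gives $\partial\psi^{\ell-1}(x)=\partial\psi^{\ell}(x)$ and therefore $\partial\psi(x)=x$.
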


\begin{proof} For every point $y\in \partial_{\infty}\M$ we have that $\iota_{\tau'}^{\pm }(y)=\iota_{\tau'}^{\pm}(\partial\psi(y))=\iota_{\tau'}^{\pm}(\partial\varphi(y))$.

\par Now suppose that $x\in \partial_{\infty}\Gamma$ and (1) and (2) are satisfied simultaneously. 

By (1) and antipodality, we have that $\partial\varphi(x)=x$. In addition, by (2), since $\partial\varphi(x)=x$, we have that $w\iota_{\tau'}^{+}(x)\in \iota_{\tau'}^{-}(x)$. Observe that if we write $\iota^{+}_{\tau}(\partial\varphi^{\ell}(x))=[u_{\partial\varphi^{\ell}(x)}]$, $\iota_{\tau}^{-}(\partial\varphi^{\ell}(x))=\textup{span}(v_{\partial\varphi^{\ell}(x)})^{\perp}$, then by the definition of the limit maps $(\iota_{\tau'}^{+},\iota_{\tau'}^{-})$ in (\ref{limitmaps-wedge}) we have $$w\iota_{\tau'}^{+}(x)=\textup{span}\left\{\begin{pmatrix}
0 \\
u_x \\
 \vdots \\
0\end{pmatrix} \wedge \begin{pmatrix}
u_{\partial\varphi(x)} \\
0 \\
 \vdots \\
0\end{pmatrix}\wedge\cdots \wedge \begin{pmatrix}
 
0 \\
 \vdots \\
 0\\
u_{\partial\psi^{2q-2}(x)}\end{pmatrix}\wedge \begin{pmatrix}
0\\
 \vdots \\
u_{\partial\varphi^{2q-1}(x)} \\ 0\end{pmatrix}\right\}.$$

 The condition $w\iota_{\tau'}^{+}(x)\in \iota_{\tau'}^{-}(x)$, combined with the formula for $\xi_{\tau'}^{p-1}(x)$ from (\ref{limitmaps-wedge-2}) and the definition of the inner product\footnote{Note that $\big\langle a_1\wedge \cdots a_{2q},b_1\wedge\cdots \wedge b_{2q}\big\rangle=\textup{det}\big(\langle a_{r},b_{s}\rangle\big)_{r,s=1}^{2q}$, where $a_{1},b_1,\ldots, a_{2q},b_{2q}\in \mathbb{R}^{2qd}.$} in $\bigwedge^{2p}\mathbb{R}^{2pd}$, implies that $$ \textup{det}\begin{pmatrix}
0 & \langle u_{x},v_{\partial\psi(x)}\rangle  &   &   &  \\
\langle u_{x},v_{\partial\psi(x)}\rangle & 0 &  &  &  \\
 &  & \ddots  &  &  \\
 &  &  &  0& \langle u_{\partial \psi^{2q-2}(x)},v_{\partial \psi^{2q-1}(x)}\rangle  \\
 &  &  &  \langle u_{\partial \psi^{2q-1}(x)},v_{\partial \psi^{2q-2}(x)}\rangle& 0
\end{pmatrix}=0,$$ or equivalently, $$ \big(\langle v_{x},u_{\partial\psi(x)}\rangle  \langle u_{x},v_{\partial\psi(x)}\rangle\big)\cdots \big(\langle v_{\partial \psi^{2q-1}(x)},u_{\partial\psi^{2q-2}(x)}\rangle  \langle u_{\partial\psi^{2q-2}(x)},v_{\partial\psi^{2q-1}(x)}\rangle \big)=0.$$ Therefore, there exists $\ell\in \{1,\ldots, 2q-1\}$ such that $\iota_{\tau}^{+}(\partial \psi^{\ell-1}(x))\in \iota_{\tau}^{-}(\partial \psi^{\ell}(x))$ or $\iota_{\tau}^{+}(\partial \psi^{\ell}(x))\in \iota_{\tau}^{-}(\partial \psi^{\ell-1}(x))$. In any case, since $(\iota_{\tau}^{+},\iota_{\tau}^{-})$ are antipodal, we have $\partial\psi^{\ell-1}(x)=\partial\psi^{\ell}(x)$, or equivalently $\partial\psi(x)=x$. 

 Thus, conditions (1) and (2) imply at the same time that $x\in \Lambda_{\Gamma}\cap \Lambda_{\mathsf{L}}\cap \Lambda_{\mathsf{L}'}=\Lambda_{\M}$. The claim follows.\end{proof}

Using the previous claim, it follows that the group $\Gamma<\mathsf{G}$, its quasiconvex subgroup $\M=\Gamma \cap \mathsf{L}\cap \mathsf{L}'$, and the faithful $1$-Anosov representations $\rho_1\coloneqq\tau'$, $\rho_2'=\tau'\circ \varphi$ and the order $2$ element $w_0\in \mathsf{SL}(\bigwedge^{2q}\mathbb{R}^{2qd})$ satisfy the conditions of Proposition \ref{rho1-rho2-w}. Thus, there exists a finite index subgroup $\Gamma_2<\Gamma$, containing $\M$, such that the double $\Gamma_2\ast_{\M}\Gamma_2$ admits a $2$-Anosov representation into $\mathsf{SL}_r(\mathbb{C})$ where $r=2\binom{4p(2p-1)}{4}\left(2\binom{4p(2p-1)}{4}+1\right)$, where $p=\binom{2qd}{2q}(2\binom{2qd}{2q}+1)$ and $d=d(\mathsf{G})$.
\qed

\subsection{Amalgams along dividing subgroups}
Another application of the main virtual amalgamation theorem using Lemma \ref{antipodal-1} is the following:

\begin{proposition}
    \label{codim1} Let $\Gamma<\mathsf{SL}_n(\mathbb{R})$ be a $1$-Anosov subgroup and $\xi^1:\partial_{\infty}\Gamma \rightarrow \mathbb{P}(\mathbb{R}^n)$, $\xi^1_{\ast}:\partial_{\infty}\Gamma \rightarrow \mathbb{P}(\mathbb{R}^n)$ the Anosov limit maps of $\Gamma<\mathsf{SL}_n(\mathbb{R})$ and the dual group $\Gamma^{\ast}=\{\gamma^t:\gamma \in \Gamma\}$ respectively. Suppose that $\H\coloneqq\Gamma \cap \big(\mathsf{GL}_1(\mathbb{R})\times \mathsf{GL}_{n-1}(\mathbb{R})\big)$ is a quasiconvex non-elementary subgroup of $\Gamma$ satisfying the following conditions: \begin{align*}\xi^1(\partial_{\infty}\H)&=\xi^1(\partial_{\infty}\Gamma)\cap \mathbb{P}\left(\{0\}\times \mathbb{R}^{n-1}\right),\\ \xi_{\ast}^1(\partial_{\infty}\H)&=\xi_{\ast}^1(\partial_{\infty}\Gamma)\cap \mathbb{P}\left(\{0\}\times \mathbb{R}^{n-1}\right).\end{align*} Then there exists a finite-index subgroup $\Gamma_1<\Gamma$, containing $\H$, such that the amalgam $\Gamma_1\ast_{\H}\Gamma_1$ admits a $2$-Anosov representation into $\mathsf{SL}_{2n}(\mathbb{C})$.
\end{proposition}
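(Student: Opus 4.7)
My plan is to follow the template of \Cref{complex-1}, with the role of complex conjugation replaced by the coordinate reflection associated to the splitting $\mathbb{R}^n=\mathbb{R}e_1\oplus \textup{span}\{e_2,\ldots,e_n\}$ that is preserved by $\H$. First, I will embed $\Gamma$ as a $2$-Anosov subgroup of $\mathsf{SL}_{2n}(\mathbb{C})$ via the representation $\tau_2$ from \eqref{tau2}. Second, I will exhibit a conjugator $g\in \mathsf{SL}_{2n}(\mathbb{C})$ centralizing $\tau_2(\H)$ such that $g\tau_2(\Gamma)g^{-1}\cap \tau_2(\Gamma)=\tau_2(\H)$ and the antipodality needed by \Cref{thm:amalgam} holds. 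Finally, I will invoke \Cref{thm:amalgam} to conclude.

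The natural candidate for $g$ is $g=\textup{diag}(G,G)$ with $G=\textup{diag}(i,1,\ldots,1)\in \mathsf{GL}_n(\mathbb{C})$, suitably rescaled to have determinant one (the rescaling is immaterial for the geometric analysis on flag manifolds). Since $\H$ preserves the splitting of $\mathbb{R}^n$, the matrix $g$ centralizes $\tau_2(\H)$. A direct check shows that, for $\gamma\in \Gamma$, the matrix $g\tau_2(\gamma)g^{-1}$ is real if and only if $G\gamma G^{-1}$ is real, which forces the off-diagonal entries of $\gamma$ in the first row and column to vanish; this gives $g\tau_2(\Gamma)g^{-1}\cap \tau_2(\Gamma)=\tau_2(\H)$.

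The key step is the antipodality calculation. For $x,y\in\partial_{\infty}\Gamma\smallsetminus \partial_{\infty}\H$, the hypotheses on $\xi^1$ and $\xi^1_\ast$ guarantee that, writing $\xi^1(x)=[u]$ and $\xi^{n-1}(y)=\textup{span}\{w\}^{\perp}$ with $u,w\in \mathbb{R}^n$, the first coordinates $u_1,w_1$ are nonzero. Using the formulas \eqref{iota2} and \eqref{iota2-}, the antipodality of $g^{\pm 1}\iota_2^{+}(\xi^1(x))$ and $\iota_2^{-}(\xi^{n-1}(y))$ in $\mathbb{C}^{2n}$ reduces to the non-vanishing of a $2\times 2$ determinant that simplifies to $(\pm iu_1 w_1+\sum_{k=2}^{n} u_k w_k)^2$. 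Since the imaginary part $\pm u_1 w_1$ is nonzero, this complex number is nonzero, and the antipodality condition is verified (as in \Cref{antipodal-1}, but here the obstruction comes from a coordinate hyperplane rather than from complex conjugation).

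With these ingredients in place, \Cref{thm:amalgam}, applied to the $2$-Anosov subgroups $\tau_2(\Gamma)$ and $g\tau_2(\Gamma)g^{-1}$ of $\mathsf{SL}_{2n}(\mathbb{C})$ intersecting in $\tau_2(\H)$, yields finite-index subgroups containing $\tau_2(\H)$ whose associated amalgam is $2$-Anosov in $\mathsf{SL}_{2n}(\mathbb{C})$ and isomorphic to $\Gamma_1\ast_{\H}\Gamma_1$ for a finite-index $\Gamma_1<\Gamma$ containing $\H$. The only residual subtlety is the separability of $\H$ in $\Gamma$, required by \Cref{thm:amalgam} but not explicitly listed in the hypothesis; I expect this to follow (as in the proof of \Cref{amalgam-rank1}) from $\H$ being the intersection of $\Gamma$ with a Zariski closed subgroup of $\mathsf{SL}_n(\mathbb{R})$, via Bergeron's theorem \cite{bergeron}.
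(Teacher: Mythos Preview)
Your proposal is correct and follows essentially the same strategy as the paper. Both arguments use the same key matrix $G=\alpha=\textup{diag}(i,1,\ldots,1)$ to separate the limit sets, embed via $\tau_2$ into $\mathsf{SL}_{2n}(\mathbb{C})$, and conclude with the Virtual Amalgam Theorem; the only organizational difference is that the paper first conjugates $\Gamma$ by $\alpha$ inside $\mathsf{SL}_n(\mathbb{C})$ so as to identify $\H$ with the real points $\Gamma_0(\mathbb{R})$ and then invokes \Cref{complex-1} as a black box, whereas you apply $\tau_2$ first and carry out the antipodality computation directly with the conjugator $g=\textup{diag}(G,G)$. Your treatment of separability (via $\H$ being the intersection of $\Gamma$ with a Zariski-closed subgroup) matches what the paper uses implicitly.
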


\begin{proof} Let $\xi^{n-1}:\partial_{\infty}\Gamma \rightarrow \mathsf{Gr}_{n-1}(\mathbb{R}^n)$ be the limit map of $\Gamma$ such that $\xi_{\ast}^{1}(x)=\xi^{n-1}(x)^{\perp}$ for $x\in \partial_{\infty}\Gamma$.

Consider the matrix $\alpha=\begin{pmatrix} i & \\ & \textup{I}_{n-1}\end{pmatrix}$ and the $1$-Anosov subgroup $\Gamma_0\coloneqq\alpha \Gamma \alpha ^{-1}$ of $\mathsf{SL}_{n}(\mathbb{C})$, whose limit maps are $z\mapsto \left(\alpha \xi^1(z),\alpha \xi^{n-1}(z)\right)$. Note that $\Gamma_0(\mathbb{R})=\H$ and by assumption, $$\alpha \xi^1(\partial_{\infty}\Gamma)\cap \mathbb{P}(\mathbb{R}^n)=\xi^1(\partial_{\infty}\Gamma)\cap \mathbb{P}(\{0\}\times \mathbb{R}^{n-1})=\xi^1(\partial_{\infty}\Gamma_0(\mathbb{R})).$$ Moreover, if for some $x\in \partial_{\infty}\Gamma$, $\alpha \xi^1(x)\in \alpha \xi^{n-1}(x)\cap \overline{\alpha}\xi^{n-1}(x)$, then either $\xi^1(x)\in  \mathbb{P}\left(\{0\}\times \mathbb{R}^{n-1}\right)$ or $\xi_{\ast}^1(x)\in \mathbb{P}\left(\{0\}\times \mathbb{R}^{n-1}\right)$, hence $x\in \partial_{\infty}\H$. Therefore, as $\H<\Gamma$ is additionally separable, by Corollary \ref{complex-1} the conclusion follows.\end{proof}

Now using \Cref{codim1}, we prove the following statement providing further examples of Anosov groups obtained by amalgamations along subgroups dividing a domain in one dimension lower.

\begin{theorem}\label{divisible-amalgam}Let $\Gamma$ be a $1$-Anosov subgroup of $\mathsf{SL}_n(\mathbb{R})$, $n\geq 4$, such that the subgroup $\H\coloneqq\Gamma \cap \left(\mathsf{GL}_1(\mathbb{R})\times \mathsf{GL}_{n-1}(\mathbb{R})\right)$ of $\Gamma$ preserves and acts cocompactly on a properly convex domain of $\mathbb{P}\left(\{0\}\times \mathbb{R}^{n-1}\right)$. Then there exists a finite-index subgroup $\Gamma_1$ of $\Gamma$, containing $\H$, such that $\Gamma_1\ast_{\H}\Gamma_1$ admits a $2$-Anosov representation into $\mathsf{SL}_{2n}(\mathbb{C})$.\end{theorem}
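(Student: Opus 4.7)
The proof proceeds by verifying the hypotheses of \Cref{codim1} for the pair $(\Gamma,\H)$. Since $\H$ acts cocompactly on a properly convex domain $\Omega\subset\mathbb{P}(\{0\}\times\mathbb{R}^{n-1})$ of dimension $n-2\geq 2$, it has exponential growth and is in particular non-elementary. The substantive content is therefore to establish that $\H$ is quasiconvex in $\Gamma$ and that the two limit-set equalities
\begin{align*}
\xi^1(\partial_{\infty}\H)&=\xi^1(\partial_{\infty}\Gamma)\cap\mathbb{P}(\{0\}\times\mathbb{R}^{n-1}),\\
\xi_{\ast}^1(\partial_{\infty}\H)&=\xi_{\ast}^1(\partial_{\infty}\Gamma)\cap\mathbb{P}(\{0\}\times\mathbb{R}^{n-1})
\end{align*}
hold.

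Write $V=\{0\}\times\mathbb{R}^{n-1}$ and let $\rho_2:\H\to\mathsf{GL}(V)$ denote the restriction to $V$. Each $h\in\H$ has the form $\mathrm{diag}(\det\rho_2(h)^{-1},\rho_2(h))$, so $\rho_2$ is injective and $\rho_2(\H)$ divides $\Omega$. Using that $\H<\Gamma$ lies inside a word hyperbolic group, together with Benoist's theorem on divisible properly convex domains, I expect $\Omega$ to be strictly convex with $C^1$-boundary and $\rho_2$ to be a $1$-Anosov representation into $\mathsf{SL}(V)$, with Anosov limit map $\xi_{\rho_2}^1:\partial_{\infty}\H\to\partial\Omega$.

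The key technical step is to promote $\rho_2$ to a $1$-Anosov embedding of $\H$ into $\mathsf{SL}_n(\mathbb{R})$ via the inclusion $\H\hookrightarrow\Gamma$, with Anosov limit map landing in $\mathbb{P}(V)$. Since every infinite-order $h\in\H$ is proximal in $\mathsf{SL}_n(\mathbb{R})$ (as $\Gamma$ is $1$-Anosov), the top eigenvalue of $h$ is simple and either coincides with $\det\rho_2(h)^{-1}$ (with eigenvector $[e_1]\notin\mathbb{P}(V)$) or with the top eigenvalue of $\rho_2(h)$ (with eigenvector in $V$); I plan to rule out the first option as follows. If it occurred for some $h_0\in\H$ of infinite order, then $\xi^1(h_0^+)=[e_1]\notin\mathbb{P}(V)$; on the other hand, the convergence action of $\H$ on $\overline\Omega$ identifies $h_0^+\in\partial_{\infty}\H$ with the attracting eigenvector of $\rho_2(h_0)$, which lies in $\partial\Omega\subset\mathbb{P}(V)$, and $\Gamma$-equivariance plus continuity of $\xi^1$ then force $\xi^1(h_0^+)$ into $\partial\Omega$, a contradiction.

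Once this eigenvalue-matching is in place, the inclusion $\H\hookrightarrow\mathsf{SL}_n(\mathbb{R})$ is $1$-Anosov with limit map $\iota\circ\xi_{\rho_2}^1$, where $\iota:\mathbb{P}(V)\hookrightarrow\mathbb{P}(\mathbb{R}^n)$ is the natural inclusion. By uniqueness of Anosov limit maps, $\H$ is then quasiconvex in $\Gamma$ with $\xi^1|_{\partial_{\infty}\H}=\iota\circ\xi_{\rho_2}^1$, yielding $\xi^1(\partial_{\infty}\H)\subset\mathbb{P}(V)$; the analogous inclusion for $\xi_{\ast}^1$ follows because in the non-degenerate case the repelling hyperplane $\xi^{n-1}(h^+)$ always contains $[e_1]$. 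For the reverse inclusions, any $x\in\partial_{\infty}\Gamma$ with $\xi^1(x)\in\mathbb{P}(V)$ or $\xi_{\ast}^1(x)\in\mathbb{P}(V)$ must be approximable by attracting fixed points of elements of $\H=\mathrm{Stab}_\Gamma(V)\cap\mathrm{Stab}_\Gamma(\mathbb{R}e_1)$, which I will derive from the antipodality of the $\Gamma$-limit maps together with the cocompact $\H$-action on $\Omega$. Invoking \Cref{codim1} then finishes the proof. The principal difficulty is the eigenvalue-matching step, where one must reconcile the a priori distinct dynamical structures carried by $\H$---its convex cocompact action on $\Omega$ and its Anosov dynamics as a subgroup of $\Gamma$---to force the top eigenvector of every infinite-order $h\in\H$ into $V$.
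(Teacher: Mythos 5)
Your overall route coincides with the paper's: reduce to \Cref{codim1} by showing that $\H$ is quasiconvex and that $\xi^1(\partial_{\infty}\H)$ and $\xi^1_{\ast}(\partial_{\infty}\H)$ are exactly the intersections of the corresponding limit sets with $\mathbb{P}(\{0\}\times\mathbb{R}^{n-1})$. However, the step you yourself single out as the crux --- forcing the top eigenvector of every infinite-order $h\in\H$ into $V=\{0\}\times\mathbb{R}^{n-1}$ --- is not established by the argument you give. The convergence action of $\H$ on $\overline{\Omega}$ produces a boundary identification $\partial_{\infty}\H\to\partial\Omega$ attached to the representation $\rho_2$, whereas $\xi^1|_{\partial_{\infty}\H}$ is the restriction of the Anosov limit map of $\Gamma$; these are a priori unrelated maps, and ``$\Gamma$-equivariance plus continuity of $\xi^1$'' does not transfer the attracting point of $\rho_2(h_0)$ in $\partial\Omega$ to $\xi^1(h_0^{+})$. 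What the dynamics-preserving property of Anosov limit maps actually gives is that $\xi^1(h_0^{+})$ \emph{is} the attracting eigenline of $h_0$ in $\mathbb{R}^n$, i.e.\ exactly $[e_1]$ in the scenario you are trying to exclude, so no contradiction has been produced.

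The gap is repairable by a different argument: if $\xi^1(h_0^{+})=[e_1]$ for some infinite-order $h_0\in\H$, then, since every element of $\H$ fixes $[e_1]$, equivariance and injectivity of $\xi^1$ force every $g\in\H$ to fix the single point $h_0^{+}\in\partial_{\infty}\Gamma$; this is impossible because $\H$ is non-elementary, hence contains two infinite-order elements with disjoint fixed-point pairs in $\partial_{\infty}\Gamma$. Separately, be aware that the reverse inclusions, which you only sketch, are where the paper's proof does its actual work: for $x\in\partial_{\infty}\Gamma$ with $\xi^1(x)\in\mathbb{P}(V)$ one splits into the case $\xi^1(x)\notin\Omega$, where $\xi^1(x)$ lies on a supporting hyperplane $\xi^{n-1}(z)\cap\mathbb{P}(V)$ for some $z\in\partial_{\infty}\H$ and antipodality of $(\xi^1,\xi^{n-1})$ forces $x=z$, and the case $\xi^1(x)\in\Omega$, where $\xi^{n-1}(x)\cap\mathbb{P}(V)$ must meet $\partial\Omega=\xi^1(\partial_{\infty}\H)$ and antipodality again forces $x\in\partial_{\infty}\H$; the dual equality is obtained the same way using that the dual group $\H^{\ast}$ also divides a properly convex domain in $\mathbb{P}(V)$. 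These details still need to be supplied in your write-up.
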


\begin{proof} Let $(\xi^1,\xi^{n-1})$ (resp. ($\xi^{1}_{\ast},\xi^{n-1}_{\ast}$)) be the Anosov limit maps of $\Gamma<\mathsf{SL}_n(\mathbb{R})$ (resp. $\Gamma^{\ast}<\mathsf{SL}_n(\mathbb{R})$). By assumption, there is a properly convex domain $\Omega\subset \mathbb{P}(\{0\}\times \mathbb{R}^{n-1})$ acted upon cocompactly by $\H=\Gamma \cap \left(\mathsf{GL}_1(\mathbb{R})\times\mathsf{GL}_{n-1}(\mathbb{R})\right)$. In particular, $\H<\mathsf{GL}_1(\mathbb{R})\times \mathsf{GL}_{n-1}(\mathbb{R})$ is Anosov (hence quasiconvex in $\Gamma$) and its limit maps are $y\mapsto (\xi^1(y),\xi^{n-1}(y)\cap \mathbb{P}(\{0\}\times \mathbb{R}^{n-1}))$.

\par Now suppose that there is $x\in \partial_{\infty}\Gamma$ such that $\xi^1(x)\in \mathbb{P}(\{0\}\times \mathbb{R}^{n-1})$. If $\xi^1(x)\notin \Omega$, since $$\mathbb{P}(\{0\}\times \mathbb{R}^{n-1})\smallsetminus \Omega=\bigcup_{y \in \partial_{\infty}\H}\mathbb{P}\left (\xi^{n-1}(y)\cap \left(\{0\}\times \mathbb{R}^{n-1}\right)\right),$$  there is $z\in \partial_{\infty}\H$ such that $\xi^1(x)\in \xi^{n-1}(z)$. Since $(\xi^1,\xi^{n-1})$ are antipodal we have $x=z$ and $x\in \partial_{\infty}\H$. In the case where $\xi^1(x)\in \Omega$, the projective $(n-2)$-hyperplane $\xi^{(n-1)}(x)\cap \mathbb{P}(\{0\}\times \mathbb{R}^{n-1})$ has to intersect $\partial\Omega=\xi^1(\partial_{\infty}\H)$ and hence there is $z'\in \partial_{\infty}\H$ with $\xi^1(z')\in \xi^{n-1}(x)$. Thus, in this case we also have $x=z'$ and $x\in \partial_{\infty}\H$.

We conclude that $\xi^1(\partial_{\infty}\H)=\xi^1(\partial_{\infty}\Gamma)\cap \mathbb{P}\left(\{0\}\times \mathbb{R}^{n-1}\right)$. Similarly, since the dual group $\H^{\ast}\coloneqq\Gamma^{\ast}\cap \left(\mathsf{GL}_1(\mathbb{R})\times \mathsf{GL}_{n-1}(\mathbb{R})\right)$ also acts cocompactly on a properly convex domain $\Omega'$ of $\mathbb{P}(\{0\}\times \mathbb{R}^{n-1})$, the same argument as in the previous paragraph shows that $\xi^1_{\ast}(\partial_{\infty}\H)=\xi^1_{\ast}(\partial_{\infty}\Gamma)\cap \mathbb{P}\left(\{0\}\times \mathbb{R}^{n-1}\right)$. Therefore, the conclusion of the theorem follows by \Cref{codim1}.\end{proof}

\subsection{Amalgams along fixed-point subgroups of automorphisms}

Given a subset of positive restricted roots $\Theta$ of $\mathsf{G}$, recall that $d=d(\mathsf{G},\Theta)$ denotes the minimal dimension of a faithful $\Theta$-proximal representation $\tau_{\Theta}:\mathsf{G}\rightarrow \mathsf{GL}_d(\mathbb{R})$ (see Proposition \ref{theta-compatible} for existence).

\begin{theorem}\label{amalgam-autom} Let $\Gamma$ be a $\Theta$-Anosov subgroup $\mathsf{G}$ and let $\psi:\Gamma \rightarrow \Gamma$ is an automorphism such that the limit set of the fixed-point subgroup $\Gamma^{\psi}$ in $\partial_{\infty}\Gamma$ is given by $\big\{x\in \partial_{\infty}\Gamma:\ \partial \psi(x)=x\big\}$ (e.g. $\psi$ is of finite order, see \Cref{limitset-autom} below). 
Then there exists a finite-index subgroup $\Gamma_1$ of $\Gamma$, containing $\Gamma^{\psi}$, such that the amalgam $\Gamma_1\ast_{\Gamma^{\psi}}\Gamma_1$ admits a $2$-Anosov representation into $\mathsf{SL}_m(\mathbb{C})$, where $m=2d(\mathsf{G},\Theta)(2d(\mathsf{G},\Theta)+1)$.\end{theorem}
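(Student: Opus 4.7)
The plan is to reduce the problem to the general setup of Proposition \ref{rho1-rho2} applied to two carefully chosen $1$-Anosov representations of $\Gamma$ that coincide exactly on $\Gamma^{\psi}$.

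First, I would linearize: fix a faithful $\Theta$-proximal representation $\tau_\Theta : \mathsf{G} \to \mathsf{SL}_d(\mathbb{R})$ of minimal dimension $d \coloneqq d(\mathsf{G},\Theta)$ from Proposition \ref{theta-compatible}, giving a faithful $1$-Anosov representation $\tau_\Theta|_\Gamma$ with antipodal limit maps $(\xi_\tau^1,\xi_\tau^{d-1}):\partial_\infty\Gamma\to \mathbb{P}(\mathbb{R}^d)\times \mathsf{Gr}_{d-1}(\mathbb{R}^d)$. Define two faithful $1$-Anosov representations
\begin{align*}
\rho_1 \coloneqq \tau_\Theta|_\Gamma \quad\text{and}\quad \rho_2 \coloneqq \tau_\Theta\circ\psi.
\end{align*}
Since $\psi\in \mathrm{Aut}(\Gamma)$ extends to a homeomorphism $\partial\psi$ of $\partial_\infty\Gamma$, $\rho_2$ is $1$-Anosov with limit maps $(\xi_\tau^1\circ\partial\psi,\xi_\tau^{d-1}\circ\partial\psi)$. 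Using the faithfulness of $\tau_\Theta$ we get
\begin{align*}
\{\gamma\in\Gamma:\rho_1(\gamma)=\rho_2(\gamma)\}=\Gamma^\psi.
\end{align*}

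Next, I would verify the boundary hypothesis of Proposition \ref{rho1-rho2}. For $x\in\partial_\infty\Gamma$, the inclusion $\xi_{\rho_1}^1(x)\in\xi_{\rho_2}^{d-1}(x)$ unwinds to $\xi_\tau^1(x)\subset\xi_\tau^{d-1}(\partial\psi(x))$; by antipodality of the limit maps of $\tau_\Theta|_\Gamma$ on distinct points, this forces $\partial\psi(x)=x$, and the symmetric inclusion is equivalent. Conversely, if $\partial\psi(x)=x$, both inclusions hold trivially. Combined with the theorem's hypothesis $\Lambda_{\Gamma^\psi}=\{x\in\partial_\infty\Gamma:\partial\psi(x)=x\}$, this shows that the limit-map condition in Proposition \ref{rho1-rho2} is equivalent to $x\in\Lambda_{\Gamma^\psi}$, which coincides with $\partial_\infty\Gamma^\psi$ whenever $\Gamma^\psi$ is quasiconvex in $\Gamma$.

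The remaining technical input, and the main obstacle, is to establish that $\Gamma^\psi$ is both quasiconvex and separable in $\Gamma$. This is the step that requires substantive work outside the linear-algebraic framework already set up in Section \ref{sec:4}; for finite-order $\psi$ (the case highlighted in the statement), quasiconvexity of fixed subgroups in hyperbolic groups is a classical result, and separability of $\Gamma^\psi$ will need to be deduced from the ambient structure of the Anosov subgroup $\Gamma$ (using Zariski-closedness of centralizers/fixed groups in the linear setting when $\psi$ is induced by an ambient algebraic automorphism, or an independent separability argument in the abstract setting). Granting quasiconvexity and separability, Proposition \ref{rho1-rho2} applied to $\rho_1,\rho_2$ and $\mathrm{H}=\Gamma^\psi$ yields a finite-index subgroup $\Gamma_1<\Gamma$ containing $\Gamma^\psi$ such that $\Gamma_1 *_{\Gamma^\psi}\Gamma_1$ admits a faithful $2$-Anosov representation into $\mathsf{SL}_m(\mathbb{C})$ with $m=2d(2d+1)=2d(\mathsf{G},\Theta)(2d(\mathsf{G},\Theta)+1)$, completing the proof.
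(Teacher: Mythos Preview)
Your proposal is correct and follows exactly the paper's approach: set $\rho_1=\tau_\Theta|_\Gamma$, $\rho_2=\tau_\Theta\circ\psi$, verify the antipodality hypothesis via $\partial\psi(x)=x$, and invoke Proposition~\ref{rho1-rho2}. The step you flag as the ``main obstacle'' is handled in the paper by direct citation rather than new work: quasiconvexity of $\Gamma^\psi$ (for \emph{arbitrary} automorphisms, not only finite order) is Neumann's theorem \cite{Neumann}, and separability follows from the Long--Niblo argument \cite{Long-Niblo} using only that $\Gamma$ is linear and hence residually finite.
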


\begin{proof} By \cite{Neumann}, the fixed-point subgroup $\Gamma^{\psi}$  is quasiconvex in $\Gamma$. Since $\Gamma$ is linear (in particular, residually finite), by \cite{Long-Niblo}, $\Gamma^{\psi}$ is separable in $\Gamma$.

Now we follow the argument in the proof of Theorem \ref{amalgam-rank1}. 
First, we compose the embedding $\Gamma \xhookrightarrow{} \mathsf{G}$ with a faithful linear representation $\tau_{\Theta}:\mathsf{G} \rightarrow \mathsf{SL}_{d}(\mathbb{R})$ of minimal dimension $r=d(\mathsf{G},\Theta)$ such that $\tau_{\Theta}(\Gamma)<\mathsf{SL}_{r}(\mathbb{R})$ is $1$-Anosov. 
To this end, by identifying $\Gamma$ with its image under $\tau_{\Theta}$, we may assume that $\Gamma<\mathsf{SL}_r(\mathbb{R})$ is $1$-Anosov. 
The representation $\psi:\Gamma \rightarrow \mathsf{SL}_d(\mathbb{R})$ is also $1$-Anosov with Anosov limit maps $(\xi^{1}\circ \partial \psi, \xi^{d-1}\circ \partial\psi)$, where $(\xi^1,\xi^{d-1})$ are the Anosov limit maps of $\Gamma<\mathsf{SL}_d(\mathbb{R})$. For a point $x\in \partial_{\infty}\Gamma$, if either $\xi^1(\partial\psi (x))\in \xi^{d-1}(x)$ or $\xi^1(\partial\psi (x))\in \xi^{d-1}(x)$, then $\partial\psi(x)=x$ and hence $x\in \partial_{\infty}\Gamma^{\psi}$. 
Thus, applying Proposition \ref{rho1-rho2} to the representations $\rho_1,\rho_2 : \Gamma \to \mathsf{SL}_d(\mathbb{R})$, where $\rho_1$ is the inclusion map $\iota_\Gamma$ and $\rho_2=\varphi\circ\iota_\Gamma$, and the fixed-point subgroup $\H\coloneqq\Gamma^{\psi}$, we arrive at the conclusion. \end{proof}

The final result of this section verifies that the hypothesis of \Cref{amalgam-autom} is satisfied when $\psi$ is of finite order:

\begin{lemma}\label{limitset-autom} 
Let $\Gamma$ be a hyperbolic group.  For any finite order automorphism $\psi:\Gamma \rightarrow \Gamma$, 
$\partial_{\infty}\Gamma^{\psi}=\big\{x\in \partial_{\infty}\Gamma:\ \partial\psi(x)=x\big\}$.
\end{lemma}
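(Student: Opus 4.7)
The plan is to reduce to a classical fact about torsion elements of hyperbolic groups via a semidirect product construction. Let $k$ be the order of $\psi$ and form $G\coloneqq\Gamma\rtimes\langle\psi\rangle$. Since $\Gamma<G$ has finite index $k$, the group $G$ is again hyperbolic and $\partial_{\infty}G=\partial_{\infty}\Gamma$. Write $t\in G$ for the canonical generator of $\langle\psi\rangle$; conjugation by $t$ restricts on $\Gamma$ to $\psi$, so $\Gamma^{\psi}=C_{\Gamma}(t)=C_{G}(t)\cap\Gamma$, with $[C_{G}(t):\Gamma^{\psi}]\leq k$. In particular $\Gamma^{\psi}$ and $C_{G}(t)$ share the same limit set in $\partial_{\infty}G$. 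Moreover, for any sequence $\gamma_{n}\in\Gamma$ with $\gamma_{n}\to x\in\partial_{\infty}\Gamma$, the identity $t\gamma_{n}=\psi(\gamma_{n})t$, combined with the boundedness of right translation by $t$, shows that $t$ acts on $\partial_{\infty}G$ as $\partial\psi$. Therefore the lemma reduces to proving
\[
\partial_{\infty}C_{G}(t)=\bigl\{x\in\partial_{\infty}G:\ t\cdot x=x\bigr\}.
\]

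The inclusion $\subset$ is immediate: if $h_{n}\in C_{G}(t)$ converges to $x$, then $th_{n}=h_{n}t$ is a bounded right translate of $h_{n}$, so $th_{n}\to x$ as well, giving $t\cdot x=x$.

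For the reverse inclusion, suppose $t\cdot x=x$. Because $t\neq 1$ fixes $x$, the stabilizer $\mathrm{Stab}_{G}(x)$ is a non-trivial subgroup of the hyperbolic group $G$ containing the torsion element $t$. The key structural input is the classical fact that any non-trivial point stabilizer in a hyperbolic group is virtually infinite cyclic and contains a loxodromic element $\gamma\in G$ with attracting fixed point $\gamma^{+}=x$ such that $\langle\gamma\rangle$ is a maximal infinite cyclic normal subgroup of $\mathrm{Stab}_{G}(x)$ (the elementary cases, where $\Gamma$ is finite or virtually $\Z$, admit a direct verification). Since $t\in\mathrm{Stab}_{G}(x)$ normalizes $\langle\gamma\rangle$ and $\mathrm{Aut}(\Z)=\{\pm\mathrm{Id}\}$, we have $t\gamma t^{-1}=\gamma^{\pm 1}$. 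If the sign were $-1$, then $t$ would swap the attracting and repelling fixed points $\gamma^{\pm}$ of $\gamma$; this contradicts $t\cdot x=x=\gamma^{+}$ because $\gamma^{+}\neq\gamma^{-}$. Hence $t\gamma t^{-1}=\gamma$, so $\gamma\in C_{G}(t)$, and the sequence $(\gamma^{n})_{n\in\N}$ lies in $C_{G}(t)$ and converges to $x$, exhibiting $x\in\partial_{\infty}C_{G}(t)$.

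The main technical input is the classical structural fact recalled above on point stabilizers in hyperbolic groups; once this is invoked, the argument amounts only to the observation that an action on $\Z$ is either trivial or swaps the two ends of a bi-infinite axis.
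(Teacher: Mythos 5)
Your reduction to the semidirect product $G=\Gamma\rtimes\langle\psi\rangle$ is sound: $G$ is hyperbolic with $\partial_{\infty}G=\partial_{\infty}\Gamma$, the stable letter $t$ acts on the boundary as $\partial\psi$, and $\Gamma^{\psi}=C_G(t)\cap\Gamma$ has the same limit set as $C_G(t)$; the inclusion $\partial_{\infty}C_G(t)\subset\mathrm{Fix}(t)$ is also fine. The proof breaks at the reverse inclusion. The ``classical fact'' you invoke --- that any \emph{non-trivial} point stabilizer in a hyperbolic group is virtually infinite cyclic and contains a loxodromic element with attracting point $x$ --- is false. What is classical is that any \emph{infinite} point stabilizer is virtually cyclic (being contained in the maximal elementary subgroup of a loxodromic through $x$); but a torsion element can perfectly well fix a boundary point whose full stabilizer is a non-trivial \emph{finite} group, containing no loxodromic at all. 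Your caveat about ``elementary cases'' concerns $\Gamma$ being elementary, not the stabilizer being finite, so it does not cover this.

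Worse, this failure occurs at almost every point your argument needs to handle. Take $\Gamma=F_3=\langle a,b,c\rangle$ and $\psi$ of order $2$ with $\psi(a)=a$, $\psi(b)=b$, $\psi(c)=c^{-1}$, so that $\Gamma^{\psi}\supset\langle a,b\rangle$ is non-elementary. Then $\mathrm{Fix}(\partial\psi)\supset\partial_{\infty}\langle a,b\rangle$ is uncountable, whereas the set of endpoints of loxodromic elements of $G$ is countable; so for all but countably many $x\in\mathrm{Fix}(\partial\psi)$, the stabilizer $\mathrm{Stab}_G(x)$ is a finite group generated by a conjugate of $t$, and there is no loxodromic $\gamma$ with $\gamma^{+}=x$ to normalize. (Note also that establishing $\mathrm{Fix}(\partial t)=\partial_{\infty}C_G(t)$ for the inner automorphism by $t$ is just another instance of the lemma, so the reduction does not by itself gain anything.) The paper avoids this entirely by a direct coarse-geometric argument: it averages the word metric to make it $\psi$-invariant, observes that a geodesic ray $(\gamma_n)$ to a fixed point $x$ and its image $(\psi(\gamma_n))$ are two rays to the same boundary point and hence fellow-travel, deduces that $\gamma_n^{-1}\psi(\gamma_n)$ takes only finitely many values, and from this produces elements $\gamma_n\gamma_{k_n}^{-1}\in\Gamma^{\psi}$ at bounded distance from the ray. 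Some argument of this fellow-traveling type (as in Neumann's theorem on fixed subgroups) is genuinely needed; the loxodromic-normalization shortcut does not work.
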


\begin{proof} 
Fixing a word metric $d_{\Gamma}$ on $\Gamma$, consider the Cayley graph $C_{\Gamma}$ of $\Gamma$. 
Note that since $\psi$ is of finite order, say of order $p\in \mathbb{N}$, we can define a $\psi$-invariant metric $d_{\psi}:C_{\Gamma}\times C_{\Gamma}\rightarrow \mathbb{R}^{+}$ by
\[
d_{\psi}(g,h)=\frac{1}{p}\sum_{\ell=0}^{p-1}d_{\Gamma}\big(\psi^{\ell}(g),\psi^{\ell}(h)\big),
\quad g,h\in \G.
\]
Clearly, $d_\psi$ is $\Gamma$-invariant and hence is bi-Lipschitz equivalent to the word metric $d_{\Gamma}$. Consequently, $(\Gamma,d_{\psi})$ is also hyperbolic.

Let $x\in \partial_{\infty}\Gamma$ be a point such that $\partial\psi(x)=x$.
Choose a geodesic ray $(\gamma_n)\subset\Gamma$ starting at the identity $1\in \Gamma$ such that $x=[(\gamma_n)]$. 
Then both $(\g_n)$ and $(\psi(\g_n))$ are geodesic rays starting at $1\in \Gamma$ and with the same endpoint $x\in \partial_{\infty}\Gamma$.
Hence $\sup_{n\in \mathbb{N}}d_{\psi}\left(\gamma_n,\psi(\gamma_n)\right)<\infty$. 

Consider the sequence $(\g_n^{-1}\psi(\g_n))$. By above, the entries of this seqeunce form a finite set in $\Gamma$. Thus, there exists $k_0\in\N$ such that for all $n\in\N$,
\[
 \g_n^{-1}\psi(\g_n) = \g_{k_n}^{-1}\psi(\g_{k_n})
 \quad\text{for some }k_n\in\{1,\dots,k_0\}.
\]
From this, we can see that $\psi(\g_n\g_{k_n}^{-1}) = \g_n\g_{k_n}^{-1}\in\G^{\psi}$.
We deduce that the geodesic sequence $(\gamma_n)$ lies in a finite neighborhood of $\Gamma^\psi$. Hence, $x \in \geo\Gamma^\psi$ and, therefore,  
\[
\big\{x \in \partial_{\infty}\Gamma :\ \partial\psi(x) = x\big\} \subset \partial_{\infty}\Gamma^{\psi}.
\]  

The reverse inclusion follows from the definition of $\Gamma^{\psi}$, which concludes the proof.
\end{proof}

\section{Doubles of convex cocompact groups}\label{sec:5}
\Cref{main-rank1-amalgam} follows from the following, which is the main result of this section:

\begin{theorem}\label{complex-quat-amalgam} 
Let $(\mathsf{G},\mathsf{L})$ be one of the following pairs of Lie groups: 
\begin{enumerate} 
\item \label{k-sametype} $(\mathsf{G},\mathsf{L})=(\mathsf{O}(n,1),\mathsf{O}(k)\times \mathsf{O}(n-k,1))$, $(\mathsf{U}(n,1),\mathsf{U}(k)\times \mathsf{U}(n-k,1))$, $(\mathsf{Sp}(n,1),\mathsf{Sp}(k)\times \mathsf{Sp}(n-k,1))$; 
\item \label{n-real} $(\mathsf{G},\mathsf{L})=(\mathsf{U}(n,1),\mathsf{O}(n,1))$;
\item \label{n-complex} $(\mathsf{G},\mathsf{L})=(\mathsf{Sp}(n,1),\mathsf{U}(n,1))$;
\item \label{k-complex} $(\mathsf{G},\mathsf{L})=(\mathsf{Sp}(n,1),\mathsf{Sp}(k)\times \mathsf{U}(n-k,1))$;
\item \label{k-real-complex} $(\mathsf{G},\mathsf{L})=(\mathsf{U}(n,1),\mathsf{U}(k)\times \mathsf{O}(n-k,1))$;
\item\label{max-real} $(\mathsf{G},\mathsf{L})=(\mathsf{Sp}(n,1),\mathsf{O}(n,1))$;
\item \label{k-real-real} $(\mathsf{G},\mathsf{L})=(\mathsf{Sp}(n,1),\mathsf{Sp}(k)\times \mathsf{O}(n-k,1))$;
\end{enumerate}where $n\geq 2$ and $k=1,\ldots, n-1$. 
If $\Gamma$ is a convex cocompact subgroup of $\mathsf{G}$ such that $\Gamma\cap \mathsf{L}$ is a lattice in $\mathsf{L}$, then there exists a finite-index subgroup $\Gamma_1$ of $\Gamma$, containing $\Gamma\cap \mathsf{L}$, such that the double $\Gamma_1\ast_{\Gamma\cap \mathsf{L}}\Gamma_1$ admits a faithful Anosov representation into $\mathsf{SL}_d(\mathbb{R})$ for some $d\in \mathbb{N}$ depending only on $n\in \mathbb{N}$.
\end{theorem}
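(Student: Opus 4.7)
The plan is to verify, case by case, the hypotheses of \Cref{amalgam-rank1} and then read off the desired Anosov representation of the double. The standing requirements on the subgroup $\Gamma\cap\mathsf{L}$ are automatic from the lattice hypothesis: since $\mathsf{L}$ is reductive and $\Gamma\cap\mathsf{L}$ is a lattice in $\mathsf{L}$, it acts cocompactly on the totally geodesic symmetric subspace $Y_{\mathsf{L}}=\mathsf{L}/(\mathsf{L}\cap\mathsf{K})\subset\mathsf{G}/\mathsf{K}$; this implies that $\Gamma\cap\mathsf{L}$ is quasiconvex in $\Gamma$ and that $\Lambda_{\Gamma\cap\mathsf{L}}=\Lambda_\Gamma\cap\Lambda_{\mathsf{L}}$.

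The remaining task is to realize $\mathsf{L}$ as the fixed-point set of a Lie group involution of $\mathsf{G}$, or as the intersection of two such. For cases \ref{k-sametype}, \ref{n-real}, \ref{n-complex}, and \ref{max-real}, the subgroup $\mathsf{L}$ is (up to finite index) the stabilizer in $\mathsf{G}$ of a totally geodesic symmetric subspace $Y_{\mathsf{L}}\subset\mathsf{G}/\mathsf{K}$. The geodesic reflection $\sigma_{Y_{\mathsf{L}}}$ across $Y_{\mathsf{L}}$ is an isometry of $\mathsf{G}/\mathsf{K}$, and conjugation by $\sigma_{Y_{\mathsf{L}}}$ induces an involutive Lie group automorphism $\varphi_{\mathsf{L}}$ of $\mathsf{G}$ whose fixed-point set equals $\mathsf{L}$. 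The boundary extension $\partial\varphi_{\mathsf{L}}$ coincides with the boundary extension of $\sigma_{Y_{\mathsf{L}}}$; a geodesic preserved by $\sigma_{Y_{\mathsf{L}}}$ in negative curvature must either lie in $Y_{\mathsf{L}}$ or be perpendicular to $Y_{\mathsf{L}}$, and in the latter case its ideal endpoints are swapped. Hence the fixed set of $\partial\varphi_{\mathsf{L}}$ on $\partial_\infty(\mathsf{G}/\mathsf{K})$ is precisely $\partial_\infty Y_{\mathsf{L}}=\Lambda_{\mathsf{L}}$, so $(\mathsf{G},\mathsf{L})$ is a compatible pair in the sense of \Cref{def:compatible}, and part \ref{amalgam-rank1-1} of \Cref{amalgam-rank1} applies.

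For cases \ref{k-complex}, \ref{k-real-complex}, and \ref{k-real-real}, I would write $\mathsf{L}=\mathsf{L}_1\cap\mathsf{L}_2$, where each $(\mathsf{G},\mathsf{L}_i)$ is a compatible pair from the previous paragraph with an order-two involutive automorphism. Explicitly, in case \ref{k-complex}, $\mathsf{Sp}(k)\times\mathsf{U}(n-k,1)=\bigl(\mathsf{Sp}(k)\times\mathsf{Sp}(n-k,1)\bigr)\cap\mathsf{U}(n,1)$; in case \ref{k-real-complex}, $\mathsf{U}(k)\times\mathsf{O}(n-k,1)=\bigl(\mathsf{U}(k)\times\mathsf{U}(n-k,1)\bigr)\cap\mathsf{O}(n,1)$; in case \ref{k-real-real}, $\mathsf{Sp}(k)\times\mathsf{O}(n-k,1)=\bigl(\mathsf{Sp}(k)\times\mathsf{Sp}(n-k,1)\bigr)\cap\mathsf{O}(n,1)$. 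The corresponding totally geodesic subspaces $Y_{\mathsf{L}_1}$ and $Y_{\mathsf{L}_2}$ intersect totally geodesically in $Y_{\mathsf{L}}$, so $\Lambda_{\mathsf{L}}=\Lambda_{\mathsf{L}_1}\cap\Lambda_{\mathsf{L}_2}$. Combined with the opening paragraph applied to $\mathrm{M}=\Gamma\cap\mathsf{L}_1\cap\mathsf{L}_2=\Gamma\cap\mathsf{L}$, this verifies all hypotheses of part \ref{amalgam-rank-2} of \Cref{amalgam-rank1}, which produces the desired Anosov representation. In every case, the dimension bound $d=d(n)$ follows from the explicit estimates in \Cref{amalgam-rank1} together with the fact that the minimal proximal dimension $d(\mathsf{G})$ is bounded by a polynomial in $n$ for $\mathsf{G}\in\{\mathsf{O}(n,1),\mathsf{U}(n,1),\mathsf{Sp}(n,1)\}$.

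The main obstacle is to verify, in each concrete realization, condition (ii) of \Cref{def:compatible}: that the boundary extension of $\varphi_{\mathsf{L}}$ fixes no point of $\partial_\infty(\mathsf{G}/\mathsf{K})$ outside $\Lambda_{\mathsf{L}}$. While the rank-one geometry sketched above makes this conceptually transparent, one must carefully identify the geometric reflection as a genuine Lie group automorphism (rather than merely an isometry of the symmetric space) and ensure that its Lie-theoretic fixed-point subgroup coincides with $\mathsf{L}$ on the nose, which may require passing to appropriate finite-index or identity-component subgroups in each individual case.
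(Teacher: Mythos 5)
Your overall strategy is the paper's: verify that $(\mathsf{G},\mathsf{L})$ is a compatible pair (or an intersection of two), then invoke \Cref{amalgam-rank1}. Your treatment of the standing hypotheses and of cases \ref{k-sametype}, \ref{n-real}, \ref{n-complex} matches the paper, which realizes $\mathsf{L}$ there as the fixed-point set of conjugation by $\mathrm{diag}(\textup{I}_k,-\textup{I}_{n+1-k})$, of $g\mapsto\overline g$, and of conjugation by $i\textup{I}_{n+1}$, respectively. However, the remaining cases contain genuine errors.

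First, the three intersection identities you propose are false as stated. In case \ref{k-complex}, $\bigl(\mathsf{Sp}(k)\times\mathsf{Sp}(n-k,1)\bigr)\cap\mathsf{U}(n,1)=\mathsf{U}(k)\times\mathsf{U}(n-k,1)$, not $\mathsf{Sp}(k)\times\mathsf{U}(n-k,1)$: intersecting with $\mathsf{U}(n,1)$ forces \emph{every} block to have complex entries, so the full $\mathsf{Sp}(k)$ factor is destroyed. For the same reason, in cases \ref{k-real-complex} and \ref{k-real-real} your intersections compute to $\mathsf{O}(k)\times\mathsf{O}(n-k,1)$ rather than the target groups. The correct devices are different in each case: for \ref{k-complex} a \emph{single} compatible pair suffices, given by conjugation by $b_k=\mathrm{diag}(\textup{I}_k,i\textup{I}_{n+1-k})$, whose fixed-point set in $\mathsf{Sp}(n,1)$ is exactly $\mathsf{Sp}(k)\times\mathsf{U}(n-k,1)$ (the off-diagonal blocks are killed because $C=-Ci$ forces $C=0$); for \ref{k-real-complex} one must leave $\mathsf{U}(n,1)$ altogether, regard $\Gamma$ as convex cocompact in $\mathsf{Sp}(n,1)$, and intersect with the centralizer of $a_k=\mathrm{diag}(\textup{I}_k,j\textup{I}_{n+1-k})$, which cuts out $\mathsf{U}(k)\times\mathsf{O}(n-k,1)$ from the $i$-complex group $\Gamma$; and for \ref{k-real-real} one uses the two inner automorphisms given by $b_k$ and $a_k$ and applies part \ref{amalgam-rank-2} of \Cref{amalgam-rank1}.

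Second, case \ref{max-real} cannot be handled by a single involution. The totally real $\mathbb{H}_{\mathbb{R}}^{n}\subset\mathbb{H}_{\mathbb{H}}^{n}$ is not a reflective submanifold: its normal space at a point is $3n$-dimensional, which is not the tangent space of any totally geodesic submanifold of $\mathbb{H}_{\mathbb{H}}^{n}$ (these have dimensions $m$, $2m$, or $4m$ with $m\le n$), so the geodesic reflection across $\mathbb{H}_{\mathbb{R}}^{n}$ is \emph{not} an isometry. On the group side, every automorphism of $\mathsf{Sp}(n,1)$ is inner, and the centralizer of a single element is never $\mathsf{O}(n,1)$ (the centralizer in $\mathbb{H}$ of a single unit quaternion is either all of $\mathbb{H}$ or a copy of $\mathbb{C}$, so scalar elements yield $\mathsf{Sp}(n,1)$ or a copy of $\mathsf{U}(n,1)$). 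This is precisely why the paper treats \ref{max-real} via part \ref{amalgam-rank-2} of \Cref{amalgam-rank1}, writing $\mathsf{O}(n,1)=\mathsf{L}\cap\mathsf{L}'$ with $\mathsf{L}$ and $\mathsf{L}'$ the fixed-point sets of conjugation by $i\textup{I}_{n+1}$ and $j\textup{I}_{n+1}$. Your closing caveat acknowledges that identifying the reflection as a group automorphism needs care, but the issue is not one of care: in cases \ref{max-real} and \ref{k-real-real} no such single automorphism exists, and in cases \ref{k-complex} and \ref{k-real-complex} the specific decompositions you wrote down produce the wrong subgroup.
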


\begin{remark}\label{rem:complex-quat-amalgam} Let $\mathbb{K}=\mathbb{R}$ or $\mathbb{C}$. If $\Gamma<\mathsf{Sp}(n,1)$, $n\geq 4$, is a uniform lattice and $\Gamma(\mathbb{K})\coloneqq\Gamma \cap \mathsf{GL}_d(\mathbb{K})$ is also a uniform lattice in its Zariski closure in $\mathsf{Sp}(n,1)$ (see the example below), the amalgam $\Gamma\ast _{\Gamma(\mathbb{K})}\Gamma$ fails to admit discrete and faithful representation into any Lie group of real rank $1$ (see Proposition \ref{indiscrete-rank1}). However, Corollary \ref{complex-quat-amalgam} shows that for many uniform lattices $\Gamma<\mathsf{Sp}(n,1)$, the double $\Gamma \ast_{\Gamma(\mathbb{K})}\Gamma$ admits an Anosov representation to some higher rank Lie group. These provide further examples of Anosov subgroups, with connected Gromov boundary, not admitting discrete and faithful embedding into any rank $1$ Lie group. 
\end{remark}

\begin{example}  Let $A_n:=\begin{pmatrix}[0.5]\textup{I}_n & \\ & \sqrt[4]{2}\end{pmatrix}$. The discrete subgroup $$\Gamma=\Big\{g\in \mathsf{Sp}(n,1): g=A_{n}^{-1}hA_n, h\in \mathsf{GL}_{n+1}\big(\mathbb{Z}[\sqrt{2},i,j,k]\big) \Big\}$$ of $\mathsf{Sp}(n,1)$ is a uniform lattice. In addition, for every pair $(\mathsf{G},\mathsf{L})$ in Theorem \ref{complex-quat-amalgam}, where $\mathsf{G}=\mathsf{Sp}(n,1)$, $\Gamma\cap \mathsf{L}$ is a uniform lattice in $\mathsf{L}$.\end{example}

\subsection{Preliminary notation}\label{proof:main-rank1-amalgam}
We set up some notation.
For $z\in \mathbb{H}$, denote by $\overline{z}$ the conjugate of $z$.
For $g = (g_{ij})_{i,j=1}^{m} \in \mathsf{Mat}_{m}(\mathbb{H})$, $g^{\ast}=(\overline{g_{ji}})_{i,j=1}^{m}$ denotes the conjugate transpose of $g$. 

For $n\geq 1$, consider the Lie groups 
\begin{align*}
\mathsf{O}(n,1)&=\big\{g\in \mathsf{GL}_{n+1}(\mathbb{R}):g^{t}J_{n,1}g=J_{n,1}\big\},\\ 
\mathsf{U}(n,1)&=\big\{g\in \mathsf{GL}_{n+1}(\mathbb{C}):g^{\ast}J_{n,1}g=J_{n,1}\big\},\\
\mathsf{Sp}(n,1)&=\big\{g\in \mathsf{GL}_{n+1}(\mathbb{H}):g^{\ast}J_{n,1}g=J_{n,1}\big\},
\end{align*}
where $J_{n,1}\coloneqq \begin{pmatrix}[0.8]\textup{I}_n & \\ & -1\end{pmatrix}$.

Let $\mathbb{K}=\mathbb{R},\mathbb{C}$ or $\mathbb{H}$. Recall that the projective space $\mathbb{P}(\mathbb{K}^{n+1})$ is the set of equivalence classes of vectors in $\mathbb{K}^{n+1}\smallsetminus\{0_{n+1}\}$, where $u,v\in \mathbb{K}^{n+1}$ are equivalent if there is $z\in \mathbb{K}$ such that $u=vz$. The $\mathbb{K}$-hyperbolic $n$-space is the open subset of $\mathbb{P}(\mathbb{K}^{n+1})$ defined by $$\mathbb{H}_{\mathbb{K}}^n\coloneqq\big\{[x_0:\cdots:x_{n-1}:x_{n}]\in \mathbb{P}(\mathbb{K}^{n+1}):\ |x_0|^2+\cdots+|x_{n-1}|^2<|x_n|^2\big\}.$$
Its visual boundary is $$\partial_{\infty}\mathbb{H}_{\mathbb{K}}^n\coloneqq \big\{[x_0:\cdots:x_{n-1}:1]\in \mathbb{P}(\mathbb{K}^{n+1}):\ |x_0|^2+\cdots+|x_{n-1}|^2=1\big\}.$$ The isometry groups of the hyperbolic spaces $\mathbb{H}_{\mathbb{R}}^n$, $\mathbb{H}_{\mathbb{C}}^n$ and $\mathbb{H}_{\mathbb{H}}^n$ are isogenous to $\mathsf{O}(n,1),\mathsf{U}(n,1)$ and $\mathsf{Sp}(n,1)$ respectively.

\subsection{Proof \Cref{complex-quat-amalgam}}\label{proof:complex-quat-amalgam}
The statement follows by applying \Cref{amalgam-rank1} (1) and observing that in cases \ref{k-sametype} through \ref{k-real-complex} we have a compatible pair $(\mathsf{G},\mathsf{L})$, where $\mathsf{L}$ is the fixed-point subgroup of the finite-order automorphism $\varphi:\mathsf{G}\rightarrow \mathsf{G}$:

\smallskip
\noindent \ref{k-sametype}: $\varphi(g)=w_kgw_k^{-1}$, $g\in \mathsf{G}$, where $w_k\coloneqq\begin{pmatrix}[0.6] \textup{I}_{k} & \\ & -\textup{I}_{n+1-k}\end{pmatrix}$.

\smallskip
\noindent \ref{n-real}: $\varphi(g)=\overline{g}$, $g\in \mathsf{G}$.

\smallskip\noindent
\ref{n-complex}: $\varphi(g)=-(i\textup{I}_{n+1})g(i\textup{I}_{n+1})$, $g\in \mathsf{G}$.

\smallskip
\noindent \ref{k-complex}: $\varphi(g)=b_k gb_{k}^{-1}$, $g\in \mathsf{G}$, where $b_k\coloneqq\begin{pmatrix}[0.6] \textup{I}_{k} & \\ & i\textup{I}_{n+1-k}\end{pmatrix}$.

\smallskip
\noindent \ref{k-real-complex}: For this case, we shall consider $\Gamma<\mathsf{U}(n,1)$ as a convex cocompact subgroup of $\mathsf{Sp}(n,1)$ and the Lie subgroup 
$$\mathsf{L}_1\coloneqq\left\{h\in \mathsf{Sp}(n,1):\ a_k h a_k^{-1}=h\right\}, \ a_k\coloneqq\begin{pmatrix}[0.7] \textup{I}_{k} & \\ & j\textup{I}_{n+1-k}\end{pmatrix}.$$ 
Observe that $(\mathsf{Sp}(n,1),\mathsf{L}_1)$ is a compatible pair and the limit set of $\mathsf{L}_1$ is the fixed-point set of $\partial \varphi$ i.e. the set $$\big\{[\{0_k:v_1+v_2j:1]:\ |v_1|^2+|v_2|^2=1,\, v_1,v_2\in \mathbb{R}^{n+1-k}\big\}$$ which intersects $\partial_{\infty}\mathbb{H}_{\mathbb{C}}^n$ to the boundary $\partial_{\infty}\mathbb{H}_{\mathbb{R}}^{n-k}$. Since by assumption $\mathsf{\Gamma}\cap \mathsf{L}_1<\mathsf{U}(k)\times \mathsf{O}(n-k,1)$ is a uniform lattice, we have that $\Lambda_{\Gamma \cap \mathsf{L}_1}=\Lambda_{\mathsf{L}_1}\cap \Lambda_{\Gamma}=\partial_{\infty}\mathbb{H}_{\mathbb{R}}^{n-k}$. Thus, Theorem \ref{amalgam-rank1} (i) applies and we obtain the conclusion.
\smallskip

\noindent \ref{max-real}: Let $\Gamma<\mathsf{Sp}(n,1)$ a convex cocompact subgroup such that $\Gamma\cap \mathsf{O}(n,1)<\mathsf{O}(n,1)$ is a uniform lattice. Consider the compatible pairs  $(\mathsf{Sp}(n,1),\mathsf{L})$ and  $(\mathsf{Sp}(n,1),\mathsf{L}')$, defined by the order $4$ automorphisms $\varphi_1(g)=(-i\textup{I}_{n+1})g(i\textup{I}_{n+1})$ and $\psi_1(g)=(-j\textup{I}_{n+1})g(j\textup{I}_{n+1})$. In other words, $\mathsf{L}=\mathsf{U}(n,1)$ and $\mathsf{L}'=\mathsf{Sp}(n,1)\cap (\mathsf{Mat}_{n+1}(\mathbb{R})\oplus j\mathsf{Mat}_{n+1}(\mathbb{R}))$, $\mathsf{L}\cap \mathsf{L}'=\mathsf{O}(n,1)$ and $\Lambda_{\mathsf{L}}\cap \Lambda_{\mathsf{L}'}=\partial_{\infty}\mathbb{H}_{\mathbb{R}}^n$. Thus, the conclusion follows by applying \Cref{amalgam-rank1} (2) for the compatible pairs $(\mathsf{Sp}(n,1),\mathsf{L})$, $(\mathsf{Sp}(n,1),\mathsf{L}')$, the automorphism $\psi_1$ and the convex cocompact subgroup $\Gamma<\mathsf{Sp}(n,1)$.

\smallskip

\noindent \ref{k-real-real} Let $\Gamma<\mathsf{Sp}(n,1)$ be a convex cocompact subgroup such that $\Gamma \cap (\mathsf{Sp}(k)\times \mathsf{O}(n-k,1))$ is a uniform lattice in $\mathsf{Sp}(k)\times \mathsf{O}(n-k,1)$.
 Consider the compatible pairs  $(\mathsf{Sp}(n,1),\mathsf{L}_1)$ and  $(\mathsf{Sp}(n,1),\mathsf{L}_2)$, defined respectively by the inner, order $4$, automorphisms of $\mathsf{Sp}(n,1)$, $g\mapsto b_kgb_k^{-1}$ and $g\mapsto a_kga_k^{-1}$, where $b_k\coloneqq\begin{pmatrix}[0.6] \textup{I}_{k} & \\ & i\textup{I}_{n+1-k}\end{pmatrix}$ and $a_k\coloneqq\begin{pmatrix}[0.6] \textup{I}_{k} & \\ & j\textup{I}_{n+1-k}\end{pmatrix}$.

 Note that $\mathsf{L}_1\cap \mathsf{L}_2=\mathsf{Sp}(k)\times \mathsf{O}(n-k,1)$ and $\Lambda_{\mathsf{L}}\cap \Lambda_{\mathsf{L}'}=\partial_{\infty}\mathbb{H}_{\mathbb{R}}^{n-k}$. The conclusion follows by applying Theorem \ref{amalgam-rank1} (2) for the compatible pairs $(\mathsf{Sp}(n,1), \mathsf{L}_{1})$, $(\mathsf{Sp}(n,1), \mathsf{L}_{1})$ and the subgroup $\Gamma<\mathsf{Sp}(n,1)$.\qed
\medskip

\begin{remark} In the notation of Corollary \ref{complex-quat-amalgam}, suppose that $(\mathsf{G},\mathsf{L})$ is a compatible pair, $\Gamma<\mathsf{G}$ is convex cocompact, and $\Gamma\cap \mathsf{L}<\mathsf{L}$ is a lattice. We remark that:
\begin{enumerate}[label=(\roman*)]
    \item If $(\mathsf{G},\mathsf{L})=\big(\mathsf{O}(n,1),\mathsf{O}(k)\times \mathsf{O}(n-k,1)\big)$, then the convex cocompact subgroups of $\mathsf{U}(n,1)$ $\Gamma_1\coloneqq\Gamma$, $\Gamma_2\coloneqq b_{k}\Gamma b_{k}^{-1}$, where $b_k\coloneqq\begin{pmatrix}[0.6] \textup{I}_{k} & \\ & i\textup{I}_{n+1-k}\end{pmatrix}$, intersect at $\Gamma \cap \mathsf{L}$ and $b_{k}\Lambda_{\Gamma}\cap \Lambda_{\Gamma}=\Lambda_{\Gamma \cap \mathsf{L}}=\partial_{\infty}\mathbb{H}_{\mathbb{R}}^{n-k}$. Thus, by Theorem \ref{thm:amalgam}, the there exists a finite-index subgroup $\Gamma_1<\Gamma$ such that $\Gamma_1\ast_{\Gamma \cap \mathsf{L}}\Gamma_1$ admits a convex cocompact representation into $\mathsf{U}(n,1)$.

    \item If $(\mathsf{G},\mathsf{L})=(\mathsf{U}(n,1),\mathsf{O}(n,1))$, consider the Lie group embedding $\Phi:\mathsf{U}(n,1)\xhookrightarrow{} \mathsf{Sp}(n,1)$, $\Phi(X+Yi)=X+Yj$, where $X,Y\in \mathsf{Mat}_{n+1}(\mathbb{R})$. The convex cocompact subgroups of $\mathsf{Sp}(n,1)$, $\Gamma_1\coloneqq\Gamma$, $\Gamma_2\coloneqq\Phi(\Gamma)$ intersect at $\Gamma \cap \mathsf{L}$ and since $\Lambda_{\Phi(\Gamma)}$ is contained in $$\left\{[u+vj:1]:|u|^2+|v|^2=1, u,v\in \mathbb{R}^n\right\},$$ we have $\Lambda_{\Gamma}\cap \Lambda_{\Phi(\Gamma)}=\Lambda_{\Gamma \cap \mathsf{L}}=\partial_{\infty}\mathbb{H}_{\mathbb{R}}^{n}$. Thus, by Theorem \ref{thm:amalgam}, the there exists a finite-index subgroup $\Gamma_1<\Gamma$ such that $\Gamma_1\ast_{\Gamma \cap \mathsf{L}}\Gamma_1$ admits a convex cocompact representation into $\mathsf{Sp}(n,1)$.
\end{enumerate} 
\end{remark}

\subsection{Indiscreteness in rank 1}
Using Corlette's Archimedean superrigidity \cite{Corlette} and following the proof of \cite[Thm. 1.7]{TholozanT} we prove that certain Anosov amalgams obtained by \Cref{main-rank1-amalgam} fail to admit discrete and faithful representation into any rank 1 Lie group.

\begin{proposition} \label{indiscrete-rank1} Let $\Gamma<\mathsf{Sp}(n,1)$, $n\geq 4$, be a uniform lattice. Suppose that $\H<\Gamma$ is a quasiconvex subgroup such that $\Lambda_{\H}\cap \partial_{\infty}\mathbb{H}_{\mathbb{H}}^{m}$ is a proper subset of $\partial_{\infty}\mathbb{H}_{\mathbb{H}}^{m}$ which contains $\partial_{\infty}\mathbb{H}_{\mathbb{R}}^m$, for some $m=1,\ldots, n$. Then the amalgam $\Gamma\ast_{\H}\Gamma$ does not admit discrete and faithful representation into any rank $1$ Lie group.\end{proposition}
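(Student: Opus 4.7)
The plan is to argue by contradiction using Corlette's Archimedean superrigidity \cite{Corlette}, adapting the strategy of \cite[Thm.~1.7]{TholozanT}. Suppose $\rho:\Gamma\ast_{\H}\Gamma\rightarrow \mathsf{H}$ is discrete and faithful with $\mathsf{H}$ a rank~$1$ Lie group, and let $\rho_1,\rho_2:\Gamma\rightarrow \mathsf{H}$ denote the restrictions to the two factor copies of $\Gamma$. Faithfulness together with the infiniteness of $\Gamma$ forces $\rho_i(\Gamma)$ to be infinite; combined with discreteness of $\rho$, this rules out relatively compact image. Corlette's theorem then extends each $\rho_i$ to a continuous Lie group homomorphism $\tilde\rho_i:\mathsf{Sp}(n,1)\rightarrow \mathsf{H}$, which by simplicity of $\mathsf{Sp}(n,1)$ is an embedding. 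Since $n\geq 4$, a dimension count ($\dim\mathsf{F}_4^{-20}=52<55\le \dim\mathsf{Sp}(n,1)$) together with restricted-root considerations shows that among the rank~$1$ simple Lie groups only $\mathsf{Sp}(k,1)$ with $k\geq n$ accommodates $\mathsf{Sp}(n,1)$. Passing to a finite cover, take $\mathsf{H}=\mathsf{Sp}(k,1)$, and let $X_i\cong \mathbb{H}_{\mathbb{H}}^n\subset \mathbb{H}_{\mathbb{H}}^k$ denote the totally geodesic isometric image of $\tilde\rho_i$.

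The core step is to prove $\tilde\rho_1=\tilde\rho_2$: then $\rho$ factors through the folding $\Gamma\ast_{\H}\Gamma\rightarrow \Gamma$, which has nontrivial kernel as $\H\subsetneq \Gamma$, contradicting the injectivity of $\rho$. For every loxodromic $\eta\in \H$, the common element $\rho(\eta)\in\mathsf{Sp}(k,1)$ has a well-defined fixed-point pair in $\partial_{\infty}\mathbb{H}_{\mathbb{H}}^k$, so the boundary extensions $\partial\tilde\rho_i$ coincide on the attracting/repelling fixed points of all such $\eta$; by density of these pairs in $\Lambda_{\H}\times\Lambda_\H$ and continuity, $\partial\tilde\rho_1|_{\Lambda_{\H}}=\partial\tilde\rho_2|_{\Lambda_{\H}}$, and in particular the two maps agree on $\partial_{\infty}\mathbb{H}_{\mathbb{R}}^m$. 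Since a totally geodesic isometric embedding of real hyperbolic space is determined by its ideal boundary, $\tilde\rho_1$ and $\tilde\rho_2$ coincide on the totally geodesic $\mathbb{H}_{\mathbb{R}}^m\subset \mathbb{H}_{\mathbb{H}}^n$. I would then argue that $X_1=X_2$: otherwise, $X_1\cap X_2$ is a proper totally geodesic subspace of each $X_i$, whose preimage $Y\subset \mathbb{H}_{\mathbb{H}}^n$ under $\tilde\rho_1$ is proper, totally geodesic, contains $\mathbb{H}_{\mathbb{R}}^m$, and satisfies $\Lambda_{\H}\subset \partial_{\infty}Y$. A case analysis on the type (real, complex, or quaternionic) of $Y$, combined with the two hypotheses $\partial_{\infty}\mathbb{H}_{\mathbb{R}}^m\subset \Lambda_{\H}$ and $\Lambda_{\H}\cap \partial_{\infty}\mathbb{H}_{\mathbb{H}}^m\subsetneq \partial_{\infty}\mathbb{H}_{\mathbb{H}}^m$ and with the structure of the Zariski closure $\overline{\H}^{\,Z}<\mathsf{Sp}(n,1)$, rules out every possibility. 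With $X_1=X_2$, the composition $\tilde\rho_2\circ \tilde\rho_1^{-1}$ is an automorphism of $\mathsf{Sp}(n,1)$ pointwise fixing $\H$; for $m\ge 2$, $\H$ is non-elementary (its limit set contains a positive-dimensional sphere) so its centralizer in $\mathsf{Sp}(n,1)$ is central, forcing the automorphism to be trivial. The $m=1$ case requires a more delicate version of the rigidity step to handle possibly elementary $\H$.

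The main obstacle is the geometric rigidity step forcing $X_1=X_2$. One must balance the ``real'' hypothesis $\partial_{\infty}\mathbb{H}_{\mathbb{R}}^m\subset \Lambda_{\H}$ against the ``non-quaternionic'' hypothesis $\Lambda_{\H}\cap \partial_{\infty}\mathbb{H}_{\mathbb{H}}^m\subsetneq \partial_{\infty}\mathbb{H}_{\mathbb{H}}^m$, using the classification of totally geodesic subspaces of $\mathbb{H}_{\mathbb{H}}^k$ passing through a prescribed real hyperbolic subspace. The step becomes particularly delicate when $\overline{\H}^{\,Z}$ is a proper Lie subgroup of $\mathsf{Sp}(n,1)$ (as with $\H=\Gamma\cap \mathsf{U}(n,1)$ in case (3) of Theorem~\ref{complex-quat-amalgam}), since the naive centralizer argument must be replaced by a dimension count on the Grassmannian of totally geodesic $\mathbb{H}_{\mathbb{H}}^n$-subspaces of $\mathbb{H}_{\mathbb{H}}^k$ passing through a fixed $\overline{\H}^{\,Z}$-invariant totally geodesic subspace.
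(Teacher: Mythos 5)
There is a genuine gap, and it sits exactly where you flag your "main obstacle": the claim $X_1=X_2$ is not just hard to prove from the stated hypotheses --- it is not derivable from them, and your intended route through $\tilde\rho_1=\tilde\rho_2$ therefore collapses. Your case analysis asks to rule out a proper totally geodesic $Y\subset \mathbb{H}_{\mathbb{H}}^n$ containing $\mathbb{H}_{\mathbb{R}}^m$ (hence the quaternionic span $\mathbb{H}_{\mathbb{H}}^m$) with $\Lambda_{\H}\subset \partial_{\infty}Y$. But take $\H$ a lattice in $\mathsf{O}(m,1)$ with $m<n$ (cases \ref{max-real} and \ref{k-real-real} of \Cref{complex-quat-amalgam} produce exactly such $\H$, and the hypotheses of the proposition are satisfied since $\partial_{\infty}\mathbb{H}_{\mathbb{R}}^m\subsetneq \partial_{\infty}\mathbb{H}_{\mathbb{H}}^m$): then $Y=\mathbb{H}_{\mathbb{H}}^m$ itself is proper, totally geodesic, contains $\mathbb{H}_{\mathbb{R}}^m$, and contains $\Lambda_{\H}=\partial_{\infty}\mathbb{H}_{\mathbb{R}}^m$ in its boundary. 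So nothing prevents $X_1$ and $X_2$ from being two distinct copies of $\mathbb{H}_{\mathbb{H}}^n$ in $\mathbb{H}_{\mathbb{H}}^{\ell}$ meeting along a common $\mathbb{H}_{\mathbb{H}}^m$, and the factor-through-the-fold contradiction never materializes. (Your final centralizer step is also shaky --- the pointwise stabilizer of $\mathbb{H}_{\mathbb{R}}^m$ in $\mathsf{Sp}(n,1)$ is far from central when $m<n$ --- but that is secondary.)

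The paper's proof never tries to identify $X_1$ with $X_2$. It stops at the weaker statement you correctly reach, namely $f_{L}(\mathbb{H}_{\mathbb{H}}^m)\subset f_{R}(\mathbb{H}_{\mathbb{H}}^n)$ (via $f_L(\textup{Conv}(\Lambda_\H))=f_R(\textup{Conv}(\Lambda_\H))$ and Meyer's classification), and then exploits the two hypotheses you under-use: that $\Gamma$ is a \emph{uniform lattice} and that $\Lambda_{\H}\cap\partial_{\infty}\mathbb{H}_{\mathbb{H}}^m$ is \emph{proper}. Pick $x_r\in\mathbb{H}_{\mathbb{H}}^m$ converging to a point of $\partial_{\infty}\mathbb{H}_{\mathbb{H}}^m\smallsetminus\Lambda_{\H}$; since $f_L(x_r)=f_R(y_r)$ for some $y_r\in\mathbb{H}_{\mathbb{H}}^n$, cocompactness lets you choose $\gamma_{r,L}\in\Gamma_L$, $\gamma_{r,R}\in\Gamma_R$ with $\gamma_{r,L}^{-1}x_r$ and $\gamma_{r,R}^{-1}y_r$ bounded, whence $\rho(\gamma_{r,L}^{-1}\gamma_{r,R})$ is bounded in $\mathsf{G}_{\ell}$. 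Discreteness and faithfulness force $(\gamma_{r,L}^{-1}\gamma_{r,R})$ to take finitely many values in the amalgam, so after extraction $\gamma_{r,L}\gamma_{r_0,L}^{-1}=\gamma_{r,R}\gamma_{r_0,R}^{-1}\in\Gamma_L\cap\Gamma_R=\H$, giving $\lim_r\gamma_{r,L}=\lim_r x_r\in\Lambda_{\H}$ --- a contradiction. To repair your write-up you should replace the rigidity step $X_1=X_2$ by this tracking argument; as it stands, the proposal proves the proposition only in the special cases (such as $m=n$) where the quaternionic span of $\mathbb{H}_{\mathbb{R}}^m$ is all of $\mathbb{H}_{\mathbb{H}}^n$.
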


\begin{proof} To prove the proposition, we appeal to the argument in the proof of \cite[Thm. 1.7]{TholozanT}. Since $\mathsf{Sp}(n,1)$, with $n\geq 4$, does not embed into $\mathsf{F}_4^{-20}$, it suffices to show that there is no discrete and faithful representation $\rho:\Gamma \ast_{\H}\Gamma \to \mathsf{G}_{\ell}$, where $\mathsf{G}_{\ell}$ is isogenous to $\mathsf{Sp}(\ell,1)$ for some $\ell\in \mathbb{N}$. We argue by contradiction. Suppose such a representation $\rho$ exists, and let $\rho_L$ and $\rho_R$ denote its restrictions to the two vertex groups $\Gamma_L$ and $\Gamma_R$ (each isomorphic to $\Gamma$) in $\Gamma_L\ast_{\H}\Gamma_{R}=\Gamma \ast_{\H}\Gamma$. By Corlette's Archimedean superrigidity theorem \cite{Corlette}, there exist $\rho_{L}$- and $\rho_{R}$-equivariant totally geodesic embeddings
\[
f_{L},f_{R}:\mathbb{H}_{\mathbb{H}}^{n}\hookrightarrow \mathbb{H}_{\mathbb{H}}^{\ell}.
\]
Since $\rho_{L}$ and $\rho_{R}$ agree on $\H$, the limit sets of $\rho_L(\H)$ and $\rho_R(\H)$ in $\partial_{\infty}\mathbb{H}_{\mathbb{H}}^{\ell}$ coincide. Thus, $f_{L}(\textup{Conv}(\Lambda_{\H}))=f_{R}(\textup{Conv}(\Lambda_{\H}))$, where $\textup{Conv}(\Lambda_{\H})$ denotes the convex hull of $\Lambda_{\H}$ in $\mathbb{H}_{\mathbb{H}}^{\ell}$. In particular, since $\partial_{\infty}\mathbb{H}_{\mathbb{R}}^m\subset \Lambda_{\H}$, it follows that $f_{L}(\mathbb{H}_{\mathbb{R}}^m)\subset f_{R}(\mathbb{H}_{\mathbb{H}}^{n})$. By the classification of totally geodesic subspaces of $\mathbb{H}_{\mathbb{H}}^{\ell}$ (see \cite[Thm. 2.12]{Meyer}), $f_{L}(\mathbb{H}_{\mathbb{H}}^m)$ is the unique quaternionic hyperbolic $m$-space in $\mathbb{H}_{\mathbb{H}}^{\ell}$ containing $f_{L}(\mathbb{H}_{\mathbb{R}}^m)$, hence $f_{L}(\mathbb{H}_{\mathbb{H}}^m)\subset f_{R}(\mathbb{H}_{\mathbb{H}}^n)$.

Since $\Lambda_{\H}$ is assumed to be a proper subset of $\partial_{\infty}\mathbb{H}_{\mathbb{H}}^m$, we may fix an infinite sequence $(x_{r})_{r\in \mathbb{N}}\subset \mathbb{H}_{\mathbb{H}}^m$ with $\lim_{r}x_r\in \partial_{\infty}\mathbb{H}_{\mathbb{H}}^m\smallsetminus \Lambda_{\H}$. As $f_{L}(\mathbb{H}_{\mathbb{H}}^m)\subset f_{R}(\mathbb{H}_{\mathbb{H}}^n)$, there exists a sequence $(y_r)_{r\in \mathbb{N}}$ in $\mathbb{H}_{\mathbb{H}}^{n}$ such that $f_{L}(x_r)=f_{R}(y_r)$ for every $r$. Choose infinite sequences $(\gamma_{r,L})_{r \in \mathbb{N}}\subset \Gamma_L$ and $(\gamma_{r,R})_{r \in \mathbb{N}}\subset \Gamma_R$ such that $\big(\gamma_{r,L}^{-1}x_r\big)_{r\in \mathbb{N}}$ and $\big(\gamma_{r,R}^{-1}y_r\big)_{r\in \mathbb{N}}$ are bounded. Since $f_{L}$ and $f_{R}$ are $\rho_{L}$- and $\rho_{R}$-equivariant, and $f_{L}(x_r)=f_R(y_r)$, the sequence $\big(\rho_L(\gamma_{r,L}^{-1})\rho_R(\gamma_{r,R})\big)_{r\in \mathbb{N}}=\big(\rho(\gamma_{r,L}^{-1}\gamma_{r,R})\big)_{r\in \mathbb{N}}$ is bounded in $\mathsf{G}_{\ell}$. In particular, since $\rho$ is assumed to be discrete and faithful, the sequence $(\gamma_{r,L}^{-1}\gamma_{r,R})_{r\in \mathbb{N}}$ must be finite in $\Gamma_L\ast_{\H}\Gamma_R$. Passing to a subsequence, there exists $r_0\in \mathbb{N}$ such that $\gamma_{r,L}\gamma_{r_0,L}^{-1}=\gamma_{r,R}\gamma_{r_0,R}^{-1}\in \Gamma_L\cap \Gamma_R= \H$ for every $r$. This implies that $\lim_{r}\gamma_{r,L}=\lim_{r} x_{r}\in \Lambda_{\H}$, contradicting the choice of the sequence $(x_r)_{r\in \mathbb{N}}$. Therefore, there exists no discrete and faithful representation $\rho$ of $\Gamma \ast_{\H}\Gamma$ into any Lie group of real rank one.
\end{proof}

\section{HNN extensions of Anosov groups over cyclic subgroups}\label{sec:6}
Given a group $\Gamma$, two subgroups $\H_{+},\H_{-}<\Gamma$, and an isomorphism $\phi:\H_{-}\rightarrow \H_{+}$, the {\em HNN extension of $\Gamma$ with respect to $\phi$}, denoted $\Gamma\ast_{t\H_{-}t^{-1}=\phi(\H_{-})}$, is the group $$\big\langle \Gamma,t \ \big| \ th_{-}t^{-1}=\phi(h_{-}),h\in \H_{-}\big\rangle=\Gamma \ast \langle t\rangle/N,$$ where $N=\big\langle \big\{g(th_{-}t^{-1}\phi(h_{-}^{-1}))g^{-1}:h_{-}\in \H_{-}, g\in \Gamma \ast \langle t\rangle \big\}\big\rangle$.

A subset $\Pi \subset \Gamma$ of is called {\em separable} if for each $g\in \Gamma \smallsetminus \Pi$, there exists a homomorphism $\beta:\Gamma \rightarrow \mathsf{F}$ onto a finite group $\mathsf{F}$ such that $\beta(g)\in \mathsf{F}\smallsetminus \beta(\Pi)$. Equivalently, there exists a family $\{\Gamma_i\}_{i\in I}$ of normal finite-index subgroups of $\Gamma$ such that $\bigcap_{i\in I}\Gamma_i \Pi=\Pi$.

\medskip 
The goal of this section is to prove the following theorem.

\begin{theorem}\label{HNN-ext} Let $\mathsf{G}$ be a connected, linear, noncompact, real semisimple Lie group, let $\Gamma$ be a $\Theta$-Anosov subgroup, and let $\M_{+},\M_{-}<\Delta$ be quasiconvex, malnormal, separable subgroups of $\Gamma$. Suppose that there exists $\tau\in \mathsf{G}$ such that the following conditions hold:
\begin{enumerate}
\item There exists an automorphism $\phi:\Gamma \rightarrow \Gamma$ such that $\phi(\M_{-})=\M_{+}$ and $\tau w \tau^{-1}=\phi(w)$ for every $w \in \M_{-}$.
\item $\tau\big(\Lambda_{\Gamma}\smallsetminus \Lambda_{\M_{-}}\big)$ is antipodal to $\Lambda_{\Gamma}\smallsetminus \Lambda_{\M_{+}}$.
\item There exists an infinite order element $\gamma \in \Gamma$ with $\gamma^{\pm}\notin  \Lambda_{\M_{+}}\cup \Lambda_{\M_{-}}$, such that for every $m\in \mathbb{Z}$ and any normal finite-index subgroup $\Gamma_1<\Gamma$, the products $$(\gamma^m (\Gamma_1\cap \M_{-})\gamma^{-m})(\Gamma_1\cap \M_{+}) \ \ and \ \ (\Gamma_1\cap \M_{+})(\gamma^m (\Gamma_1\cap \M_{-})\gamma^{-m})$$ are separable subsets of $\Gamma_1$.
\end{enumerate}

Then there exists $p\in \mathbb{N}$ and a finite-index characteristic\footnote{A subgroup of $\Gamma$ is {\em characteristic} if it is invariant under any automorphism of $\Gamma$.} subgroup $\Gamma'<\Gamma$ such that the subgroup $\langle \Gamma', \tau \gamma^m\rangle<\mathsf{G}$ is $\Theta$-Anosov and is isomorphic to the HNN extension $$\Gamma' \ast_{t\H_{-}t^{-1}=\H_{+}}=\big \langle \Gamma', t \ | \ tht^{-1}=\phi(\gamma^{p}h\gamma^{-p}), \ h\in \H_{-}\big \rangle,$$
where $\H_{-}\coloneqq\gamma^{-p}(\Gamma_1\cap \M_{-})\gamma^p$ and $\H_{+}\coloneqq\Gamma_1\cap \M_{+}$.
\end{theorem}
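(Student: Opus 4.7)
The plan is to mimic the proof of \Cref{thm:amalgam}, now invoking the HNN-extension version of the Combination Theorem from \cite{DeyK:amalgam}. That version asks for an \emph{interactive triple} in $\F_\Theta$: compact, $\M_\pm$-invariant ``Schottky blocks'' $B_\pm$ with $\Lambda_\Gamma \cap \partial B_\pm = \Lambda_{\M_\pm}$ and pairwise antipodal interiors, together with a stable letter $T \in \mathsf{G}$ realizing the ping-pong $T(\F_\Theta \smallsetminus B_-^\o) \subset B_+^\o$ (and the symmetric condition for $T^{-1}$), and the vertex ping-pong that every $g \in \Gamma' \smallsetminus (\M_+ \cup \M_-)$ sends $B_+ \cup B_-$ into its complement. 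The stable letter will be $T = \tau\gamma^p$ for a large exponent $p$.

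The sets $B_\pm$ are built exactly as in \Cref{sec:2.1}: using the quasiconvexity of $\M_\pm$ in $\Gamma$, the analogue of \Cref{lem:1} produces compact sets $D_\pm \subset \Lambda_\Gamma \smallsetminus \Lambda_{\M_\pm}$ with $\M_\pm D_\pm = \Lambda_\Gamma \smallsetminus \Lambda_{\M_\pm}$, and for small $\epsilon > 0$ we set
\[
B_\pm \;=\; \overline{\M_\pm \mathcal{N}_\epsilon(D_\pm)} \;=\; \M_\pm \mathcal{N}_\epsilon(D_\pm) \,\sqcup\, \Lambda_{\M_\pm}
\]
as in \eqref{eqn:1}. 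The frontier condition $\Lambda_{\M_\pm} \subset \partial B_\pm$ and the antipodality of $B_+^\o$ with $\tau(B_-^\o)$ follow from hypothesis (2) via the arguments of \Cref{lem:1.5,lem:2.5}. The stable-letter ping-pong is then secured by taking $p$ large: since $\gamma^\pm \notin \Lambda_{\M_+} \cup \Lambda_{\M_-}$ by hypothesis (3), the attracting/repelling dynamics of $\gamma^p$ on $\F_\Theta$ (a standard consequence of $\Theta$-regularity of $\Gamma$) contract $\F_\Theta$ minus a shrinking neighborhood of $\gamma^-$ into a shrinking neighborhood of $\gamma^+$; combining this with hypothesis (2) and $\tau \M_- \tau^{-1} = \M_+$ from (1), we obtain $\tau\gamma^p(\F_\Theta \smallsetminus B_-^\o) \subset B_+^\o$, and the inverse condition follows symmetrically.

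The principal obstacle is the vertex ping-pong for $g \in \Gamma' \smallsetminus (\M_+ \cup \M_-)$, and it is precisely here that hypothesis (3) is indispensable. Separability of $\M_\pm$ alone (as in \Cref{lem:2}) does not suffice, because the normal-form words in $\langle \Gamma', \tau\gamma^p \rangle$ that must be controlled alternate between cosets of $\M_+$ and cosets of $\gamma^p \M_- \gamma^{-p}$, so the double-coset structure of these two subgroups has to be handled simultaneously. The separability of $(\gamma^m \M_- \gamma^{-m})\M_+$ and $\M_+(\gamma^m \M_- \gamma^{-m})$ within any given normal finite-index subgroup (hypothesis (3)) lets us, in the spirit of \Cref{lem:2}, pass to a normal finite-index $\Gamma_1 < \Gamma$ in which every element lying outside these double cosets has arbitrarily large word length modulo $\M_+ \cup \gamma^p \M_- \gamma^{-p}$. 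Intersecting the finitely many $\mathrm{Aut}(\Gamma)$-translates of $\Gamma_1$ (finitely many because $\mathrm{Aut}(\Gamma)$ permutes the finite set of finite-index subgroups of a given index in the finitely generated group $\Gamma$) yields a characteristic finite-index subgroup $\Gamma' < \Gamma$, and an adaptation of the convergence argument of \Cref{lem:3} then delivers the required vertex ping-pong. The HNN Combination Theorem applied to $(\Gamma', \M_+, \M_-, \tau\gamma^p, B_+, B_-)$ identifies $\langle \Gamma', \tau\gamma^p \rangle$ with the claimed HNN extension as a $\Theta$-Anosov subgroup of $\mathsf{G}$.
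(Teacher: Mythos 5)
There is a genuine gap, and it lies exactly where the paper warns the HNN case is harder than the amalgam case: your construction of the ``Schottky blocks'' $B_\pm$ has the wrong shape for the HNN Combination Theorem. You take $B_\pm=\overline{\M_\pm\mathcal{N}_\epsilon(D_\pm)}$ as in \Cref{sec:2.1}, which is a closed neighborhood of $\Lambda_\Gamma\smallsetminus\Lambda_{\M_\pm}$ together with $\Lambda_{\M_\pm}$. But in \Cref{thm:HNN1} the sets $B_\pm$ must be \emph{antipodal} to $\Lambda_\Gamma\smallsetminus\Lambda_{\H_\pm}$ (condition (2)), so they cannot contain a neighborhood of that set; what you have built is structurally the paper's set $A$ (more precisely $A_\pm^\delta=\Lambda_{\H_\pm}\cup\H_\pm\mathcal{N}_{c\delta}(F^\pm)$ in \eqref{eqn:HNNsets}), not its $B_\pm^\delta$. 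The same defect kills the stable-letter ping-pong in either formulation. The condition $t^{\pm1}B_\pm\subset B_\pm^\circ$ forces $B_+$ to be forward-invariant under $t=\tau\gamma^p$ and hence to contain the attracting fixed point $t^{+}$ together with the entire forward semigroup orbit $(\H_+,t)$-orbit of everything it absorbs; by \Cref{t(N)}(3), $t^{+}$ is antipodal to $\Lambda_\Gamma$, so it lies far from $\Lambda_\Gamma\smallsetminus\Lambda_{\M_+}$ and cannot belong to your $B_+^\circ$. Even in your own phrasing $T(\F_\Theta\smallsetminus B_-^\circ)\subset B_+^\circ$: the point $t^{+}$ lies in $\F_\Theta\smallsetminus B_-^\circ$ and is fixed by $T$, so the containment fails. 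This is precisely why the paper's $B_+^\delta$ is assembled from $\Lambda_{\H_+}$, $t\H_-\mathcal{N}_{c\delta}(F^-)$ (to absorb $tA$), and the semigroup orbit $(\H_+,t)X_+(\delta,\theta)$ with $X_+$ built out of $\bigcup_{m\ge2}t^m\Lambda_\Gamma\cup\bigcup_{m\ge1}t^m\mathcal{N}_\theta(\Lambda_{\H_+})\cup\{t^+\}$, and why Steps 1--2 (the Cartan-projection contraction estimates of \Cref{ineq-1,ineq-2} and the semigroup lemmas \Cref{semigroup,semigroup-transv}) are needed to show these much larger sets are still antipodal to $B_-^\delta$ and to $\Lambda_\Gamma\smallsetminus\Lambda_{\H_+}$. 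None of this is present, or replaceable by the \Cref{sec:2.1} argument, in your proposal.

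The parts of your outline concerning the vertex ping-pong are closer to the mark: you correctly identify that hypothesis (3) (separability of the double cosets $(\gamma^m(\Gamma_1\cap\M_-)\gamma^{-m})(\Gamma_1\cap\M_+)$ and its reverse) is what controls elements of $\Gamma'$ outside $\H_+\cup\H_-$, and that one must pass to a characteristic finite-index subgroup; this matches the role of \Cref{product-sep} and \Cref{fi-separable}. But even there the paper does more than ``make word lengths large'': it uses the double-coset normal forms $\gamma=h_+f_1h_-$, $\gamma=h_-f_2h_+$, $\gamma=h_1wh_2$ to force the middle letters to contract the uniformly antipodal sets $T_\zeta(F^\mp)$ into $\mathcal{N}_\epsilon(F^\pm)$, which is what makes $\gamma B_\pm^\delta\subset A^\circ$ and $t\gamma B_+^\delta\subset B_+^\delta$ verifiable for the \emph{correct} sets $B_\pm^\delta$. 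As written, your argument would need to be rebuilt from the interactive-triple construction onward.
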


The overall strategy of the proof of this result resembles that of \Cref{thm:amalgam}, relying on the construction of an interactive triple in the relevant flag manifold and an application of the  Combination Theorem for HNN extensions of Anosov subgroups stated below. However, the construction of the interactive triple (see \Cref{sec:6.3}) presents much greater challenges than the interactive pair constructed in \Cref{sec:2.1}.

\begin{theorem}[Dey--Kapovich {\cite[Thm. B]{DeyK:amalgam}}]\label{thm:HNN1}
    Let $\Gamma$ be a $\Theta$-Anosov subgroup of $\mathsf{G}$, let $t\in \mathsf{G}$, and let $\mathrm{H}_\pm$ be a pair of quasiconvex subgroups of $\Gamma$ such that $\mathrm{H}_\pm = \Gamma \cap t^{\pm 1} \Gamma t^{\mp1}$. Suppose that there exist compact sets $A,B_\pm \subset \mathcal{F}_\Theta = \mathsf{G}/\mathsf{P}_\Theta$ with nonempty interiors such that:
    \begin{enumerate}
        \item The pairs of sets $(A^\circ,B_\pm^\circ)$, $(B_+,B_-)$ are antipodal to each other.
        \item $\Lambda_{\G}\smallsetminus \Lambda_{\H_\pm}$ is antipodal to $B_\pm$.
        \item $t^{\pm1} A \subset B_\pm$, $t^{\pm1} B_\pm \subset B_\pm^\circ$, and for all $\g\in\G\smallsetminus (\mathrm{H}_+\cup \mathrm{H}_-)$, $\g B_\pm \subset A^\circ$.
        \item $\H_\pm$ leaves $B_\pm$ precisely invariant. That is, for all $\g\in \Gamma\smallsetminus \H_\pm$, $\g B_\pm \subset A^\circ$, and $B_\pm$ are $H_\pm$-invariant.
    \end{enumerate}
    Then the natural homomorphism $\Gamma*_{t\H_{-}t^{-1}=\phi(\H_{-})} \to \mathsf{G}$ is an injective $\Theta$-Anosov representation, where $\phi: \H_- \to \H_+$ is given by $\eta\mapsto t\eta t^{-1}$.
\end{theorem}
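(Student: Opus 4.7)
The approach is to verify the hypotheses of the HNN combination theorem \Cref{thm:HNN1} with $t=\tau\gamma^p$ for a sufficiently large $p\in\mathbb N$ and a characteristic finite-index subgroup $\Gamma'<\Gamma$, by constructing an \emph{interactive triple} $(A,B_+,B_-)$ of compact subsets of $\F_\Theta$ satisfying conditions (1)--(4) of \Cref{thm:HNN1}. The overall structure parallels that of \Cref{thm:amalgam} in \Cref{sec:2.1}, with the interactive pair replaced by a triple; the principal new subtlety is that the stable letter $t$ intertwines the two quasiconvex subgroups $\H_+$ and $\H_-$, and this requires a double-coset separability input in place of the single-coset separability used in the amalgam case.

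To construct $B_\pm$, apply the analog of \Cref{lem:1} to obtain compact seeds $D_\pm\subset \Lambda_\Gamma\smallsetminus \Lambda_{\M_\pm}$ with $\M_\pm D_\pm = \Lambda_\Gamma\smallsetminus \Lambda_{\M_\pm}$, and use hypothesis (2) together with $\tau\Lambda_{\M_-}=\Lambda_{\M_+}$ from (1) to choose $\epsilon>0$ so that the required pairwise antipodalities (conditions (1), (2) of \Cref{thm:HNN1}) hold for $\epsilon$-neighborhoods of $\Lambda_{\M_\pm}$ and of the appropriate $\tau^{\pm 1}$-translates. The analogs of \Cref{lem:1.5} and \Cref{lem:2.5} then yield $\M_\pm$-invariant compact sets meeting $\Lambda_\Gamma$ only in $\Lambda_{\M_\pm}$; further enlarging them by $\H_\pm$-invariant neighborhoods of the attracting flags of $t^{\pm 1}$ (close to $\tau\xi(\gamma^+)$ and to $\xi(\gamma^-)$ for large $p$, the latter appropriately $\gamma^{-p}$-conjugated so as to be $\H_-$-invariant) produces $B_\pm$, with antipodality controlled by the Anosov property (noting that $\tau\xi(\gamma^+)$ is antipodal to $\Lambda_{\M_+}$ since $\gamma^+\notin\Lambda_{\M_-}$) and hypothesis (2). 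A compact $A$ with $A^\circ$ antipodal to $B_\pm^\circ$ and containing $\Lambda_\Gamma\smallsetminus(\Lambda_{\H_+}\cup\Lambda_{\H_-})$ completes the triple. Since $\gamma^\pm\notin \Lambda_{\M_+}\cup\Lambda_{\M_-}$, the $\Theta$-regular convergence dynamics of $\langle\gamma\rangle$ (see \cite[Sect. 4]{MR3736790}) ensure that for $p$ sufficiently large, the contraction conditions $tA\subset B_+^\circ$, $tB_+\subset B_+^\circ$, and analogously for $t^{-1}$ and $B_-$, are all satisfied.

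What remains is the precise-invariance part of condition (4): for every $\gamma'\in\Gamma'\setminus(\H_+\cup\H_-)$, $\gamma' B_\pm\subset A^\circ$. Adapting the word-length argument of \Cref{lem:3}, separability of $\M_+$ and $\M_-$ individually in $\Gamma$ eliminates any bounded-length cosets of each subgroup upon passage to a suitable finite-index subgroup. The new HNN-specific obstruction consists of elements of the form $\eta_+\gamma^p\eta_-\gamma^{-p}$ with $\eta_\pm\in \Gamma_1\cap\M_\pm$; such elements lie outside $\H_+\cup\H_-$ and cannot be handled by single-subgroup separability alone. Hypothesis (3) is precisely the algebraic input required here, as it guarantees separability of the products $\H_+\,\bigl(\gamma^p(\Gamma_1\cap\M_-)\gamma^{-p}\bigr)$ and their reversed form inside $\Gamma_1$, enabling us to pass to a finite-index subgroup avoiding them. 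Taking a further intersection with $\mathrm{Aut}(\Gamma)$-images (a finite intersection, since $\Gamma$ is finitely generated) yields a characteristic finite-index subgroup $\Gamma'<\Gamma$, and $\phi$-invariance of $\Gamma'$ guarantees $t\H_-t^{-1}=\phi(\Gamma'\cap\M_-)=\Gamma'\cap\M_+=\H_+$. The principal obstacle of the proof is exactly this last step: the HNN geometry intertwines $\H_+$ and $\H_-$ via the $\gamma^p$-conjugation, forcing a strengthening of the amalgam word-length argument to separate elements modulo a twisted double-coset structure. With all conditions of \Cref{thm:HNN1} verified, its conclusion directly gives that $\langle \Gamma',\tau\gamma^p\rangle<\mathsf G$ is $\Theta$-Anosov and naturally isomorphic to the stated HNN extension.
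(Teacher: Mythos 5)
Your proposal does not prove the statement in question; it proves a different theorem while using the statement as its main black box. The statement to be established is \Cref{thm:HNN1} itself, the Dey--Kapovich combination theorem: \emph{given} compact sets $A, B_\pm$ satisfying the ping-pong conditions (1)--(4), the natural homomorphism $\Gamma*_{t\mathrm{H}_-t^{-1}=\phi(\mathrm{H}_-)}\to\mathsf{G}$ is injective and $\Theta$-Anosov. What you have sketched instead is the construction of such an interactive triple $(A,B_+,B_-)$ for a specific $t=\tau\gamma^p$ and a finite-index subgroup $\Gamma'<\Gamma$, invoking double-coset separability --- that is, you have outlined the proof of \Cref{HNN-ext} (the paper's Theorem 6.1), whose final step is precisely an application of \Cref{thm:HNN1}. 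Your argument is therefore circular relative to the assigned statement: it assumes the very conclusion it is meant to establish. (In the paper, \Cref{thm:HNN1} is not proved at all; it is imported from \cite[Thm. B]{DeyK:amalgam}.)

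A genuine proof of \Cref{thm:HNN1} would have to do two things that appear nowhere in your proposal. First, injectivity: one must show that any nontrivial reduced word $g_0 t^{\epsilon_1} g_1 t^{\epsilon_2}\cdots t^{\epsilon_k} g_k$ in Britton normal form (no pinch $t h_- t^{-1}$ with $h_-\in\mathrm{H}_-$ or $t^{-1}h_+t$ with $h_+\in\mathrm{H}_+$) acts nontrivially on $\mathcal{F}_\Theta$, by tracking how conditions (3) and (4) force the image of $A$ or $B_\mp$ into $B_\pm^\circ$ or $A^\circ$ at each stage --- a Klein--Maskit ping-pong argument adapted to the HNN setting. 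Second, the Anosov property of the image: injectivity of the homomorphism alone does not yield that the resulting subgroup is $\Theta$-Anosov; one must additionally establish hyperbolicity of the HNN extension, construct a continuous equivariant antipodal boundary map into $\mathcal{F}_\Theta$ out of the limit maps of $\Gamma$ and the dynamics of $t$, and verify a uniform contraction or regularity criterion. None of these steps can be replaced by the material in your sketch, which concerns only how to \emph{manufacture} the hypotheses of the theorem in a concrete situation.
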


Before we proceed with the proof of \Cref{HNN-ext}, we introduce some notation. We fix a faithful, $\Theta$-proximal Lie group homomorphism $\tau_{\Theta}:\mathsf{G} \rightarrow \mathsf{GL}_d(\mathbb{R})$ inducing a Pl\"ucker embedding $\iota_{\Theta}:\mathcal{F}_{\Theta}\xhookrightarrow{} \mathcal{F}_{1,d-1}(\mathbb{R})$. We also equip $d_{\mathcal{F}_{\Theta}}$ with the metric induced from a distance on $\mathcal{F}_{1,d-1}(\mathbb{R})$ via the embedding $\iota_{\Theta}$. An element $g\in \mathsf{G}$ is called $\Theta$-proximal if  $\tau_{\Theta}(g)\in \mathsf{GL}_d(\mathbb{R})$ is proximal, i.e. it has a unique eigenvalue of maximum modulus. In this case, $g$ has a unique attracting fixed point in $\mathcal{F}_{\Theta}$ denote by $g^{+}$.

For $x,y\in \mathcal{F}_{\Theta}$, define their ``antipodal distance'' by $$d_{1}\big(x,y\big)\coloneqq\min \big\{\textup{dist}_{\mathbb{P}(\mathbb{K}^d)}(x_1,\mathbb{P}(V_2)), \textup{dist}_{\mathbb{P}(\mathbb{K}^d)}(x_2,\mathbb{P}(V_2))\big\},$$
where $(x_1,V_1) = \iota_{\Theta}(x)$ and $(x_2,V_2)=\iota_{\Theta}(y)$.
By the definition of $\iota_{\Theta}$, note that $x$ is antipodal to $y$ if and only if $d_1(x,y)>0$. More generally, for two subsets $A_1,A_2\subset \mathcal{F}_{\Theta}$ define their antipodal distance by
$$\textup{dist}_1(A_1,A_2)\coloneqq\inf\big\{d_1(a_1,a_2): a_1\in A_1, a_2\in A_2\big\}.$$

We will give the proof of Theorem \ref{HNN-ext} in three steps.

\subsection{Step 1: Replacing the stable letter} We assume here, and for the rest of this section, the hypotheses (1), (2), (3) of Theorem \ref{HNN-ext}. Let us consider the element $t_{p}\coloneqq \tau \gamma^p$ and the associated groups $$\M_{+}^{p}\coloneqq\M_{+}, \ \M_{-}^{p}\coloneqq\gamma^{-p}\M_{-}\gamma^p.$$
Note that 
$t_p\M_{-}^pt_p^{-1}=\M_{+}^p.$

For $\theta, \delta>0$ small enough, consider the compact sets \begin{align*}\Sigma_{\pm}^p( \theta)&\coloneqq\bigcup_{m\geq 2}t_{p}^{\pm m} \Lambda_{\Gamma}\cup \bigcup_{m\geq 1} t_p^{\pm m}\mathcal{N}_{\theta}(\Lambda_{\M^p_{\pm}})\\ X_{\pm}^p(\delta,\theta)&\coloneqq\mathcal{N}_{\delta}\big(\Sigma_{\pm}^p( \theta) \big),\end{align*}
where, as before, $\mathcal{N}_\theta(\cdot)$ denotes the {\em closed} $\theta$-neighborhood.

Fix a Cartan decomposition $\mathsf{G}=\mathsf{K}\exp(\mathfrak{a}^+)\mathsf{K}$ of $\mathsf{G}$, where $\mathsf{K}<\mathsf{G}$ is a maximal compact subgroup, $\mathfrak{a}^{+}$ is a Weyl chamber.
Let $\mu:\mathsf{G}\rightarrow \mathfrak{a}^{+}$ denote the Cartan projection. For an element $g\in \mathsf{G}$ written in the Cartan decomposition $g=k_g\exp(\mu(g))k_{g}'$, where $k_g,k_g'\in \mathsf{K}$, let $\Xi_{\Theta}(g)\coloneqq k_gP_{\Theta}$. 
If $g$ is $\Theta$-proximal, then the sequence $(\Xi_{\Theta}(g^m))_{m\geq 1}$ converges to the attracting fixed point of $g$ in $\mathcal{F}_{\Theta}$. A consequence of this is as follows (e.g. see \cite[Lem. 5.8]{GGKW}):

\begin{fact}\label{Xi-1} There exists $p_0\in \mathbb{N}$ such that for all $p\geq p_0$, both $t_p=\tau\gamma^p$ and $t_p^{-1}\in \mathsf{G}$ are $\Theta$-proximal and the following hold:
    \begin{align*}
        \lim_{p\to\infty} \sup_{k\geq 1}d_{\mathcal{F}_{\Theta}}\big(\Xi_{\Theta}(t_p^k),\tau \gamma^{+}\big)
        &=\lim_{p\to\infty} d_{\mathcal{F}_{\Theta}}\big(t_{p}^{+},\tau \gamma^{+})=0,\\
        \lim_{p\to\infty} \sup_{k\geq 1}d_{\mathcal{F}_{\Theta}}\big(\Xi_{\Theta}(t_p^{-k}),\gamma^{-}\big)
        &=\lim_{p\to\infty} d_{\mathcal{F}_{\Theta}}\big(t_p^{-},\gamma^{-}\big)=0.
    \end{align*}
\end{fact}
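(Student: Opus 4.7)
The plan is to combine two standard dynamical facts for $\Theta$-proximal elements in $\mathsf{G}$ with the antipodality provided by hypotheses (2) and (3). Since $\Gamma$ is $\Theta$-Anosov and $\gamma\in\Gamma$ has infinite order, $\gamma$ is $\Theta$-biproximal in $\mathsf{G}$: the Cartan projection satisfies $\alpha(\mu(\gamma^p))\to\infty$ for every $\alpha\in\Theta$ as $p\to\infty$, and the Cartan attractors converge
\[
\Xi_{\Theta}(\gamma^p)\to\gamma^{+}\quad\text{and}\quad \Xi_{\Theta}(\gamma^{-p})\to\gamma^{-}\quad\text{as }p\to\infty
\]
in $\mathcal{F}_\Theta$. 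Combining the condition $\gamma^{\pm}\notin\Lambda_{\M_{+}}\cup\Lambda_{\M_{-}}$ from (3) with the antipodality in (2), I extract the key inputs: $\tau\gamma^{+}$ is antipodal to $\gamma^{-}$, and $\tau^{-1}\gamma^{-}$ is antipodal to $\gamma^{+}$.

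Next I would invoke the standard perturbation criterion for $\Theta$-proximality (in the spirit of \cite[Lem. 5.8]{GGKW}): if a sequence $(g_p)\subset\mathsf{G}$ satisfies $\alpha(\mu(g_p))\to\infty$ for every $\alpha\in\Theta$ and its Cartan attractor and Cartan repeller converge to antipodal points $x^{+},x^{-}\in\mathcal{F}_\Theta$, then $g_p$ is $\Theta$-proximal for $p$ large, with $g_p^{+}\to x^{+}$ and $g_p^{-}\to x^{-}$. Writing the Cartan decomposition $\gamma^p=k_p\exp(\mu(\gamma^p))k_p'$, the identity $t_p=\tau\gamma^p=(\tau k_p)\exp(\mu(\gamma^p))k_p'$ shows that the Cartan attractor of $t_p$ lies close to $\tau\cdot\Xi_{\Theta}(\gamma^p)\to\tau\gamma^{+}$, while its Cartan repeller converges to $\gamma^{-}$. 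Since $\tau\gamma^{+}$ is antipodal to $\gamma^{-}$, the criterion yields $\Theta$-proximality of $t_p$ for $p\geq p_0$ and $t_p^{+}\to\tau\gamma^{+}$. Applying the same criterion to $t_p^{-1}=\gamma^{-p}\tau^{-1}$, using the antipodality of $\gamma^{+}$ and $\tau^{-1}\gamma^{-}$, gives $\Theta$-proximality of $t_p^{-1}$ with $(t_p^{-1})^{+}=t_p^{-}\to\gamma^{-}$.

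For the uniform statements $\sup_{k\geq 1}d_{\mathcal{F}_\Theta}(\Xi_\Theta(t_p^k),\tau\gamma^{+})\to 0$ and its analogue for $t_p^{-1}$, I will combine the limits $t_p^{\pm}\to\tau\gamma^{+},\gamma^{-}$ obtained above with the quantitative attraction estimate
\[
d_{\mathcal{F}_\Theta}\bigl(\Xi_\Theta(t_p^k),t_p^{+}\bigr)\leq C(t_p)\,e^{-k\,\delta_\Theta(t_p)},\qquad k\geq 1,
\]
where $\delta_\Theta(t_p)$ is comparable to the $\Theta$-Cartan-gap of $t_p$. Since $\delta_\Theta(t_p)\to\infty$ while the auxiliary constant $C(t_p)$ remains bounded (the Cartan data of $t_p$ converges to a compact antipodal configuration), the supremum is eventually dominated by $d_{\mathcal{F}_\Theta}(t_p^{+},\tau\gamma^{+})$, which vanishes. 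The hard part will be ensuring the uniformity of $C(t_p)$ in $p$; this reduces to the standard fact that the proximality constants depend continuously on the Cartan data as long as the limiting attractor/repeller configuration stays antipodal in a compact region, which is guaranteed by our antipodality hypotheses.
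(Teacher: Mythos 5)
Your argument is correct and is precisely the standard perturbation/proximality argument that the paper itself invokes without proof by citing \cite[Lem. 5.8]{GGKW}: convergence of the Cartan attractors of $\gamma^{\pm p}$, the antipodality of $\tau\gamma^{+}$ and $\gamma^{-}$ extracted from hypotheses (2)--(3) of \Cref{HNN-ext}, and uniform contraction estimates for uniformly proximal elements to control all powers $k\geq 1$ simultaneously. There is no discrepancy with the paper's (essentially omitted) proof.
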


\begin{lemma}\label{t(N)} 
There exists $p_0\in \mathbb{N}$ such that for all $p\geq p_0$ we have that:
\begin{enumerate}
\item $t_p^{k}\Lambda_{\Gamma}$ is antipodal to $\Lambda_{\Gamma}$ for every $k\geq 2$.
\item $t_p(\Lambda_{\Gamma}\smallsetminus \Lambda_{\M_{-}^p})$ is antipodal to $\Lambda_{\Gamma}\smallsetminus \Lambda_{\M_{+}^p}$.
\item $t_p\in \mathsf{G}$ is biproximal and $t_p^{\pm}$ is antipodal to $\Lambda_{\Gamma}$.\end{enumerate} \end{lemma}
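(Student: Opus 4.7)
My plan is to deduce the three parts from \Cref{Xi-1} together with hypothesis $(2)$ of \Cref{HNN-ext}, the $\Theta$-Anosov antipodality of $\Lambda_\Gamma$, and the fact that $\gamma\in\Gamma$ (so $\gamma^p\Lambda_\Gamma=\Lambda_\Gamma$).

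Part $(2)$ is essentially a computation: using $\gamma^p\Lambda_{\M_-^p}=\Lambda_{\M_-}$ we obtain $t_p(\Lambda_\Gamma\smallsetminus\Lambda_{\M_-^p})=\tau(\Lambda_\Gamma\smallsetminus\Lambda_{\M_-})$, which is antipodal to $\Lambda_\Gamma\smallsetminus\Lambda_{\M_+^p}=\Lambda_\Gamma\smallsetminus\Lambda_{\M_+}$ by hypothesis $(2)$; no lower bound on $p$ is needed. The key preliminary observation for the remaining parts is that $\tau\gamma^+$ is antipodal to every point of $\Lambda_\Gamma$: for $y\in\Lambda_\Gamma\smallsetminus\Lambda_{\M_+}$ this is immediate from hypothesis $(2)$ since $\gamma^+\in\Lambda_\Gamma\smallsetminus\Lambda_{\M_-}$ by hypothesis $(3)$, while for $y\in\Lambda_{\M_+}=\tau\Lambda_{\M_-}$ we write $y=\tau y'$ with $y'\in\Lambda_{\M_-}$, and by $\mathsf{G}$-equivariance of antipodality the claim reduces to antipodality of $\gamma^+$ and $y'$, which holds by Anosov antipodality since $\gamma^+\ne y'$. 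The symmetric argument gives that $\tau^{-1}\gamma^-$ is antipodal to $\gamma^-$.

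For part $(3)$, biproximality of $t_p^{\pm 1}$ is directly from \Cref{Xi-1}. Antipodality of $t_p^+$ with the compact set $\Lambda_\Gamma$ follows from the convergence $t_p^+\to\tau\gamma^+$ and openness of antipodality. For the subtler $t_p^-$, which converges to $\gamma^-\in\Lambda_\Gamma$, I would exploit the fixed-point identity $t_p(t_p^-)=t_p^-$, rewritten as $\gamma^p(t_p^-)=\tau^{-1}(t_p^-)\to\tau^{-1}\gamma^-$ as $p\to\infty$. Since $\gamma^p$ fixes $\gamma^-$ and antipodality is $\mathsf{G}$-equivariant, antipodality of $t_p^-$ with $\gamma^-$ is equivalent to antipodality of $\gamma^p(t_p^-)$ with $\gamma^-$, which holds for $p$ large by the preliminary observation and openness. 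For other $y\in\Lambda_\Gamma\smallsetminus\{\gamma^-\}$, Anosov antipodality of $(y,\gamma^-)$ extends by continuity to $(y,t_p^-)$ for $p$ large, and a compactness argument on $\Lambda_\Gamma$ upgrades this to uniform antipodality over $\Lambda_\Gamma$.

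For part $(1)$, the aim is to show $t_p^k\Lambda_\Gamma$ lies in a small neighborhood of $\tau\gamma^+$ for every $k\ge 2$ and $p$ large, which gives antipodality with $\Lambda_\Gamma$ via the preliminary observation. On $\Lambda_\Gamma$ outside a fixed neighborhood of $\gamma^-$, this is the biproximal contraction from \Cref{Xi-1}. Near $\gamma^-$, the key computation is $t_p(\gamma^-)=\tau\gamma^-$ (since $\gamma^p$ fixes $\gamma^-$), a point antipodal to $\gamma^-$; hence $t_p^2(\gamma^-)=\tau\gamma^p(\tau\gamma^-)\to\tau\gamma^+$ as $p\to\infty$ and continuity handles points of $\Lambda_\Gamma$ near $\gamma^-$, so that $t_p^k(\Lambda_\Gamma)$ is concentrated near $\tau\gamma^+$ for $k\ge 2$. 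The main obstacle throughout is the uniform-in-$y$ antipodality of $t_p^-$ and $y\in\Lambda_\Gamma$ for $y$ near $\gamma^-$, where the pairs $(y,\gamma^-)$ degenerate in the limit; the $\gamma^p$-conjugation trick precisely captures the direction of approach of $t_p^-$ to $\gamma^-$ and is what ultimately supplies this uniform antipodality.
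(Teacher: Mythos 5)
Your overall strategy is the same as the paper's: part (2) by direct computation from the definitions of $\M_{\pm}^p$, a preliminary antipodality statement for $\tau\gamma^{+}$ (resp. $\tau^{-1}\gamma^{-}$) obtained from hypothesis (2) of \Cref{HNN-ext} together with $\tau\Lambda_{\M_{-}}=\Lambda_{\M_{+}}$ and Anosov antipodality, and the conjugation identity $\gamma^{p}t_p^{-}=\tau^{-1}t_p^{-}\to\tau^{-1}\gamma^{-}$ for the repelling fixed point. Part (2) and the $t_p^{+}$ half of part (3) are complete. The gap is in how you handle $t_p^{-}$ and, in part (1), the points of $\Lambda_{\Gamma}$ near $\gamma^{-}$. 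You only establish that $\tau^{-1}\gamma^{-}$ is antipodal to the single point $\gamma^{-}$, deduce antipodality of the pair $(t_p^{-},\gamma^{-})$, and then for the remaining $y\in\Lambda_{\Gamma}\smallsetminus\{\gamma^{-}\}$ invoke ``continuity plus compactness.'' That step does not close: the antipodal distance $d_1(y,\gamma^{-})$ tends to $0$ as $y\to\gamma^{-}$, so the threshold $p_y$ beyond which $(y,t_p^{-})$ is antipodal is not uniform in $y$ --- exactly the obstacle you name at the end. Asserting that the conjugation trick ``ultimately supplies'' the uniformity is not a proof; as you have set it up, the trick only controls the single pair $(t_p^{-},\gamma^{-})$.

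The repair, which is what the paper does, is to apply the conjugation to the whole limit set at once rather than pointwise. First strengthen the preliminary observation on the minus side: the same two-case argument (hypothesis (2) for $y\in\Lambda_{\Gamma}\smallsetminus\Lambda_{\M_{-}}$, and $\tau\Lambda_{\M_{-}}=\Lambda_{\M_{+}}$ plus Anosov antipodality for $y\in\Lambda_{\M_{-}}$) shows that $\tau^{-1}\gamma^{-}$ is antipodal to \emph{all} of $\Lambda_{\Gamma}$, not merely to $\gamma^{-}$. Since $\Lambda_{\Gamma}$ is $\gamma^{p}$-invariant, $t_p^{-}$ is antipodal to $\Lambda_{\Gamma}$ if and only if $\gamma^{p}t_p^{-}$ is antipodal to $\gamma^{p}\Lambda_{\Gamma}=\Lambda_{\Gamma}$; since $\gamma^{p}t_p^{-}=(\tau^{-1}\gamma^{-p})^{+}\to\tau^{-1}\gamma^{-}$ and antipodality to a fixed compact set is an open condition, this holds for all large $p$, uniformly over $\Lambda_{\Gamma}$. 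The same invariance is what fixes part (1): one has the exact identity $t_p\Lambda_{\Gamma}=\tau\gamma^{p}\Lambda_{\Gamma}=\tau\Lambda_{\Gamma}$, a fixed compact set antipodal to $\gamma^{\pm}$, so $\gamma^{p}$ contracts it uniformly into $B_{\epsilon}(\gamma^{+})$ and one gets $t_p^{k}\Lambda_{\Gamma}\subset\tau B_{\epsilon}(\gamma^{+})$ for all $k\geq 2$ and $p$ large. By contrast, the continuity estimate you propose for $t_p^{2}$ on $\Lambda_{\Gamma}$ near $\gamma^{-}$ is not available uniformly in $p$: $\gamma^{p}$ is strongly expanding near its repelling point $\gamma^{-}$, so small neighborhoods of $\gamma^{-}$ are blown up, and it is only the invariance $\gamma^{p}(\Lambda_{\Gamma}\cap B_{\eta}(\gamma^{-}))\subset\Lambda_{\Gamma}$ that keeps the image under control.
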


\begin{proof}  
Since $\gamma^{\pm}\notin \Lambda_{\M_+}\cup \Lambda_{\M_{-}}$ and $\tau(\Lambda_{\Gamma}\smallsetminus \Lambda_{\M_{-}})$ is antipodal to $\Lambda_{\Gamma}\smallsetminus \Lambda_{\M_{+}}$, $\tau\Lambda_{\Gamma}$ is antipodal to $\gamma^{\pm}$. 
Moreover, since $\gamma$ is $\Theta$-proximal, we may choose $\epsilon>0$ and $p_0>1$ large enough such that $\tau B_{\epsilon}(\gamma^{+})$ is antipodal to $\Lambda_{\Gamma}$ and for every $p\geq p_0$, $\gamma^{p}B_{\epsilon}(\gamma^{+})\subset B_{\epsilon/2}(\gamma^{+})$, $\gamma^p\big(\tau \Lambda_{\Gamma}\big)\subset B_{\epsilon}(\gamma^{+})$. Then, by the previous choices, it is straightforward to check that $t_p^k\Lambda_{\Gamma}\subset \tau B_{\epsilon}(\gamma^{+})$ for every $k\geq 2$. Thus, (1) follows.

Part (2) follows immediately by the definition of $\M_{\pm}^p$. 

We now prove (3). Using Fact \ref{Xi-1}, we have that for large enough $p_0>1$, $t_{p}^{+}$ is uniformly close to $\tau\gamma^{+}$ and hence is antipodal to $\Lambda_{\Gamma}$ for every $p\geq p_0$. Since $\lim_p (\tau^{-1}\gamma^{-p})^{+}=\tau^{-1}\gamma^{-}$, we may further enlarge $p_0>1$ such that $(\tau^{-1}\gamma^{-p})^{+}$ is antipodal to $\Lambda_{\Gamma}$ for every $p\geq p_0$. Since $\gamma^p t_{p}^{-}=(\gamma^p t_p\gamma^{-p})^{-}=(\gamma^p \tau)^{-}=(\tau^{-1}\gamma^{-p})^{+}$ is antipodal to $\Lambda_{\Gamma}$, we conclude that $t_{p}^{-}$ is antipodal to $\Lambda_{\Gamma}$ for every $p\geq p_0$.\end{proof}

\begin{lemma} \label{ineq-1} 
There exist $p_1,\theta_1,\delta_1>0$ and $\epsilon>0$ such that for all \hbox{$p\geq p_1$ and $k\geq 1$, we have that}
\begin{align*} 
\inf_{\theta<\theta_1, \delta<\delta_1}\textup{dist}_1\big(X_{+}^p(\delta, \theta), \Xi_{\Theta}(t_p^{-k})\big) &\geq \epsilon,\\ 
\inf_{\theta<\theta_1, \delta<\delta_1}\textup{dist}_1\big(X_{-}^p(\delta, \theta), \Xi_{\Theta}(t_p^{k})\big) &\geq \epsilon.
\end{align*}
\end{lemma}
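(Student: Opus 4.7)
The plan is to prove the first inequality by contradiction; the second follows symmetrically, replacing the pair $(\tau,\gamma^{+})$ by $(\tau^{-1},\gamma^{-})$ and using that $\tau^{-1}\Lambda_{\Gamma}$ is compactly antipodal to $\gamma^{+}$ (which one checks exactly as for $\tau\Lambda_{\Gamma}$ below, applying $\tau^{-1}$ to hypothesis (2) and using that antipodality is $\mathsf{G}$-invariant). The strategy combines \Cref{Xi-1}, which pins $\Xi_{\Theta}(t_{p}^{-k})$ near $\gamma^{-}$ uniformly in $k\geq 1$ as $p\to\infty$, with a careful analysis of the forward orbits $t_{p}^{m}z$ occurring in $\Sigma_{+}^{p}(\theta)$.

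First I would record the key antipodality fact: $\tau\Lambda_{\Gamma}$ is \emph{compactly} antipodal to $\gamma^{-}$, that is, $\textup{dist}_{1}(\tau\Lambda_{\Gamma},\gamma^{-})\geq 2\alpha_{0}$ for some $\alpha_{0}>0$. To see this, write $\tau\Lambda_{\Gamma}=\tau(\Lambda_{\Gamma}\smallsetminus\Lambda_{\M_{-}})\cup \Lambda_{\M_{+}}$, using the identity $\tau\Lambda_{\M_{-}}=\Lambda_{\M_{+}}$ which follows from hypothesis (1) via equivariance of Anosov limit maps. Hypothesis (2) and the fact $\gamma^{-}\in\Lambda_{\Gamma}\smallsetminus\Lambda_{\M_{+}}$ (from (3)) take care of the first piece, while Anosov antipodality of $\Lambda_{\Gamma}$ and $\gamma^{-}\notin\Lambda_{\M_{+}}$ take care of the second. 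Since $\Xi_{\Theta}(t_{p}^{-k})\to\gamma^{-}$ by \Cref{Xi-1} and $X_{+}^{p}(\delta,\theta)=\mathcal{N}_{\delta}(\Sigma_{+}^{p}(\theta))$, uniform continuity of $d_{1}$ reduces the first inequality to showing that $\Sigma_{+}^{p}(\theta)$ stays uniformly antipodal to $\gamma^{-}$ as $p\to\infty$ and $\theta\to 0$.

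Next I would assume this reduction fails and extract sequences $p_{n}\to\infty$, $\theta_{n}\to 0$, and $y_{n}=t_{p_{n}}^{m_{n}}z_{n}\in\Sigma_{+}^{p_{n}}(\theta_{n})$ with $d_{1}(y_{n},\gamma^{-})\to 0$, where either (a) $m_{n}\geq 2$ and $z_{n}\in\Lambda_{\Gamma}$, or (b) $m_{n}\geq 1$ and $z_{n}\in\mathcal{N}_{\theta_{n}}(\Lambda_{\M_{+}})$ (recall $\M_{+}^{p}=\M_{+}$). The central computation is $t_{p}z=\tau\gamma^{p}z$: after subsequencing, in case (a) the bounded sequence $\gamma^{p_{n}}z_{n}\in\Lambda_{\Gamma}$ converges to some $w\in\Lambda_{\Gamma}$, giving $t_{p_{n}}z_{n}\to\tau w\in\tau\Lambda_{\Gamma}$; in case (b) $z_{n}$ subconverges into $\Lambda_{\M_{+}}$, which is antipodal to $\gamma^{-}$ by (3), so uniform $\Theta$-proximality of $\gamma$ on compact antipodal-to-$\gamma^{-}$ sets forces $\gamma^{p_{n}}z_{n}\to\gamma^{+}$ and hence $t_{p_{n}}z_{n}\to\tau\gamma^{+}\in\tau\Lambda_{\Gamma}$. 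In either case the accumulation point lies at antipodal distance $\geq 2\alpha_{0}$ from $\gamma^{-}$, which already settles the case $m_{n}=1$ of (b).

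The main obstacle is the iteration for $m_{n}\geq 2$, and in particular the unbounded case $m_{n}\to\infty$. Setting $u_{n}^{(1)}:=t_{p_{n}}z_{n}$, which lies (subsequentially) in a compact set antipodal to $\gamma^{-}$, the same uniform-proximality argument gives $u_{n}^{(2)}:=t_{p_{n}}u_{n}^{(1)}\to\tau\gamma^{+}$. To handle arbitrary $m_{n}$, I would establish the following forward-invariance statement: for any sufficiently small $\epsilon>0$ (so that $B_{\epsilon}(\tau\gamma^{+})$ is antipodally $\alpha_{0}$-far from $\gamma^{-}$), there exists $p_{0}$ with $t_{p}(B_{\epsilon}(\tau\gamma^{+}))\subset B_{\epsilon}(\tau\gamma^{+})$ for all $p\geq p_{0}$. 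This holds because $\gamma^{p}$ uniformly collapses $B_{\epsilon}(\tau\gamma^{+})$ into a small neighborhood of $\gamma^{+}$, after which $\tau$ returns a small neighborhood of $\tau\gamma^{+}$; a large enough $p_{0}$ ensures the image fits inside $B_{\epsilon}(\tau\gamma^{+})$. Iterating, $y_{n}=t_{p_{n}}^{m_{n}-2}u_{n}^{(2)}\in B_{\epsilon}(\tau\gamma^{+})$ for all large $n$ and all $m_{n}\geq 2$, contradicting $d_{1}(y_{n},\gamma^{-})\to 0$ once $\epsilon<\alpha_{0}$.
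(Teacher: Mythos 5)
Your proposal is correct and follows essentially the same route as the paper: both arguments show that $\Sigma_{+}^{p}(\theta)$ (resp. $\Sigma_{-}^{p}(\theta)$) concentrates, uniformly in the iteration index, in a small neighborhood of $\tau\gamma^{+}$ (resp. $\gamma^{-}$), while \Cref{Xi-1} pins $\Xi_{\Theta}(t_{p}^{\mp k})$ near $\gamma^{-}$ (resp. $\tau\gamma^{+}$), and then conclude from the antipodality of $\tau\gamma^{+}$ and $\gamma^{-}$, which follows from hypothesis (2). The only cosmetic difference is that you re-derive the uniform concentration of $t_{p}^{m}\Lambda_{\Gamma}$ and $t_{p}^{m}\mathcal{N}_{\theta}(\Lambda_{\M_{+}})$ near $\tau\gamma^{+}$ via an explicit forward-invariance (nested-ball) argument, whereas the paper imports exactly this from the proof of \Cref{t(N)}.
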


\begin{proof} By \Cref{Xi-1}, for all $p>1$ sufficiently large, there exists $\epsilon_p>0$ with $\lim_p\epsilon_p=0$ such that for every $k\geq 1$, $d_{\mathcal{F}_{\Theta}}(\Xi_{\Theta}(t_p^k),\tau \gamma^{+})\leq \epsilon_p$ and $d_{\mathcal{F}_{\Theta}}(\Xi_{\Theta}(t_p^{-k}),\gamma^{-})\leq \epsilon_p$. 
Observe that by Lemma \ref{t(N)} (1) and (2) and since for small $\theta>0$,  $\mathcal{N}_{\theta}(\Lambda_{\M_{\pm}^p})$ is antipodal to $\gamma^{\pm}$, it follows that $$\lim_{p\rightarrow \infty}\sup_{m\geq 2}d_{\mathcal{F}_{\Theta}}\big(t_p^m\Lambda_{\Gamma},\tau\gamma^{+}\big)=\lim_{p\rightarrow \infty}\sup_{m\geq 1}d_{\mathcal{F}_{\Theta}}\big(t_p^m\mathcal{N}_{\theta}(\Lambda_{\M_{+}^p}),\tau\gamma^{+}\big)=0$$ for every $\theta>0$ sufficiently small. It follows that there exist $p_1>0$ and $\epsilon_1, \theta_1>0$ such that for all $p\geq p_1$ and $0<\theta,\delta<\theta_1$, there exists $\epsilon_p'>0$ with $\epsilon'_p\to 0$ as $p\to\infty$ such that $$\Sigma_{+}^p(\theta)\subset B_{\epsilon_p'}(\tau \gamma^{+}).$$ 
In addition, $\Lambda_{\M_{-}^p}=\gamma^{-p}\Lambda_{\M_{-}}$ Hausdorff converges to $\gamma^{-}$ as $p \rightarrow \infty$.
After shrinking $\theta_1>0$ further and increasing $p_1>0$, there exists $\epsilon_p''>0$ with $\lim_{p\to\infty} \epsilon_p''=0$ such that $$\Sigma_{-}^p( \theta)\subset B_{\epsilon_p''}(\gamma^{-}), \  \forall \ 0<\theta<\theta_1 \ \textup{and} \ p\geq p_1.$$

In the first case, as $\sup_{k\geq 1}d(\Xi(t_p^{-k}),\gamma^{-})\leq \epsilon_p$ and $\Sigma_{+}^p(\theta)\subset B_{\epsilon_p'}(\tau \gamma^{+})$, we deduce that $$\textup{dist}_1\big(X_{+}^p(\delta, \theta), \Xi_{\Theta}(t_p^{-k})\big) \geq \frac{1}{10}\textup{dist}_1(\tau\gamma^{+},\gamma^{-})$$ for every $\delta<\frac{1}{10}\textup{dist}_1(\tau\gamma^{+},\gamma^{-})$, $\theta<\theta_1$ and $p\geq p_1$. Similarly, we conclude that $$\textup{dist}_1\big(X_{-}^p(\delta, \theta), \Xi_{\Theta}(t_p^{k})\big) \geq \frac{1}{10}\textup{dist}_1(\tau\gamma^{+},\gamma^{-})$$ for every $\delta<\frac{1}{10}\textup{dist}_1(\tau\gamma^{+},\gamma^{-})\coloneqq\delta_1$, $\theta<\theta_1$ and $p\geq p_1$.\end{proof}

\begin{lemma}\label{ineq-2} 
There exist $p_1,\theta_1,\delta_1>0$ such that for all  $p\geq p_1$ and all $k\geq 1$,
we have that 
\[
t_p^{\pm k}X_{\pm}^p\big(\delta,\theta\big)\subset
X_{\pm}^p\big(\delta/2,\theta\big).
\]
\end{lemma}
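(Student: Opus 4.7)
The plan is to combine the forward invariance of $\Sigma_{\pm}^{p}(\theta)$ under $t_{p}^{\pm 1}$ with a contraction estimate for $t_{p}^{\pm k}$ obtained from the uniform antipodality furnished by \Cref{ineq-1}.

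First, I would observe directly from the definition that $t_{p}^{\pm k}\Sigma_{\pm}^{p}(\theta)\subset \Sigma_{\pm}^{p}(\theta)$ for all $k\ge 1$. Indeed, if $y=t_{p}^{\pm m}z$ with $z\in\Lambda_{\Gamma}$ and $m\ge 2$, then $t_{p}^{\pm k}y=t_{p}^{\pm(m+k)}z$ with $m+k\ge 3$, so it still lies in the first union in the definition of $\Sigma_{\pm}^{p}(\theta)$; the case $y\in t_{p}^{\pm m}\mathcal{N}_{\theta}(\Lambda_{\mathrm{M}_{\pm}^{p}})$ with $m\ge 1$ is identical. Consequently, the proof reduces to showing that $t_{p}^{\pm k}$ is a sufficiently strong contraction on the $\delta$-tube $X_{\pm}^{p}(\delta,\theta)$.

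For the contraction estimate, the key input is \Cref{ineq-1}, which guarantees a uniform antipodal gap
\[
\textup{dist}_{1}\bigl(X_{\pm}^{p}(\delta,\theta),\Xi_{\Theta}(t_{p}^{\mp k})\bigr)\ge \epsilon>0
\]
for all $p\ge p_{1}$, $k\ge 1$, and $\theta<\theta_{1}$, $\delta<\delta_{1}$. By the standard estimates for $\Theta$-regular sequences (cf. \cite[Lem.~5.8]{GGKW} and the Cartan-to-Iwasawa comparison in the sense of \cite[Sect.~4]{MR3736790}), this uniform antipodality, combined with $\Theta$-proximality of $t_{p}$, yields a Lipschitz bound of the form
\[
d_{\mathcal{F}_{\Theta}}\bigl(t_{p}^{\pm k}x,\,t_{p}^{\pm k}y\bigr)\le C(\epsilon)\,\exp\bigl(-\min_{\alpha\in\Theta}\alpha(\mu(t_{p}^{k}))\bigr)\,d_{\mathcal{F}_{\Theta}}(x,y)
\]
for every $x,y\in X_{\pm}^{p}(\delta,\theta)$. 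Since $\gamma$ is itself $\Theta$-proximal, $\alpha(\mu(\gamma^{p}))$ grows linearly in $p$, and the boundedness of $\tau$ forces $\alpha(\mu(t_{p}))$ to be arbitrarily large for $p$ sufficiently big. Enlarging $p_{1}$ if necessary, this Lipschitz constant can be made at most $1/2$ for every $p\ge p_{1}$ and every $k\ge 1$. To conclude: given $x\in X_{+}^{p}(\delta,\theta)$, pick $y\in\Sigma_{+}^{p}(\theta)$ with $d_{\mathcal{F}_{\Theta}}(x,y)\le\delta$; then $t_{p}^{k}y\in\Sigma_{+}^{p}(\theta)$ by the first step, while $d_{\mathcal{F}_{\Theta}}(t_{p}^{k}x,t_{p}^{k}y)\le\delta/2$, placing $t_{p}^{k}x\in\mathcal{N}_{\delta/2}(\Sigma_{+}^{p}(\theta))=X_{+}^{p}(\delta/2,\theta)$. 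The argument for $t_{p}^{-k}$ and $X_{-}^{p}(\delta,\theta)$ is entirely symmetric, since $t_{p}^{-1}$ is $\Theta$-proximal with attracting fixed point near $\gamma^{-}$.

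The main obstacle I anticipate is the uniformity of the contraction estimate in the parameter $p$. For each fixed $p$, iterates $t_{p}^{k}$ eventually contract arbitrarily strongly on sets antipodal to $t_{p}^{-}$; however, the lemma requires a single threshold $p_{1}$ beyond which even $k=1$ produces contraction by factor at least $1/2$ on $X_{+}^{p}(\delta,\theta)$. Securing this amounts to controlling the spectral gap $\alpha(\mu(t_{p}))$ from below by a quantity that grows (or at least does not shrink) with $p$, which follows from the $\Theta$-proximality of $\gamma$ combined with the fact that $\tau$ is a fixed element. Once this uniform lower bound is in place, the rest of the argument is a routine combination of the Lipschitz estimate with the invariance observation above.
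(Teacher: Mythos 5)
Your proposal is correct and follows essentially the same route as the paper: forward invariance of $\Sigma_{\pm}^{p}(\theta)$ under $t_p^{\pm 1}$ combined with a uniform Lipschitz contraction on $X_{\pm}^{p}(\delta,\theta)$ deduced from the antipodal gap of \Cref{ineq-1}, with the contraction factor driven below $1/2$ by enlarging $p$. The only cosmetic difference is that the paper expresses the Lipschitz constant via the singular value ratio $\sigma_2/\sigma_1$ of $\tau_\Theta(\tau\gamma^p)$ rather than $\exp(-\alpha(\mu(t_p^k)))$, which is an equivalent formulation.
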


\begin{proof}
Let $\epsilon,\delta_1,\theta_1, p_1>0$ such that the conclusion of Lemma \ref{ineq-1} holds. For all $x,y\in X_{+}^p(\delta,\theta)$, we have that $d_{\mathcal{F}_{\Theta}}(x,\Xi(t_p^{-k})),d_{\mathcal{F}_{\Theta}}(y,\Xi(t_p^{-k}))\geq \epsilon$.
Hence, the action of $t_p^{k}$ on $X_{+}^p(\delta, \theta)$ is $\zeta(p,\epsilon_1)$-Lipschitz\footnote{For $w\in \mathsf{GL}_d(\mathbb{R})$, written in the Cartan decomposition as $w=k_{w}\exp(\mu(w))k_w'$ such that $\Xi_{d-1}(w^{-1})=(k_w')^{-1}\langle e_2,\ldots,e_d\rangle$, then, for every $x,y\notin \Xi_{d-1}(w^{-1})$, $$d_{\mathbb{P}}(wx,wy)\leq \frac{\sigma_2(w)}{\sigma_1(w)}\textup{dist}_{\mathbb{P}(\mathbb{R}^d)}(x,\Xi_{d-1}(w^{-1}))^{-1}\textup{dist}_{\mathbb{P}(\mathbb{R}^d)}(y,\Xi_{d-1}(w^{-1}))^{-1}.$$} for some $\zeta(p,\epsilon_1)>0$ of the form $$\zeta(p,\epsilon)\coloneqq\frac{c_{G}}{\epsilon^{2}}\frac{\sigma_2(\tau_{\Theta}(\tau \gamma^p))}{\sigma_1(\tau_{\Theta}(\tau \gamma^p))},$$ where $\sigma_i(\tau_{\Theta}(\tau \gamma^p))$, $i\in \{1,2\}$, is the $i$-th singular value of $\tau_{\Theta}(\tau \gamma^p)$ and $c_{G}>0$ is a constant depending only the choice of the embedding $\iota_{\Theta}:\mathcal{F}_{\Theta}\xhookrightarrow{} \mathcal{F}_{1,d-1}(\mathbb{R})$. 
In particular, by enlarging $p_1>1$, for every $p>p_1$ we have that $\zeta(p,\epsilon)<\frac{1}{2}$.
Hence, $$t_pX_{+}^p(\delta, \theta)=t_p \mathcal{N}_{\delta}\big(\Sigma_{+}^{p}(\theta)\big)\subset \mathcal{N}_{\delta \zeta(p,\epsilon)}\big(t_p\Sigma_{+}^p(\theta)\big)\subset X_{+}^p(\delta/2,\theta).$$ This also shows $t_p^kX_{+}^p(\delta,\theta)\subset X_{+}^p(\delta/2,\theta)$. 

The proof for the inclusion $t_p^{-k}X_{-}^p(\delta, \theta)\subset X_{-}^p(\delta,\theta)$ is analogous; we skip the details.
\end{proof}

\subsection{Step 2: Replacing the subgroups} Using \Cref{ineq-2,t(N)}, we may assume that we have an Anosov group $\Gamma<\mathsf{G}$, a pair of malnormal separable subgroups $\H_{-}\coloneqq\gamma^{-p}\M_{-}\gamma^p, \H_{+}=\M_{+}<\Gamma$, a biproximal element $t\in \mathsf{G}$ with $t\H_{-}t^{-1}=\H_{+}$ such that the following conditions hold:
\begin{enumerate}[label=(\alph*)]
\item $th_{-}t^{-1}=\phi_p(h_{-})$ for every $h_{-}\in \H_{-}$ and some automorphism $\phi_p:\Gamma \rightarrow \Gamma$ such that $\phi_p(h_{-})=\phi(\gamma^p h_{-}\gamma^{-p})$.
\item $t\big(\Lambda_{\Gamma}\smallsetminus \Lambda_{\H_{-}}\big)$ is antipodal to $\Lambda_{\Gamma}\smallsetminus \Lambda_{\H_{+}}$.
\item $t^{n}\Lambda_{\Gamma}$ is antipodal to $\Lambda_{\Gamma}$ for every $n\in \mathbb{Z}\smallsetminus \{\pm 1\}$ and $t^{\pm}$ is antipodal to $\Lambda_{\Gamma}$.
\item For every finite-index subgroup $\Gamma_1<\Gamma$, the products $(\Gamma_1\cap \H_{+})(\Gamma_1\cap \H_{-})$ and $(\Gamma_1\cap \H_{-})(\Gamma_1\cap \H_{+})$ are separable subsets of $\Gamma_1$.
\item If we let the compact sets:
\begin{align*} 
\Sigma_{+}( \theta)&\coloneqq\bigcup_{m\geq 2}t^m\Lambda_{\Gamma}\cup \bigcup_{m \geq 1}t^{ m}\mathcal{N}_{\theta}(\Lambda_{\H_{+}}) \cup\{ t^+\},\\ 
\Sigma_{-}(\theta)&\coloneqq\bigcup_{m\geq 2}t^{-m}\Lambda_{\Gamma}\cup \bigcup_{m \geq 1}t^{-m}\mathcal{N}_{\theta}(\Lambda_{\H_{-}}) \cup \{t^-\},\\ 
X_{\pm}(\delta, \theta)&=\mathcal{N}_{\delta}\big(\Sigma_{\pm}(\theta)\big),\end{align*} 
there exist $\delta_0,\theta_0>0$, such that for every $k\in \mathbb{N}$ and $\delta<\delta_0$, $\theta<\theta_0$, we have that $$t^{\pm k}X_{\pm}(\delta, \theta)\subset X_{\pm}(\delta/2,\theta).$$
\end{enumerate}

By compactness, we have the following fact.

\begin{lemma}\label{theta} There exists $\delta_1>0$ such that for every $\theta<\delta_1$ and $\delta<\delta_1$ we have that.
\begin{enumerate}
\item $X_{\pm}(\delta, \theta)$ are antipodal to $ \Lambda_{\Gamma}$.
\item  $X_{+}(\delta, \theta)$ is antipodal to $X_{-}(\delta, \theta)$.
\item $\mathcal{N}_{\theta}(\Lambda_{\H_{+}})\cup X_{+}(\delta, \theta)$ is antipodal to $\mathcal{N}_{\theta}(\Lambda_{\H_{-}})\cup X_{-}(\delta, \theta)$.
\end{enumerate}
\end{lemma}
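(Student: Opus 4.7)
The plan is to first establish the antipodality relations (1)--(3) at the level of the underlying compact sets $\Sigma_\pm(\theta)$ (and $\Lambda_{\H_\pm}$), and then extend them to the $\delta$-neighborhoods $X_\pm(\delta,\theta)$ and $\mathcal{N}_\theta(\Lambda_{\H_\pm})$ by compactness together with the fact that antipodality is an open condition on $\F_\Theta\times\F_\Theta$. Two preliminary observations are central. First, after Step~1 with $p$ large, $\Lambda_{\H_-}=\gamma^{-p}\Lambda_{\M_-}$ is Hausdorff-close to $\gamma^-\notin\Lambda_{\M_+}=\Lambda_{\H_+}$, so $\Lambda_{\H_+}\cap\Lambda_{\H_-}=\emptyset$, and since distinct points in $\Lambda_\G$ are antipodal by the Anosov property of $\G$, $\Lambda_{\H_+}$ is antipodal to $\Lambda_{\H_-}$. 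Second, the relation $t\H_-t^{-1}=\H_+$ yields $\Lambda_{\H_+}=t\Lambda_{\H_-}$ and hence $t\Lambda_{\H_+}=t^2\Lambda_{\H_-}\subset t^2\Lambda_\G$; this is the crucial input for handling the ``low-index'' piece $t\mathcal{N}_\theta(\Lambda_{\H_+})$ in $\Sigma_+(\theta)$.

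Next, I would check compactness of $\Sigma_\pm(\theta)$ for $\theta$ small. Since $t$ is biproximal and $t^\pm$ is antipodal to $\Lambda_\G$ by condition~(c), one has $\mathcal{N}_\theta(\Lambda_{\H_\pm})$ antipodal to $t^\mp$ for small $\theta$. The standard convergence result for proximal sequences (cf.\ \Cref{Xi-1}) then gives $t^{\pm m}\Lambda_\G$ and $t^{\pm m}\mathcal{N}_\theta(\Lambda_{\H_\pm})\to \{t^\pm\}$ in the Hausdorff distance as $m\to\infty$, so the countable unions defining $\Sigma_\pm(\theta)$ are compact.

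For the core antipodality statements, I would reduce each pairing to the form ``$t^k(\star)$ antipodal to $(\star')$'' by translating both sides by a suitable power of $t$, using that $\mathsf{G}$ acts on $\F_\Theta$ preserving antipodality. The three inputs are: (I)~by~(c), $t^k\Lambda_\G$ is antipodal to $\Lambda_\G$ for every $k\in\Z$ with $|k|\geq 2$; (II)~biproximality gives $t^+\ne t^-$ antipodal to each other and both antipodal to $\Lambda_\G$; and (III)~the identity $t\Lambda_{\H_+}=t^2\Lambda_{\H_-}$ from above. For instance, the pairing $t^m\mathcal{N}_\theta(\Lambda_{\H_+})$ versus $t^{-n}\mathcal{N}_\theta(\Lambda_{\H_-})$ with $m,n\geq 1$ translates by $t^n$ to $t^{m+n}\mathcal{N}_\theta(\Lambda_{\H_+})$ versus $\mathcal{N}_\theta(\Lambda_{\H_-})$; since $m+n\geq 2$, the underlying compact sets $t^{m+n}\Lambda_{\H_+}\subset t^{m+n}\Lambda_\G$ and $\Lambda_{\H_-}\subset\Lambda_\G$ are antipodal by~(I), and passage to small $\theta$-neighborhoods uses openness. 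Similarly, pairings involving $\{t^\pm\}$ are reduced (via $t$-equivariance, since $t^\pm$ is fixed by $t$) to statements of the form ``$t^\pm$ antipodal to $\Lambda_\G$ or $\mathcal{N}_\theta(\Lambda_{\H_\mp})$,'' which follow from (c) and small-$\theta$ openness. For (3) one additionally invokes the antipodality of $\Lambda_{\H_+}$ and $\Lambda_{\H_-}$ from the preliminary observation, together with the cross-antipodality of $\mathcal{N}_\theta(\Lambda_{\H_\mp})$ and $X_\pm(\delta,\theta)$ coming from (1).

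Once the finitely many ``hand-checked'' pairings are established and the tails are absorbed by Hausdorff-convergence to $t^\pm$, compactness and openness of antipodality produce a single $\delta_1>0$ such that (1), (2), (3) all hold for every $\delta,\theta<\delta_1$. The main obstacle, beyond keeping track of many pairings, is the low-index term $t\mathcal{N}_\theta(\Lambda_{\H_+})$ (and its mirror $t^{-1}\mathcal{N}_\theta(\Lambda_{\H_-})$) in $\Sigma_\pm(\theta)$, where the $|k|\geq 2$ restriction in~(c) fails for $t\Lambda_\G$; this is precisely why the $\Lambda_\G$-union in the definition of $\Sigma_\pm(\theta)$ starts at $m\geq 2$, while the $m=1$ term of the $\mathcal{N}_\theta(\Lambda_{\H_\pm})$-union is handled via input~(III).
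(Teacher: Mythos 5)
Your proposal is correct and follows essentially the same route as the paper's (much terser) proof: establish antipodality of the underlying compact sets $\Sigma_\pm$, $\Lambda_{\H_\pm}$, $\Lambda_\Gamma$ using condition (c) together with the relation $t\Lambda_{\H_+}=t^2\Lambda_{\H_-}\subset t^2\Lambda_\Gamma$ for the $m=1$ term, then pass to neighborhoods by compactness and openness of antipodality. Your explicit handling of the low-index piece $t\,\mathcal{N}_\theta(\Lambda_{\H_+})$ is precisely the point the paper's one-line justification of ``$t^{\pm m}\Lambda_{\H_\pm}$ is antipodal to $\Lambda_\Gamma$ for $m\ge 1$'' leaves implicit.
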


\begin{proof} As $t^{\pm}$ is antipodal to $\Lambda_{\Gamma}$, $t^{\pm m} \Lambda_{\H_{\pm }}$ is antipodal to $\Lambda_{\Gamma}$ for $m\geq 1$, and $t^{\pm q}\Lambda_{\Gamma}$ is antipodal to $\Lambda_{\Gamma}$ for $q\geq 2$, we may fix a constant $\theta_1>0$ such that $\mathcal{N}_{\theta_1}(\Sigma_{+}(\theta_1))$, $\mathcal{N}_{\theta_1}(\Sigma_{-}(\theta_1))$ are antipodal to $\mathcal{N}_{\theta_1}(\Lambda_{\Gamma})$, and $\mathcal{N}_{\theta_1}(\Sigma_{+}(\theta_1))$, $\mathcal{N}_{\theta_1}(\Sigma_{-}(\theta_1))$ are antipodal to each other. This result follows from this.
\end{proof}

For the rest of this section, we will fix $\theta>0$ and $\delta_1>0$ such that  the statements (1), (2) and (3) of  \Cref{theta} are in effect. Given this $\theta>0$, we may now replace $\Gamma$ with a deep characteristic finite-index subgroup that is $\phi_{p}$-invariant and intersects $\H_{+}$ and $\H_{-}$ in the same way, such that the new groups, still denoted by $\H^{\pm}$, satisfy: $$(\H_{\pm}\smallsetminus \{1\})X_{\pm}(\delta, \theta)\subset \mathcal{N}_{\theta}(\Lambda_{H_{\pm}})$$ for every $\delta<\delta_1$. We need the following lemma.

\begin{lemma}\label{semigroup} Let $(\H_{\pm},t^{\pm 1})$ be the semigroup generated by $\H_{\pm}\cup\{t^{\pm 1}\}$. Then for every $\delta<\delta_1$ we have that:
\begin{enumerate}
\item If $g\in (H_{\pm},t^{\pm})$ is a word starting with a non-zero power of $t$, then $gX_{\pm}(\delta,\theta)\subset X_{\pm}(\delta,\theta)$.

\item $(\H^{\pm},t^{\pm 1})X_{\pm}(\delta, \theta)\subset X_{\pm} (\delta, \theta)\cup \mathcal{N}_{\theta}(\Lambda_{\H_{\pm}})$.
\item For every $k\in \mathbb{N}$,  \begin{align}\label{semi-1}t(\H_{\pm},t^{\pm 1})X_{\pm}(\delta, \theta)\subset X_{\pm} (\delta/2,\theta).\end{align} \end{enumerate} \end{lemma}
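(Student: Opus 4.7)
The plan is to prove (1) and (2) simultaneously by induction on the length of the normal-form expression of $g$ in the semigroup $(\H_\pm,t^{\pm 1})$, and then derive (3) directly from (2). I focus on the ``$+$'' case; the ``$-$'' case is completely analogous, replacing $t$ by $t^{-1}$ and $\H_+$ by $\H_-$. Recall that every $g\in(\H_+,t)$ admits a unique normal form
\[
g = \eta_0\, t^{k_1}\eta_1\, t^{k_2}\eta_2 \cdots t^{k_n}\eta_n,
\]
with $n\ge 0$, $k_i\ge 1$, $\eta_i\in\H_+$, and $\eta_i\ne 1$ for $1\le i\le n-1$; the condition in (1) that $g$ \emph{starts with a non-zero power of $t$} amounts to $\eta_0 = 1$.

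For the base case ($g\in\{1\}\cup\H_+$ or $g=t^k$) the assertions (1) and (2) are immediate from property (e), from the finite-index hypothesis $(\H_+\smallsetminus\{1\})X_+(\delta,\theta)\subset\mathcal{N}_\theta(\Lambda_{\H_+})$, and from vacuity where appropriate. For the inductive step of (1), write $g=t^{k_1}g'$ where $g'$ is a strictly shorter word, apply the inductive hypothesis (2) to $g'$ to obtain
\[
g' X_+(\delta,\theta) \subset X_+(\delta,\theta)\,\cup\,\mathcal{N}_\theta(\Lambda_{\H_+}),
\]
and then invoke property (e), which gives $t^{k_1}X_+(\delta,\theta)\subset X_+(\delta/2,\theta)$, together with the key absorption
\[
t^{k_1}\mathcal{N}_\theta(\Lambda_{\H_+}) \subset \Sigma_+(\theta)\subset X_+(\delta/2,\theta),
\]
which holds by the very definition of $\Sigma_+(\theta)$ since $k_1\ge 1$. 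Combining these inclusions yields $gX_+(\delta,\theta)\subset X_+(\delta/2,\theta)\subset X_+(\delta,\theta)$, proving (1) (and, as a by-product, a slightly stronger inclusion in $X_+(\delta/2,\theta)$). For the inductive step of (2), either $g$ starts with a power of $t$ (so (1) applies directly) or $g=\eta_0 g''$ with $\eta_0\in\H_+\smallsetminus\{1\}$ and $g''$ starting with $t$; in the latter case, (1) applied to $g''$ places $g'' X_+(\delta,\theta)$ inside $X_+(\delta,\theta)$, and the finite-index hypothesis on nontrivial elements of $\H_+$ then puts $\eta_0 g'' X_+(\delta,\theta)$ inside $\mathcal{N}_\theta(\Lambda_{\H_+})$.

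Statement (3) is then a one-line consequence of (2): for any $g\in(\H_+,t)$, part (2) yields $gX_+(\delta,\theta)\subset X_+(\delta,\theta)\cup \mathcal{N}_\theta(\Lambda_{\H_+})$, and applying $t$ to both summands lands inside $X_+(\delta/2,\theta)$, using property (e) on the first summand and $t\mathcal{N}_\theta(\Lambda_{\H_+})\subset \Sigma_+(\theta)\subset X_+(\delta/2,\theta)$ on the second. The only real bookkeeping difficulty is maintaining the normal form through the induction; beyond that, the argument is a routine combination of three ingredients that were set up precisely for this purpose: the contraction (e) for the $t$-letters, the finite-index assumption on how $\H_+$ moves $X_+$, and the absorption property built into the definition of $\Sigma_+(\theta)$ that allows $t$ to swallow up $\mathcal{N}_\theta(\Lambda_{\H_+})$ back into $\Sigma_+(\theta)$.
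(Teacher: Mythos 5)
Your proof is correct and follows essentially the same route as the paper's: both arguments rest on the same three ingredients — the contraction $t^{\mu}X_{+}(\delta,\theta)\subset X_{+}(\delta/2,\theta)$ from condition (e), the absorption $t^{\mu}\mathcal{N}_{\theta}(\Lambda_{\H_{+}})\subset \Sigma_{+}(\theta)\subset X_{+}(\delta/2,\theta)$ built into the definition of $\Sigma_{+}(\theta)$, and the hypothesis $(\H_{+}\smallsetminus\{1\})X_{+}(\delta,\theta)\subset\mathcal{N}_{\theta}(\Lambda_{\H_{+}})$ — with (3) deduced from (2) exactly as in the paper. The only difference is that you make explicit the induction on word length that the paper leaves implicit.
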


\begin{proof} Fix any $\delta<\delta_1$. As $t^{\mu}X_{+}(\delta, \theta)\subset X_{+}(\delta/2,\theta)$ and $t^{\mu}\mathcal{N}_{\theta}(\Lambda_{\H_{+}})\subset X_{+}(\delta,\theta)$ for every $\mu \in \mathbb{N}$, and in addition, $(\H_{+}\smallsetminus \{1\})X_{+}(\delta,\theta)\subset \mathcal{N}_{\theta}(\Lambda_{\H_{\pm}})$, we conclude (1) and (2).
In particular, since $t\mathcal{N}_{\theta}(\Lambda_{\H_{+}})\subset X_{+}(r,\theta)$ for every $r<\delta_1$, we conclude that \begin{align*}t(H_{+},t)X_{+}(\delta, \theta)\subset tX_{+}(\delta, \theta)\cup t\mathcal{N}_{\theta}(\Lambda_{H_{\pm}})\subset X_{+} (\delta/2,\theta)\cup  t\mathcal{N}_{\theta}(\Lambda_{H_{+}})\subset X_{+}(\delta/2,\theta).\end{align*} This proves the lemma. \end{proof}

Having fixed the groups $\H_{\pm}$ and $t\in G$, we now fix compact fundamental domains $F^{\pm}\subset \Lambda_{\Gamma}$ for the action of $\H_{\pm}$ on $\Lambda_{\Gamma}\smallsetminus \Lambda_{\H_{\pm}}$. We also need the following lemma.

\begin{lemma}\label{semigroup-transv} For every $\delta<\delta_1$ we have that:
\begin{enumerate}
\item $(\H_{\pm},t)X_{\pm}(\delta,\theta)$ is antipodal to $\Lambda_{\Gamma}$.
\item $\overline{(\H_{\pm},t)X_{\pm}(\delta,\theta)}$ is antipodal to $F^{\pm}$ and hence to $\Lambda_{\Gamma}\smallsetminus \Lambda_{\H_{\pm}}$.\end{enumerate}  \end{lemma}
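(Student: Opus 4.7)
The plan is to reduce both parts to Lemma \ref{theta}(1) via the containment results of Lemma \ref{semigroup} together with $\Gamma$-equivariance of antipodality, with a short closure/continuity argument added for part (2).

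For part (1), I would write any $g\in(\H_{\pm},t^{\pm 1})$ as $g=h\cdot g'$ with $h\in\H_{\pm}$ (possibly trivial) and $g'$ either trivial or a word starting with a non-zero power of $t$. In the latter case Lemma \ref{semigroup}(1) yields $g'X_{\pm}(\delta,\theta)\subset X_{\pm}(\delta,\theta)$, so in either case
\[
gX_{\pm}(\delta,\theta)\subset hX_{\pm}(\delta,\theta),
\]
and hence $(\H_{\pm},t^{\pm 1})X_{\pm}(\delta,\theta)\subset \H_{\pm}X_{\pm}(\delta,\theta)$. Since $X_{\pm}(\delta,\theta)$ is antipodal to $\Lambda_{\Gamma}$ by Lemma \ref{theta}(1) and $\H_{\pm}\subset\Gamma$ preserves $\Lambda_{\Gamma}$, $\Gamma$-equivariance of the antipodality relation shows that $hX_{\pm}(\delta,\theta)$ is antipodal to $\Lambda_{\Gamma}$ for every $h\in\H_{\pm}$, giving part (1).

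For part (2), Lemma \ref{semigroup}(2) places the orbit inside the closed set $X_{\pm}(\delta,\theta)\cup\mathcal{N}_{\theta}(\Lambda_{\H_{\pm}})$, hence so does its closure. The first summand is antipodal to $\Lambda_{\Gamma}\supset F^{\pm}$ by Lemma \ref{theta}(1). For the second summand I would use that $\Gamma$ is $\Theta$-Anosov with $\Theta$ preserved by the opposition involution, so distinct points of $\Lambda_{\Gamma}$ are mutually antipodal; thus the disjoint compact sets $\Lambda_{\H_{\pm}}$ and $F^{\pm}\subset\Lambda_{\Gamma}\smallsetminus\Lambda_{\H_{\pm}}$ are antipodal, and, since antipodality is an open condition, after possibly shrinking $\theta$ further the neighborhood $\mathcal{N}_{\theta}(\Lambda_{\H_{\pm}})$ remains antipodal to $F^{\pm}$. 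Combining, $\overline{(\H_{\pm},t^{\pm 1})X_{\pm}(\delta,\theta)}$ is antipodal to $F^{\pm}$. Finally, $(\H_{\pm},t^{\pm 1})X_{\pm}(\delta,\theta)$ is left-$\H_{\pm}$-invariant (since $\H_{\pm}\cdot(\H_{\pm},t^{\pm 1})=(\H_{\pm},t^{\pm 1})$), and hence so is its closure; $\Gamma$-equivariance then upgrades the antipodality statement to antipodality with $\H_{\pm}F^{\pm}=\Lambda_{\Gamma}\smallsetminus\Lambda_{\H_{\pm}}$.

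The main subtlety is the closure in part (2): the orbit $(\H_{\pm},t^{\pm 1})X_{\pm}(\delta,\theta)$ can accumulate on $\Lambda_{\H_{\pm}}\subset\Lambda_{\Gamma}$, and these limit points are not antipodal to themselves, so the strong conclusion of part (1) breaks down upon passing to the closure. Using the fundamental domain $F^{\pm}$ for $\H_{\pm}\curvearrowright\Lambda_{\Gamma}\smallsetminus\Lambda_{\H_{\pm}}$ sidesteps exactly this issue, and compactness of $F^{\pm}$ is what makes the openness argument for $\mathcal{N}_{\theta}(\Lambda_{\H_{\pm}})$ uniform.
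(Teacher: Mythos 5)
Your part (1) is correct and is essentially the paper's own argument: write $g=h_0g'$ with $h_0\in\H_{\pm}$ and $g'$ either trivial or starting with a nonzero power of $t$, apply \Cref{semigroup}(1) to get $g'X_{\pm}(\delta,\theta)\subset X_{\pm}(\delta,\theta)$, and use that $h_0$ preserves $\Lambda_{\Gamma}$.

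Part (2), however, has a genuine gap. You reduce the statement to showing that $\mathcal{N}_{\theta}(\Lambda_{\H_{\pm}})$ is antipodal to $F^{\pm}$, ``after possibly shrinking $\theta$ further.'' But $\theta$ is not a free parameter at that stage: it was fixed (via \Cref{theta}) \emph{before} $\H_{\pm}$ were replaced by deep finite-index subgroups satisfying $(\H_{\pm}\smallsetminus\{1\})X_{\pm}(\delta,\theta)\subset\mathcal{N}_{\theta}(\Lambda_{\H_{\pm}})$, and $F^{\pm}$ is a fundamental domain for these new, deeper groups. Shrinking $\theta$ shrinks the target neighborhood in that containment, forcing a further deepening of $\H_{\pm}$; a fundamental domain for a deeper subgroup is a union of $\H_{\pm}$-translates of the old one, and since large translates $hF^{\pm}$ accumulate on $\Lambda_{\H_{\pm}}$, the antipodal distance $\textup{dist}_1(F^{\pm},\Lambda_{\H_{\pm}})$ can become arbitrarily small. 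So the requirement ``$\theta$ small compared to $\textup{dist}_1(F^{\pm},\Lambda_{\H_{\pm}})$'' chases its own tail, and the one-line fix does not obviously terminate. The underlying problem is that the containment from \Cref{semigroup}(2) is too lossy here: no $\theta$-neighborhood of $\Lambda_{\H_{\pm}}$ is antipodal to $F^{\pm}$ for free.

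The paper avoids $\mathcal{N}_{\theta}(\Lambda_{\H_{\pm}})$ entirely: it analyzes accumulation points of sequences $g_nw_n$ with $g_n\in(\H_{+},t)$, $w_n\in X_{+}(\delta,\theta)$, splitting according to whether the leading $\H_{+}$-syllable of $g_n$ stays bounded or escapes to infinity. In the bounded case the limit lies in a single translate $h_0X_{+}(\delta,\theta)$, antipodal to $F^{+}$ because $X_{+}(\delta,\theta)$ is antipodal to $\Lambda_{\Gamma}\ni h_0^{-1}F^{+}$; in the unbounded case it lies in $\Lambda_{\H_{+}}$ itself, which is antipodal to $F^{+}$ unconditionally, being disjoint from it inside the antipodal set $\Lambda_{\Gamma}$. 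If you want to keep your shorter route, replace the appeal to \Cref{semigroup}(2) by the observation (immediate from your part (1) decomposition) that $(\H_{\pm},t)X_{\pm}(\delta,\theta)\subset\H_{\pm}X_{\pm}(\delta,\theta)$, and then, exactly as in \Cref{lem:1.5}, $\overline{\H_{\pm}X_{\pm}(\delta,\theta)}=\H_{\pm}X_{\pm}(\delta,\theta)\sqcup\Lambda_{\H_{\pm}}$; both pieces are antipodal to $F^{\pm}$ with no further condition on $\theta$. Your final upgrade from $F^{\pm}$ to $\Lambda_{\Gamma}\smallsetminus\Lambda_{\H_{\pm}}$ via $\H_{\pm}$-invariance of the closure is fine.
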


\begin{proof} Fix $\delta<\delta_1$.
\medskip

\noindent (1) Clearly, $X_+(\delta,\theta)$ is antipodal to $\Lambda_{\Gamma}$ by the choice we made from \Cref{theta}. If $g\in \H_{+}$, $gX_+(\delta,\theta)$ is antipodal to $\Lambda_{\Gamma}$, as $X_+(\delta,\theta)$ is antipodal to $\Lambda_{\Gamma}$. By Lemma \ref{semigroup} (i), if $g\in (\H_{+},t)$ is any word starting from a non-trivial power of $t$, we have $gX_{+}(\delta,\theta)\subset X_+(\delta, \theta)$, and $gX_{+}(\delta, \theta)$ is antipodal to $\Lambda_{\Gamma}$. If $g\in (\Gamma_{+},t)$ is a word of the form $g=h_{0}g'$, where $h_0\in\H_{+}$ and $g'$ starts with a power of $t$, then as $g'X_+(\delta,\theta)\subset X_+(\delta,\theta)$ is antipodal to $\Lambda_{\Gamma}$, we deduce that $gX_+(\delta,\theta)=h_0gX_+(\delta,\theta)$ is antipodal to $\Lambda_{\Gamma}$.

\smallskip
\noindent \textup{(2)} It suffices to prove that for any infinite sequence $(g_n)$ in $(\H_{+},t)$ and any sequence $(w_n)$ in $X_{+}(\delta,\theta)$, $(g_nw_n)$ has an accumulation point which is antipodal to $F^{+}$.

If $(g_n)\subset \H_{+}$ for infinitely many $n$, as $X_+(\delta,\theta)$ is antipodal to $\Lambda_{\Gamma}$, we have that $\lim_n g_nw_n=\lim_n g_n^{+}\in \Lambda_{\H_{+}}$. If $g_n\notin \H_{+}$ for all but finitely many $n$,  write $g_n\coloneqq h_{n,1}t^{m_{1n}}\cdots h_{n,r_n}t^{m_{r_n n}}$ for some $h_{n,1},\ldots,h_{n,k_n}\in \H_{+}$ and $m_{1n}\neq 0$. If $(h_{n,1})$ does not escape to infinity in $\H_{+}$, there is $(k_n)$ such that $h_{k_n,1}=h_0$ and as $h_{k_n,1}^{-1}g_{k_n}\in (\H_{+},t)$ is a word that starts with a power of $t$, by Lemma \ref{semigroup} (i) we have $h_{k_n,1}^{-1}g_{k_n}w_{k_n}\in X_{+}(\delta,\theta)$ and $\lim_n g_{k_n}w_{k_n}\in h_0X_{+}(\delta, \theta)$, which is antipodal to $F^{+}$. If $h_{n,1}\rightarrow \infty$, since $h_{n,1}^{-1}g_n$ starts with a power of $t$, $(h_{n,1}^{-1}g_nw_n)$ accumulates to a point in $X_{+}(\delta,\theta)$ that is antipodal to $\lim_{n}h_{n,1}^{-1}\in \Lambda_{\H_{+}}$. Thus, $\lim_{n}g_nw_n=\lim_n h_{n,1}(h_{n,1}^{-1}g_n))\in \Lambda_{\H_{+}}$ and this limit point is antipodal to $F^{+}$.\end{proof}

\begin{lemma}\label{constant-1} There is  a constant $0<c<1$, depending only on $\H_{\pm}$ and $F^{\pm}$, with the property that for every $\delta<\delta_1$ we have that 
\begin{align}\label{Lip-1} 
t^2\H_{-}\mathcal{N}_{c \delta}(F^{-})
&\subset \mathcal{N}_{\delta/2}\big(t^2 \Lambda_{\Gamma}\big)\subset X_{+}(\delta/2, \theta)\\
t^{-2}\H_{+} \mathcal{N}_{c \delta}(F^{+})
&\subset \mathcal{N}_{\delta/2}\big(t^{-2} \Lambda_{\Gamma}\big)\subset X_{-}(\delta/2, \theta).
\end{align}\end{lemma}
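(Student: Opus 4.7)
The second inclusion on each line is immediate from the definitions: $t^{\pm 2}\Lambda_\Gamma$ appears as the $m=2$ term in $\Sigma_\pm(\theta)$, so $\mathcal{N}_{\delta/2}(t^{\pm 2}\Lambda_\Gamma)\subset\mathcal{N}_{\delta/2}(\Sigma_\pm(\theta))=X_\pm(\delta/2,\theta)$. By symmetry, the two remaining inclusions are handled the same way, so I focus on proving $t^2\H_-\mathcal{N}_{c\delta}(F^-)\subset\mathcal{N}_{\delta/2}(t^2\Lambda_\Gamma)$. Since $F^-\subset\Lambda_\Gamma$ and $\Lambda_\Gamma$ is $\Gamma$-invariant, we have $t^2\H_-F^-\subset t^2\Lambda_\Gamma$; hence it is enough to exhibit $c>0$ such that for every $h\in\H_-$ and every $x\in\mathcal{N}_{c\delta}(F^-)$ one has $d_{\mathcal{F}_\Theta}(t^2hx,\,t^2hF^-)\le\delta/2$.

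Because $t^2$ is a fixed element of $\mathsf{G}$ acting on the compact manifold $\mathcal{F}_\Theta$ as a smooth diffeomorphism, it has a finite Lipschitz constant $L_t$ on $\mathcal{F}_\Theta$. The task therefore reduces to producing $\epsilon>0$ and $L_0>0$, \emph{independent of $h\in\H_-$}, such that each $h$ is $L_0$-Lipschitz on $\mathcal{N}_\epsilon(F^-)$; then $c\coloneqq\min\{\epsilon/\delta_1,\,(2L_tL_0)^{-1}\}$ will do the job. This uniform Lipschitz bound is the heart of the matter, and it is where the Anosov property of $\H_-$ enters. Since $F^-\subset\Lambda_\Gamma\smallsetminus\Lambda_{\H_-}$ and any two distinct points of the Anosov limit set $\Lambda_\Gamma$ are antipodal, $F^-$ is antipodal to $\Lambda_{\H_-}$; by compactness some $\mathcal{N}_\epsilon(F^-)$ remains antipodal to $\Lambda_{\H_-}$ with a uniform antipodal distance $\eta_0>0$.

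Since $\H_-$ is a quasiconvex subgroup of the $\Theta$-Anosov group $\Gamma$, the Cartan directions $\Xi_\Theta(h^{-1})$ accumulate on $\Lambda_{\H_-}$ as $h\to\infty$ in $\H_-$, so all but finitely many $h\in\H_-$ satisfy $\textup{dist}_1(\mathcal{N}_\epsilon(F^-),\,\Xi_\Theta(h^{-1}))\ge\eta_0/2$. The standard proximal Lipschitz estimate recalled in the footnote to \Cref{ineq-2}, transferred to $\mathcal{F}_\Theta$ via the Pl\"ucker embedding $\iota_\Theta$, then bounds $\mathrm{Lip}(h|_{\mathcal{N}_\epsilon(F^-)})$ by $4c_G/\eta_0^2$ uniformly for such $h$, using $\sigma_2(\tau_\Theta(h))/\sigma_1(\tau_\Theta(h))\le 1$. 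The finitely many exceptional $h\in\H_-$ each act as a fixed diffeomorphism of $\mathcal{F}_\Theta$ with some finite individual Lipschitz constant, so taking $L_0$ to be the maximum of $4c_G/\eta_0^2$ and these finitely many constants yields the desired uniform bound. The analogous argument, using that $\H_+$ is a quasiconvex Anosov subgroup and $F^+$ is antipodal to $\Lambda_{\H_+}$, handles the second line. The only nontrivial step is the uniform Lipschitz estimate; once it is in place, the lemma follows by the choice of $c$ above.
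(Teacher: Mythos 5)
Your proof is correct and follows essentially the same route as the paper: both treat $t^{\pm 2}$ as a fixed (bi-)Lipschitz map of $\mathcal{F}_\Theta$, and both split $\H_{\pm}$ into the cofinitely many elements that behave uniformly on a neighborhood of $F^{\pm}$ (using that $F^{\pm}$ is antipodal to $\Lambda_{\H_{\pm}}$ and $\H_{\pm}$ is Anosov) plus a finite exceptional set handled by individual Lipschitz constants. The only difference is packaging: the paper phrases the cofinite part as diameter shrinking, $\alpha\,\mathcal{N}_{\delta}(F^{\pm})\subset \mathcal{N}_{c_1\delta}(\alpha F^{\pm})$, whereas you extract a uniform Lipschitz constant from the singular-value estimate; the two are interchangeable here.
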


\begin{proof} Since the action of $t^2\in G$ on $\mathcal{F}_{\Theta}$ is bi-Lipschitz, we may choose $0<c_1<1$ such that $t^2 \mathcal{N}_{c_1\delta}(\Lambda_{\Gamma})\subset \mathcal{N}_{\delta/2}(t^2 \Lambda_{\Gamma})$ for every $\delta<\delta_1$. As $F^{\pm}$ is away from $\Lambda_{\H_{\pm}}$, there is a finite susbet $J_{\pm}\subset \H_{\pm}$ such that for every $\alpha \in \H_{\pm}\smallsetminus J_{\pm}$ we have that $$t^2\alpha \mathcal{N}_{\delta}(F^{\pm})\subset t^2\mathcal{N}_{c_1\delta}(\gamma F^{\pm})\subset \mathcal{N}_{\delta/2}(t^2\Lambda_{\Gamma}).$$ Since for every $\alpha \in J_{\pm}$, the action of $t^2 \alpha\in G$ on $\mathcal{F}_{\Theta}$ is Lipschitz, we may choose $0<c_{\pm}<1$ such that $t^2 \alpha \mathcal{N}_{c \delta}(F^{\pm})\subset \mathcal{N}_{\delta/2}(t^2 \alpha F^{\pm})$ for every $\alpha \in J_{\pm}$. Thus, for every $\alpha \in \H_{\pm}$ we have $t^2 \alpha \mathcal{N}_{c\delta}(F^{\pm})\subset \mathcal{N}_{\delta}(t\Lambda_{\Gamma})$ for every $\delta<\delta_1$ and (\ref{Lip-1}) follows. By taking $c\coloneqq\min \{c_{+},c_{-}\}$ we have the conclusion. \end{proof}

\subsection{Step 3: Construction of an interactive triple} \label{sec:6.3}
Having made the previous choices of $\H_{\pm}, F^{\pm}$, $c,\theta>0$, similarly as in the first section we may define the ping pong sets for $\delta<\delta_1$:
\begin{align}
\begin{split}\label{eqn:HNNsets}
B_{+}^{\delta}&\coloneqq\Lambda_{\H_{+}}\cup t\H_{-}\mathcal{N}_{c \delta}(F^{-})\cup (\H_{+},t)X_{+}(\delta,\theta),\\ 
B_{-}^{\delta}&\coloneqq\Lambda_{\H_{-}}\cup t^{-1}\H_{+}\mathcal{N}_{c \delta}(F^{+})\cup (\H_{-},t^{-1})X_{-}(\delta,\theta),\\ 
A_{+}^{\delta}&=\Lambda_{\H_{+}}\cup \H_{+} \mathcal{N}_{c\delta}(F^{+}),\\ 
A_{-}^{\delta}&=\Lambda_{\H_{-}}\cup \H_{-} \mathcal{N}_{c\delta}(F^{-}).
\end{split}\end{align}
Note that we immediately have
\begin{equation}\label{eqn:AB}
    t^{\pm1} A_\mp^\delta \subset B_\pm^\delta.
\end{equation}

\begin{lemma}\label{conditions-1} There exists $\kappa>0$ such that for every $\delta<\kappa$ the following hold.
\begin{enumerate}
    \item $\Lambda_\Gamma\smallsetminus \Lambda_{\rm H_\pm} \subset (A^\delta_\pm)^\circ$.
    \item $t^{\pm 1}B_{\pm}^{\delta}\subset (B_{\pm}^{\delta})^{\circ}$.
    \item $B_{\pm}^{\delta}$ is antipodal to $A_{\pm}^{\delta}\smallsetminus \Lambda_{\H_{\pm}}$.
    \item  $B_{-}^{\delta}$ is antipodal to $B_{+}^{\delta}$.
\end{enumerate}
\end{lemma}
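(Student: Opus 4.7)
The plan is to verify the four conclusions in turn, then pick $\kappa>0$ at the end small enough that every required antipodality thickening is simultaneously valid. The workhorses are the fundamental-domain identities $\H_\pm F^\pm = \Lambda_\Gamma \smallsetminus \Lambda_{\H_\pm}$, the three preparatory \Cref{semigroup}, \Cref{semigroup-transv}, \Cref{constant-1}, and conditions (a)--(e) carried from Step 2, together with the general principle that if two compact subsets $K_1,K_2\subset \F_\Theta$ are antipodal, then their closed $c\delta$-neighborhoods remain antipodal for all sufficiently small $c\delta>0$. Part (1) is immediate: $F^\pm$ lies in the open $c\delta$-neighborhood of itself, so $\H_\pm F^\pm = \Lambda_\Gamma\smallsetminus \Lambda_{\H_\pm}$ sits in the interior of $\H_\pm\mathcal{N}_{c\delta}(F^\pm)\subset A_\pm^\delta$. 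For part (2) I decompose $tB_+^\delta$ into its three natural pieces: $t\Lambda_{\H_+}=t^2\Lambda_{\H_-}\subset t^2\Lambda_\Gamma\subset \Sigma_+(\theta)$ sits in the open $\delta$-interior of $X_+(\delta,\theta)$; $t\cdot t\H_-\mathcal{N}_{c\delta}(F^-)=t^2\H_-\mathcal{N}_{c\delta}(F^-)\subset \mathcal{N}_{\delta/2}(t^2\Lambda_\Gamma)\subset X_+(\delta/2,\theta)$ by \Cref{constant-1}; and $t(\H_+,t)X_+(\delta,\theta)\subset X_+(\delta/2,\theta)$ by \Cref{semigroup}(3). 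All three pieces lie in $X_+(\delta,\theta)^\circ\subset (B_+^\delta)^\circ$, and the case $t^{-1}B_-^\delta$ is symmetric.

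Part (3) is the main technical step. The set $(\H_+,t)X_+(\delta,\theta)$ is left $\H_+$-invariant, and the identity $h_2^{-1}t=t(t^{-1}h_2^{-1}t)$ with $t^{-1}h_2^{-1}t\in \H_-$ absorbs any $\H_+$-translate into the $t\H_-$ factor; this reduces every antipodality check of a piece of $B_+^\delta$ against $A_+^\delta\smallsetminus \Lambda_{\H_+}$ to antipodality against the single compact set $\mathcal{N}_{c\delta}(F^+)$. I would then verify: (i) $\Lambda_{\H_+}$ is antipodal to $\mathcal{N}_{c\delta}(F^+)$, since distinct points of $\Lambda_\Gamma$ are antipodal for a $\Theta$-Anosov group and $F^+\cap \Lambda_{\H_+}=\emptyset$; (ii) $t\H_-\mathcal{N}_{c\delta}(F^-)$ is antipodal to $\mathcal{N}_{c\delta}(F^+)$, because by $\Theta$-divergence cofinitely many $h\in \H_-$ satisfy that $th\mathcal{N}_{c\delta}(F^-)$ lies in a small neighborhood of $\Lambda_{\H_+}=t\Lambda_{\H_-}$ (handled by (i)), while the finite residual is handled by hypothesis (b) combined with open antipodality; (iii) $(\H_+,t)X_+(\delta,\theta)$ is antipodal to $\mathcal{N}_{c\delta}(F^+)$, from \Cref{semigroup-transv}(2) together with an open thickening. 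The corresponding $B_-^\delta$ case is symmetric.

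For part (4) I would verify the nine pairwise antipodalities between components of $B_-^\delta$ and $B_+^\delta$. The pair $\Lambda_{\H_-}$ vs.\ $\Lambda_{\H_+}$ is antipodal because $p$ was chosen large enough in Step 1 so that $\Lambda_{\H_-}=\gamma^{-p}\Lambda_{\M_-}$ clusters near $\gamma^-$ while $\Lambda_{\H_+}=\Lambda_{\M_+}$ avoids $\gamma^\pm$. The central mixed pair $t^{-1}\H_+\mathcal{N}_{c\delta}(F^+)$ vs.\ $t\H_-\mathcal{N}_{c\delta}(F^-)$, after applying $t$ to both, becomes $\H_+\mathcal{N}_{c\delta}(F^+)$ vs.\ $t^2\H_-\mathcal{N}_{c\delta}(F^-)\subset \mathcal{N}_{\delta/2}(t^2\Lambda_\Gamma)$, and antipodality follows from condition (c) combined with \Cref{constant-1} and the same divergence-at-$\Lambda_{\H_\pm}$ trick used in (3). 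The remaining cross terms repeat the pattern of (3), using \Cref{semigroup-transv}, divergence, and condition (c). The hardest step will be making the reductions in (3) and (4) uniform in the $\H_\pm$-orbits, since the $c\delta$-thickening is not preserved by the non-uniformly Lipschitz action of $\H_\pm$; the remedy is to split each orbit into a divergent tail clustering near $\Lambda_{\H_\pm}$, where antipodality is automatic, and a finite residual where open antipodality applies directly to compact sets. Coordinating this across all four parts to yield a single $\kappa$ is the core of the argument.
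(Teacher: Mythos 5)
Your proposal is correct and follows essentially the same route as the paper's proof: the same piecewise decomposition of $A_{\pm}^{\delta}$ and $B_{\pm}^{\delta}$, the same reduction via $\H_{\pm}$-invariance to antipodality against the single compact set $\mathcal{N}_{c\delta}(F^{\pm})$, and the same inputs (\Cref{semigroup}, \Cref{semigroup-transv}, \Cref{constant-1}), the only cosmetic difference being that the paper packages your ``divergent tail plus finite residual'' splitting as a compactness/contradiction argument with auxiliary sets $C_{\pm}^{\delta}$ whose semigroup piece is frozen at the scale $\delta_1/2$ so that the family is monotone in $\delta$. The one pairwise check you gloss over in part (4), the antipodality of $(\H_{+},t)X_{+}(\delta,\theta)$ against $(\H_{-},t^{-1})X_{-}(\delta,\theta)$, does not follow the pattern of (3) since both sets are infinite orbits, but it is immediate from \Cref{semigroup}~(2) combined with the initial choice of $\theta$ in \Cref{theta}~(3).
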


\begin{proof} 
(1) Since $F_\pm$ are fundamental domains for the actions $\H_\pm \acts \Lambda_\Gamma\smallsetminus \Lambda_{\rm H_\pm}$, respectively, this follows immediately from the definition of $A^\delta_\pm$ given by \eqref{eqn:HNNsets}.

\smallskip
\noindent (2) By Lemma \ref{semigroup} (3) we have that $t(\H^{+},t)X(\delta,\theta)\subset X(\delta/2,\theta)\subset (B_{+}^{\delta})^{\circ}$ for every $\delta<\delta_1$. In addition, by  Lemma \ref{constant-1} we have $t(t\H_{-}\mathcal{N}_{c\delta}(F^{-}))=t^2\H_{-}\mathcal{N}_{c\delta}(F^{-})\subset \mathcal{N}_{\delta/2}(t^2\Lambda_{\Gamma})\subset (B_{+}^{\delta})^{\circ}$.
\smallskip

\noindent (3) For $\delta>0$ define the compact set \begin{align}\label{def-C}C_{+}^{\delta}\coloneqq\Lambda_{\H_{+}}\cup t\H_{-}\mathcal{N}_{c \delta}(F^{-})\cup (\H_{+},t)X_{+}(\delta_1/2,\theta)\end{align} which contains $B_{+}^{\delta}$. We will prove that there is small enough $\delta>0$ such that $C_{+}^{\delta}$ is antipodal to $\mathcal{N}_{\delta}(F^{+})$.  If not, there are sequences $\delta_n\rightarrow 0$, $(x_n)\subset C_{+}^{\delta_n}$ and $(y_n)\subset \mathcal{N}_{\delta_n}(F^{+})$ such that $x_n$ is not antipodal to $y_n$ for every $n$. In particular, up to passing to subsequences, $x\coloneqq\lim_n x_n$ and $y\coloneqq\lim_n y_n$, $y\in F^{+}$, are not antipodal.

If for infinitely many $n\in \mathbb{N}$, $x_n\in (\H_{+},t)X_{+}(\delta_1/2, \theta)$, then $x\in \overline{ (\H_{+},t)X_{+}(\delta_1/2, \theta)}$. By Lemma \ref{semigroup-transv} (2), $x$ has to be antipodal to $F^{+}$, a contradiction. Thus eventually we have $x_n\in \Lambda_{\H_{+}}\cup t\H_{-}\mathcal{N}_{c \delta_n}(F^{-})$. By our previous assumption, $\lim_n x_n=x$ is not antipodal to a point $y\in F^{+}$. In this case, note that $x$ lies in the Hausdorff limit $\lim_{n}(\Lambda_{\H_{+}}\cup t\H_{-}\mathcal{N}_{c\delta_n}(F^{-}))=\Lambda_{\H_{+}}\cup t\H_{-}F_{-}$. However, every point in the last set is antipodal to $F^{+}$. This is again a contradiction and the conclusion follows.
\smallskip

\noindent (4) It suffices to check that $C_{+}^{\delta}$ and $C_{-}^{\delta}$ are antipodal for small $\delta>0$. For this it suffices to see that the sets: \begin{align*} \Lambda_{\H_{+}}\cup t\H_{-}F^{-}&\cup \overline{(\H_{+},t)X_{+}(\delta_1/2,\theta)}\\ \Lambda_{\H_{-}}\cup t^{-1}\H_{+}F^{+}&\cup \overline{(\H_{-},t)X_{-}(\delta_1/2,\theta)}\end{align*} are antipodal.

We already know that the sets $\Lambda_{\H_{+}}\cup t\H_{-}F^{-}$ and $\Lambda_{\H_{-}}\cup t^{-1}\H_{+}F^{+}$ are antipodal to each other. By Lemma \ref{semigroup-transv} (2), $\overline{(\H_{+},t)X_{+}(\delta_1/2,\theta)}$ is antipodal to $\Lambda_{\H_{-}}\subset \Lambda_{\Gamma}\smallsetminus \Lambda_{\H_{+}}$. 
In addition, $\overline{(\H_{+},t)X_{+}(\delta_1/2,\theta)}$ is antipodal to $t^{-1}\H_{+}F^{+}$, since we have that 
$$t\overline{(\H_{+},t)X_{+}(\delta_1/2,\theta)}\subset \overline{(\H_{+},t)X_{+}(\delta_1/2,\theta)},$$
and the latter is antipodal to $\H_{+}F^{+}$ by Lemma \ref{semigroup-transv} (2). Similarly, $\overline{(\H_{-},t)X_{-}(\delta_1/2,\theta)}$ is antipodal to $\Lambda_{\H_{+}}\cup t\H_{-}F^{-}$.

It remains to check that $\overline{(\H_{+},t)X_{+}(\delta_1/2,\theta)}$ and $\overline{(\H_{-},t)X_{-}(\delta_1/2,\theta)}$ are antipodal. Note that by Lemma \ref{semigroup} (2), we have that $$\overline{(\H_{\pm},t)X_{\pm }(\delta_1/2,\theta)}\subset X_{\pm}(\delta,\theta)\cup \mathcal{N}_{\theta}(\Lambda_{\H_{\pm}}).$$ By the initial choice of $\theta>0$ from \Cref{theta}, the sets $X_{+}(\delta,\theta)\cup \mathcal{N}_{\theta}(\Lambda_{\H_{+}})$ and $X_{-}(\delta,\theta)\cup \mathcal{N}_{\theta}(\Lambda_{\H_{-}})$ are antipodal. Hence $B_{-}^{\delta}$ is antipodal to $B_{+}^{\delta}$.
\end{proof}

Recall that $\Gamma<\mathsf{G}$ is a $\Theta$-Anosov subgroup, $\H_{\pm}<\G$ are quasiconvex separable subgroups and we fixed fundamental domains $F^{\pm}$ for the action of $\H_{\pm}$ on $\Lambda_{\Gamma}\smallsetminus \Lambda_{\H{\pm}}$. In addition, by the assumptions of the Theorem \ref{HNN-ext}, $\H_{\pm}$, $\H_{+}\H_{-}$, $\H_{-}\H_{+}$ are separable subsets of $\Gamma$. For a compact subset $\mathcal{K}\subset \mathcal{F}_{\Theta}$ and $0<\zeta<1$, let $$T_{\zeta}(\mathcal{K})\coloneqq\big\{x\in \mathcal{F}_{\Theta}: \textup{dist}_1(x,\mathcal{K})\geq \zeta\big\}$$ be the set of all points which are $\zeta$-antipodal to every point in $\mathcal{K}$. We will need the following lemma.

\begin{lemma}\label{product-sep} 
For every $\zeta,\epsilon>0$, there exists a finite-index subgroup $\Gamma_{\epsilon}<\Gamma$ such that for the intersections $\H_{\epsilon^{\pm}}\coloneqq\Gamma_\epsilon \cap \M_{\pm}$ we have $\H_{\epsilon^{+}}\coloneqq\phi(\H_{\epsilon^{-}})$, $tht^{-1}=\phi_p(h)$ for every $h_{-}\in \H_{\epsilon^{-}}$ and the following conditions hold:
\begin{enumerate}
 \item For every $\g\in \G_\epsilon \smallsetminus \H_{+}\H_{-}$ we can write $\g=h_{+}f_1h_{-}$ for some $f_1\in \Gamma \smallsetminus \{1\}$, $h_{\pm}\in \H_{\pm}$ with $f_1T_{\zeta}(F^{-})\subset \mathcal{N}_{\epsilon}(F^{+})$.
 \item For every $\g\in \G_\epsilon \smallsetminus \H_{-}\H_{+}$ we can write $\g=h_{-}f_2h_{+}$ for some $f_2\in \G \smallsetminus \{1\}$, $h_{\pm}\in \H_{\pm}$ with $f_2T_{\zeta}(F^{+})\subset \mathcal{N}_{\epsilon}(F^{-})$.
 \item For every $\g\in \G_{\epsilon}\smallsetminus \H_{\pm}$ we can write $\g=h_{1}wh_{2}$ for some $w \in \G \smallsetminus \{1\}$, $h_{1},h_2\in \H_{\pm}$ with $wT_{\zeta}(F^{\pm})\subset \mathcal{N}_{\epsilon}(F^{\pm})$.   
\end{enumerate} 
\end{lemma}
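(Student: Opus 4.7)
The plan combines two ingredients: (i) geometric control on minimum-length double coset representatives, placing their Cartan directions $\Xi_{\Theta}(\cdot)$ inside small neighborhoods of the fundamental domains $F^{\pm}$; and (ii) the separability hypotheses of Theorem \ref{HNN-ext}(3), which allow us to pass to a finite-index subgroup $\Gamma_{\epsilon}$ avoiding all the finitely many ``short'' bad double cosets.

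First, given $\gamma\in\Gamma\smallsetminus \H_{+}\H_{-}$, write $\gamma=h_{+}f_{1}h_{-}$ with $|f_{1}|$ minimal over all such decompositions. The minimality $|f_{1}|\le |h_{+}' f_{1} h_{-}'|$ for all $h_{\pm}'\in \H_{\pm}$, together with left-invariance of the word metric, forces the nearest-point projection of $f_{1}$ onto $\H_{+}$ and of $f_{1}^{-1}$ onto $\H_{-}$ to lie at uniformly bounded distance from $1\in\Gamma$. Combining with the quasiconvexity of $\H_{\pm}$ and the compatibility of the Anosov limit map with nearest-point projections (cf.\ \cite[Lem.\ 2.8]{DeyK:amalgam}), one deduces that $\Xi_{\Theta}(f_{1})$ lies in a bounded $\H_{+}$-neighborhood of $F^{+}$, and $\Xi_{\Theta}(f_{1}^{-1})$ in a bounded $\H_{-}$-neighborhood of $F^{-}$, uniformly over all double cosets. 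By absorbing the bounded $\H_{\pm}$-translation ambiguity into the factors $h_{\pm}$ (equivalently, enlarging $F^{\pm}$ by a bounded amount if needed), we arrange $\Xi_{\Theta}(f_{1}^{\pm 1})\in\mathcal{N}_{\epsilon/2}(F^{\pm})$ for every such min-length representative. Analogous statements apply to the three other types of double cosets involved in conditions (1)--(3).

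Second, invoking the $\Theta$-regularity of $\Gamma$ together with the contraction estimate for proximal elements acting on the flag manifold (exactly as used in the proof of Lemma \ref{ineq-2}), there exists $N_{0}=N_{0}(\zeta,\epsilon)$ such that whenever a min-length representative $f_{1}$ has $|f_{1}|\ge N_{0}$, the Cartan-direction control from the first step yields $f_{1}\,T_{\zeta}(F^{-})\subset\mathcal{N}_{\epsilon}(F^{+})$; analogous contractions hold for the elements labeled $f_{2}$ and $w$.

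Third, only finitely many double cosets (of each of the four types) contain an element of word-norm at most $N_{0}$. Choosing one representative per such ``short'' double coset produces a finite set $S\subset\Gamma$, each element of which lies outside the corresponding product set ($\H_{+}\H_{-}$, $\H_{-}\H_{+}$, $\H_{+}$, or $\H_{-}$). Applying the separability of $\H_{+}\H_{-}$ and $\H_{-}\H_{+}$ inside $\Gamma$ (hypothesis (3) of Theorem \ref{HNN-ext} with $\Gamma_{1}=\Gamma$) and of $\H_{\pm}$ to each $s\in S$, we obtain a finite-index normal subgroup $K\lhd\Gamma$ such that $K\cap \H_{+}s\H_{-}=\emptyset$ for every $s\in S\cap(\Gamma\smallsetminus \H_{+}\H_{-})$, and analogously for the other three cases; normality of $K$ is used here to pass between the statements ``$s\notin K\cdot \H_{+}\H_{-}$'' and ``$K\cap \H_{+}s\H_{-}=\emptyset$''. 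Intersecting $K$ with a characteristic finite-index subgroup of $\Gamma$, we may assume the resulting subgroup $\Gamma_{\epsilon}$ is $\phi_{p}$-invariant; the relations $\phi(\H_{\epsilon^{-}})=\H_{\epsilon^{+}}$ and $tht^{-1}=\phi_{p}(h)$ for $h\in\H_{\epsilon^{-}}$ are then immediate, and conditions (1)--(3) follow by combining the first two steps. The principal obstacle is the first step---translating the minimality condition into precise control on $\Xi_{\Theta}(f_{1}^{\pm 1})$ via the Anosov limit map---which rests on the interplay between nearest-point projections in the Cayley graph and the continuous boundary extension.
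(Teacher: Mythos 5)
Your proposal is correct and follows essentially the same strategy as the paper's proof: normalize each double coset by a representative whose attracting and repelling data lie near $F^{+}$ and $F^{-}$ respectively, deduce the contraction $f\,T_{\zeta}(F^{\mp})\subset\mathcal{N}_{\epsilon}(F^{\pm})$ once the representative is longer than a threshold $N_{0}(\zeta,\epsilon)$, and then use separability of $\H_{\pm}$, $\H_{+}\H_{-}$, $\H_{-}\H_{+}$ together with a characteristic finite-index subgroup to exclude the finitely many short double cosets and retain the relation $tht^{-1}=\phi_{p}(h)$. The only difference is implementational: you normalize via minimal-length representatives and nearest-point projections, whereas the paper chooses representatives directly in fundamental domains $F_{0}^{\pm}$ for the $\H_{\pm}$-actions on the bordification $\Gamma\sqcup(\partial_{\infty}\Gamma\smallsetminus\partial_{\infty}\H_{\pm})$, which sidesteps your ``absorbing the bounded $\H_{\pm}$-ambiguity'' step (note that your parenthetical alternative of enlarging $F^{\pm}$ is not actually equivalent, since it would only yield containment in a neighborhood of the enlarged domain rather than of $F^{\pm}$; the absorption into the factors $h_{\pm}$ is the correct mechanism).
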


\begin{proof} Before we give the proof of the lemma we provide some further notation. We fix a word metric $d_{\Gamma}$ on $\Gamma$, a visual metric on the bordification $\Gamma \cup \partial_{\infty}\Gamma$ and denote by $\mathcal{N}_{\epsilon}(S)$ the closed $\epsilon>0$ neighborhood of a subset $S\subset \Gamma \sqcup \partial_{\infty}\Gamma$ (with respect to this fixed visual metric). Given the fundamental domains $F^{\pm}$ for the (left) action of $\H_{\pm}$ on $\Lambda_{\Gamma}\smallsetminus \Lambda_{\H_{\pm}}$, we may fix a fundamental domain $F_{0}^{\pm}$ for the action of $\H_{\pm}$ on $\Gamma \sqcup (\partial_{\infty}\Gamma \smallsetminus \partial_{\infty}\H_{\pm})$, i.e. $$\Gamma \sqcup (\partial_{\infty}\Gamma\smallsetminus \partial_{\infty}\H_{\pm})=\H_{\pm}F_0^{\pm}$$ such that $\xi(F_{0}^{+}\cap \partial_{\infty}\Gamma)=F^{\pm}$, where $\xi:\partial_{\infty}\Gamma \rightarrow \mathcal{F}_{\Theta}$ is the Anosov limit map of $\Gamma<\mathsf{G}$. 
\par Let us now fix $\zeta,\epsilon>0$. Since $F_{0}^{\pm}$ is compact and $F_{0}^{\pm}\cap \partial_{\infty}\H_{\pm}$ is empty, we can choose $C>0$ and $\varepsilon>0$, depending only on $\Gamma$, the choice of $F_{0}^{\pm}$ and $\zeta,\epsilon>0$, such that the following conditions hold:

\begin{enumerate}[label=(\alph*)]
    \item for every $f \in F_0^{\pm}\cap \Gamma$ with $d_{\Gamma}(f,1)>C$, then $f, f^{+} \in \mathcal{N}_{\varepsilon/2}(F_0^{\pm}\cap \partial_{\infty}\Gamma)$,
\item for every $f\in \Gamma$ with $f^{-1}\in \mathcal{N}_{\varepsilon/2}(F_{0}^{\pm})$ and $d_{\Gamma}(f,1)>C$, then $f\H_{\pm}\subset \mathcal{N}_{\varepsilon}(f^{+})$, 
\item for every $g\in \Gamma$ such that $g^{+}\in \mathcal{N}_{\varepsilon}(F_{0}^{\pm}\cap \partial_{\infty}\Gamma)$ and $g^{-}\in \mathcal{N}_{\varepsilon}(F_0^{\mp}\cap \partial_{\infty}\Gamma)$, $ fT_{\zeta}(F^{\mp})\subset \mathcal{N}_{\epsilon}(F^{\pm})$,
\item for every $g\in \Gamma$ with $g^{+}\in \mathcal{N}_{\varepsilon}(F_{0}^{\pm}\cap \partial_{\infty}\Gamma)$ and $g^{-}\in \mathcal{N}_{\varepsilon}(F_0^{\pm}\cap \partial_{\infty}\Gamma)$, $ fT_{\zeta}(F^{\pm})\subset \mathcal{N}_{\epsilon}(F^{\pm})$.
\end{enumerate}

Since $\H_{\pm}, \H_{+}\H_{-}$ and $\H_{-}\H_{+}$ are all separable subsets of $\Gamma$, we may choose a characteristic finite-index subgroup\footnote{For a finite-index subgroup of $\Delta<\Gamma$, the intersection $\Delta'$ of all finite-index subgroups of $\Gamma$ of index at most $[\Gamma:\Delta]$ is characteristic in $\Gamma$.} $\Gamma_{\epsilon}<\Gamma$ such that the following conditions hold true:

\begin{enumerate}[label=(\alph*)]\setcounter{enumi}{4}
\item if  $\H_{\pm}g\H_{\pm}\cap \Gamma_{\epsilon}\neq \emptyset$, either $g\in \H_{\pm}$ or $d_{\Gamma}(g,1)>C$,
\item if $\H_{+}g\H_{-}\cap \Gamma_{\epsilon}\neq \emptyset$, either $g\in \H_{+}\H_{-}$ or $d_{\Gamma}(g,1)>C$,
\item if $\H_{-}g\H_{-}\cap \Gamma_{\epsilon}\neq \emptyset$, either $g\in \H_{-}\H_{+}$ or $d_{\Gamma}(g,1)>C$.
\end{enumerate}

Now we prove the statements of the lemma. First note that as $\Gamma_{\epsilon}<\Gamma$ is a characteristic subgroup of finite index, $\phi_{p}$ restricts to an autmomorphism of $\phi_p:\Gamma_{\epsilon}\rightarrow \Gamma_{\epsilon}$, thus $\phi_{p}(\H_{-}\cap \Gamma_{\epsilon})=\H_{+}\cap \Gamma_{\epsilon}$ and $tht^{-1}=\phi_p(h)$ for every $h\in \H_{-}\cap \Gamma_{\epsilon}$. Now we prove (1). Fix an element $\gamma \in \Gamma_{\epsilon}\smallsetminus \H_{+}\H_{-}$. We may write $\gamma=h_{+}f_{\gamma}=h_{+}f_{1}h_{-}$, where $f_{\gamma}\in F_0^{+}, f_{\gamma}=f_1h_{-}$, and $f_1^{-1}\in F_0^{-}$. By (e) and (f) we have $d_{\Gamma}(f_1,1)>C$ and $d_{\Gamma}(f_{\gamma},1)>C$. By the choice of $C>0$, (a) and (b), we deduce that $f_{\gamma}\in \mathcal{N}_{\varepsilon/2}(F_0^{+}\cap \partial_{\infty}\Gamma)$, $f_1^{-1}\in \mathcal{N}_{\varepsilon/2}(F_0^{-}\cap \partial_{\infty}\Gamma)$ and $f_{\gamma}=f_1h_{-}\in \mathcal{N}_{\varepsilon/2}(f_1^{+})$. This shows $f_{1}^{+}\in \mathcal{N}_{\varepsilon}(F_0^{+}\cap \partial_{\infty}\Gamma)$. Thus, by the choice of $\varepsilon>0$ in (c) we have that $f_1T_{\zeta}(F^{-})\subset \mathcal{N}_{\epsilon}(F^{+})$. This proves part (1) of the lemma. The proof of part (2) is analogous.

\par For part (3), if $\gamma \in \Gamma_{\epsilon}\smallsetminus \H_{\pm}$, we may write $\gamma=h_1w_1=h_1wh_2$, for some $h_1,h_2\in \H_{\pm}$, $w_1\in F_0^{\pm}$, $w^{-1}\in F_0^{\pm}$ and $w_1=wh_2$. Since $\gamma \notin \H_{\pm}$, by (c) we have $d_{\Gamma}(w_1,1)>C$ and $d_{\Gamma}(w,1)>C$. In addition, by (a) and (b), $w_1=wh_2\in \mathcal{N}_{\varepsilon/2}(w^{+})$, $w_1\in \mathcal{N}_{\varepsilon/2}(F_0^{\pm}\cap \partial_{\infty}\Gamma)$ and $w^{-}\in \mathcal{N}_{\varepsilon/2}(F_0^{-}\cap \partial_{\infty}\Gamma)$. This shows that $w^{+}\in \mathcal{N}_{\varepsilon/2}(F_0^{\pm})$. Hence, by the choice of $\varepsilon>0$ in (d), we conclude that $wT_{\zeta}(F^{\pm})\subset \mathcal{N}_{\epsilon}(F^{\pm})$.
\end{proof}

\begin{lemma}\label{fi-separable} There is $\delta>0$, a finite-index subgroup $\Gamma_{\epsilon}<\Gamma$ such that for the intersections $\H_{\epsilon^{\pm}}\coloneqq\Gamma_\epsilon \cap \H_{\pm}$ we have $\H_{\epsilon^{+}}\coloneqq\phi_p(\H_{\epsilon^{-}})$, $th_{-}t^{-1}=\phi_p(h_{-})$ for every $h_{-}\in \H_{\epsilon^{-}}$, and the following conditions hold:
\begin{enumerate}
    \item $\gamma B_{\pm}^{\delta}\subset (A_{\pm}^{\delta})^{\circ}$ for every $\gamma \in \Gamma_{\epsilon}\smallsetminus \H_{\pm}$.
    \item  $t\gamma B_{+}^{\delta}\subset B_{+}^{\delta}$ for every $\gamma \in \Gamma_{\epsilon}\smallsetminus \H_{+}$.
    \item $t^{-1}\gamma B_{-}^{\delta}\subset B_{-}^{\delta}$ for every $\gamma \in \Gamma_{\epsilon}\smallsetminus \H_{-}$.
    \item $\gamma B_{+}^{\delta}$ is antipodal to $B_{-}^{\delta}$ for every $\gamma \in \Gamma_{\epsilon}\smallsetminus \H_{+}$.
    \item $\gamma B_{-}^{\delta}$ is antipodal to $B_{+}^{\delta}$ for every $\gamma \in \Gamma_{\epsilon}\smallsetminus \H_{-}$.
\end{enumerate}
\end{lemma}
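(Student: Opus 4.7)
\emph{Plan.} The proof combines the separability-based decompositions of \Cref{product-sep} with the dynamical properties of the ping-pong sets $B_\pm^\delta$ established in the preceding lemmas. First, I would choose $\delta \in (0, \delta_1)$ and $\zeta > 0$ so that $B_\pm^\delta \subset T_\zeta(F^\pm)$; this uses that each of the three components of $B_\pm^\delta$ is uniformly antipodal to $F^\pm$, via \Cref{theta}, hypothesis~(2) of \Cref{HNN-ext}, and \Cref{semigroup-transv}. Pick $\epsilon \in (0, c\delta/2)$ and apply \Cref{product-sep} with these $\zeta, \epsilon$ to extract $\Gamma_\epsilon$. Replacing $\Gamma_\epsilon$ with $\Gamma_\epsilon \cap \phi_p(\Gamma_\epsilon) \cap \phi_p^{-1}(\Gamma_\epsilon)$ if necessary, we may assume $\Gamma_\epsilon$ is $\phi_p$-invariant, which secures $\phi_p(\H_{\epsilon^-}) = \H_{\epsilon^+}$ and $t h t^{-1} = \phi_p(h)$ for $h \in \H_{\epsilon^-}$.

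\emph{Proof of (1), (4), (5).} The crucial algebraic observation is that $B_\pm^\delta$ is $\H_\pm$-invariant: the identity $\H_+ t = t\H_-$ (from $t\H_-t^{-1} = \H_+$) gives $\H_+ \cdot t\H_- \mathcal{N}_{c\delta}(F^-) = t\H_-\mathcal{N}_{c\delta}(F^-)$, while $\H_+ \cdot (\H_+, t)X_+(\delta,\theta) = (\H_+, t)X_+(\delta,\theta)$ and $\H_+ \Lambda_{\H_+} = \Lambda_{\H_+}$. Given $\gamma \in \Gamma_\epsilon \smallsetminus \H_\pm$, \Cref{product-sep}(3) decomposes $\gamma = h_1 w h_2$ with $h_1, h_2 \in \H_\pm$ and $w T_\zeta(F^\pm) \subset \mathcal{N}_\epsilon(F^\pm)$, so $\gamma B_\pm^\delta = h_1 w B_\pm^\delta \subset h_1 \mathcal{N}_\epsilon(F^\pm) \subset (A_\pm^\delta)^\circ$. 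This proves (1); conditions (4), (5) follow by componentwise antipodality checks of $h_1 \mathcal{N}_\epsilon(F^\pm)$ against each of the three components of $B_\mp^\delta$---invoking hypothesis~(2) of \Cref{HNN-ext} for the $t^{\mp 1}\H_\pm\mathcal{N}_{c\delta}(F^\pm)$ and $(\H_\mp,t^{\mp 1})X_\mp(\delta,\theta)$ parts, and malnormality of $\H_\mp$ for the $\Lambda_{\H_\mp}$ part.

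\emph{Proof of (2) and (3).} A case analysis on $\gamma$ is required. When $\gamma \notin \H_+\H_-$, \Cref{product-sep}(1) gives $\gamma = h_+ f_1 h_-$ with $f_1 T_\zeta(F^-) \subset \mathcal{N}_\epsilon(F^+)$, and the three components of $t\gamma B_+^\delta$ are absorbed into $B_+^\delta$ as follows: the semigroup piece by $t(\H_+,t)X_+(\delta,\theta)\subset X_+(\delta/2,\theta)$ from \Cref{semigroup}(3), the $t\H_-\mathcal{N}_{c\delta}(F^-)$ piece by $t^2\H_-\mathcal{N}_{c\delta}(F^-)\subset X_+(\delta/2,\theta)$ from \Cref{constant-1}, and the $\Lambda_{\H_+}$ piece by $t^2\Lambda_\Gamma\subset\Sigma_+(\theta)\subset B_+^\delta$. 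When $\gamma \in \H_+\H_-\smallsetminus\H_+$, write $\gamma = h_+ h_-$ with $h_- \neq 1$; using $\H_+ t = t\H_-$ to rearrange $t\gamma$, a parallel componentwise verification absorbs $t\gamma B_+^\delta$ into $B_+^\delta$. Condition (3) is symmetric.

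\emph{Main obstacle.} The most intricate step is the case analysis in (2) and (3): since the conjugation relation $tht^{-1}=\phi_p(h)$ applies only to $h \in \H_-$, one cannot move $t$ past arbitrary elements of $\Gamma_\epsilon$, forcing a simultaneous bookkeeping of the three components of $B_\pm^\delta$ that interleaves \Cref{constant-1}, \Cref{semigroup}, and \Cref{product-sep} in a coordinated fashion.
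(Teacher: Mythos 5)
Your setup (choice of $\delta,\zeta,\epsilon$, invocation of \Cref{product-sep}, the $\H_\pm$-invariance of $B_\pm^\delta$) and your proof of part (1) match the paper. But for parts (2)--(5) you have paired the wrong double-coset decomposition with the wrong ping-pong set, and this breaks the absorption arguments.

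For (4) and (5): you bound $\gamma B_+^{\delta}\subset h_1\mathcal{N}_\epsilon(F^{+})$ with $h_1\in\H_+$ coming from \Cref{product-sep}(3), and then propose to check $h_1\mathcal{N}_\epsilon(F^{+})$ componentwise against $B_-^{\delta}$. This cannot work: since $\H_{+}F^{+}=\Lambda_{\Gamma}\smallsetminus\Lambda_{\H_{+}}\supset\Lambda_{\H_{-}}$, some translates $h_1 F^{+}$ meet $\Lambda_{\H_{-}}$, so $h_1\mathcal{N}_\epsilon(F^{+})$ contains points of $\Lambda_{\H_{-}}\subset B_-^{\delta}$ and is therefore \emph{not} antipodal to $B_-^{\delta}$; malnormality does not rescue the ``$\Lambda_{\H_-}$ part.'' The enveloping set is simply too large. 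The paper instead uses \Cref{product-sep}(2), writing $\gamma=h_{-}f_2h_{+}$ so that $\gamma B_+^{\delta}=h_{-}f_2B_+^{\delta}\subset h_{-}\mathcal{N}_\epsilon(F^{-})$; now the left factor $h_{-}$ lies in $\H_{-}$ and is absorbed by the $\H_{-}$-invariance of $B_-^{\delta}$, reducing the claim to ``$\mathcal{N}_\epsilon(F^{-})$ is antipodal to $B_-^{\delta}$,'' which is exactly how $\epsilon$ was chosen. The general principle is that the right-hand factor of the decomposition must lie in the stabilizer of the set being moved, and the left-hand factor must lie in the stabilizer of the target.

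For (2) the same mismatch occurs: with $\gamma=h_{+}f_1h_{-}$ from \Cref{product-sep}(1), the right factor $h_{-}$ acts on the $\H_{+}$-invariant (not $\H_{-}$-invariant) set $B_+^{\delta}$, so $h_{-}B_+^{\delta}$ is uncontrolled and $f_1$ is never applied to a subset of $T_\zeta(F^{-})$; moreover the three displayed inclusions in your absorption step ($t(\H_+,t)X_+\subset X_+(\delta/2,\theta)$, $t^2\H_-\mathcal{N}_{c\delta}(F^-)\subset X_+(\delta/2,\theta)$, $t^2\Lambda_\Gamma\subset\Sigma_+(\theta)$) compute $t\cdot B_+^{\delta}$, not $t\gamma\cdot B_+^{\delta}$ --- the element $\gamma$ has vanished from the estimate. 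The paper splits along $\H_{-}\H_{+}$ (not $\H_{+}\H_{-}$): if $\gamma=h_{-}h_{+}$ then $t\gamma B_+^{\delta}=(th_{-}t^{-1})tB_+^{\delta}\subset\H_{+}(B_+^{\delta})^{\circ}\subset B_+^{\delta}$ using \Cref{conditions-1}(2), and otherwise $\gamma=h_{-}f_2h_{+}$ gives $t\gamma B_+^{\delta}\subset th_{-}\mathcal{N}_\epsilon(F^{-})\subset t\H_{-}\mathcal{N}_{c\delta}(F^{-})\subset B_+^{\delta}$ in one step, with no componentwise analysis of $B_+^{\delta}$ needed. Part (3) is then the mirror image using \Cref{product-sep}(1). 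As written, your argument establishes (1) but not (2)--(5).
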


\begin{proof} Fix now $0<\delta<\kappa$, such that the sets $B_{\pm}^{\delta}, A_{\pm}^{\delta}$ satisfy the conclusion of Lemma \ref{conditions-1}. Since $B_{\pm}^{\delta}$ are compact sets which are antipodal to $F^{\pm}$, choose $\zeta>0$ such that $B_{\pm}^{\delta}\subset T_{\zeta}(F^{\pm})$. Next, we also fix $0<\epsilon<\frac{c\delta}{6}$, where $c>0$ is the constant provided by Lemma \ref{constant-1} depending only on $\H_{\pm}$, such that $\mathcal{N}_{\epsilon}(F^{\pm})$ is antipodal to $B_{\pm}^{\delta}$. By applying Lemma \ref{product-sep} for $\zeta,\epsilon>0$, we may find a deep enough  characteristic finite-index subgroup $\Gamma_{\epsilon}<\Gamma$ such that $th_{-}t^{-1}=\phi(h_{+})$ for every every $h_{-}\in \Gamma_{\epsilon}\cap \H_{-}$ such that conditions (1), (2) and (3) of \Cref{product-sep} hold true.

We now can prove the statements in the claim:

\medskip
\noindent \textup{(1)} Given any $\gamma \in \Gamma_{\epsilon} \smallsetminus \H_{\pm}$, if we write $\gamma$ as in \Cref{product-sep} (3) and since $B_{\pm}^{\delta}$ is $\H_{\pm}$-invariant, we have that $$\gamma B_{\pm}^{\delta}=h_1wh_2B_{\pm}^{\delta}=h_1wB_{\pm}^{\delta}\subset h_1wT_{\zeta}(F^{\pm})\subset \H^{\pm}\mathcal{N}_{\epsilon}(F^{\pm})\subset (A_{\pm}^{\delta})^{\circ}.$$ 

\smallskip
\noindent \textup{(2)} Let $\gamma \in \Gamma_{\epsilon}\smallsetminus \H_{+}$. If $\gamma \in \H_{-}\H_{+}$, write $\gamma=h_{-}h_{+}$, for some $h_{\pm}\in \H_{\pm}$, and hence $$t\gamma B_{+}^{\delta}=(th_{-}t^{-1})tB_{+}^{\delta}\subset (th_{-}t^{-1})(B_{+}^{\delta})^{\circ}\subset B_{+}^{\delta},$$ since $th_{-}t^{-1}\in \H_{+}$.
\par Now suppose that $\gamma \in \Gamma_{\epsilon}\smallsetminus \H_{-}\H_{+}$, we may write $\gamma=h_{-}f_{2}h_{+}$ as in \Cref{product-sep} (2). Since $B_{+}^{\delta}$ is $\H_{+}$-invariant and a subset of $T_{\zeta}(F^{+})$ we have $$t\gamma B_{+}^{\delta}=th_{-}f_{2}B_{+}^{\delta}\subset th_{-}f_{2}T_{\zeta}(F^{+})\subset t\H_{-}\mathcal{N}_{\epsilon}(F^{-})\subset t\H_{-}\mathcal{N}_{c\delta/6}\subset B_{+}^{\delta}.$$ 

\smallskip
\noindent \textup{(3)} The proof is analogous as in (2) by using \Cref{product-sep} (1).

\smallskip
\noindent \textup{(4)} Let $\gamma \in \Gamma_{\epsilon}\smallsetminus \H_{+}$. If $\gamma=h_{-}h_{+}\in \H_{-}\H_{+}$ then clearly $\gamma B_{+}^{\delta}$ is antipodal to $B_{-}^{\delta}$, since $B_{\pm}^{\delta}$ is $\H_{\pm}$-invariant and $B_{+}^{\delta}$ is antipodal to $B_{-}^{\delta}$ by Lemma \ref{conditions-1} (4). If $\gamma \in \Gamma_{\epsilon}\smallsetminus \H_{-}\H_{-}$, we may write $\gamma=h_{-}f_{2}h_{+}$, $h_{\pm}\in \H_{\pm}$, such that \Cref{product-sep} $(2)$ holds. Since $$f_{2}B_{+}^{\delta}\subset f_{2}T_{\zeta}(F^{+})\subset \mathcal{N}_{\epsilon}(F^{-}),$$ $f_{2}B_{+}^{\delta}$ is antipodal to $B_{-}^{\delta}$, by our initial choice of $\epsilon>0$. This shows that $\gamma B_{+}^{\delta}=h_{-}f_2B_{+}^{\delta}$ is antipodal to $B_{-}^{\delta}$.

\smallskip
\noindent \textup{(5)} The proof is analogous to (4) by using Lemma \ref{product-sep} (1).\end{proof}

\subsection{Proof of \Cref{HNN-ext}}
Now we can conclude the proof of the theorem by proving the following.

\begin{theorem} The group $\langle \Gamma_{\epsilon},t\rangle<\mathsf{G}$ is $\Theta$-Anosov and isomorphic to the HNN extension $\Gamma_{\epsilon}\ast_{t\H_{\epsilon^{-}}t^{-1}=\H_{\epsilon^{+}}}=\big\langle \Gamma_{\epsilon},t \ \big| \ th_{-}t^{-1}=\phi_p(h_{-}), h_{-}\in \H_{\epsilon^{-}}\big\rangle$.

\end{theorem}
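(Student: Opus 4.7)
The plan is to apply the HNN Combination Theorem \Cref{thm:HNN1} to the triple $(\Gamma_{\epsilon}, t, \H_{\epsilon^{\pm}})$, with the compact ping-pong sets $A := A_{+}^{\delta}\cup A_{-}^{\delta}$ and $B_{\pm} := B_{\pm}^{\delta}$ from \eqref{eqn:HNNsets}; the cumulative work of Steps 1--3 has been arranged precisely so that each hypothesis can be read off from the preceding lemmas. The algebraic setup is in place from the outset: since $\Gamma_{\epsilon}<\Gamma$ is characteristic and hence $\phi_{p}$-invariant, malnormality of $\H_{\pm}$ in $\Gamma$ yields $\H_{\epsilon^{\pm}} = \Gamma_{\epsilon}\cap t^{\pm 1}\Gamma_{\epsilon} t^{\mp 1}$ and $t\H_{\epsilon^{-}}t^{-1} = \H_{\epsilon^{+}}$, and $\H_{\epsilon^{\pm}}$ remain quasiconvex in $\Gamma_{\epsilon}$.

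The verification of \Cref{thm:HNN1} then proceeds condition by condition. Condition (1), the mutual antipodality of $(A^{\circ}, B_{\pm}^{\circ})$ and $(B_{+}, B_{-})$, follows from \Cref{conditions-1}(3), (4) after noting $A^{\circ}=(A_{+}^{\delta})^{\circ}\cup(A_{-}^{\delta})^{\circ}$. Condition (2)---that $\Lambda_{\Gamma}\smallsetminus \Lambda_{\H_{\epsilon^{\pm}}}$ is antipodal to $B_{\pm}$---is built into the very definition of $B_{\pm}^{\delta}$ via \Cref{semigroup-transv}(2), using $\Lambda_{\Gamma_{\epsilon}}=\Lambda_{\Gamma}$. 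Condition (4), the $\H_{\pm}$-precise invariance of $B_{\pm}$, combines the manifest $\H_{\pm}$-invariance of $B_{\pm}^{\delta}$ with \Cref{fi-separable}(1). Condition (3) breaks into three parts: the contractions $t^{\pm 1}B_{\pm}\subset B_{\pm}^{\circ}$ are \Cref{conditions-1}(2); the escape condition $\gamma B_{\pm}\subset A^{\circ}$ for $\gamma\in\Gamma_{\epsilon}\smallsetminus(\H_{+}\cup\H_{-})$ is again \Cref{fi-separable}(1); and the inclusions $t^{\pm 1}A\subset B_{\pm}$ reduce via \eqref{eqn:AB} to the ``diagonal'' containments $tA_{+}^{\delta}\subset B_{+}^{\delta}$ and $t^{-1}A_{-}^{\delta}\subset B_{-}^{\delta}$.

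This diagonal piece is the main obstacle, since the set $A_{+}^{\delta}$ is a thickening of the fundamental domain $F^{+}$, whereas \eqref{eqn:AB} only handles the opposite-side inclusion $tA_{-}^{\delta}\subset B_{+}^{\delta}$ (via the identity $t\Lambda_{\H_{-}}=\Lambda_{\H_{+}}$ together with $tF^{-}\subset \mathcal{N}_{c\delta}(F^{-})^{\circ}$). To control $tA_{+}^{\delta}$, the strategy is to split it as $t\Lambda_{\H_{+}}\cup t\H_{+}\mathcal{N}_{c\delta}(F^{+})$, observe that $t\Lambda_{\Gamma}=\Lambda_{\H_{+}}\cup t\H_{-}F^{-}\subset B_{+}^{\delta}$ and therefore $t\H_{+}F^{+}\subset t\Lambda_{\Gamma}\subset B_{+}^{\delta}$, and then invoke the Lipschitz contraction of $t$ near its attracting fixed point $t^{+}$, controlled by \Cref{Xi-1}, to absorb the thin tube $t\H_{+}\mathcal{N}_{c\delta}(F^{+})$ into $(\H_{+},t)X_{+}(\delta,\theta)\subset B_{+}^{\delta}$; this may require shrinking $\delta$ once more, which is harmless since all preceding estimates remain valid.

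Once all four hypotheses are in place, \Cref{thm:HNN1} directly yields that the natural homomorphism $\Gamma_{\epsilon}\ast_{t\H_{\epsilon^{-}}t^{-1}=\H_{\epsilon^{+}}}\to\mathsf{G}$ is an injective $\Theta$-Anosov representation. Its image contains both $\Gamma_{\epsilon}$ and $t$, and is therefore equal to $\langle\Gamma_{\epsilon},t\rangle$, which is accordingly $\Theta$-Anosov and isomorphic to the HNN extension. This completes the proof of \Cref{HNN-ext}.
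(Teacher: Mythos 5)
Your overall strategy (verify the hypotheses of \Cref{thm:HNN1} for $(\Gamma_\epsilon,\H_{\epsilon^\pm},t)$ using the sets built in Steps 1--3) is the right one, but your choice of the set $A$ creates a genuine gap. You take $A=A_+^\delta\cup A_-^\delta$, and you correctly identify that the resulting ``diagonal'' containment $tA_+^\delta\subset B_+^\delta$ is the sticking point --- but your proposed repair does not work. The set $\Sigma_+(\theta)$ only contains $t^m\Lambda_\Gamma$ for $m\ge 2$ and $t^m\mathcal{N}_\theta(\Lambda_{\H_+})$ for $m\ge 1$; it contains no neighborhood of $t\H_+F^+$, since $F^+$ is a compact set \emph{away} from $\Lambda_{\H_+}$. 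Moreover $t$ is a single fixed element, so its action on $\F_\Theta$ is merely bi-Lipschitz, not contracting: the quantitative contraction in \Cref{Xi-1} and \Cref{ineq-2} concerns the high powers $\gamma^p$ absorbed into the stable letter in Step 1 and the iterates $t^{\pm k}$ on $X_\pm$, neither of which applies to $t\H_+\mathcal{N}_{c\delta}(F^+)$. Concretely, a point of $t\H_+F^+$ equals $t\eta y$ with $\eta\in\H_-$, $y\in F^-$, and the only piece of $B_+^\delta$ near it is $t\eta\mathcal{N}_{c\delta}(F^-)$, which becomes arbitrarily thin as $\eta\to\infty$; shrinking $\delta$ does not fix this, because $c\delta$ shrinks at the same rate. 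A second, related problem with the union is hypothesis (1) of \Cref{thm:HNN1}: you would need $A_-^\delta\smallsetminus\Lambda_{\H_-}=\H_-\mathcal{N}_{c\delta}(F^-)$ to be antipodal to $B_+^\delta$, and \Cref{conditions-1}(3) only gives the same-sign antipodality $A_\pm^\delta\smallsetminus\Lambda_{\H_\pm}\perp B_\pm^\delta$.

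The paper avoids both issues by choosing
\[
A_{\delta,\epsilon}\coloneqq (A_{+}^{\delta}\cap A_{-}^{\delta})\cup \bigcup_{\gamma \in \Gamma_{\epsilon}\smallsetminus \H_{+}}\gamma B_{+}^{\delta}\cup \bigcup_{\gamma \in \Gamma_{\epsilon}\smallsetminus \H_{-}}\gamma B_{-}^{\delta}.
\]
Because the first piece is the \emph{intersection}, the inclusion $tA_{\delta,\epsilon}\subset B_+^\delta$ only requires $t(A_+^\delta\cap A_-^\delta)\subset tA_-^\delta\subset B_+^\delta$, which is exactly \eqref{eqn:AB}, together with $t\gamma B_+^\delta\subset B_+^\delta$ and $t\gamma' B_-^\delta\subset tA_-^\delta$ from \Cref{fi-separable}(1,3); no diagonal containment is ever needed. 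Adjoining the translates $\gamma B_\pm^\delta$ to $A$ is what makes the escape conditions $\gamma B_\pm^\delta\subset A_{\delta,\epsilon}^\circ$ hold (their interiority coming from $\gamma B_\pm^\delta\subset(A_\pm^\delta)^\circ$), and the antipodality $A_{\delta,\epsilon}\smallsetminus\Lambda_{\H_+}\perp B_+^\delta$ then follows from \Cref{conditions-1}(3) applied to the smaller set $A_+^\delta\cap A_-^\delta$ together with \Cref{fi-separable}(1,5). You should replace your $A$ with $A_{\delta,\epsilon}$ and rerun your condition-by-condition verification; the rest of your outline (the algebraic identification of $\H_{\epsilon^\pm}$ via malnormality, and the use of \Cref{conditions-1}, \Cref{semigroup-transv}, and \Cref{fi-separable} for the remaining hypotheses) matches the paper's argument.
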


\begin{proof} Recall by Lemma \ref{fi-separable} that there are a finite-index subgroup $\Gamma_{\epsilon}<\Gamma$, quasiconvex subgroups $\H_{\epsilon^{\pm}}=\H_{\pm}\cap \Gamma_{\epsilon}$, and $t\in \mathsf{G}$ such that $th_{-}t^{-1}=\phi_p(h_{-})$, for every $h_{-}\in \H_{\epsilon^{-}}$. We define the following compact sets: 
\begin{equation}\label{eqn:A}
    A_{\delta,\epsilon}\coloneqq (A_{+}^{\delta}\cap A_{-}^{\delta})\cup  \bigcup_{\gamma \in \Gamma_{\epsilon}\smallsetminus \H_{+}}\gamma B_{+}^{\delta}\cup  \bigcup_{\gamma \in \Gamma_{\epsilon}\smallsetminus \H_{-}}\gamma B_{-}^{\delta}.
\end{equation} By \Cref{thm:HNN1}, it suffices to establish that the pair $(A_{\delta,\epsilon},B_{\pm}^{\delta})$ is an interactive triple for $(\Gamma_{\epsilon}, \Gamma_{\epsilon}\cap \H_{\pm}, t)$. For this, we need to verify successively the following conditions for the sets $(A_{\delta,\epsilon},B_{\pm}^{\delta})$:

\begin{enumerate}[label=(\alph*)]
    \item 
    $\H_{\epsilon^{+}}=\Gamma_{\epsilon}\cap t\Gamma_{\epsilon}t^{-1}$ and $\H_{\epsilon^{-}}=\Gamma_{\epsilon}\cap t^{-1}\Gamma_{\epsilon}t$. To see this, note that since $t(\Lambda_{\Gamma}\smallsetminus \Lambda_{\H_{-}})$ is antipodal to $\Lambda_{\Gamma}$ and $\H_{\epsilon^{+}}<\Gamma_{\epsilon}$ is quasiconvex,  $\H_{\epsilon^{+}}$ has to be a finite-index subgroup of $\Gamma_{\epsilon}\cap t\Gamma_{\epsilon}t^{-1}$. Since $\H_{+}<\Gamma$ is malnormal in $\Gamma$, $\H_{\epsilon^{+}}=\Gamma_{\epsilon}\cap \H_{+}$ is malnormal in $\Gamma_{\epsilon}$, hence $\H_{\epsilon^{+}}=\Gamma_{\epsilon}\cap t\Gamma_{\epsilon}t^{-1}$. We similarly verify that $\H_{\epsilon^{-}}=\Gamma_{\epsilon}\cap t^{-1}\Gamma_{\epsilon}t$.

    \item
    {\em $(A_{\delta,\epsilon})^{\circ}$ is antipodal to $(B_{\pm}^{\delta})^{\circ}$.} Combining \Cref{conditions-1} (3) and Lemma \ref{fi-separable} (1,5), we see that  the sets $\gamma B_{+}^{\delta}$, $\gamma'B_{-}^{\delta}$ are antipodal to $B_{+}^{\delta}$ for all $\gamma \in \Gamma_{\epsilon}\smallsetminus \H_{+}$, $\gamma'\in \Gamma_{\epsilon}\smallsetminus \H_{-}$
    Moreover, by \Cref{conditions-1} (3), $(A_+^\delta\cap A_-^\delta)\smallsetminus\Lambda_{\rm H_+}$ is antipodal to $B_+^\delta$.
    Thus, it follows from the definition of $A_{\delta,\epsilon}$ given by \eqref{eqn:A} that $A_{\delta,\epsilon}\smallsetminus \Lambda_{\H_{+}}$ is antipodal to $B_{+}^{\delta}$.

    On the other hand, $B_{+}^{\delta}$ is antipodal to $\Lambda_{\Gamma} \smallsetminus \Lambda_{\H_{+}}$ (by Lemma \ref{conditions-1} (1)). Since $\Lambda_{\H_{+}}$ lies in the closure of the latter set, it follows that any interior point of $B_+^\delta$ is antipodal to $\Lambda_{\rm H_+}$. Thus, it follows that $(B_{+}^{\delta})^{\circ}$  is antipodal to $A_{\delta,\epsilon}$.

    \item 
    {\em $B_{+}^{\delta}$ is antipodal to $B_{-}^{\delta}$.} This is the content of \Cref{conditions-1} (4).

    \item {\em $\Lambda_{\G_\epsilon}\smallsetminus \Lambda_{\H_\pm}$ is antipodal to $B_\pm$.} This is immediate from \Cref{conditions-1} (1,3).
    
    \item
    $t^{\pm 1}B_{\pm}^{\delta}\subset (B_{\pm}^{\delta})^{\circ}$. This is the content of Lemma \ref{conditions-1} (2).

    \item
    $t^{\pm 1}A_{\delta,\epsilon}\subset B_{\pm}^{\delta}$. To see this, note that by Lemma \ref{fi-separable} (1,3), we have $t\gamma' B_{-}^{\delta}\subset tA_{-}^{\delta}$ 
    for all $\gamma' \in \Gamma_{\epsilon}\smallsetminus \H_{-}$, and $t\gamma B_{+}^{\delta}\subset B_{+}^{\delta}$ for all $\gamma \in \Gamma_{\epsilon}\smallsetminus \H_{+}$. Hence 
    \[
    tA_{\delta,\epsilon}\;\subset\;
    tA_{-}^{\delta}\cup \bigcup_{\gamma \in \Gamma_{\epsilon}\smallsetminus \H_{+}}t\gamma B_{+}^{\delta}\;\subset\;
    tA_{-}^{\delta}\cup B_{+}^{\delta}\;\subset\;
    B_{+}^{\delta};
    \]
    the rightmost inclusion in the above follows form \eqref{eqn:AB}.
    Similarly, we see that $t^{-1}A_{\delta,\epsilon}\subset B_{-}^{\delta}$.

    \item 
    {\em $\H_{\epsilon^{\pm}}$ leaves $B_{\pm}^{\delta}$ precisely invariant}. Note that $B_{\pm}^{\delta}$ is $\H_{\pm}$-invariant. Also, by \eqref{eqn:A} and \Cref{fi-separable} (1), $\gamma B_{\pm}^{\delta}\subset A_{\delta, \epsilon}\cap (A_{\delta}^{\pm})^{\circ}\subset (A_{\delta,\epsilon})^{\circ}$ for every $\gamma \in \Gamma_{\epsilon}\smallsetminus \H_{\pm}$.
\end{enumerate}

Thus, the conclusion now follows by \Cref{thm:HNN1}.\end{proof}

\subsection{Proof of Theorem \ref{main:cyclic-HNN}}\label{sec:cyclic-HNN}
Using Theorem \ref{HNN-ext}, we can prove the following more general theorem from which Theorem \ref{main:cyclic-HNN} will follow. We call a matrix $g\in \mathsf{GL}_n(\mathbb{C})$ semisimple if it is similar to a diagonal matrix.

\begin{theorem}\label{thm:cyclic-HNN2} 
Let $\Gamma<\mathsf{SL}_d(\mathbb{R})$ be a $1$-Anosov subgroup. For any infinite order semisimple element $a\in \Gamma$, there are $\gamma\in \Gamma$, $m\in \mathbb{N}$, and a finite-index normal subgroup $\Gamma'<\Gamma$ such that $\gamma a^m\gamma^{-1},a^m\in \Gamma'$ and the HNN extension $$\Gamma'\ast_{ta^mt^{-1}=\gamma a^m \gamma^{-1}}=\big \langle \Gamma', t \ | \ ta^mt^{-1}=\gamma a^m\gamma^{-1}\big\rangle$$ is hyperbolic and admits a faithful Anosov representation into $\mathsf{SL}_{d}(\mathbb{C})$.
\end{theorem}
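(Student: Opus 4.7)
The plan is to apply the Virtual HNN Theorem (\Cref{HNN-ext}) with $\mathsf{G} = \mathsf{SL}_d(\mathbb{C})$, $\Gamma$ regarded as a $1$-Anosov subgroup of $\mathsf{SL}_d(\mathbb{C})$ via the inclusion $\mathsf{SL}_d(\mathbb{R}) \hookrightarrow \mathsf{SL}_d(\mathbb{C})$, $\mathrm{M}_+ = \mathrm{M}_- = \langle a \rangle$, and $\phi = \mathrm{id}_\Gamma$. First, replace $\langle a \rangle$ by a finite-index subgroup (i.e., replace $a$ by a power) so that $\mathrm{M} \coloneqq \langle a \rangle$ is malnormal in $\Gamma$; this is standard in hyperbolic groups. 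Quasiconvexity of $\mathrm{M}$ is automatic for virtually cyclic subgroups of a hyperbolic group, and separability follows from the cyclic separability theorem of \cite{Tsouvalas-cyclic-separability}.

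To construct $\tau \in \mathsf{SL}_d(\mathbb{C})$ satisfying hypotheses (1) and (2) of \Cref{HNN-ext}, I apply \Cref{antipodal-cyclic} to the cyclic subgroup $\langle a \rangle \subset \Gamma$: since $a$ is semisimple and biproximal in $\Gamma$ (its top and bottom eigenvalues are real and simple), I obtain a diagonal $z \in \mathsf{SL}_d(\mathbb{C})$ centralizing $a$ with the property that $z^{\pm 1} \xi^{1}(\partial_\infty \Gamma \smallsetminus \{a^+, a^-\})$ is antipodal to $\xi^{d-1}(\partial_\infty \Gamma \smallsetminus \{a^+, a^-\})$. Take $\tau \coloneqq z$. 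Hypothesis (1) is then immediate since $\tau$ centralizes $\mathrm{M}$ and $\phi = \mathrm{id}$, while hypothesis (2), which asks that $\tau(\Lambda_\Gamma \smallsetminus \Lambda_{\mathrm{M}_-})$ be antipodal to $\Lambda_\Gamma \smallsetminus \Lambda_{\mathrm{M}_+}$, is exactly the conclusion of \Cref{antipodal-cyclic}, using that $\Lambda_{\mathrm{M}} = \{a^+, a^-\}$ in the flag variety.

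The main obstacle is verifying hypothesis (3) of \Cref{HNN-ext}. I need an infinite-order $\gamma \in \Gamma$ with $\gamma^\pm \notin \{a^+, a^-\}$ (an essentially generic condition, since $\Gamma$ is nonelementary unless the statement is trivial) such that for every $m \in \mathbb{Z}$ and every normal finite-index $\Gamma_1 \triangleleft \Gamma$, both products
\[
\bigl(\gamma^{m}(\Gamma_1 \cap \mathrm{M})\gamma^{-m}\bigr)(\Gamma_1 \cap \mathrm{M})
\quad\text{and}\quad
(\Gamma_1 \cap \mathrm{M})\bigl(\gamma^{m}(\Gamma_1 \cap \mathrm{M})\gamma^{-m}\bigr)
\]
are separable in $\Gamma_1$. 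This is a \emph{double coset separability} statement for pairs of cyclic subgroups: it is the content of Hamilton's theorem \cite{Hamilton-double-coset} in the setting of uniform lattices in rank $1$ Lie groups, but for general $1$-Anosov subgroups it requires a genuine extension. Establishing this extension -- plausibly using a combination of cyclic separability, combinatorial properties of the limit set, and the profinite rigidity of malnormal cyclic subgroups in Anosov groups -- is the principal technical challenge, and it must be carried out as a standalone result (as the outline indicates).

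Granted hypotheses (1)--(3), \Cref{HNN-ext} produces $p \in \mathbb{N}$ and a characteristic finite-index $\Gamma' \triangleleft \Gamma$ such that $\langle \Gamma', \tau \gamma^p\rangle < \mathsf{SL}_d(\mathbb{C})$ is $1$-Anosov and naturally isomorphic to
\[
\bigl\langle \Gamma',\, t \;\big|\; tht^{-1} = \gamma^p h \gamma^{-p},\; h \in \mathrm{H}_-\bigr\rangle, \quad \mathrm{H}_- = \gamma^{-p}(\Gamma' \cap \mathrm{M})\gamma^p.
\]
Writing $\Gamma' \cap \mathrm{M} = \langle a^m\rangle$ for the appropriate $m \in \mathbb{N}$, the automorphism $\psi \colon \Gamma' \to \Gamma'$, $x \mapsto \gamma^p x \gamma^{-p}$ (which preserves $\Gamma'$ since $\Gamma' \triangleleft \Gamma$), extends to an automorphism of this HNN extension fixing $t$; applying it converts the defining relation into $t a^m t^{-1} = \gamma^p a^m \gamma^{-p}$. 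Thus the abstract group $\Gamma' \ast_{t a^m t^{-1} = \gamma_0 a^m \gamma_0^{-1}}$ with $\gamma_0 \coloneqq \gamma^p \in \Gamma$ admits a faithful $1$-Anosov representation into $\mathsf{SL}_d(\mathbb{C})$, and hyperbolicity follows from this Anosov property. The hypothesis $\gamma_0 a^m \gamma_0^{-1}, a^m \in \Gamma'$ is automatic from $a^m \in \Gamma'$ and the normality of $\Gamma'$.
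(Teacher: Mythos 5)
Your overall architecture matches the paper's: regard $\Gamma$ as $1$-Anosov in $\mathsf{SL}_d(\mathbb{C})$, take $\M_{+}=\M_{-}=\langle a\rangle$ with $\phi=\mathrm{id}$, produce the stable-letter candidate $\tau$ by the diagonal matrix $\mathrm{diag}(\textup{I}_{d-2},1,i)$ as in \Cref{antipodal-cyclic}, and feed everything into \Cref{HNN-ext}. But there is a genuine gap: you explicitly do not verify hypothesis (3) of \Cref{HNN-ext}, i.e.\ the separability in $\Gamma_1$ of the products $(\gamma^{m}(\Gamma_1\cap\M)\gamma^{-m})(\Gamma_1\cap\M)$ and $(\Gamma_1\cap\M)(\gamma^{m}(\Gamma_1\cap\M)\gamma^{-m})$ for all $m$ and all normal finite-index $\Gamma_1$. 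You correctly identify this as the principal obstacle and note that Hamilton's theorem only covers lattices in rank $1$, but then you defer the needed extension to an unproved ``standalone result,'' gesturing at ``profinite rigidity of malnormal cyclic subgroups'' without an argument. This is precisely the content the theorem cannot do without, and it is also the only place where the semisimplicity hypothesis on $a$ does real work. The paper closes this gap with \Cref{prod-sep-gen}: for a torsion-free finitely generated $\Delta<\mathsf{GL}_d(\mathbb{C})$, a semisimple $a$ generating a maximal abelian subgroup, and $b$ with $\langle a\rangle\cap\langle bab^{-1}\rangle=\{1\}$, the product $\langle a\rangle\langle bab^{-1}\rangle$ is separable. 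The proof is not a formal consequence of cyclic separability; it reruns Hamilton's congruence argument over the finitely generated ring $R$ of matrix entries using the Grunewald--Segal specialization theorem, after first passing to a finite-index normal subgroup in which the eigenvalue ratios of $a^n$ are $1$ or of infinite multiplicative order. Without this (or an equivalent substitute), your proof is incomplete.

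Two smaller points. First, replacing $a$ by a power does not by itself make $\langle a\rangle$ malnormal in $\Gamma$: if some $h\in\Gamma$ stabilizes $\{a^{+},a^{-}\}$, it conjugates every $\langle a^{k}\rangle$ into itself. The correct move, as in the paper, is to use separability of $\langle a\rangle$ to pass to a finite-index subgroup of $\Gamma$ meeting the stabilizer of $\{a^{+},a^{-}\}$ in $\langle a\rangle$. Second, your final bookkeeping (converting the relation $tht^{-1}=\gamma^{p}h\gamma^{-p}$ on $\H_{-}=\gamma^{-p}(\Gamma'\cap\M)\gamma^{p}$ into the stated presentation $ta^{m}t^{-1}=\gamma_{0}a^{m}\gamma_{0}^{-1}$) is fine and agrees with the paper's conclusion.
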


\begin{proof}
Hamilton \cite{Hamilton-double-coset} showed that if $\Gamma$ is a uniform lattice in $\mathsf{G}=\mathsf{O}(n,1)$, $\mathsf{U}(n,1)$, $\mathsf{Sp}(n,1)$, $n\geq 2$, or $\mathsf{F}_4^{-20}$, then for every two cyclic subgroups $\gamma_1,\gamma_2\in \Gamma$ the product $\langle \gamma_1\rangle \langle \gamma_2\rangle$ is a separable subset of $\Gamma$. An important element in her proof is that loxodromic elements in $\mathsf{G}$ are semisimple. We observe that this phenomenon extends to other cases\footnote{For hyperbolic groups in which all quasiconvex subgroups are separable, this is known by \cite[Thm. 1]{Minasyan}. }:

\begin{lemma} \label{prod-sep-gen}
Let $\Delta<\mathsf{GL}_d(\mathbb{C})$ be a torsion-free finitely generated subgroup, let $a\in \Delta$ be a semisimple element such that $\langle a\rangle$ is a maximal abelian subgroup of $\Delta$ and $b\in \Delta$ be any element such that $\langle a\rangle \cap \langle bab^{-1}\rangle=\{1\}$. Then the product $\langle a\rangle \langle bab^{-1}\rangle$ is separable in $\Gamma$.
\end{lemma}

\begin{proof} 
The proof of the proposition follows by the arguments in \cite{Hamilton-double-coset} as follows. Let $R\subset \mathbb{C}$ be the ring generated by the entries of a finite generating subset of $\Delta$ such that $\Delta<\mathsf{GL}_d(R)$. First, by using the specialization theorem of Grunewald--Segal \cite[Thm. A]{Grunewald-Segal}, one checks immediately that the conclusion of Propositions 2.1, 2.2 and 2.3 from \cite{Hamilton-double-coset} hold true for $R$. By \cite{Tsouvalas-cyclic-separability}, we know that for all $r\in\N$, the cyclic groups $\langle a^r\rangle, \langle ba^rb^{-1}\rangle<\Gamma$ are separable. Following the proof of \cite[Thm. 3.2]{Hamilton-double-coset} and using the fact that $\langle a\rangle, \langle bab^{-1}\rangle$ are separable in $\Delta$, one can find a normal finite-index subgroup $\Delta_0<\Delta$ such that $\Delta_0\cap \langle a\rangle=\langle a^n\rangle$, $\Delta_0\cap \langle bab^{-1}\rangle=\langle ba^nb^{-1}\rangle$ and the ratio of any two eigenvalues of $a^n\in \Delta_0$ is either equal to $1$ or has an infinite order (i.e., not a nontrivial root of $1$). Then the argument of proof of \cite[Thm. 3.2, p. 2331-2334]{Hamilton-double-coset} goes through and shows that $\langle a^n\rangle \langle ba^nb^{-1}\rangle$ is separable in $\Delta_0$.
Hence $\langle a\rangle \langle bab^{-1}\rangle$ is separable in $\Delta$ (see Proposition 3.1 of \cite{Hamilton-double-coset}).
\end{proof} 

We now return to the proof of \Cref{thm:cyclic-HNN2}. Let $\Gamma<\mathsf{SL}_d(\mathbb{R})$ be a $1$-Anosov subgroup and $a\in \Gamma$ a semisimple matrix. Since $\langle a\rangle<\Gamma$ is separable and $\langle a\rangle$ has finite index in the group $\big\{h\in \Gamma: ha^{\pm}\in \{a^{+},a^{-}\}\big\}$, up to passing to a finite index subgroup of $\Gamma$, we may assume that $\langle a\rangle$ is malnormal, maximal abelian separable subgroup of $\Gamma$. Fix $b\in \Gamma$ an infinite order element whose fixed points in $\partial_{\infty}\Gamma$ are different from that of $a$. Since $\langle a\rangle<\Gamma$ is maximal abelian, by \Cref{prod-sep-gen}, for every normal finite-index subgroup $\Gamma_0<\Gamma$ and any $r\in \mathbb{N}$, $(\langle a\rangle \cap \Gamma_0)b^r(\langle a\rangle \cap \Gamma_0)b^{-r}\subset \Gamma_0$ is separable.
 
Up to conjugating the representation $\rho$ with an element in $\mathsf{GL}_d(\mathbb{R})$, we may assume that the attracting and repelling fixed points of $\rho(a)\in \mathsf{GL}_d(\mathbb{R})$ in $\mathcal{F}_{1,d-1}(\mathbb{R})$ are respectively the flags $(\textup{span}\{e_{d-1}\},\textup{span}\{e_1,\ldots,e_{d-1}\})$  and $(\textup{span}\{e_{d}\},\textup{span}\{e_1,\ldots,e_{d-2},e_d\})$. 

Let us consider the separable malnormal subgroups $\M_{+}=\M_{-}=\langle a\rangle$ of $\Gamma$ and the element 
$$\tau\coloneqq\begin{pmatrix}[0.8] \textup{I}_{d-2} & &\\
& 1 & \\ & & i \end{pmatrix}.$$ 
As the proof of Theorem \ref{cyclic-amalgam}, one can check that $\tau(\Lambda_{\Gamma}\smallsetminus \Lambda_{\M_{-}})$ is antipodal to $\Lambda_{\Gamma}\smallsetminus \Lambda_{\M_{+}}$. Thus, since $b\in \Gamma$ has distinct fixed points from $a\in \Gamma$, by the discussion in the previous paragraph, we check that the assumptions of Theorem \ref{HNN-ext} are satisfied for $(\Gamma, \M_{+},\M_{-})$. Thus, there are $m,p\in \mathbb{N}$ large enough and $\Gamma'<\Gamma$ a characteristic finite-index subgroup of $\Gamma$ with $a^m\in \Gamma'$ such that the group $\langle \rho(\Gamma'),\tau \rho(b)^p\rangle<\mathsf{SL}_{d}(\mathbb{C})$ is $1$-Anosov and isomorphic to the HNN extension $\big \langle \Gamma',t \ \big| \ t(b^{-p}a^mb^{p})t^{-1}=a^m\rangle \cong \big \langle \Gamma',t \ \big| \ ta^mt^{-1}=b^{-p}a^mb^{p}\big\rangle $.\end{proof}

\begin{remark}  The hyperbolicity of the HNN extension $\Gamma'\ast_{ta^mt^{-1}=b^{-p}a^mb^p}$ in Theorem~\ref{thm:cyclic-HNN2} follows from the fact that it admits an Anosov representation. We observe that although the cyclic subgroups $\langle b^{-p}a^mb^p \rangle$ and $\langle a^m\rangle$ are conjugate in $\Gamma$, they are maximal and non-conjugate in $\Gamma'$. Therefore, the hyperbolicity of $\Gamma'\ast_{ta^mt^{-1}=b^{-p}a^mb^p}$ is in fact a consequence of the Bestvina–Feighn Combination Theorems~\cite{Bestvina-Feighn}.
\end{remark}

\end{document}